\theoremstyle{plain}
\newtheorem{mainthm}{\protect\theoremname}
\newtheorem{thm}{\protect\theoremname}[section]
\theoremstyle{plain}
  \newtheorem{lem}[thm]{\protect\Lemmaname}
    \theoremstyle{definition}
  \newtheorem{rem*}{\protect\remarkname}
  \newtheorem{defn}[thm]{\protect\definitionname}
  \theoremstyle{plain}
  \theoremstyle{plain}
  \theoremstyle{remark}
  \theoremstyle{definition}
  \newtheorem*{example*}{\protect\examplename}
  \providecommand{\claimname}{Claim}
  \providecommand{\corollaryname}{Corollary}
  \providecommand{\definitionname}{Definition}
  \providecommand{\examplename}{Example}
  \providecommand{\Lemmaname}{Lemma}
  \providecommand{\propositionname}{Proposition}
  \providecommand{\remarkname}{Remark}
\providecommand{\theoremname}{Theorem}
\begin{document}

\begin{title}
{Stable motions of high energy particles interacting via a repelling potential}
\end{title}

\author{V. Rom-Kedar\thanks{The Estrin Family Chair of Computer Science and Applied Mathematics, Department of Computer Science and Applied Mathematics,  The Weizmann Institute of Science
  (email:{vered.rom-kedar@weizmann.ac.il}).} \
 and D. Turaev\thanks{Department of Mathematics, Imperial College  (email:{d.turaev@imperial.ac.uk })}.}

\maketitle

\begin{abstract}
The motion of \(N\) particles interacting by a smooth
repelling potential and confined to a compact
\(d\)-dimensional region is proved to be, under
mild conditions, non-ergodic for all sufficiently
large energies. Specifically, choreographic
solutions, for which all particles follow
approximately the same path close to an
elliptic periodic orbit of the single-particle
system, are proved to be KAM stable in the
high energy limit. Finally, it is proved that
the motion of \(N\) repelling particles in
a rectangular box is non-ergodic at high energies
for a generic choice of interacting potential:
there exists a KAM-stable periodic motion by which
the particles move fast only in one direction,
each on its own path, yet in synchrony with all
the other parallel moving particles.\end{abstract}
\tableofcontents
\section{Introduction}

Can a large number \(N\) of repelling particles moving rapidly in a \(d\)
(\(\geqslant 2\))-dimensional domain \(D\), remain forever bounded away from each other? We prove that such stable motion that avoids collisions occurs with positive probability.
Borrowing the terminology from  Celestial Mechanics \cite{Montgomery:2010,Chenciner2000,Chenciner2002,Fukuda2017}, the solutions we construct are of a choreographic type, i.e., the particles move essentially synchronously along the same path (or, along a family of parallel paths) with nearly constant phase shifts between them. It follows that systems of repelling particles are not ergodic, and have, in fact,  KAM-stable states. Moreover, we show that such stable sets persist at
arbitrarily high energies, for any finite number of particles.

Establishing ergodicity of the Liouville measure (the Lebesgue measure restricted to a constant level of the Hamiltonian in the phase space) is a long-standing problem for conservative many-particle systems. The question is related to principal issues of the foundation of statistical mechanics, see e.g. \cite{Balint2021,Kozlov2000} and also \cite{bricmont2022making} in which  this relation is critically discussed. Classical statistical mechanics is based on the assumption, sometimes called {\em Gibbs postulate}, that macroscopic quantities describing the state of a large
system of microscopic particles are averages over the Liouville measure in the phase space (the so-called micro-canonical ensemble). This postulate is supported by an overwhelming experimental evidence; the question is whether it can be inferred
from the Hamiltonian formulation of dynamics by rigorous mathematical arguments. Are there general properties of the Hamiltonian dynamics which make a general Hamiltonian system choose the Liouvile measure over all other invariant measures?

The ergodicty of the Liouville measure could be such a property\footnote{One may argue that macroscopic quantities are, in fact,
time-averages, so they are indeed equal to the averages over the Liouville measure for a full-measure set of initial conditions when the Liouville measure is ergodic, by Birkhoff-Khinchin theorem.  Notably, other mechanisms that do not require ergodicity have been suggested, see e.g. the  recent discussion, examples and a historical review in \cite{bricmont2022making}.}. However, by Kolmogorov-Arnold-Moser (KAM) theorem, the ergodicity is violated for an open set of smooth Hamiltonians - for example, it is violated for energy levels near any non-degenerate minimum or maximum of the Hamiltonian function. Therefore, one cannot simply postulate ergodicity - it has to be justified by certain additional properties of the class of systems under consideration. Below we summarize some of the relevant works on the \(N\)-particle problem which focus on proving ergodicity within the Sinai program of studying the (billiard) dynamics of the gas of hard balls, or, on the contrary, proving non-ergodicity by studying the emergence of stability islands. As we mentioned, at low energy, near local minima of the potential (i.e., near ``ground states'') one expects, by KAM theory, that the system is generically non-ergodic. Therefore, a sensible mathematical question is to study ergodic properties of many-particle systems at high energies.\\

\textbf{Hard spheres in a container:} The idea going back to Boltzmann is that one can neglect the interactions between particles when the potential energy of the interaction is much smaller than their kinetic energy.
This means that in the gas of sufficiently energetic particles, the particles motion is essentially free except for the short instances when the distance between some particles becomes small enough to create a strong repulsion force resulting in the fast change of the momenta. In the limit, one obtains the {\em Boltzmann gas} of \(N\)-hard spheres of diameter \(\rho\), which interact only via momentarily elastic collisions and are confined to a $d$-dimensional container of the characteristic size $L$ such that \(N\rho^{d} \ll\ L^d \). This provides  a {\em universal model} for any system of $N$ particles in such a container for large values of the kinetic energy per particle, irrespective of the precise form of the repelling interaction potential.

Thus, proving the ergodicity of the Boltzmann gas -- the Boltzmann-Sinai ergodic conjecture -- is one of the  corner-stone problems in the mathematical foundations of statistical mechanics. The Sinai program \cite{sinai1963foundations,sinai1970dynamical,Sinai1997}
was inspired by ideas of Krylov \cite{Krylov1979} and culminated in a series of works \cite{Simanyi2002,Simanyi2003,Simanyi2004,Bunimovich1991,Simanyi1999}. By this program, the ergodicity of the Boltzmann gas is inferred from the characteristic ``Krylov-Sinai'' instability of the elastic collision of spheres (or any convex bodies) in $\mathbb{R}^d$ for $d\geqslant 2$: a small change in the momentum of the particle increases exponentially with the number of collisions. One can view the \(N\)-particle hard-sphere gas in \(d\) dimensions as a billiard in an \(Nd\)-dimensional domain \cite{Sinai1970}. The pair-wise collisions of the
spheres correspond to boundaries of the domain -- Krylov-Sinai instability means that these boundaries are (semi)-dispersing, which, for hard spheres moving on a flat torus or in a rectangular box, implies the hyperbolicity of the dynamics \cite{Bunimovich1991,Simanyi1999,Simanyi2002} and leads to the ergodicity of the Liouville measure \cite{Simanyi2003,Simanyi2004}.

The Sinai program has led to seminal works in dynamical systems theory --
it was one of the main sources for the development of ergodic theory of smooth dynamical systems, the theory of billiards and of general
dynamical systems with singularities \cite{Katok1995,Barreira2006,chernov2006chaotic,KozlovBook}.
However, it has also revealed the  inherent difficulties in relating the Boltzmann gas dynamics to the problem of the
ergodicity of multi-particle systems.

A well-recognized difficulty is the strong dependence of the hard-sphere dynamics on the container shape, see
\cite{Bunimovich2003,Lansel2006,Kozlov2016}. When the container boundary has a convex piece, the $Nd$-dimensional
billiard representing the hard-spheres gas acquires a non-dispersing (focusing) boundary component, which makes the establishment of the hyperbolicity problematic.
Notably, even in the case of a concave container, the ergodicity of the Boltzmann gas has been established only in a quite special geometrical set-up, for spheres of a sufficiently large diameter \(\rho\) \cite{Bunimovich1992}.

The grander problem is the singularity of the hard-sphere system: the interaction potential jumps from zero to infinity when the
distance between particles becomes equal to $\rho$. The Boltzmann gas serves as a universal limit of {\em smooth} multi-particle systems. Since this limit is singular, the question of which of its dynamical and statistical properties survive a regularization must be addressed.\\

\textbf{Smooth billiard-like Hamiltonians} provide a natural regularization of billiard dynamics. Such a Hamiltonian $H$ is the sum of
the kinetic energy term (a positive-definite quadratic function of momenta) and a steep potential \(V(q;\delta)\) associated with a billiard domain \(D\subset  \mathbb{R}^d \). The potential is a smooth function of $q\in D$ and it also depends on a small parameter  \(\delta\) (the inverse steepness), so that when
\(\delta\rightarrow0\), the potential vanishes in the interior of \(D\) while staying bounded from below on the billiard boundary \(\partial D\).

For example, in the present paper, we consider $N$-particle systems with a smooth interaction potential $W$ which tends to
$+\infty$ when the distance between the particles approaches $\rho$; the particles are confined to an open bounded
$d$-dimensional region by a smooth potential $V$ which gets infinite on the boundary of this region.
When we restrict the system to the energy surface \(H=\frac{1}{2} N h\) and scale the momenta by \(\sqrt{h}\) (so the energy is scaled by $h$), we obtain a billiard-like system with
the steep potential \(\delta (V+W)\), where \(\delta=1/h\); see the precise setup in Section \ref{sec:substrongrepelling}. The limit
\(\delta\rightarrow0\) for the fixed value of the rescaled energy $H=\frac{1}{2}N$ corresponds to the high energy limit of the unscaled system.

In \cite{RapRKT07apprx,TRK03cor,rom2012billiards,turaev1998elliptic}, we described a large class of billiard-like Hamiltonians with  steep potentials that satisfy some natural growth and smoothness conditions. We proved for this class that the limit billiard dynamics which are represented by {\em regular} orbits -- i.e., those which hit the billiard boundary
$\partial D$ away of its singularities and at angles away from zero -- persist for sufficiently small $\delta$.
Namely, near the regular orbits, the local return maps of the smooth Hamiltonian flows to cross-sections that are bounded away from \(\partial D\) tend
{\em with all derivatives} to those of the billiard as \(\delta\to 0\)   \cite{RapRKT07apprx,rom2012billiards,turaev1998elliptic}. This implies that regular uniformly-hyperbolic sets and KAM-nondegenerate elliptic orbits of the billiard persist for sufficiently small $\delta$ in the smooth billiard-like system \cite{RapRKT07apprx,rom2012billiards}.

On the other hand, we also showed that the regularization of dispersing billiards changes drastically their dynamics near
{\em singular} orbits, such as orbits which are tangent to $\partial D$ or which enter corner points in
$\partial D$. Namely, the inherent hyperbolic structure of dispersing billiards {\em cannot survive the regularization} \cite{turaev1998elliptic,rom2012billiards}. In particular, singular periodic orbits of dispersing billiards give rise to stable periodic motions --
hence to non-ergodic behavior -- in the smooth system at arbitrarily small $\delta$. Indeed, we proved, under quite general conditions, the loss of ergodicity due to the regularization for two-dimensional dispersing billiards  \cite{TRK03cor,turaev1998elliptic} and also for billiards with specific types of corners in any dimension \cite{RapRK063d,RapRKT08stab}. Applying this logic to the billiard that represents the Boltzmann gas, one concludes that
the same dispersing geometry that creates the Krylov-Sinai instability of the colliding spheres is also responsible for the
destruction of the associated hyperbolic structure -- when the hard-spheres model is replaced by a more realistic model of particles interacting via a smooth potential. It is thus natural to conjecture that orbits of the system of \(N\) hard spheres which
undergo sufficiently many instances of brushing (zero angle) collisions between the spheres or end at multi-collision points (simultaneous collisions of more than 2 particles) can produce islands of stability of the corresponding system of \(N \)
smoothly interacting particles at sufficiently high energy (see, e.g., discussion in \cite{RapRKT08stab}).  Actually, in earlier works, Donnay established the non-ergodicity of two planar point particles   interacting via a smooth short-range potential by utilizing this mechanism of folding in phase space that occurs near brushing orbits  \cite{Donnay1996,donnay1999non}. Proving the non-ergodicity of the \(N\)-particle problem for \(N>2\) by exploring these inherit  singularities of the limit system near such collisions has not been realized yet.

In this paper, we explore a new and different mechanism of the ergodicity loss of the hard-spheres system due to the smoothing. We establish the existence and stability of choreographic solutions for which highly-energetic particles, placed on the same periodic path or parallel paths, never come close to collisions. An  example of choreographic motion of repelling particles is given by  the so-called  polygonal Coulomb crystals: a small number of equidistant Coulomb  particles moving in an harmonic (Paul) trap   \cite{amiranashvili1999stability}. One can find such motions in the hard-spheres system as well (just take the diameter
$\rho$ small enough to ensure the spheres on the same path do not overlap and let them move with the same speed). However,
they are unstable, as small discrepancies in the speed eventually lead to collisions of the spheres. As we show, if the speed of the synchronous motion of the particles is
sufficiently high, a generic smoothing of the repulsive interaction potential stabilizes such type of solutions for some discrete set of  particles phases.\\

\textbf{The $N$-body problem} of Celestial Mechanics has much in common with the \(N\)-particle problem discussed here, with the difference that the $N$-body problem usually refers to attracting interactions. In both cases, the pairwise interaction decays at large distances, whereas, at very small distances, the interaction potential is singular (see e.g. \cite{Fejoz2021}). The full characterization of the mixed phase space dynamics for \(N\geqslant 3\) is intractable due to non-integrability \cite{Knauf2002,meyer2008introduction,turaev2010richness,Guardia2022}.
Thus, finding special type of solutions, in particular KAM-stable periodic motions, is an achievement for such problems  \cite{Arnold2007CelestialMechanics,chierchia2010properly,chierchia2011planetary,neishtadt2021stability,simo2015dynamical}. Choreographic solutions
for the \(N\)-body problem were found by fixing the phase shift between the bodies to be constant and utilizing symmetries to establish that such solutions minimize the action \cite{yu2017simple,Chenciner2000,Chenciner2002,Montgomery2015,Moore1993,fusco2011platonic}. The avoidance of collisions for the attracting potential case follows from the observation that the action becomes infinite for sufficiently strong singularities (in particular, the Newtonian potential is not included). Numerically, the choreographic solutions with small $N$ were found by continuation schemes also for the Newtonian potential  \cite{Chenciner2002,Ouyang2015} and for the Lennard-Jones potential  \cite{Fukuda2017}. In these works, the choreographic solution is not induced by an external field, it is a genuine outcome of the particles interaction.
Here, we  study the problem of choreographic solutions that follow a path dictated by a common background potential that governs the uncoupled dynamics.  The method we use for proving the existence and stability of choreographic solutions is based on averaging. While we mostly focus on repelling potentials,
the case of attracting potentials is covered by our scheme too, see Remarks \ref{rem:maxpotential} and \ref{rem:attrpotential} in  Section \ref{sec:setup}. These results may be related to KAM stability of resonant solutions of planetary systems, see \cite{chierchia2011planetary} and references therein.\\

The paper is ordered as follows. In Section \ref{sec:setup} we list our main results regarding the existence and stability of choreography-type solutions in a system of \(N\) interacting particles in a common background potential.  We consider  four different settings. Theorem  \ref{thm:mainbounded} states that such KAM-stable motion exists in the case of \(N\) identical, weakly interacting particles when the particles are subject to  a smooth background potential which admits a non-degenerate elliptic periodic orbit.  Theorem \ref{thm2} states that under some additional conditions the same result applies when the interaction potential is repelling - i.e. it diverges to $+\infty$ when the particles get close to each other. These results correspond to common  setups in which the application of KAM theory is natural. The method of their proofs prepares the ground to the more demanding parts in which the singular billiard limit is considered - Theorems 3-5. Theorem \ref{thm:billiardsystem} states that the same result applies in the high energy limit for interacting particles with a repelling interaction potential when the background potential is billiard-like and the limiting billiard admits a KAM non-degenerate   periodic orbit. Finally, Theorems \ref{thm:billiardbox1} and \ref{thm:billiardbox2} state that under some explicit non-degeneracy conditions there exist  KAM-stable periodic motions of \(N\) repelling particles in a rectangular box. Sections \ref{sec:proofmainthm} -\ref{sec:boxsection} contain the proofs of these theorems. Following the discussion section, the appendix establishes, by applying the results of \cite{rink2001symmetry},  the existence of KAM-tori in  Fermi-Pasta-Ulam type chains, which naturally arise in the averaging of identical particles systems.

\section{\label{sec:setup}Setup and Main results}

By a particle, we mean a $d$ degrees of freedom, autonomous Hamiltonian system with
a Hamiltonian function \(H_{0}(q,p)\), i.e.,
\begin{equation}\label{eq:singleparticlesys}
\dot q=\partial_{p}H_0(q,p),\qquad\dot p=-\partial_{q}H_0(q,p), \qquad(q,p)\in\mathbb{R}^{2d}.
\end{equation}
Let this system have, at a certain energy value $H_0=E^*$, an elliptic periodic orbit
\(L^* \) with period $T=\frac{2\pi }{\omega_0}$. Let the equation of  \(L^*\)  be
$(q,p)=(q^*(\omega_0t),p^*(\omega_0t))$ where \((q^*,p^*)\) are some \(2\pi\)-periodic functions.

Consider the system of  \(N \) identical particles, each controlled by the same Hamiltonian $H_0$,
and allow the particles to interact with each other.
We define the system of interacting particles by the Hamiltonian
\begin{equation}\label{eq:multparsysy}
H=\sum_{n=1,\dots,N }H_0(q^{(n)},p^{(n)})+\delta\mathop{\sum_{n,m=1,\dots,N}}_{n\neq m}\ W(q^{(n)}-q^{(m)})
\end{equation}
where \((q^{(n)},p^{(n)})\) are the coordinates and momenta of the \(n\)-th particle,  $\delta\geqslant 0$ is a small coupling parameter, and $W$ is the interaction potential. We assume that the particles are identical,
so the interaction potential is the same for any pair of particles; similar results hold true also
when the pairwise interaction potentials vary from pair to pair. Note that in (\ref{eq:multparsysy}) we sum over each pair twice and take the  interaction to depend only on the difference between the particles coordinates. With this choice, the potential is  translation invariant and even. Such properties are natural from a physical point of view, yet, mathematically,  they make the question of  genericity more delicate. The proofs of Theorems   \ref{thm:mainbounded} -  \ref{thm:billiardbox1} do not use these properties (rather, overcome them) so these theorems remain valid for arbitrary pair-wise interaction potential.

For \(\delta=0\), Hamiltonian (\ref{eq:multparsysy}) describes the motion of $N$ non-interacting
particles. It has ``choreography'' type solutions, for which each particle moves along the same periodic path \(L^*\) with a given phase shift:
\begin{equation}\label{uncoupledch}
\mathbf{L}^{*}(\theta)=\{q^{(n)}=q^{*}(\omega _{0}t+\theta^{(n)}), \;\; p^{(n)}=p^*(\omega _{0}t+\theta^{(n)}),
\qquad n=1,\dots,N\}
\end{equation}
for an arbitrary  set of fixed phases $\theta=(\theta^{(1)},\dots,\theta^{(N)})\in \mathbb{T}^N$.
Below, we formulate conditions which ensure that for sufficiently small \(\delta>0\)  choreographic motions persist for all time:  the particles, modulo small oscillations, perpetually orbit \(L^*\) with the same frequency and with certain individual phase shifts $\theta$.

The ``equilibrium phases'' $\theta$ are found as minima of the interaction potential averaged over the synchronous collective motion
of the particles along $L^*$. We first perform the averaging for the case of uniformly bounded, smooth (\(C^{\infty}\)) potential $W$;
after that we generalize the results to the case of repelling potentials, i.e., those which tend to
$+\infty$ as $q^{(n)}-q^{(m)}\to 0$.  Then we consider high-energy particles in a container of a generic shape. This corresponds to the single-particle Hamiltonian $H_0$ depending on $\delta$ in a singular way -- the limit motion is a billiard in the domain where the particles are confined. The singularity in $H_0$ requires amendments to the
averaging procedure and, also, additional conditions on the interaction potential for the persistence of choreographic motions along an elliptic periodic orbit of the billiard. Finally, we consider the special case of interacting high-energy particles in a rectangular box. The limit billiard does not have elliptic orbits in this case, however we show that
a generic repelling interaction stabilizes choreographic motions along parabolic periodic orbits in the box.

\subsection{\label{sec:setupsinglepart}Local assumptions on the single-particle system}

First, we impose non-degeneracy conditions on the periodic orbit $L^*$ of
the one-particle system (\ref{eq:singleparticlesys}). We call these conditions single-particle (SP) assumptions. Recall that elliptic
orbits exist in families parameterized by energy $E=H_0(q,p)$. Thus, by the assumption that the one-particle system
(\ref{eq:singleparticlesys}) has at energy \(E^*\) an elliptic periodic orbit \(L^*\), it follows that it has a smooth family $(q,p)=(q(t,E),p(t,E))$ of elliptic
periodic orbits $L(E)$ such that the energy value $E=E^*$ corresponds to the
original periodic orbit $L^*=L(E^{*})$.\\

\noindent{\bf SP1: Acceleration assumption.} {\em Near \(E^*\), the period of  the elliptic periodic orbit $L(E)$ decreases
with energy.}\\

Note that this assumption holds for billiard-like potentials \cite{RapRKT07apprx,rom2012billiards,turaev1998elliptic}, geodesic flows, and other settings where higher energy corresponds to a higher speed of the motion along the same (or almost the same) path in the configuration space.\\

Let us introduce symplectic coordinates
$(I_{0},\theta,z)$ where $I_0\in\mathbb{R}$,
$\theta \in \mathbb{S}^1=\mathbb{R}^{1}/{2\pi\mathbb{Z}}$, $z\in\mathbb{R}^{2(d-1)}$,
such that the surface filled by the periodic orbits $L(E)$ is given by \(z=0\).\footnote{This is done as follows: one first straightens this surface by a smooth coordinate transformation which is not necessarily symplectic; this transformation may change the symplectic form  -- then one brings
the symplectic form back to the standard form by a smooth transformation which leaves
the surface $z=0$ invariant.} Moreover, we choose $(I_0,\theta)$ such that they give action-angle variables for system (\ref{eq:singleparticlesys}) restricted to the surface $z=0$. This means that on any of the periodic orbits $L(E)$ that foliate this surface the value of $I_{0}$ stays constant and equal to the signed area between $L(E)$ and $L(E^*)$ (and the variable $\theta$ is symplectic conjugate to $I_0$). Thus, the Hamiltonian restricted to this surface is
a function of \(I_{0}\) only and the frequency of the periodic orbit $L(E^{*})$ is equal to
$$\omega_0=\partial_{I_0} H_0(I_0,\theta,0)|_{H_0=E^*}.$$
Thus, Assumption SP1 reads as
\begin{equation}\label{mb0}
a=\partial_{I_0I_0} H_0(I_0,\theta,0)|_{H_0=E^*}>0.
\end{equation}
\begin{figure}
\begin{centering}
\includegraphics[scale=0.4]{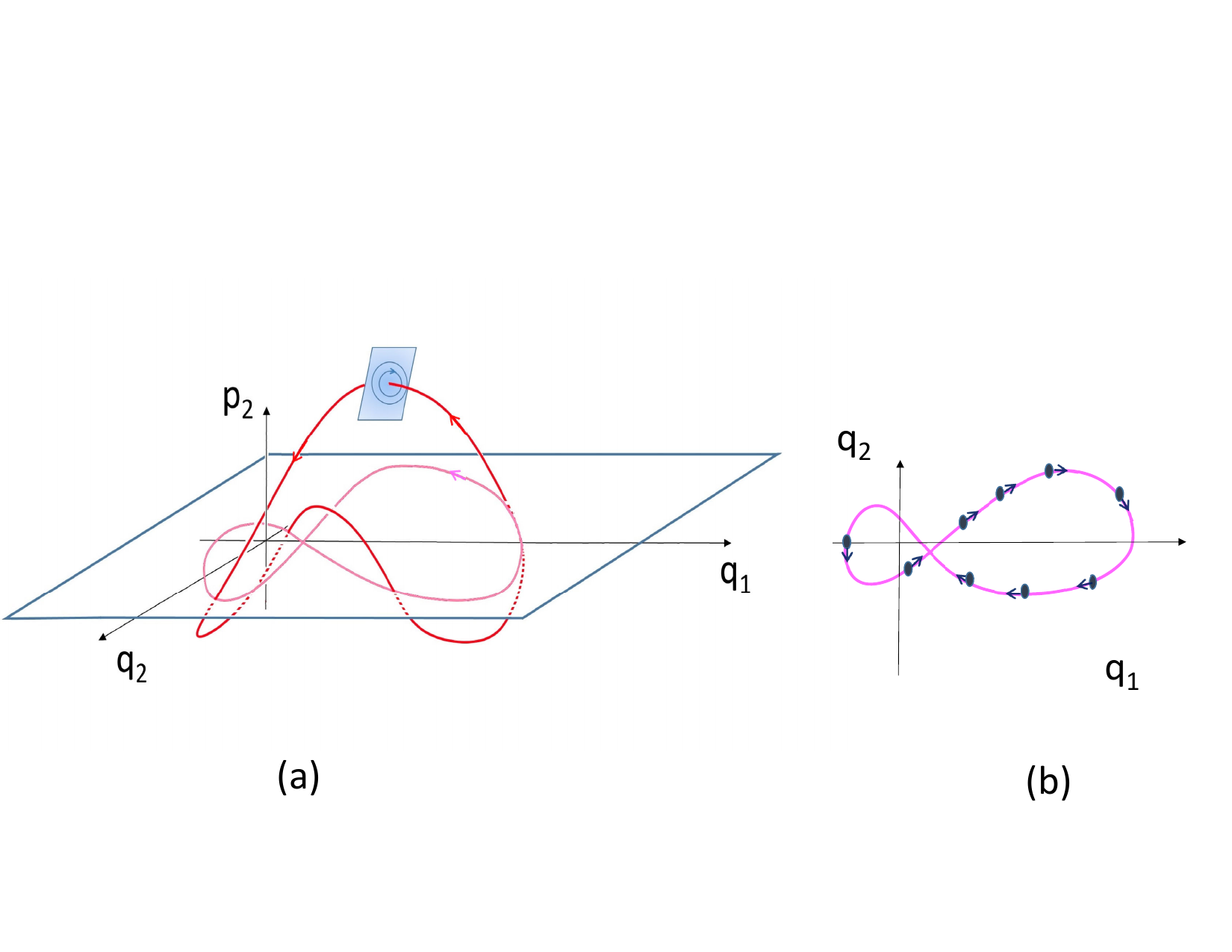}
\par\end{centering}
\protect\caption{\label{fig:ellipticorbit} (a) An elliptic periodic orbit of a 2 degrees of freedom system on the fixed energy level \(H_0=E^{*}\). The elliptic periodic orbit \(L^*\) (red curve)  is surrounded by KAM tori - the blue curves correspond to the intersection of these tori with a transverse  cross-section.
The pink curve shows the projection of $L^*$ to the two-dimensional configuration space - here the projected curve has a single self-intersection point.
(b) Theorem \ref{thm:mainbounded} shows that for a generic smooth interaction potential, any number of weakly interacting particles can orbit the same path in the configuration space. Theorem \ref{thm2} shows that the multi-particle choreography is KAM-stable also  for repelling potentials.}
\end{figure}
Let the multipliers\footnote{Recall that the multipliers of the periodic orbit $L^*$ are defined as follows. Consider a restriction of the system to the $(2d-1)$-dimensional energy level $H_0=E^*$, take a $(2d-2)$-dimensional cross-section to
$L^*$ in this energy level, and consider the Poincar\'e map (the map defined by the
orbits of the system) on the cross-section. The intersection point of
the periodic orbit with the cross-section is a fixed point of the map. The eigenvalues of
the linearization matrix of the Poincar\'e map at this point are called the multipliers of $L^*$.
Since $L^*$ is elliptic, all its multipliers are not
real and lie on the unit circle.} of \(L^* \) be
$e^{\pm i\frac{2\pi }{\omega_{0}}\omega_1},\dots,
e^{\pm  i\frac{2\pi }{\omega_{0}}\omega_{d-1}}$.

\noindent{\bf SP2: Non-resonance assumption.} {\em The frequencies
$\omega=(\omega_0,\dots,\omega_{d-1})$ are not in a
strong resonance:}
\begin{equation}
m_0 \omega_0 + \sum_{j=1}^{d-1} m_j \omega_j \neq 0
\label{eq:singlepartnonres}\end{equation}
{\em for every integer $m_0$ and every integer vector $(m_1,\dots,m_{d-1})$ such that
 $\displaystyle 1\leqslant \sum_{j=1}^{d-1} |m_j|\leqslant 4$.}\\

By this assumption, system (\ref{eq:singleparticlesys}) can be brought to
Birkhoff normal form up to fourth order \cite{Arnold2007CelestialMechanics}. Namely, in a sufficiently small neighborhood of
\(L^*\) one can perform a symplectic coordinate transformation
$$(q,p)=(\hat q(I_{0},\theta,z),\hat p(I_{0},\theta,z))$$
where $(I_0,\theta,z)\in \mathbb{R} \times \mathbb{S}^1 \times \mathbb{R}^{2(d-1)}$,
such that the Hamiltonian $H_0$ takes the form
\begin{equation} \label{eq:birkhoffsingle}
H_0(I_{0},\theta,z) = E^*+\omega I+\frac{1}{2}I^\top AI+g(I_{0},\theta,z),
\end{equation}
where $g(I_{0},\theta,z)=g_0(I_0)+g_1(I_0)\hat I+O(\|z\|^4 |I_{0}|+\|z\|^{5})$
with $g_0=O(I_0^3)$, $g_1 = O(I_0^2)$. Here we use the notation $z=(z_1, \dots, z_{d-1})$ where $z_j=(x_j,y_j)\in R^2$,
$I=(I_0, \hat I)$, where $\hat I=(I_1, \dots, I_{d-1})$ and \(I_j=\frac{1}{2}z_j^2\) denote the actions in the directions transverse to the orbit $L^*$, $j=1,\ldots,d-1$.
We think of $I$ as a column vector, the frequency vector,
$\omega$, is the row vector with components $\omega_0, \omega_1, \dots, \omega_{d-1}$, and $A$
is a symmetric $d\times d$ matrix with constant coefficients. We denote
\begin{equation}\label{amatr}
A=\begin{pmatrix}a &  b \\  b^\top &   \hat A \\ \end{pmatrix},
\end{equation}
where $a$ is a scalar (it is given by (\ref{mb0}) and is strictly positive by Assumption SP1),
$b\in R^{d-1}$ is a row vector, and $\hat A$ is a symmetric $(d-1)\times (d-1)$ matrix
with elements $\{a_{kj}\}_{k,j=1,\ldots,d-1}$.

The system of differential equations defined by the Hamiltonian (\ref{eq:birkhoffsingle}) has the form
$$\begin{array}{l}\dot I_0 = O(\|I\|^{5/2}),\qquad
\dot \theta = \omega_0 + a I_0 + \sum_{j=1}^{d-1} b_j I_j + O(I^2), \vspace{1mm}\\
\vspace{1mm}
\dot x_j = \;(\omega_j + b_j I_0 + \sum_{k=1}^{d-1} a_{kj} I_j) \;y_j + O(I^2),\\
\dot y_j = - (\omega_j + b_j I_0 + \sum_{k=1}^{d-1} a_{kj} I_j) \;x_j + O(I^2),
\qquad j=1, \dots, d-1.\end{array}$$
To the main order, the motion is a nonlinear rotation -- the rotation of the phase $\theta$ corresponds to the motion along the orbit $L^*$ (the circle $I=0$ in these coordinates), and the rotation of $(x_j,y_j)$ describes the transverse oscillations. The frequencies of the oscillations depend on the actions $I_0, \dots, I_{d-1}$, and  we
assume that this dependence is non-degenerate:\\

\noindent{\bf SP3: Twist assumption.} \textit{The orbit $L^*$ satisfies the twist condition and the iso-energetic twist condition}:
\begin{equation}\label{twistel}
\det A=\det\begin{pmatrix}a &  b \\
  b^\top &   \hat A \\
\end{pmatrix} \neq 0
\end{equation}
{\em and}
\begin{equation}\label{twistelw}
\det A_\omega=det \left(\begin{array}{ccc} 0 & \omega_0 & \hat\omega \\  \omega_0 & a &  b \\ \hat\omega^\top &  b^\top & \hat A \end{array}\right)\neq 0,
\end{equation}
{\em where } $\hat\omega=(\omega_1,\dots, \omega_{d-1})$.\\

This completes the list of assumptions on the single-particle motion, assuring that \(L^{*}\) is surrounded by KAM\ tori.

\subsection{\label{sec:condonw}Conditions on the coupling potential}
Next, we impose a non-degeneracy condition on the coupling potential in the multi-particle system (\ref{eq:multparsysy}). We call such conditions Interacting Particles (IP) assumptions. Consider
the motion of $N$ uncoupled particles over the same periodic orbit $L^*$, as given by (\ref{uncoupledch}),
and introduce the {\em averaged interaction potential}
\begin{equation}\label{eq:averagedpotu}
U(\theta^{(1)}, \dots,\theta^{(N)})=\mathop{\sum_{n,m=1,\dots,N}}_{n\neq m}
 W_{avg}(\theta^{(n)}-\theta^{(m)}),
\end{equation}
where
\begin{equation}\label{eq:averagedw}
 W_{avg}(\theta^{(n)}-\theta^{(m)})=
\frac{\omega _{0}}{2\pi}\int_0^{\frac{2\pi}{\omega _{0}}} W(q^*(\omega _{0}t+\theta^{(n)})-q^*(\omega _{0}t+\theta^{(m)}))dt.
\end{equation}
Note that \(U\) is  invariant under translations:
$U(\theta^{(1)}, \dots, \theta^{(N)})=U(\theta^{(1)}+c, \dots, \theta^{(N)}+c)$ for any
constant $c$. As \( U\) is a continuous function on the torus $\mathbb{T}^N$, it must have a point of minimum. By the translation invariance,
the minima of $U$ form lines in $\mathbb{T}^N$:
\begin{equation}\label{lineq}
\theta^{(n)}=\theta^{(n)}_{min}+c, \qquad n=1, \dots, N.
\end{equation}
We take any such line; by varying $c$, we can always make
$\theta^{(1)}_{min} + \ldots + \theta^{(N)}_{min} =0$ in (\ref{lineq}). Introduce coordinates $(\varphi, \psi_1, \dots, \psi_{N-1})$ in a small neighborhood of this line on $\mathbb{T}^N$ such that
$\varphi=\frac{1}{N}(\theta^{(1)}+ \dots +\theta^{(N)})$
and the transformation $(\theta^{(1)}-\theta^{(1)}_{min},\dots, \theta^{(N)}-\theta^{(N)}_{min}) \mapsto (\varphi, \psi_1, \dots, \psi_{N-1})$
is linear and, up to the factor $\frac{1}{\sqrt{N}}$, orthogonal.
In the new coordinates, the line of minima is the line $\psi=0$ and the averaged potential $U$ is independent of $\varphi$, so we denote
\begin{equation}\label{uhat}
\hat U(\psi)=U(\theta^{(1)}(\varphi,\psi), \dots ,\theta^{(N)}(\varphi,\psi)).
\end{equation}
We show in Section \ref{sec:proofmainthm} that for small $\delta$, the evolution of phases of interacting particles moving along the path $L^*$ is, to the main order, governed by the potential $U$. We are looking for stable motions, therefore, we make the following assumption on the interaction potential.\\

\noindent{\bf IP1: KAM assumption.} \textit{The local minimum of  \(\hat U(\psi)\) at the origin is non-degenerate; namely, the eigenvalues of the Hessian matrix
$\left(\partial_{\psi_j\psi_k} \hat U\right)_{j,k=1,\dots,N-1}$ are strictly positive, so the origin is an elliptic equilibrium point for the system defined by the Hamiltonian}
\begin{equation}
H =\frac{a}{2N}J^2+\hat U(\psi),\quad J\in \mathbb{R}^{N-1}.\label{eq:hamav}
\end{equation} \textit{Moreover,  in any neighborhood of the equilibrium this system has a KAM-torus:  a smooth invariant Lagrangian torus with a Diophantine frequency vector such that in a small neighborhood of this torus the system is near integrable and satisfies the twist condition. }\\

In view of the symmetries of \(\hat U(\psi)\) we explain this assumption, which is more general than the usual KAM non-degeneracy assumption, in more details. The  eigenvalues of the Hessian matrix
$\left(\partial_{\psi_j\psi_k} \hat U\right)_{j,k=1,\dots,N-1}$ are  the squares of
the frequencies of the small oscillations near the equilibrium configuration of the phases. The existence of positive measure set of KAM-tori  in a small neighborhood of the equilibrium is, for example,
achieved when
 there are no resonances up to
order four between the frequencies of small
oscillations, and the corresponding Birkhoff normal
form satisfies the twist condition at the equilibrium point.
This is a natural requirement for a generic potential $\hat U$, and assumption IP1 is satisfied in this case.

Since we consider the system of identical particles, the averaged potential
$U=\sum_{m\neq n}  W_{avg}(\theta^{(n)}-\theta^{(m)})$ is symmetric with respect to any permutation of the phases $\theta$ (in particular, since the sum is made over all pairs of different phases, we can always think of $W_{avg}$ as an even function). If the line  (\ref{lineq}) of minimum of $U$ is not preserved by the symmetry, then, the  potential  $\hat U$ is expected to be generic, so, as explained above, for verifying IP1, it is sufficient to check the absence of small resonances and twist condition at the origin for the system
(\ref{eq:hamav}).

On the other hand, when the line   (\ref{lineq}) is
symmetric with respect to some permutation of phases, the potential $\hat U$ inherits this symmetry,
which may lead to resonances. In this case, the question of the existence of KAM-tori in system
(\ref{eq:hamav}) cannot be reduced to the standard genericity assumptions and
has to be specially addressed by constructing a proper resonant normal form and verifying that the normal form has KAM tori as in IP1. Notably, the normal form might be non-integrable, in which case finding KAM tori  is not trivial, yet is doable as shown next.

A natural  example\footnote{A simpler example is the symmetric configuration $\theta^{(1)}=\ldots = \theta^{(N)}$
(all the particles are at the same point), yet it is not very relevant in our setting -- we are interested in the case of repelling potentials, so having all particles glued together for all time should not give a minimum
of the averaged potential. } is given by the equidistant distribution of phases: $\theta^{(n)}= 2\pi \frac{n}{N}$.
It is easy to see that the gradient of $U$ vanishes for this choice of $\theta$'s and, moreover, $U$ indeed
has a minimum at such configuration if, for example, the second derivative of $W_{avg}$ is positive
at the points $2\pi\frac{j}{N}$, $j=1, \dots, N-1$ (which is natural for repelling forces).  The corresponding line of minima (\ref{lineq}) is symmetric with respect to the cyclic permutation
$(\theta^{(1)},\dots,\theta^{(N)})\mapsto (\theta^{(2)},\dots,\theta^{(N)},\theta^{(1)})$; this is
responsible for the unavoidable creation of strong resonances between the frequencies of small oscillations
in system (\ref{eq:hamav}), similarly to the Fermi-Pasta-Ulam (FPU) chain, see Appendix \ref{sec:equidistantsol}. In this case
one cannot bring the system to the Birkhoff normal form. Yet, as explained in Appendix \ref{sec:equidistantsol}, one can generalize the theory that Rink built for the FPU \cite{rink2001symmetry} and show that system
(\ref{eq:hamav}) has KAM-tori near the minimum of $\hat U$
for a generic pairwise interaction potential $W_{avg}$.  Thus, Assumption IP1, of the existence of a smooth invariant  Diophantine torus near which the system is near-integrable and satisfies the twist condition, holds generically for the minimum line of $U$ corresponding to the equidistant distribution of phases. The genericity assumption may be checked by calculating the coefficients of the Rink normal form. Summarizing, even though Assumption IP1 could be difficult to check in general (system (\ref{eq:hamav})
has $(N-1)$ degrees of freedom, which can be arbitrarily large),  it is satisfied in the most natural setting.\\

\subsection{Choreographic solutions of smooth multi-particle systems }
The next theorem establishes the existence of KAM-stable choreographic solutions in the
multi-particle system (\ref{eq:multparsysy}) near \(\mathbf{L}^{*}(\theta_{min})\), the choreographic solution  (\ref{uncoupledch}) of the uncoupled system, where \(\theta_{min}\) is a minimum of the averaged potential. For a positive measure
set of initial conditions, all particles follow approximately the same path
with the phase difference between particles \(n\) and \(m\) remaining close to  \(\theta^{(n)}_{min} - \theta^{(m)}_{min}\)
for all time.

\begin{mainthm} \label{thm:mainbounded} Consider the system (\ref{eq:multparsysy}) where the single-particle system (\ref{eq:singleparticlesys}) has a periodic orbit \(L^{*}\) satisfying
the acceleration (SP1), no-resonance (SP2), and twist (SP3) assumptions, and let the $C^\infty$-smooth bounded pairwise interaction potential $W(q)$ be such that its averaged interaction potential $\hat U$ admits a minimum satisfying the KAM Assumption (IP1). Then, for all sufficiently small
$\delta>0$, system (\ref{eq:multparsysy}) admits a positive measure set of initial conditions corresponding to quasi-periodic solutions which satisfy, uniformly for all time $t$,
\begin{equation}\label{eq:coreogsol}
(q^{(n)}(t),p^{(n)}(t))=(q^*(\bar\omega t+\theta^{(n)}_{min}),
p^*(\bar\omega t+ \theta^{(n)}_{min})) + O(\delta^{1/4}), \quad n=1,\dots,N,
\end{equation}
with some constant $\bar\omega$ (which may depend on initial conditions) such that
$\bar\omega = \omega_0 + O(\delta^{1/2})$.
\end{mainthm}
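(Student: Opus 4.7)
\emph{Proof plan.} The strategy is to reduce, by Birkhoff normal form and averaging over a single fast angle, to a near-integrable system whose unperturbed part combines the single-particle normal form (\ref{eq:birkhoffsingle}) with the averaged interaction Hamiltonian (\ref{eq:hamav}), and then to apply a parametric KAM theorem. First, in a neighbourhood of $L^*\times\dots\times L^*$, introduce the single-particle Birkhoff coordinates $(I_0^{(n)},\theta^{(n)},z^{(n)})$ of (\ref{eq:birkhoffsingle}) for each particle, and then pass from $(\theta^{(1)},\dots,\theta^{(N)})$ to the collective variable $\varphi=\tfrac{1}{N}\sum_n\theta^{(n)}$ and the relative phases $(\psi_1,\dots,\psi_{N-1})$ introduced above (\ref{uhat}), with conjugate momenta $(I_\varphi,J)$. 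Since the $\theta$-part of this change of variables is $(1/\sqrt N)$-orthogonal, the twist $\tfrac{a}{2}\sum_n(I_0^{(n)})^2$ becomes $\tfrac{a}{2N}I_\varphi^2+\tfrac{a}{2N}J^2$, automatically reproducing the kinetic term of (\ref{eq:hamav}); moreover, translation invariance of $W$ in $\varphi$ implies that the leading $\delta$-order perturbation $U(\theta)$ depends only on $\psi$ and equals $\hat U(\psi)$.

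Next, average over the fast angles. By SP2 the single-particle frequencies $(\omega_0,\hat\omega)$ are non-resonant up to order four, so an $O(\delta)$-close-to-identity canonical transformation removes the $\varphi$- and $z$-angle-dependence of the perturbation up to $O(\delta^2)$, yielding an averaged Hamiltonian of the form
\begin{equation*}
\bar H=NE^*+\omega_0 I_\varphi+\hat\omega\cdot\sum_n\hat I^{(n)}+\tfrac{1}{2}\sum_n(I^{(n)})^\top A\, I^{(n)}+\delta\hat U(\psi)+R,
\end{equation*}
where $R$ collects terms of higher order in the actions or $O(\delta^2)$. The symplectic rescaling $\psi=\tilde\psi$, $J=\delta^{1/2}\tilde J$, $t\mapsto t/\delta^{1/2}$ turns the slow sector $\tfrac{a}{2N}J^2+\delta\hat U(\psi)$ into precisely the system (\ref{eq:hamav}), which, by IP1, admits KAM tori in every neighbourhood of its elliptic equilibrium $\psi=0$.

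A parametric KAM theorem applied to the full $Nd$-degree-of-freedom averaged Hamiltonian $\bar H$ then yields the required quasi-periodic solutions. Its non-degeneracy hypothesis is assembled from three ingredients acting on well-separated scales: the matrix $A$ of (\ref{amatr}) (via SP3) furnishes twist along the fast action directions $(I_\varphi,\hat I^{(n)})$; IP1 supplies the KAM structure on the slow $(\psi,J)$-sector; and the $O(1)$-frequencies $\omega_0,\hat\omega$ are automatically non-resonant with the $O(\delta^{1/2})$-frequencies of the slow oscillations for small $\delta$. KAM tori of amplitude $O(1)$ in $(\tilde\psi,\tilde J)$ correspond to $|\psi|=O(\delta^{1/4})$ in the original variables, so unwinding the coordinate changes produces (\ref{eq:coreogsol}); the frequency shift $\bar\omega-\omega_0=O(\delta^{1/2})$ arises because $I_\varphi$, $\hat I^{(n)}$ and $J$ all take values of size $O(\delta^{1/2})$ on these tori and enter $\dot\varphi=\omega_0+\partial_{I_\varphi}(\tfrac{1}{2}\sum_n(I^{(n)})^\top A\,I^{(n)})+\dots$ linearly through $A$.

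The principal technical obstacle is combining the non-degeneracy coming from SP3 and IP1 into a single KAM statement for the whole system: one must track how the matrix $A$ interacts with the $(\theta^{(n)})\mapsto(\varphi,\psi)$ change of variables and its momentum dual, verify that the off-diagonal blocks $b\cdot I_0\hat I$ of (\ref{amatr}) do not destroy the slow--fast decoupling at the order needed for the averaging, and show that the Diophantine conditions on the full $dN$-frequency vector are satisfied on a positive-measure set of initial conditions for every sufficiently small $\delta$. In the symmetric case where IP1 is established via the Rink-type normal form of the appendix, one must further check that this normal form remains compatible with the coupling to the fast action directions inherited from SP3.
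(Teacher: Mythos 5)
Your plan correctly identifies the decomposition into a collective phase $\varphi$, the relative phases $\psi$, and the transverse Birkhoff variables, and it correctly locates the difficulty in combining the non-degeneracies from SP3 and IP1. However, three concrete gaps remain, and at least one of them reflects a step that would fail as stated.

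\textbf{Averaging over the $z$-angles is not available.} You propose to remove the $\varphi$- \emph{and} $z$-angle dependence of the interaction by a single $O(\delta)$-canonical transformation, justified by the SP2 non-resonance. But SP2 only excludes resonances $m_0\omega_0+\sum m_j\omega_j=0$ with $\sum|m_j|\le 4$; a $C^\infty$ interaction has Fourier modes of all orders in the $z$-angles, and the homological equation would then require bounding $|m_0\omega_0+\sum m_j\omega_j|$ from below for $|m|$ arbitrarily large, i.e.\ a Diophantine hypothesis that is not assumed. The paper sidesteps this entirely: by first scaling $z^{(n)}\to\delta^{1/4}z^{(n)}$ and $I_0^{(n)}\to\delta^{1/2}I_0^{(n)}$, the $z$-dependence of $W(\hat q-\hat q)$ becomes an $O(\delta^{1/4})$ \emph{correction} (see (\ref{eq:wperiodicorbit})), so the only angle one needs to average over is the single collective phase. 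The Fourier solution $\Psi$ of Lemma~\ref{Lemma1.1} has denominators $\omega_0(k_1+k_2)$ bounded away from zero, so no non-resonance beyond $\omega_0\neq0$ is used. Your sketch does not mention the $\delta^{1/4}$ scaling of $z$ at all and only rescales $J$; without it the averaging step as you describe it is not valid.

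\textbf{The dynamics on the transverse actions is not decoupled by averaging, and the twist is not actually verified.} Even after rescaling and averaging, the effective Hamiltonian (the paper's (\ref{eq:timeoneham})) retains a coupling term $\frac{1}{\sqrt N}\sum_n(b\hat I^{(n)})(R_n^\top J)$ between the slow momenta $J$ and the transverse actions $\hat I^{(n)}$, coming exactly from the off-diagonal block $b$ of $A$ that you flag as a concern. The paper handles the transverse oscillations not by averaging them out but by passing to a rotating frame adapted to $K=O(\delta^{-1/2})$ iterates of the Poincar\'e map (Lemma~\ref{Lemma1.2}), so that the residual rotation numbers $\nu_j$ become $O(1)$ and enter the final Hamiltonian linearly in $\hat I^{(n)}$. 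Then the $J$--$\hat I$ coupling is removed by an explicit symplectic shift (\ref{jjr}), and the twist of the resulting $\hat I$-Hamiltonian is proved by the determinant identity $\det M=-\omega_0^{-2}\bigl(\tfrac{1}{N}\det A\bigr)^{N-1}\det A_\omega$ (Lemma~\ref{lem:HIhastwist}), which is precisely where both determinants of SP3 are consumed. This computation is the heart of the argument and your proposal leaves it as ``one must track how the matrix $A$ interacts with the change of variables,'' i.e.\ it names the problem but does not solve it. Finally, ``apply a parametric KAM theorem'' glosses over the fact that the slow dynamics is visible only over $O(\delta^{-1/2})$ periods of the fast motion; the paper's argument via the $K$-th iterate of the return map and its closeness to a time-1 flow map is the mechanism that legitimizes the KAM application, and it has no counterpart in your outline.
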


The theorem is proven in Section \ref{sec:proofmainthm}.
\begin{rem*}\label{rem:maxpotential} The conclusion of Theorem \ref{thm:mainbounded} about the existence of a positive measure set of quasiperiodic choreographic motions also holds near non-degenerate {\em maxima} of  $\hat U$, provided the acceleration assumption SP1 is reversed to deceleration, i.e., if the period of $L^*$ {\em increases} with energy (as is the case near homoclinic loops, see Figure \ref{fig:saddlecenter}). Indeed, the quasiperiodic choreographic solutions remain such if we reverse the direction of time. This corresponds to changing the sign of the Hamiltonian
(\ref{eq:multparsysy}), i.e., of both \(H_{0}\) and \(W\). The change of the sign of $H_0$ makes the period of $L^*$
grow with energy (so the coefficient $a$ in (\ref{eq:hamav}) becomes negative, cf. (\ref{mb0})), while changing the sign of $W$ makes a minimum of the averaged potential a maximum.
\end{rem*}

\subsection{Choreographic solutions for repelling coupling potentials}\label{sec:substrongrepelling}

As the single-particle system (\ref{eq:birkhoffsingle}) near the elliptic orbit is nearly-integrable, the multi-particle system (\ref{eq:multparsysy}) near the set of choreographic solutions (\ref{uncoupledch}) is also nearly-integrable for small $\delta$. Therefore, the existence of KAM-tori established by the above theorem is not surprising. However, this result admits a generalization to systems with singularities, where the concept of near-integrability is not automatically applicable.

The simplest case corresponds to a singularity in the interaction potential.

\begin{defn}\label{defstr} We call a potential \(W(q)\) \textbf{repelling},
if it is smooth and bounded from below for all \(\|q\|>\rho\geqslant0\)  and $W(q)\rightarrow+\infty$ as \(\|q\|\rightarrow\ \rho\).
For \(\|q\|\leqslant\ \rho\), the function \(W\) is infinite. \end{defn}

Note that the repulsion assumption is made only for small distances, at large distances the potential may be attracting (e.g. the theory applies to the Lennard-Jones potential). If the pairwise interaction potential $W$ in (\ref{eq:multparsysy}) satisfies this definition, the perturbation term
$\delta\sum_{n\neq m} W(q^{(n)}-q^{(m)})$ can become large for arbitrary small $\delta$  - this occurs if $q^{(n)}$ gets sufficiently close to $q^{(m)}$
for some $n\neq m$. In particular, the averaged potential $W_{avg}$ in (\ref{eq:averagedw})
may have singularities when two particles moving on the same path $L^*$ collide.
When \(\rho=0\), this happens when two phases are identical
(\(\theta^{(n)}=\theta^{(m)}\) for some \(m\neq n\)) or when the orbit $L^*$ has
self-crossings in the $q$-space, see \ref{fig:ellipticorbit} (i.e.,  \(q^*(\theta^{(n)})=q^*(\theta^{(m)}) \) for some \(\theta^{(n)}\neq\theta^{(m)}\)).
If the dimension $d$ of the $q$-space is larger than $2$, then a typical periodic orbit has no
self-crossings. However, the existence of self-crossings is a robust phenomenon when $d=2$, or, in higher dimensions, when there are certain symmetries.

In general, we define
the {\em collision set} $\Theta_{c} \subseteq \mathbb{T}^N$ as the set of all initial phases $(\theta^{(1)}, \dots, \theta^{(n)})$ for
which the motion of uncoupled particles along the same path $L^*$ (see (\ref{uncoupledch})) leads to a collision at a certain time $t_c$:
$$\Theta_c = \{ (\theta^{(1)}, \dots, \theta^{(n)}) \; | \; \|q^*(\omega_{0}t_{c}+\theta^{(n)})-q^*(\omega_{0}t_{c}+\theta^{(m)})\|\leqslant\rho \text{ for some } n\neq m\ \text{and some } t_c\in[0,T]\}.$$
The collision set is closed. For \(\rho=0\), it is, typically, a union of codimension-1 hypersurfaces in $\mathbb{T}^N$, whereas for \(\rho>0\) this set has a non-empty interior, see Figure \ref{fig:collisionphases}. For too  large \(\rho\)  it coincides with the torus. For $(\theta^{(1)}, \dots, \theta^{(n)})$ outside the collision set, the averaged potential $U(\theta^{(1)}, \dots,\theta^{(N)})$ is well-defined and is a smooth function, bounded from below. Therefore, if
$\Theta_{c}$ is not the whole torus $\mathbb{T}^N$, then $U$ attains a finite minimum.

\begin{figure}
\begin{centering}
\includegraphics[scale=0.275]{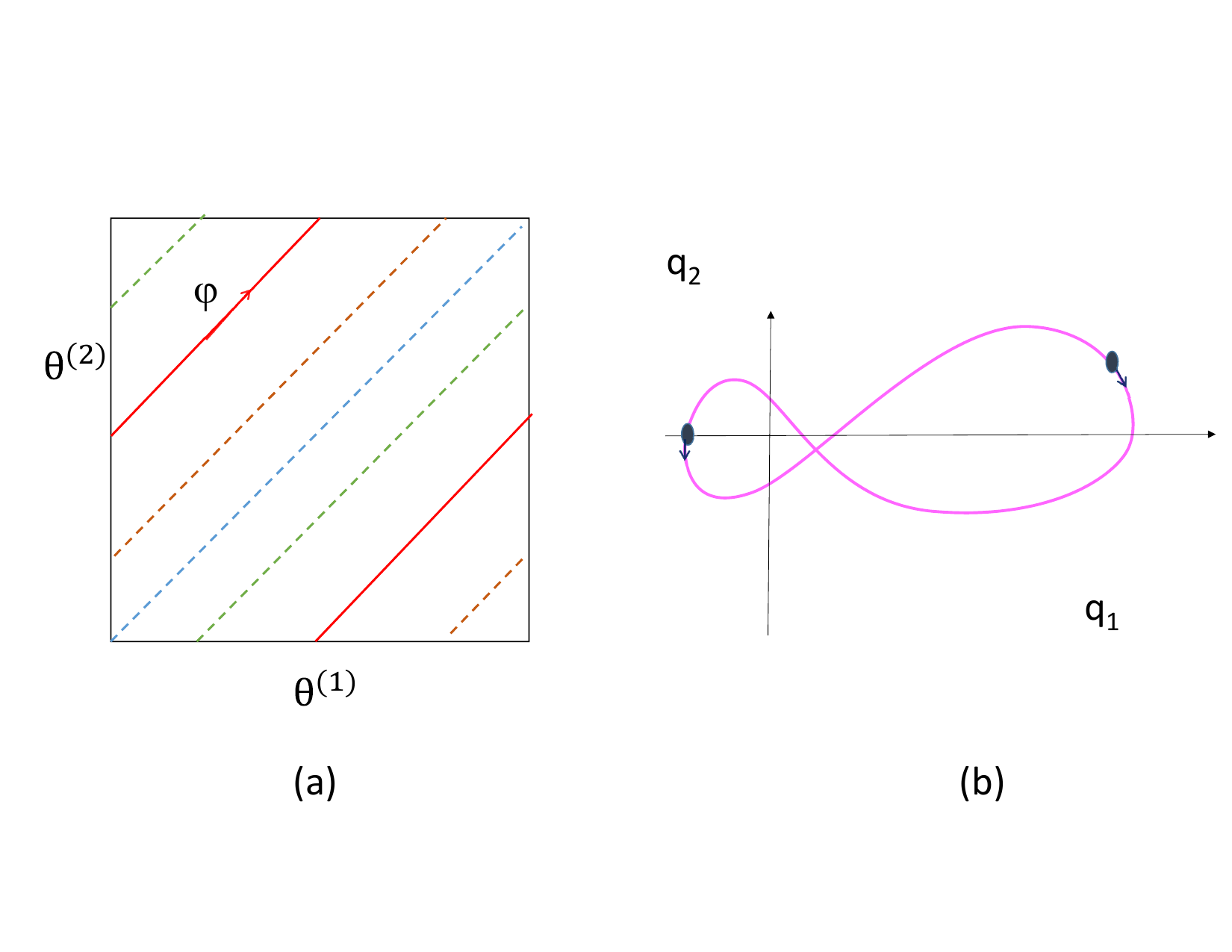}\\
\includegraphics[scale=0.2]{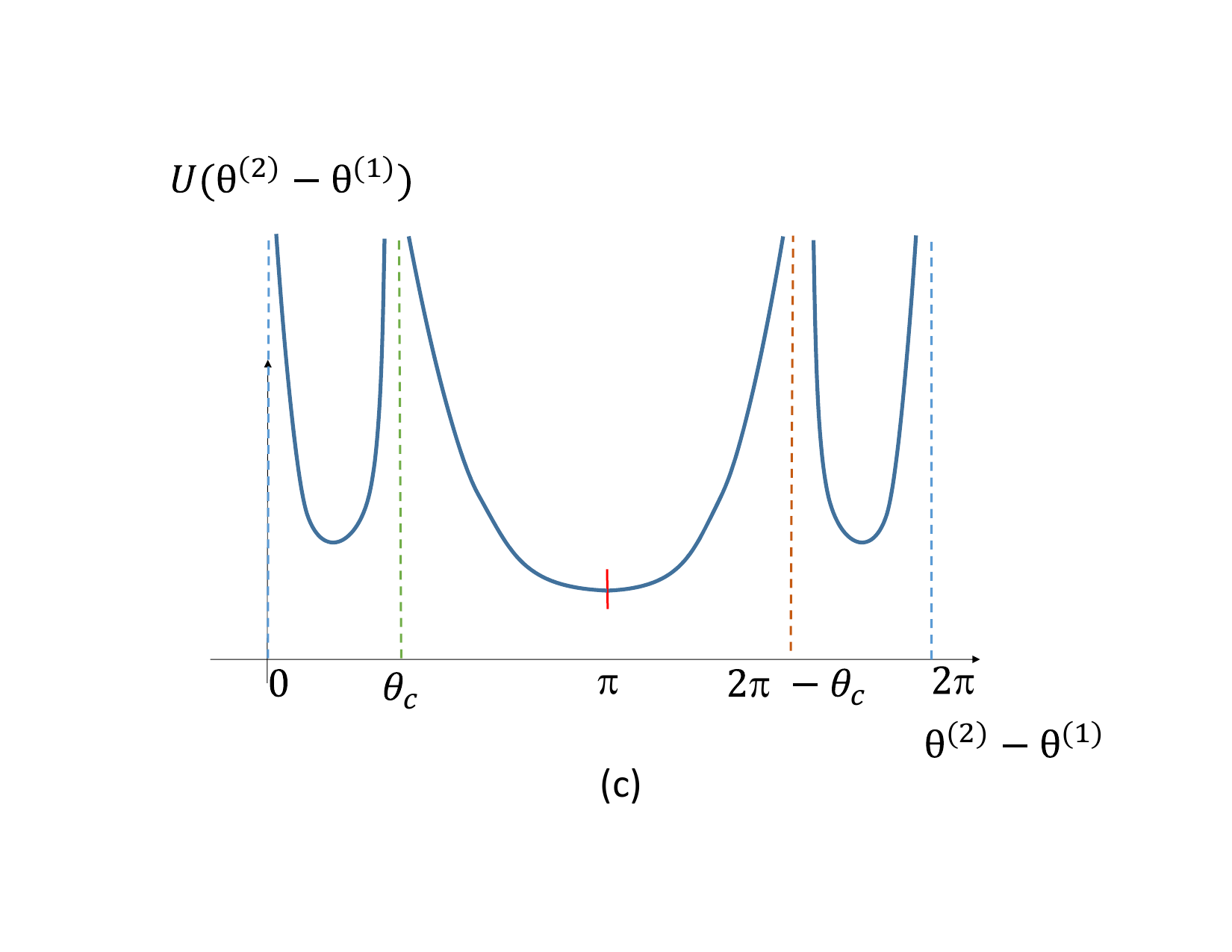}
\includegraphics[scale=0.2]{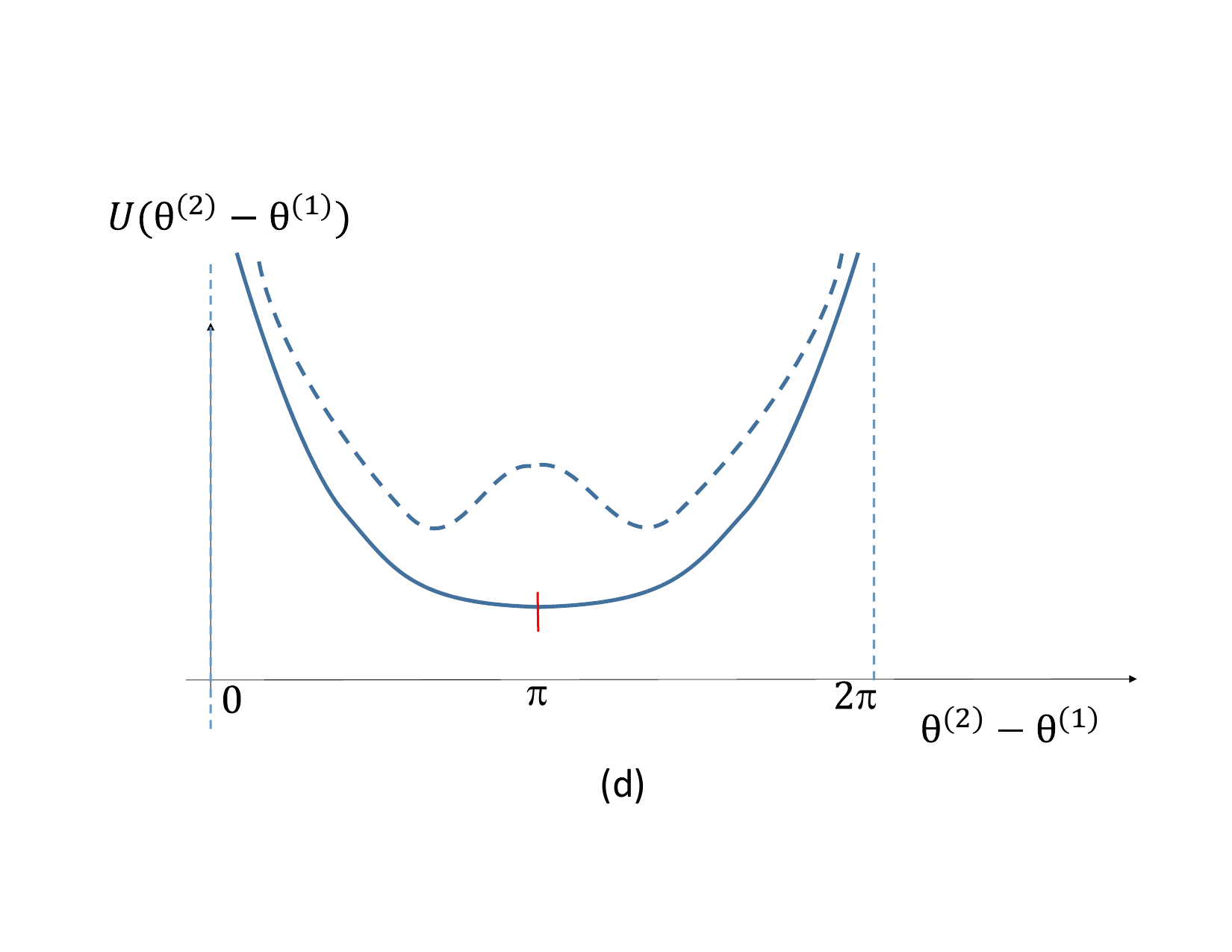}
\par\end{centering}
\protect\caption{\label{fig:collisionphases} A two-particle system for an orbit with one self-intersection  (a-c) and averaged potential \(U\) for the case of no self-intersection (d). (a) The collision set for \(\rho=0\): dashed blue line corresponds to the two particles coalescing,  \(\theta^{(1)}=\theta^{(2)}\); the dashed green and brown lines correspond to the two possible collisions at the intersection point,   \(\theta^{(1)}=\theta^{(2)}+\theta_c\)    and \(\theta^{(1)}=\theta^{(2)}-\theta_c\), respectively. (b) The two particles on the self-intersecting curve. (c) The averaged potential is infinite at the collision set. Provided the collision set does not include \(\theta_c=\pi\), it must have at least 3 minima (it may attain additional minima points). As the averaged potential is even,  \(\pi\)  must be an extremal point, here a minimum. (d) If there is no self-crossing, the only singularity of \(U\) is at the blue line, so, if \(U\)  has a single minimum, it must be at  \(\theta=\pi\). It is also possible that  \(\theta=\pi\) is a maximum of \(U\) (dashed curve), and then non-symmetric minima must exist.      }
\end{figure}

\noindent{\textbf{IP2: No-collision assumption.}} {\em There exists a minimum line (of the form (\ref{lineq})) of the averaged potential $U(\theta^{(1)}, \dots,\theta^{(N)})$ which is collision free, i.e.,
this line does not intersect the collision set \(\Theta_{c}\).}\\

If \(\rho=0\) and the elliptic periodic orbit \((q^*(\omega_{0}t), p^*(\omega_{0} t))\) of (\ref{eq:singleparticlesys})  has no self-intersection points, then Assumption IP2
automatically holds for any repelling potential and for any minimum line of $U$. Indeed, in this case the set $\Theta_c$ is exactly the set where at least two phases are equal. Then, at least two particles on the path $L^*$ have the
same coordinate $q$ for all time, hence the corresponding integrand in (\ref{eq:averagedpotu}) is infinite on the whole interval $[0,T]$, making the average
potential \(U\) infinite everywhere on $\Theta_c$. This cannot be a minimum of $U$.

In order to conclude the same when  $L^*$ has self-crossings  or when \(\rho>0\), it is enough to have  the repelling interaction sufficiently strong. For example:

\begin{lem}\label{lem:asumtion5sat}
If the pairwise interaction potential
\(W\) satisfies, for \(\|q\|>\rho\), the growth condition
\begin{equation}
W(q)\geqslant \frac{C_1}{\|q\|-\rho} - C_2, \quad  \quad\ C_{1}>0,
\label{eq:strongrepelling}\end{equation}
with some constants $C_{1,2}$, then the no-collision assumption IP2 holds, unless the collision set coincides with the whole torus
$\mathbb{T}^N$.
\end{lem}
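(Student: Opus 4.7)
The plan is to show that the averaged potential $U$ is finite on the open complement of $\Theta_c$, equals $+\infty$ on all of $\Theta_c$ (boundary included), and is lower semi-continuous on $\mathbb{T}^N$. On $\mathbb{T}^N\setminus\Theta_c$ the pairwise distances along $L^*$ stay uniformly above $\rho$ by compactness of $[0,T]$, so the integrand in (\ref{eq:averagedw}) is continuous and bounded and $U$ is smooth and finite. Extending $W$ by $+\infty$ inside $\{\|q\|\leqslant\rho\}$ makes $W$ lower semi-continuous on $\mathbb{R}^d$, and Fatou's lemma applied to (\ref{eq:averagedw}) then transfers lower semi-continuity to $W_{avg}$ and hence to $U$.

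The delicate step is to show $U=+\infty$ everywhere on $\Theta_c$, not only at its interior points. For a configuration in the interior of $\Theta_c$, some pair has inter-distance strictly below $\rho$ on an open time interval, and the integrand is identically $+\infty$ there. For a configuration on $\partial\Theta_c$, the function
\[ d(t) := \|q^{*}(\omega_{0}t+\theta^{(n)})-q^{*}(\omega_{0}t+\theta^{(m)})\|-\rho \]
for the relevant colliding pair is smooth, nonnegative, and vanishes at some $t_c\in[0,T]$, so $t_c$ is a minimum of $d$. Taylor expansion then gives $d(t)\leqslant C(t-t_c)^2$ near $t_c$, and the growth condition (\ref{eq:strongrepelling}) yields
\[ W(q^{*}(\omega_{0}t+\theta^{(n)})-q^{*}(\omega_{0}t+\theta^{(m)})) \;\geqslant\; \frac{C_1}{C(t-t_c)^2}-C_2 \]
on a punctured neighborhood of $t_c$. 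This lower bound has divergent integral, so $W_{avg}$ for that pair is $+\infty$, whence $U=+\infty$.

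Combining the three ingredients, the extended-real-valued lower semi-continuous function $U$ on the compact torus $\mathbb{T}^N$ attains its infimum. Since $\Theta_c\neq\mathbb{T}^N$, this infimum is finite and is attained at some $\theta_{min}\notin\Theta_c$. Both $\Theta_c$ and $U$ are invariant under the diagonal translation $\theta\mapsto\theta+c(1,\dots,1)$ (a uniform time shift of the synchronous motion permutes collision times but not the collision set), so the line of minima (\ref{lineq}) through $\theta_{min}$ lies entirely in $\mathbb{T}^N\setminus\Theta_c$: if it met $\Theta_c$ at a single point, invariance would force the whole line into $\Theta_c$, contradicting $U(\theta_{min})<+\infty$. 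This establishes Assumption IP2. The main obstacle is the boundary analysis of the second paragraph: the $1/(\|q\|-\rho)$ growth sits precisely at the threshold needed to convert the quadratic vanishing of $d$ at a tangential collision into a non-integrable singularity of $W$; without such a quantitative bound, $U$ could remain finite on $\partial\Theta_c$ and the minimum could fail to avoid collisions.
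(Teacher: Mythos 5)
Your overall strategy is the same as the paper's: show that $U=+\infty$ everywhere on $\Theta_c$, observe that $U$ is lower semi-continuous (so it attains its infimum on the compact torus), and conclude that the minimum, being finite, lies outside $\Theta_c$; the diagonal translation invariance then carries the whole minimum line outside the collision set. Your extra care about lower semi-continuity and the invariance of the minimum line is a welcome bit of scaffolding that the paper leaves implicit.

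However, the quadratic Taylor bound $d(t)\leqslant C(t-t_c)^2$ is not valid in general and is, moreover, unnecessary. When $\rho=0$ (which Definition~\ref{defstr} allows) the function $d(t)=\|q^*(\omega_0 t+\theta^{(n)})-q^*(\omega_0 t+\theta^{(m)})\|$ is \emph{not} smooth at a zero $t_c$: if the vector $f(t)=q^*(\omega_0 t+\theta^{(n)})-q^*(\omega_0 t+\theta^{(m)})$ vanishes at $t_c$ with nonzero derivative, then $d(t)\sim\|f'(t_c)\|\,|t-t_c|$ vanishes linearly, not quadratically, and there is no Taylor expansion of $d$ at $t_c$. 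The correct and universally applicable bound is the one the paper uses: because $\frac{d}{dt}q^*(\omega_0 t+\theta)$ is bounded, the distance-minus-$\rho$ is Lipschitz, so $d(t)\leqslant C|t-t_c|$; combined with (\ref{eq:strongrepelling}) this already gives $W\geqslant C_1/(C|t-t_c|)-C_2$, whose integral diverges. Your closing remark that the $1/(\|q\|-\rho)$ growth "sits precisely at the threshold needed to convert the quadratic vanishing of $d$ into a non-integrable singularity" is therefore misdirected: with quadratic vanishing even $W\gtrsim d^{-1/2}$ would give divergence, so $1/d$ is overkill in that regime; it is the \emph{linear} (Lipschitz) bound that makes $1/d$ growth the sharp requirement. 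Replacing the Taylor-at-a-minimum step by the Lipschitz estimate fixes the gap and simplifies the argument.
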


\begin{proof} It is enough to show that the averaged potential $U$ is infinite for all $(\theta^{(1)}, \dots, \theta^{(n)})$ from the collision set $\Theta_c$. If $(\theta^{(1)}, \dots, \theta^{(n)})\in \Theta_c$, then for some $n\neq m$ either $\|q^*(\omega_{0}t+\theta^{(n)})-q^*(\omega_{0}t+\theta^{(m)})\|\leqslant\rho$ for all $t$, or there exists a value of $t=t_c$ such that $\|q^*(\omega_{0}t_c+\theta^{(n)})-q^*(\omega_{0}t_c+\theta^{(m)})\|=\rho$ and $\|q^*(\omega_{0}t+\theta^{(n)})-q^*(\omega_{0}t+\theta^{(m)})\|>\rho$
when $t$ approaches $t_c$. In the first case, an integrand in (\ref{eq:averagedpotu}) is infinite on the whole interval $[0,T]$, thus making $U$ infinite.  In the second case, since the derivative $\frac{d}{dt} q^*(\omega_{0}t+\theta)$ is bounded, the distance between the particles decays at least linearly in $(t-t_c)$, so the integrand $W(q^*(\omega_0t + \theta^{(n)})-q^*(\omega_0t + \theta^{(m)}))$
grows, by (\ref{eq:strongrepelling}), proportionally to $(t-t_c)^{-1}$ or faster, hence the integral (\ref{eq:averagedw}) diverges, i.e.,
$U$ is infinite in this case as well.
\end{proof}

There can, however, be cases when $\Theta_c=\mathbb{T}^N$ and Assumption  IP2 does not hold. For example, if the system is reversible, then there can exist orbits
(like period-2 orbits in billiards) for which \(q^*(\omega_0 t)=q^*(2\pi-\omega_0 t)\) for all \(t\). In this case, collisions are unavoidable even for $\rho=0$. Note, however, that generically,
since the orbit $L^*$ is elliptic, around it one can find resonant periodic orbits with at most finitely many self-crossings, and the previous remarks apply for sufficiently small $\rho$.

The following result generalizes Theorem \ref{thm:mainbounded} to the case of  repelling interaction potentials.
\begin{mainthm} \label{thm2} Consider system (\ref{eq:multparsysy}) where the single-particle system (\ref{eq:singleparticlesys}) has a periodic orbit \(L^{*}\) satisfying
Assumptions SP1,SP2,SP3, and the $C^\infty$-smooth, repelling potential $W(q)$ is such that   its  averaged interaction potential $\hat U$ admits a minimum satisfying the KAM assumption IP1 and the no-collision assumption IP2. Then, for all sufficiently small
\(\delta \), the system admits a positive measure set of initial conditions corresponding to quasi-periodic solutions as in Theorem \ref{thm:mainbounded}.\end{mainthm}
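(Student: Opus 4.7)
\textbf{Proof proposal for Theorem \ref{thm2}.} The plan is to reduce to Theorem \ref{thm:mainbounded} via a cutoff argument, exploiting the fact that the chosen minimum line is bounded away from the singular set $\Theta_c$.

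First, I would use the no-collision assumption IP2 to choose a precise safety margin. Let $\theta_{min}=(\theta^{(1)}_{min},\ldots,\theta^{(N)}_{min})$ parametrize the collision-free minimum line $\mathcal{L}_{min}$ of $U$. Since $\Theta_c$ is closed and $\mathcal{L}_{min}\cap \Theta_c=\varnothing$ (and $\mathcal{L}_{min}$ is compact modulo the diagonal translation), there exists $\rho'>\rho$ and a neighborhood $V\subset \mathbb{T}^N\setminus\Theta_c$ of $\mathcal{L}_{min}$ such that for every $(\theta^{(1)},\dots,\theta^{(N)})\in V$ and every $t\in[0,T]$ we have $\|q^*(\omega_0 t+\theta^{(n)})-q^*(\omega_0 t+\theta^{(m)})\|\geqslant \rho'$ for all $n\neq m$. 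Equivalently, the trajectories of the uncoupled choreography started from $V$ keep every pair of particles outside the ball of radius $\rho'$ around the singularity of $W$ throughout one period.

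Next, I would replace $W$ by a globally smooth, bounded modification $\tilde W$ that agrees with $W$ wherever it matters. Concretely, pick a $C^\infty$ cutoff $\chi:\mathbb{R}^d\to[0,1]$ with $\chi\equiv 1$ on $\{\|q\|\geqslant \rho'\}$ and $\chi\equiv 0$ on $\{\|q\|\leqslant (\rho+\rho')/2\}$, and set
\[
\tilde W(q)=\chi(q)\, W(q)+(1-\chi(q))\, M,
\]
where $M$ is a large constant chosen above $\sup_{\|q\|\geqslant (\rho+\rho')/2} W(q)$. Then $\tilde W$ is $C^\infty$ and bounded on all of $\mathbb{R}^d$, and $\tilde W\equiv W$ on the set $\{\|q\|\geqslant \rho'\}$. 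By the construction of $V$, the averaged potential $\tilde U$ associated with $\tilde W$ coincides with $U$ on $V$. In particular, $\tilde U$ has the same minimum line $\mathcal{L}_{min}$, the same Hessian structure transverse to it, and therefore $\tilde U$ satisfies Assumption IP1 (this is a local condition on $\hat U$ at $\psi=0$).

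Now I would apply Theorem \ref{thm:mainbounded} to the modified system with pairwise potential $\tilde W$. It provides, for all sufficiently small $\delta>0$, a positive-measure set of quasi-periodic solutions of the modified Hamiltonian satisfying the uniform estimate (\ref{eq:coreogsol}): along each such solution, the phases $(\theta^{(n)}(t))$ stay within an $O(\delta^{1/4})$ neighborhood of $\mathcal{L}_{min}$ and the transverse actions remain $O(\delta^{1/2})$. For $\delta$ small enough, the full phase-space trajectory therefore stays inside the tube where the pairwise distances $\|q^{(n)}(t)-q^{(m)}(t)\|$ exceed $\rho'$ for all $t$, i.e., in the region where $\tilde W=W$. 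Consequently these orbits are also exact solutions of the original system (\ref{eq:multparsysy}), and the estimate (\ref{eq:coreogsol}) is inherited verbatim, proving the theorem.

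The most delicate step, and the main obstacle, is the uniform confinement claim in the last paragraph: one must verify that the $O(\delta^{1/4})$ control on phases, together with the $O(\delta^{1/2})$ control on transverse amplitudes provided by Theorem \ref{thm:mainbounded}, translates into a uniform lower bound $\|q^{(n)}(t)-q^{(m)}(t)\|\geqslant \rho'$ for the full Cartesian trajectory at all times. This requires tracing through the normal-form coordinates of Section \ref{sec:setupsinglepart} to see that the perturbation of the synchronous choreography in the original coordinates is of size $o(1)$ as $\delta\to 0$, uniformly in $t$, so the separation inherited from membership in $V$ survives (with a slightly smaller but still positive margin). Once this continuity is established, the cutoff does its job and the repelling nature of $W$ plays no further role.
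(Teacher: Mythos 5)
Your proposal is correct and takes essentially the same approach as the paper: replace the repelling potential by a bounded cutoff that agrees with $W$ on the region traversed by the uncoupled choreography, invoke Theorem~\ref{thm:mainbounded} for the modified system, and observe that the resulting orbits stay in the region where the two potentials coincide, hence solve the original system. The only minor difference is that the paper cuts $W$ by its value ($W^{cut}=W$ where $W<K+1$) rather than by distance, and your worry in the last paragraph about ``tracing through normal-form coordinates'' is unnecessary since the estimate~(\ref{eq:coreogsol}) in Theorem~\ref{thm:mainbounded} is already stated in the original $(q,p)$-coordinates, so the uniform lower bound on $\|q^{(n)}(t)-q^{(m)}(t)\|$ follows immediately once one notes that $\|q^*(\bar\omega t+\theta^{(n)}_{min})-q^*(\bar\omega t+\theta^{(m)}_{min})\|$ is independent of $\bar\omega$.
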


\begin{proof} By Assumption  IP2, we have that the uncoupled particles moving by the path $L^*$ (see (\ref{uncoupledch})) stay away from collisions, i.e.,
the distances $\|q^*(\omega_0 t+\theta^{(n)}_{min}) - q^*(\omega_0 t+ \theta^{(m)}_{min})\|$
stay bounded away from \(\rho\) for any $m\neq n$ and for all $t$. Therefore,
\begin{equation}\label{cutK}
W(q^*(\omega_0 t+\theta^{(n)}_{min}) - q^*(\omega_0 t+ \theta^{(m)}_{min}))< K
\end{equation}
for some constant $K$.

Replace the potential \(W\) by smooth and everywhere bounded potential
\(W^{cut}\) which coincides with \(W\) when \(W<K+1\). The corresponding averaged potential \(U^{cut}\) coincides with \(U\) in a neighborhood of $(\theta^{(1)}, \dots, \theta^{(N)}) =(\theta^{(1)}_{min}, \dots, \theta^{(N)}_{min})$, i.e., \(U^{cut}\) has the same minimum $(\theta^{(1)}_{min}, \dots, \theta^{(N)}_{min})$. By Theorem \ref{thm:mainbounded}
the multi-particle system (\ref{eq:multparsysy}) with potential \(W^{cut}\) has a positive measure set of quasiperiodic solutions for which $q(t)$ remains \(O(\delta^{1/4})\)-close to \(q^*(\bar \omega t +\theta_{\min}^{(1)}),\dots, q^*(\bar \omega t +\theta_{\min}^{(N)})\) where \(\bar \omega=\omega_0+O(\delta^{1/4})\). For sufficiently small \(\delta\), these solutions correspond to particles which stay away from each other for all times (because, for all times, \(q^*(\omega_0 t+\theta^{(n)}_{min})\) are bounded away from each other, thus, the same is true for \(q^*(\bar \omega t +\theta_{\min}^{(n)})\)).  Hence, for such solutions, if \(\delta\) is small enough,
the potential $W$ is bounded by (\ref{cutK}), so $W^{cut}$ coincides with $W$. Thus, they are also solutions of system
(\ref{eq:multparsysy}) with the original potential $W$.
\end{proof}

\begin{rem*}\label{rem:attrpotential}The result stays true if we reverse time, as in Remark \ref{rem:maxpotential} of Theorem \ref{thm:mainbounded}. This means
that we can replace the  repelling potential $W$ by a potential which {\em attracting}, i.e., bounded from above and tending to $-\infty$ as particles come close to each other. Then, Theorem \ref{thm2} implies that a positive measure set of quasiperiodic choreographic motions exists with phases close to the {\em maximum} of the averaged potential $\hat U$
(if it satisfies IP1 and IP2), provided the elliptic orbit $L^*$ of the single-particle system satisfies the non-degeneracy assumptions SP2 and SP3, and the acceleration assumption SP1 is reversed to deceleration, see Figure \ref{fig:saddlecenter}.
\end{rem*}

\begin{figure}
\begin{centering}
\includegraphics[scale=0.3]{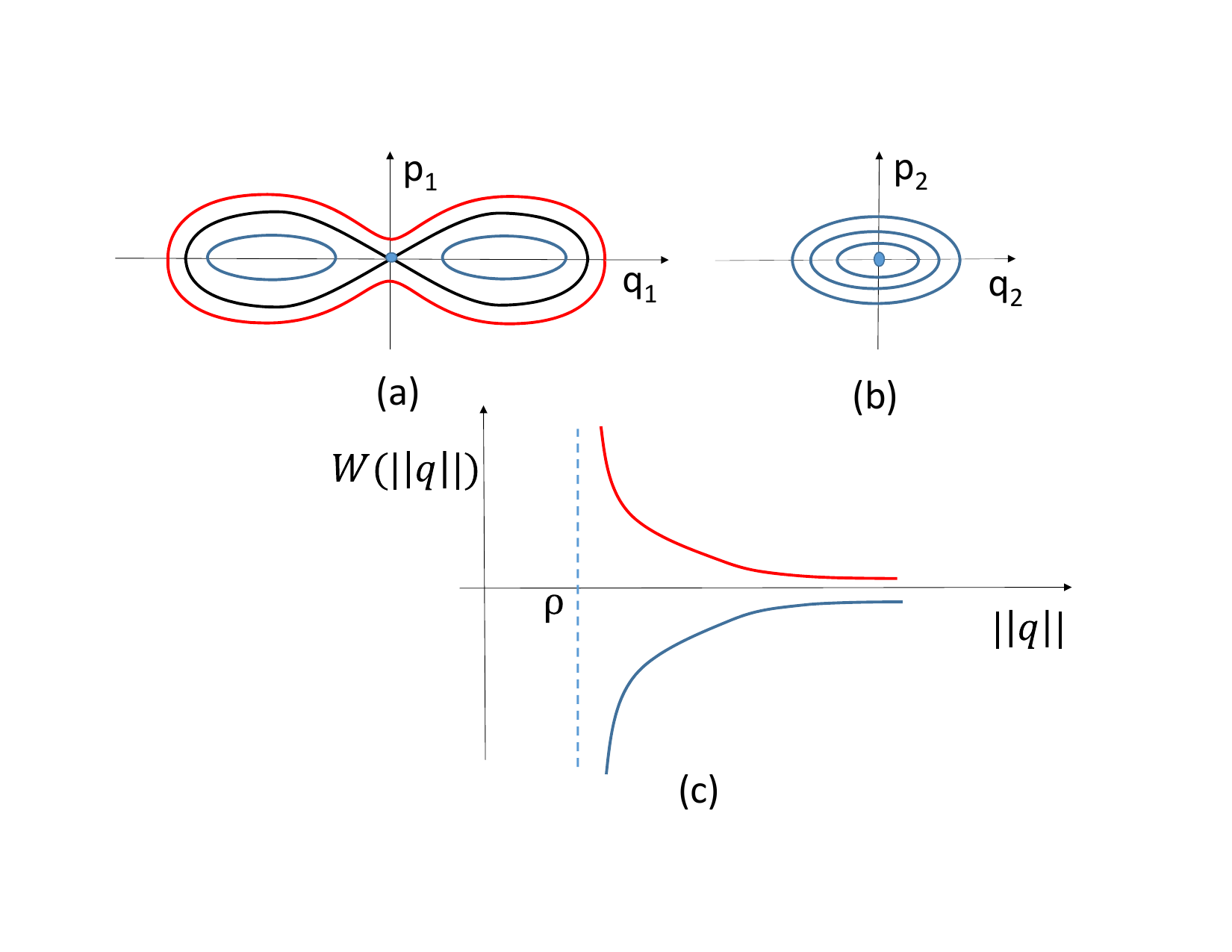}
\par\end{centering}
\protect\caption{\label{fig:saddlecenter} Accelerating and decelerating families of elliptic orbits coexist near homoclinic loops to a saddle-center.  Consider a product system of a Duffing oscillator (a) and a center (b). Then,  in the vicinity of the  homoclinic loop to the saddle-center there  exist three families of periodic orbits corresponding to  \((q_{2},p_2)=0\). For the family of periodic orbits which are outside the separatrix  (red curve in the \((q_{1},p_1)\) space) the period decreases with energy, whereas for the two families of periodic orbits which are inside the separtrix (blue curves in the \((q_{1},p_1)\) space) the period decreases with energy. By Theorem  \ref{thm:mainbounded} and Remark \ref{rem:maxpotential},   for a smooth interaction potential, choreographic solutions exist near the averaged potential minima lines for any fixed outer orbit, and near the   potential maxima lines for any given inner orbit. For smooth potentials, both types of lines must exist. Similar conclusions apply to singular potentials, where here,  when the potential is repelling (red curve in (c)) a minimum line must exist,  whereas, when  the potential is attracting (blue curve in (c))  a maximum line must exist, see Remark  \ref{rem:attrpotential}.}
\end{figure}

\subsection{\label{sec:highenergy}High-energy particles in a bounded domain}
Next, we apply the above methodology to the case of a system of repelling particles which are confined in a bounded domain. Let \(D\subset \mathbb{R}^d\) be a domain with a smooth (\(C^{\infty}\)) or piecewise smooth boundary $\partial D$ (in the piecewise smooth case, we call the points where $\partial D$ is smooth non-singular).
A particle confined in \(D\) is described by the Hamiltonian:
$$H=\frac{p^{2}}{2} +  V(q),$$
where the potential \(V(q)\) is a $C^\infty$-function defined in the interior of $D$ and tending to $+\infty$ on
$\partial D$ (here, if the particle is  a ball of a finite diameter \(\rho\), the domain \(D\) is the set of all possible positions of the ball center). When the growth of $V$ at the approach to $\partial D$ is reasonably regular, the high-energy motion limits to the billiard in \(D\), as described in \cite{rom2012billiards,RapRKT07apprx}. In order to simplify the analysis of the transition to the billiard limit, we restrict the class of confining potentials by assuming a power-law growth of \(V\) near $\partial D$ (we use the notation BD for assumptions we make on the single particle confined in the bounded domain).\\

\noindent\textbf{BD1: Power-law growth assumption.}  {\em Given any compact subset of the non-singular part of
$\partial D$, in a small neighborhood of this set the potential $V$ is given by
\begin{equation}\label{eq:potentialboundarylayer}
V(q)=\frac{1}{Q(q)^{\alpha}},
\end{equation}
where $\alpha>0$, and the $C^\infty$ function $Q$ measures the distance to the boundary of $D$, i.e.,
$Q(q)|_{q\in\partial D}=0$
and $\nabla Q(q)\neq0$; we also choose the sign of $Q$ such that \(Q(q)>0\) inside the domain \(D\).}\\

When we consider \(N\) mutually repelling particles moving in the potential field $V$, their motion is described by the Hamiltonian
\begin{equation}
H=\sum_{n=1,\dots, N }\left[\frac{(p^{(n)})^2}{2}+ V(q^{(n)})\right]+
\mathop{\sum_{n,m=1,\dots,N}}_{n\neq m}  W(q^{(n)}-q^{(m)}),\label{eq:nparticlesbillnorescale}
\end{equation}
where $(q^{(n)},p^{(n)})$ are coordinates and momenta of the $n$-th particle and \(W\) is a  repelling potential, as in Definition \ref{defstr} (note that we do not assume that the interaction potential is small here).

We consider a limit of large energy per particle, namely, we study the behavior at the energy level
$H((q^{(1)},p^{(1)}), \ldots, (q^{(N)},p^{(N)})) = N h$ for large $h$. We scale the momenta $p^{(n)}$ to
$\sqrt{2h}$, so the Hamiltonian transforms to
\begin{equation}\label{eq:multparsysybill0}
H=\sum_{n=1,\dots,N}\left[\frac{(p^{(n)})^2}{2}+\delta V(q^{(n)})\right]+
\delta\mathop{\sum_{n,m=1,\dots,N}}_{n\neq m} W(q^{(n)}-q^{(m)}),
\end{equation}
where \(\delta=\frac{1}{2h}\) (the inverse temperature). Now, the goal is to study the behavior on the fixed energy level \(H=\frac{N}{2}\) in the limit $\delta\to 0^+$.

System (\ref{eq:multparsysybill0}) is similar to system (\ref{eq:multparsysy}). However, here
the single-particle system
\begin{equation}\label{eq:singlepartbiliarddel}
H_0(q,p;\delta)=\frac{p^{2}}{2} + \delta V(q),
\end{equation}
depends on \(\delta\) in a singular way. The formal limit of the potential energy term as $\delta\rightarrow 0^+$ is the billiard potential, which is zero inside $D$ and infinite at the boundary of $D$. The corresponding dynamical
system, the billiard in the domain \(D\) \cite{chernov2006chaotic}, is not smooth, so our
Theorems \ref{thm:mainbounded} and \ref{thm2} cannot be directly applied. However, the method we used there
can be carried over to this case as well, with the help of an enhanced version of our theory of billiard-like potentials \cite{RapRKT07apprx,rom2012billiards}.

Recall that the billiard dynamics can be viewed as a motion of a particle along straight segments with speed $1$, interrupted by jumps in momenta as the particle reflects from the boundary. The jumps are defined by the elastic reflection law, with the angle of incidence equal to the angle of reflection. Equivalently, the dynamics are determined by the billiard map, which records the position and the angle of reflection at impacts. The dynamics of the smooth system at small $\delta$ can be quite different from the dynamics of the formal billiard limit.  Still, this formal limit provides good approximation for {\em regular billiard orbits}, which are defined as orbits for which all impact points  are bounded away from singularities of the billiard boundary, and
{\em all the impact angles are bounded away from zero} \cite{RapRKT07apprx,rom2012billiards,turaev1998elliptic}.

Thus, let \(L^*=\{(q^*(\omega_0t), p^*(\omega_0t)) \}\) denote a regular periodic orbit of the billiard in
\(D \), which hits the billiard boundary at points $M^1, \dots, M^{k^*}$ (we call them
{\em impact points}, to distinguish from multi-particle {\em collision} points). Let $t^1, \dots t^{k^*}$ be the
impact moments or time, i.e., $M^j= q^*(\omega_0 t^j)$, $j=1,\dots, k^*$.
The functions $q^*$ and $p^*$ are $2\pi$-periodic. As this is a billiard orbit,
\(p^{*} \) is a piece-wise constant function of time, with the jumps of $p^*$ happening at
$t=t^j\; {\rm mod} 2\pi$, $j=1,\dots, k^*$. The energy conservation implies that $\|p^*\|$ stays constant:
\(\|p^{*}\|=1\); thus, the frequency $\omega_0$ is such that $\frac{2\pi}{\omega_0}$ equals the length of $L^*$.
The function \(q^*\) is continuous and piece-wise linear, since $\frac{d}{dt} q^*=p^*$ when
$t\neq t^j\; {\rm mod} 2\pi$, $j=1,\dots, k^*$. The regularity of the orbit $L^*$ means that the boundary
of $D$ is smooth at each of the points $M^j$ and the vectors $p^*(\omega_0 t^j\pm 0)$
are not tangent to the boundary of $D$ at $M^j$, $j=1,\dots, k^*$.
The impact points $M^1, \dots, M^{k^*}$ comprise a periodic orbit of the billiard map \(B_{D}\): each of them is a fixed point of the {\em  billiard return map}  \((B_{D})^{k^*}\). Since the impact points are non-singular and the impacts are non-tangent, this map is smooth in a small neighborhood of any of the impact points.

\noindent\textbf{BD2: Elliptic orbit assumption.} {\em The regular billiard periodic orbit \(L^{*}\) is elliptic and KAM-nondegenerate. Namely, the point $M^1$ is a KAM-nondegenerate elliptic periodic point of the billiard map.
This means that two conditions are fulfilled. First, the multipliers \((\exp(\pm i\frac{2\pi }{\omega_{0} }\omega_{1}),\ldots,\exp(\pm i\frac{2\pi }{\omega_{0} }\omega_{d-1}))\) (the eigenvalues of the derivative of
 \((B_{D})^{k^*}\) at the point $M^1$) are non-resonant up to order 4, namely
$\displaystyle m_0 \omega_0 + \sum_{j=1}^{d-1} m_j \omega_j \neq 0$
for all integer $m_0$ and $m_1,\dots, m_{d-1}$ such that $\displaystyle 1\leqslant \sum_{j=1}^{d-1} |m_j|\leqslant 4$. This implies that
the Birkhoff normal form for \((B_{D})^{k^*}\) in the action-angle coordinates
\((I,\Phi)\in\mathbb{R}^{d-1}\times \mathbb{T}^{d-1}\) near $M_1^*$ is given by
\((I,\Phi)\mapsto (\bar I,\bar\Phi)\), where
 \begin{equation}\label{eq:birkhoffbillmap}
\bar I=I+o(I),\ \bar \Phi=\Phi+\frac{2\pi }{\omega_{0} }\hat \omega +\Omega I+o(I),
\end{equation}
with  constant $\hat\omega=(\omega_1,\dots, \omega_{d-1})$ and  \(\omega_{0}=\frac{2\pi}{|L^{*}|}\), where $|L^*|$ is the length of $L^*$.
The second KAM-nondegeneracy condition (the twist condition) is
\begin{equation}\label{eq:nondegOmega}
\det(\Omega)\neq 0.
\end{equation}}

\begin{figure}
\begin{centering}
\includegraphics[scale=0.3]{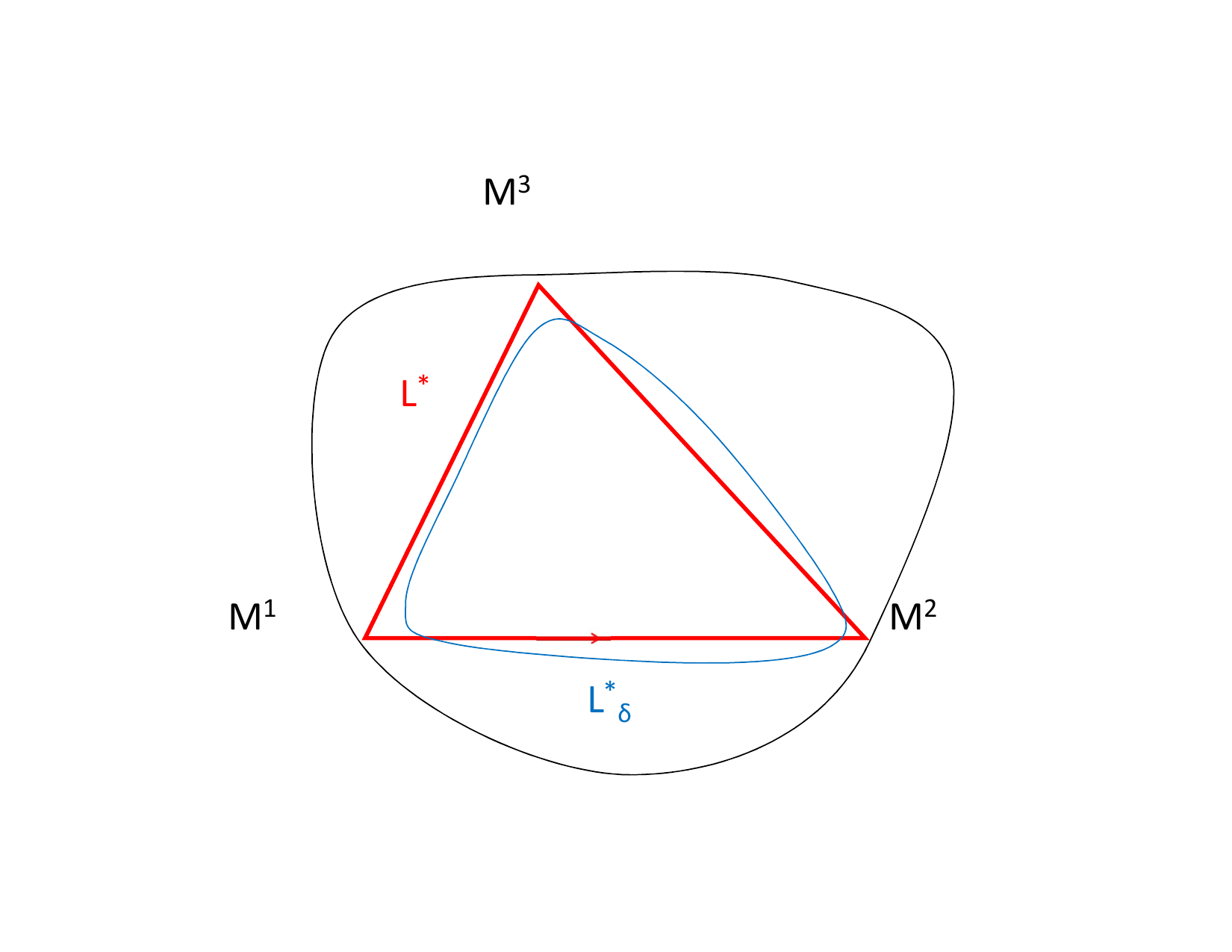}
\par\end{centering}
\protect\caption{\label{fig:bill} Stable periodic orbit in a convex billiard table (\(L^{*}\)) and a nearby stable periodic orbit  (\(L_{\delta}^{*}\)) for the smooth billiard-like potential at high energy (\(\delta=\frac{1}{2h}\), see (\ref{eq:singlepartbiliarddel})).        }
\end{figure}

The existence of a periodic orbit satisfying this assumption holds true for an open set of billiards; for convex billiards in the plane this assumption is also open and dense \cite{Dvorin1973}.

The billiard return map \((B_{D})^{k^*}\) is smoothly conjugate (by the billiard flow) to the return map to any small
cross-section to \(L^{*}\) chosen in the interior of \(D\). As shown in \cite{turaev1998elliptic,RapRKT07apprx}, in the limit
\(\delta\rightarrow +0\), the return map of the smooth flow of (\ref{eq:singlepartbiliarddel}) on such cross-section tends, in $C^\infty$, to the return map of the billiard flow. Thus, up to a change of coordinates, the billiard return map describes the limit of the smooth dynamics defined by (\ref{eq:singlepartbiliarddel}). In particular, with Assumptions BD1 and BD2, the single-particle system (\ref{eq:singlepartbiliarddel}) has, for all sufficiently small
\(\delta\),  a KAM-nondegenerate elliptic periodic orbit \(L^*_{\delta}\) in the energy level $H_0=\frac{1}{2}$, which is close to
\(L^{*}\).\\

Now, we consider the product of \(N\) billiard flows in \(D\), with the invariant set as in (\ref{uncoupledch}):
\begin{equation}\label{uncoupledchD}
q^{(n)}=q^*(\omega _{0}t+\theta^{(n)}), \;\; p^{(n)}=p^*(\omega _{0}t+\theta^{(n)}),
\qquad n=1,\dots,N.
\end{equation}
The projection of this invariant set to the \(Nd\)-dimensional configuration space is an \(N\)-dimensional torus (continuous, but only piece-wise smooth).  The averaged potential on this torus is defined as in (\ref{eq:averagedpotu}):
\begin{equation}\label{eq:averagedpotuD}
U(\theta^{(1)}, \dots,\theta^{(N)})=\mathop{\sum_{n,m=1,\dots,N}}_{n\neq m}
 \frac{1}{T}\int_0^{T} W(q^*(\omega _{0}t+\theta^{(n)})-q^*(\omega _{0}t+\theta^{(m)}))dt.
\end{equation}
We assume that there exists a line (\ref{lineq}) of minima of $U$:
\begin{equation}\label{eq:lineminbill}\theta^{(n)}=\theta^{(n)}_{min}+c, \qquad n=1, \dots, N,\end{equation}
which satisfies the KAM assumption IP1 of Section \ref{sec:condonw} and the no-collision assumption IP2 of Section \ref{sec:substrongrepelling}.

The KAM assumption requires a sufficient smoothness of the averaged potential, which does not, a priori, hold
for billiard orbits because $q^*(t)$ is not smooth at the impact points. In general, the non-smoothness of the
system at $\delta=0$ can make the averaging procedure invalid and lead to dynamics different from those in the smooth case. However, we show that these issues do not materialize (e.g. we prove the smoothness of the averaged potential, see  Lemma \ref{Lemmaavbil}) if the  non-interacting particles moving along the same billiard trajectory $L^*$ with the phase shifts
$\theta^{(n)}_{min}$ {\em never hit the billiard boundary simultaneously}:

\noindent\textbf{IP3:  Non-simultaneous impacts assumption.} \textit{The impacts of
\(q^*(\omega_0t+\theta_{min}^{(n)})\) with the
billiard boundary do not happen simultaneously, namely, if
\(\omega_0 t+\theta_{min}^{(n)}=\omega_0 t^j\) (mod \(2\pi\)) for some \(j\), then
 \(\omega_0 t+\theta_{min}^{(m)}\neq \omega_0 t^k\) (mod \(2\pi\)) for all \(k\) and all $m\neq n$:}
\begin{equation}\label{eq:asynchrD}
\theta_{min}^{(n)}-\theta^{(m)}_{min}\neq\omega_0(t^j-t^k)  \mod 2\pi.
\end{equation}

For convenience, we can always assume (by redefining $c$ in (\ref{lineq})) that
\begin{equation}\label{eq:sumtminD}
\sum_{n=1}^N \theta^{(n)}_{min}=0;
\end{equation}
by a shift of time, we can also achieve that
\begin{equation}\label{eq:noimpactthetnD}
\theta^{(n)}_{min}\neq\omega_0 t^j,\quad \mbox{ for all } \;\; n=1, \dots, N,\quad j=1,..k^*.
\end{equation}
\begin{mainthm}
\label{thm:billiardsystem} Consider $N$  repelling particles that are confined to a region
\(D\)  by a trapping potential satisfying the power-law assumption BD1. Assume that the billiard table \(D\)  has a regular elliptic periodic orbit  \(L^* \) which satisfies the elliptic orbit assumption BD2, and that the averaged interaction potential has a minima line satisfying the KAM assumption IP1, the no-collision assumption IP2, and the non-simultaneous impacts assumption IP3. Then, for all sufficiently high values of the energy-per-particle \(h\), the $N$-particle system
(\ref{eq:nparticlesbillnorescale}) has a positive measure set of initial conditions corresponding to quasi-periodic solutions as in Theorem \ref{thm:mainbounded}, with
$\bar\omega =\frac{2\pi \sqrt{2h} }{|L^*|}(1+ o_{h\rightarrow\infty}(1))$. In particular, this system is not ergodic for all sufficiently high energies. \end{mainthm}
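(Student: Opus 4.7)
The strategy will be to reduce Theorem \ref{thm:billiardsystem} to Theorem \ref{thm2} applied to the rescaled system (\ref{eq:multparsysybill0}), treating $H_0(q,p;\delta)=\frac{p^{2}}{2}+\delta V(q)$ as a $\delta$-dependent single-particle Hamiltonian on the energy level $H_0=\frac{1}{2}$. Although $H_0$ depends singularly on $\delta$, the proof scheme of Theorem \ref{thm2} will carry over provided the hypotheses SP1--SP3 and IP1--IP2 can be verified uniformly in small $\delta$; this is precisely the content of the billiard-to-smooth-flow approximation machinery of \cite{RapRKT07apprx,rom2012billiards,turaev1998elliptic}, which is why we impose assumptions BD1 and BD2 in addition to those used in Theorem \ref{thm2}.

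The first step will be to produce an elliptic periodic orbit $L^{*}_{\delta}$ of the smooth one-particle flow at energy $\frac{1}{2}$, close to $L^{*}$, and to verify SP1--SP3 for it. On any cross-section transverse to $L^{*}$ contained in the interior of $D$, the return map of the smooth flow converges in $C^\infty$ to the billiard return map $(B_D)^{k^*}$ as $\delta\to 0^+$, by the results cited. Assumption BD2 says that $(B_D)^{k^*}$ has a KAM-nondegenerate elliptic fixed point at $M^{1}$, so $C^\infty$ persistence yields $L^{*}_{\delta}$ whose multipliers and fourth-order Birkhoff normal form are close to those of $L^{*}$. Hence SP2 (no resonances up to order four) and SP3 (twist and iso-energetic twist) hold for $L^{*}_{\delta}$ for all small $\delta$, while SP1 (acceleration) follows from the homogeneity of the rescaled flow, since larger rescaled energy corresponds to higher speed along essentially the same path -- a standard feature of billiard-like potentials.

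The technical crux will be verifying that the averaged interaction potential $U_\delta$ computed along $L^{*}_{\delta}$ admits a minimum satisfying IP1 and IP2 for all small $\delta$. Here the non-simultaneous impact assumption IP3 is essential. Outside $O(\delta^{\beta})$-neighborhoods of the impact times $t^{j}$ (with exponent $\beta>0$ determined by $\alpha$ from BD1), the orbit $(q^{*}_{\delta},p^{*}_{\delta})$ is uniformly $O(\delta^{\gamma})$-close to the billiard orbit $(q^{*},p^{*})$ for some $\gamma>0$; inside each boundary layer the smooth particle undergoes a fast turn-around but stays near the impact point $M^{j}$. By IP3, whenever one particle is inside a boundary layer, every other particle is simultaneously in the free-flight region, so the integrand $W(q^{(n)}_\delta-q^{(m)}_\delta)$ differs from $W(q^{(n)}-q^{(m)})$ only on a set of time-measure $O(\delta^{\beta})$, on which it stays bounded thanks to IP2 and a cutoff of the type used in (\ref{cutK}). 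A boundary-layer expansion like the one developed in \cite{RapRKT07apprx,rom2012billiards} will then show that $U_\delta$ converges to $U$ together with its derivatives up to the order required by the KAM step, so the openness of IP1 and IP2 transfers the nondegenerate minimum of $U$ into a nondegenerate minimum of $U_\delta$ close to it.

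With SP1--SP3 for $L^{*}_{\delta}$ and IP1--IP2 for $U_\delta$ in hand, Theorem \ref{thm2} applied to (\ref{eq:multparsysybill0}) will produce a positive measure set of KAM-stable quasiperiodic choreographic solutions with frequency $\bar\omega=\omega_{0}+O(\delta^{1/2})=2\pi/|L^{*}|+O(\delta^{1/2})$; undoing the momentum rescaling $p\mapsto p/\sqrt{2h}$ recovers solutions of (\ref{eq:nparticlesbillnorescale}) at energy $Nh$ with frequency $\sqrt{2h}\,\bar\omega=(2\pi\sqrt{2h}/|L^{*}|)(1+o_{h\to\infty}(1))$, as claimed. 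The main obstacle will be the boundary-layer analysis underlying the smoothness of $U_\delta$ in $\delta$: one must control not just $U_\delta$ but enough of its $\theta$-derivatives, uniformly in $\delta$, to preserve the KAM nondegeneracy used in the averaging proof of Theorem \ref{thm2}, while handling the fact that the single-particle Hamiltonian itself becomes singular as $\delta\to 0^+$.
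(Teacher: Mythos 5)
Your overall strategy is correct and matches the paper's: approximate the single-particle flow by the billiard, obtain an elliptic periodic orbit $L^*_\delta$ of the smooth flow satisfying SP1--SP3, and then run the averaging and KAM argument of Theorem \ref{thm:mainbounded}/\ref{thm2} in the $\delta$-dependent setting. Your verification of SP1--SP3 (via $C^\infty$-closeness of the return map to the billiard return map plus the billiard-energy scaling giving $T(E)=|L^*|/\sqrt{2E}$) is essentially Lemma \ref{lem:billasum1to3}, and your observation that IP3 ensures only one particle is near the boundary at a time is the right reason the argument can be localized.

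However, there are genuine gaps, and you locate the main obstacle in the wrong place. You identify the obstacle as controlling the smoothness of the \emph{averaged potential} $U_\delta$ uniformly in $\delta$. In fact $U_\delta$ turns out to be $C^\infty$ uniformly down to $\delta=0$ (this is part of Lemma \ref{Lemmaavbil} and the discussion after it); the convergence of $U_\delta$ to $U$ is a relatively tame step. The real difficulties, which your sketch does not address, are the following. First, you cannot simply ``apply Theorem \ref{thm2}'': its proof routes through Lemma \ref{Lemma1.1}, which constructs the averaging change of coordinates via a Fourier expansion of the non-averaged interaction $\tilde W(\theta^{(1)},\dots,\theta^{(N)})=\sum_{n\neq m}W(q^*_\delta(\theta^{(n)})-q^*_\delta(\theta^{(m)}))$ and relies on the rapid decay of its Fourier coefficients. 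In the billiard limit, $q^*_\delta$ has derivatives of order $\delta^{-(k-1)/\alpha}$ near the impact phases (see \eqref{eq:qdeltabound}), so the Fourier coefficients of $\tilde W$ do \emph{not} decay uniformly in $\delta$, and the generating function $\Psi$ defined by \eqref{eq:Psismoothdef} need not be smooth uniformly. The paper avoids this by building $\Psi$ via pairwise time-integrals instead of Fourier series (Lemma \ref{Lemmaavbil}, formulae \eqref{eq:psinmdefbil}--\eqref{eq:Psiall}) and proving their ``weak regularity'' using a careful shift-of-integration-interval argument (Lemma \ref{lem:psi1}); this is not an automatic carry-over of Theorem \ref{thm2}. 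Second, your sketch tacitly assumes the rescaled Hamiltonian \eqref{eq:wperhaatbil} is uniformly close to the truncated Hamiltonian everywhere, but this fails when any particle is inside the boundary layer: the transformation between $(q,p)$ and action-angle coordinates becomes singular there, and only the \emph{Poincar\'e return maps} to an interior cross-section can be compared, not the flows. This requires introducing flow-box coordinates $(\tau,\mathcal E,\mathcal P)$ near each impact (Lemmas \ref{lem:qnearimpact}--\ref{lem:energytimecoor}) and then exploiting IP3 to replace the problematic particle's $q$-dependence by its flow-box representation while the other particles remain smooth (Lemma \ref{lem:poincaremapclosebilnew}); this gives only $o(\delta^{1/2})$-closeness rather than the $O(\delta^{3/4})$-closeness of Lemma \ref{lem:poinc1}, which one must then check is still sufficient for the $O(\delta^{-1/2})$ iterations in Lemma \ref{Lemma1.2}. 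Without addressing these two points, the reduction to Theorem \ref{thm2} does not close.
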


The proof is in Section \ref{sec:proofbilliards}. It is an empirical fact that Hamiltonian systems with low number of degrees of freedom have elliptic periodic orbits easily, unless the system is specially prepared to have a (partially) hyperbolic structure on every energy level. Therefore, a common belief (and a challenging conjecture to prove) is that a generic Hamiltonian system without the uniform partially-hyperbolic structure possesses a non-degenerate elliptic orbit. The billiard counterpart of such claim would be that a generic billiard which is not of the dispersing or defocusing type \cite{Wojtkowski2007}
has a non-degenerate elliptic orbit. Currently, no methods are known for proving such conjecture in any reasonable regularity class. But, once we accept this conjecture for systems with low number of degrees of freedom, Theorem \ref{thm:billiardsystem} implies that the gas of any number of repelling particles confined in
a domain with a sufficiently smooth boundary is {\em generically non-ergodic} for all sufficiently high temperatures.

\subsection{Particles in a rectangular box}
The single-particle billiard in a rectangular box has no elliptic periodic orbits: it is an integrable system (with partial oscillatory motions parallel to different coordinate axes independent of each other), so the periodic orbits are parabolic. The orbits of the same period go in several continuous $(d-1)$-parameter families; the orbits in the same family can be distinguished by the phase differences between the partial oscillations or by the coordinates of the impact points. Namely, if the box sizes
are $(l_1,\dots,l_d)$ and the conserved kinetic energies of the corresponding partial oscillations are $E_j=\frac{1}{2}p_j^2$, then the frequencies of the partial oscillations are $2\pi \frac{|p_j|}{l_j}$, and the single-particle motion is periodic if and only if the ratio of each two of these frequencies is a rational number. Thus, we have a discrete set of possible choices of partial energies, for which the motion with any initial point $(q_1,\dots,q_d)$ in the box is periodic with the same period (for any choice of the signs of $p_j$).

\begin{figure}
\begin{centering}
\includegraphics[scale=0.3]{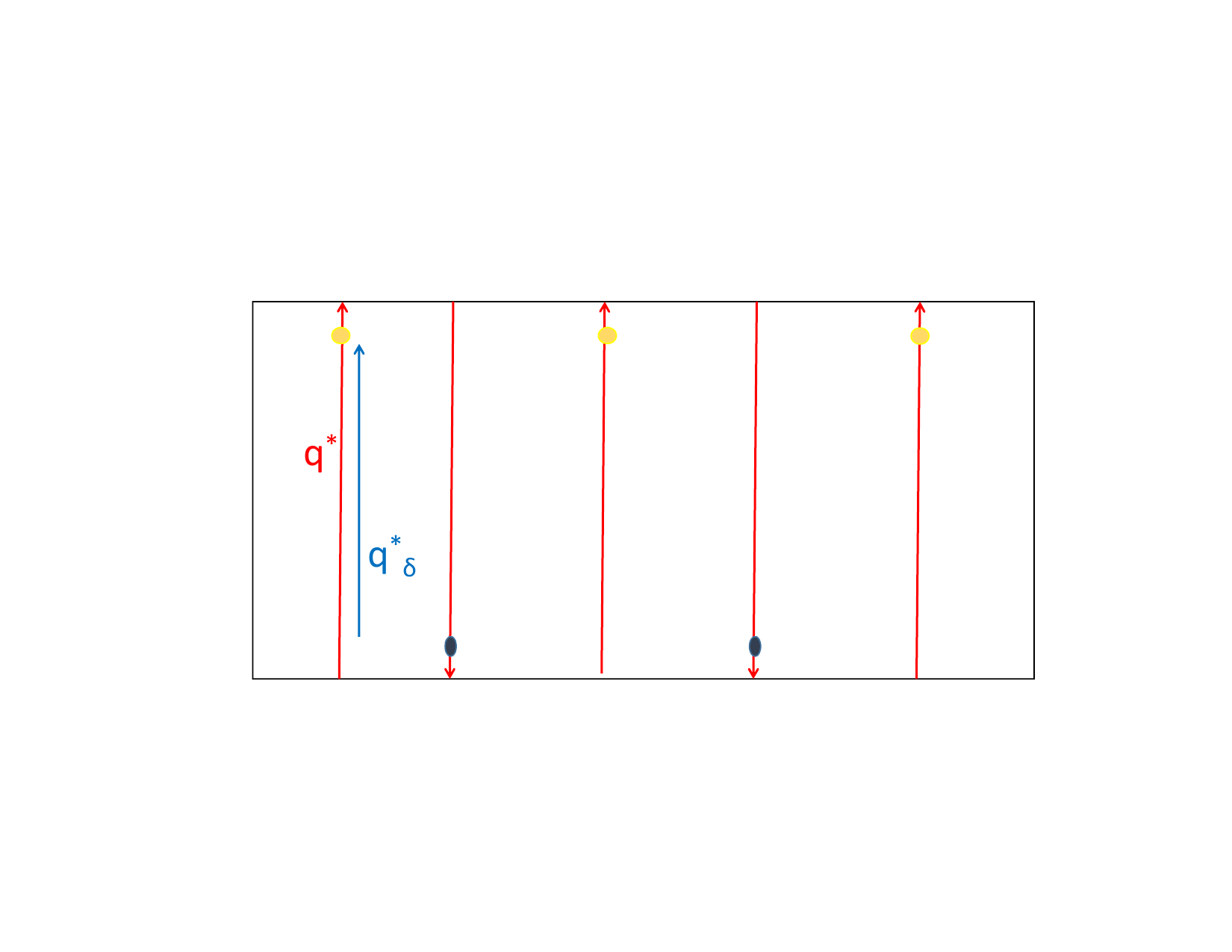}
\par\end{centering}
\protect\caption{\label{fig:box} In a rectangular box, synchronized parallel motion by which all particles move vertically and hit the  boundary simultaneously is KAM-stable under the conditions of Theorem \ref{thm:billiardbox2}. Less ordered parallel motion, by which all particles have the same period yet each particle turns at a different moment, is KAM-stable  under the conditions of Theorem \ref{thm:billiardbox1}.       }
\end{figure}

While similar computations can be performed for any of these families, we choose the simplest one, where all the particles move strictly along one of the coordinate axes, i.e., the family is given by the equation $p_1=\ldots = p_{d-1}=0$. We call such oscillations vertical; the particle moves up for a half of the period and it moves down for the other half. When the energy is fixed, different periodic orbits in this family are distinguished by the values of the ``horizontal'' coordinates $(q_1,\ldots, q_{d-1})$, which do not change with time. In the same spirit as before, one can place any number of non-interacting particles on this family (each particle with the same kinetic energy, but on its own path, i.e., with different values of the horizontal coordinates). The difference with the previous cases is that we now allow the particles to spread over a continuous family and not over just one orbit. If the particle energy is high enough, switching the repulsion between the particles on makes only a small perturbation of the fast vertical motion. We show below that the slow evolution of the horizontal degrees of freedom and the differences between the phases of the vertical oscillations are governed, to the main order, by the averaged potential; its non-degenerate minima correspond to elliptic orbits of the multi-particle system.

Note that we have only one fast degree of freedom for the entire multi-particle system in this setting (the sum of the phases of the vertical oscillations). This makes the averaging procedure simpler than in the previous cases. However,
the non-simultaneous impacts assumption IP3, which is crucial for justification of the averaging in
Theorem \ref{thm:billiardsystem}, can be violated for the family of vertically oscillating particles for an open set of repelling potentials (see below). We therefore develop a different approach for the case of simultaneous impacts.

Let us describe the assumptions we impose on the system in the box.

\noindent\textbf{Box1: Separability assumption.}  {\em The single-particle Hamiltonian is given by
\begin{equation}\label{eq:vinbox}
H_0(q,p)=\sum_{i=1}^d (\frac{p_i^2}{2} + V_{i}(q_{i})),
\end{equation}
with
\begin{equation}\label{eq:potentialboundarylayerbox}
V_{i}(q_{i})=\frac{1}{Q_{i}(q_{i})^{\alpha}},\quad  \alpha>0
\end{equation}
where the $C^\infty$ function  $Q_{i}$ measures the distance to the box boundary in the $i$-th coordinate direction, i.e., $Q_{i}(0)=0$, $Q_i(l_i)=0$, $Q'_i(0)>0$, $Q_i'(l_i)<0$, and $Q_i(q_i)>0$ for $q_i\in (0,l_{i})$. Finally assume that the potential is symmetric\footnote{This symmetry assumption appears to be non-essential. It is not used at all in Theorem  \ref{thm:billiardbox1}. We include it here as it is natural and makes some notations and computations in the proof of Theorem \ref{thm:billiardbox2} simpler. } in the vertical direction: \(Q_{d}(l_{d}-q_{d})=Q_d(q_d).\)}

Thus, the $N$-particle Hamiltonian has the form
\begin{equation}\label{eq:Hamparticleinbox}
H=\sum_{n=1,\dots,N}\sum_{i=1}^d \left[\frac{(p_{i}^{(n)})^2}{2}+V_{i}(q_{i}^{(n)})\right]+\mathop{\sum_{n,m=1,\dots,N}}_{n\neq m}  W(q^{(n)}-q^{(m)}),
\end{equation}
where $W$ is a repelling potential, $C^\infty$ for $\|q^{(n)}-q^{(m)}\|> \rho$ (see Definition \ref{defstr}).

We consider the limit of large energy per particle, and look for motions which are fast only in the last coordinate. Namely, we study the behavior at the energy level $H = N h$ for a fixed $N$ and large $h$ where most of the particles' energy is at the vertical motion.
We scale the vertical momenta $p_{d}^{(n)}$  by $\sqrt{2h}$, and the Hamiltonian transforms to
\begin{equation}\label{eq:multparsysybox}
H=\sum_{n=1,\dots, N}\left[\frac{(p_{d}^{(n)})^2}{2}+\delta V_{d}(q_{d}^{(n)})\right]+
\delta\sum_{n=1,\dots, N}\sum_{i=1}^{d-1}\left[\frac{(p_{i}^{(n)})^2}{2}+ V_{i}(q_{i}^{(n)})\right]+
\delta\mathop{\sum_{n,m=1,\dots,N}}_{n\neq m} W(q^{(n)}-q^{(m)}),
\end{equation}
where \(\delta=\frac{1}{2h}\); as in section  \ref{sec:highenergy}, we study the behavior on the fixed energy
level \(H=\frac{N}{2}\) in the limit $\delta\to+0$.

In the limit $\delta=0$, the Hamiltonian describes $N$ independent vertical, constant speed, saw-tooth motions.
Setting all the particles to have the same speed $|p_{d}^{(n)}|=1$ in the limit $\delta=0$, and choosing the vertical size of the box $l_d=\pi$, we obtain the limiting  family of solutions
in the form $ q_{i}^{(n)}(t)=constant$ for \( i=1,\dots,d-1\) and \(q_{d}^{(n)}(t)=q^*(t+\theta^{(n)})\) where $q^*$ is the $2\pi$-periodic saw-tooth function:
\begin{equation}\label{eq:sawtoothsol}
q^*(t)=\left\{\begin{array}{cc}t, & \mbox{ for } ~ t\in[0,\pi], \\
2\pi-t, & \mbox{ for } ~ t\in[\pi,2\pi]. \\
\end{array}\right.
\end{equation}Denote the horizontal coordinates of the \(n-\)th particle  by \(\xi^{(n)}\) (so \(\xi^{(n)}=(\xi ^{(n)}_1,\dots, \xi^{(n)}_{d-1})=(q^{(n)}_1,\dots, q^{(n)}_{d-1})\)). Define the averaged potential, \begin{equation}\label{eq:averagedparaldelta0}
U(\theta,\xi)=\mathop{\sum_{n,m=1,\dots,N}}_{n\neq m}
 W_{avg}(\theta^{(n)}-\theta^{(m)},\xi^{(n)}-\xi^{(m)})+\sum_{n=1,\dots,N}\sum_{i=1}^{d-1}  V_{i}(\xi_i^{(n)})
\end{equation}
where
\begin{equation}\label{eq:potsawtooth}
 W_{avg}=\frac{1}{2\pi}\int_0^{2\pi} W(q^*(s+\theta^{(n)})-q^*(s+\theta^{(m)}),\xi^{(n)}-\xi^{(m)})ds,
\end{equation}and $\theta=(\theta^{(1)},\dots,\theta^{(N)})$, $\xi=(\xi_1^{(1)},\dots, \xi_{d-1}^{(1)},\dots, \xi_1^{(N)},\dots, \xi_{d-1}^{(N)})$.
Let
$\|\xi^{(n)}- \xi^{(m)}\|>\rho$ for every $n\neq m$.  We establish in Section \ref{sec:boxsection} (see Lemma \ref{lem:wtildc2inthet}) that  the averaged potential is $C^\infty$-smooth if  $\theta^{(n)}\neq \theta^{(m)} \mod \pi$ for every $n\neq m$. Moreover, we also show that under the parity assumption Box4 below, the averaged potential is, along with all its derivatives with respect to $\xi$, at least $C^2$-smooth function of $\theta$ even if
$\theta^{(n)}= \theta^{(m)} \mod \pi$ for some, or all, $n\neq m$.

Since \(V_{i}\) and  \(W_{avg}\) are bounded from below,  the averaged potential $U(\theta,\xi)$ must have a minimum line
\begin{equation}\label{lnqbox}
\theta^{(n)}=\theta^{(n)}_{min}+c, \qquad \xi^{(n)}=\xi^{(n)}_{min},
\end{equation}
where $c$ is an arbitrary constant. Like in Section \ref{sec:condonw},
we can introduce coordinates $(\varphi, \psi, \xi)$ in a small neighborhood of this line such that
$\varphi=\frac{1}{N}(\theta^{(1)}+ \dots +\theta^{(N)})$ and the coordinates $\psi=(\psi_1, \dots, \psi_{N-1})$ are linear combinations of
the phase differences $(\theta^{(n)}-\theta^{(n)}_{min}) - (\theta^{(m)}-\theta^{(m)}_{min})$. Since the averaged potential $U$
depends only on the differences of the phases, we obtain that it is independent of $\varphi$, so, as in (\ref{uhat}), we set
$$U(\theta,\xi)=\hat U(\psi, \xi).$$

\noindent\textbf{Box2: Non-degenerate minimum assumption.}  {\em The minimum of the averaged potential $\hat U$ corresponds to
$\|\xi^{(n)}_{min}- \xi^{(m)}_{min}\|>\rho$ for all $n\neq m$. The Hessian matrix of  $\hat U$ at the minimum $\psi=0, \xi=\xi_{min}$ is non-degenerate,
and all its eigenvalues are simple.}

The condition
$\|\xi^{(n)}_{min}- \xi^{(m)}_{min}\|>\rho$ means that the particles stay away from each other, each on its own path. This assumption  is fulfilled automatically when, for example, $W$ satisfies (\ref{eq:strongrepelling}). Indeed, then,  by Lemma \ref{lem:asumtion5sat},  the minimum of the averaged potential cannot correspond to collisions, yet, two particles on the same vertical path  collide unavoidably.

The non-degeneracy of the Hessian is a generic condition, implying that
$(\psi=0,\xi=\xi_{min})$ corresponds to the elliptic equilibrium of the Hamiltonian
\begin{equation}\label{eq:Hrectavrg2assum}
\begin{array}{ll}
H=\frac{1}{2N} J^{2} + \frac{p_{\xi}^2}{2} +\hat U(\psi,\xi),   \end{array}
\end{equation}
where $J$ and $p_{\xi}$ denote the conjugate momenta to $\psi$ and $\xi$, respectively.

Let us first consider the case of {\em non-simultaneous impacts} motion at which
\(\theta^{(n)}_{min}\neq \theta^{(m)}_{min} \mod \pi \) for all \(n\neq m\) (a particle impacts to the boundary happen exactly at each half-period,
so this condition, obviously, means that no two particles hit the boundary simultaneously). By Lemma  \ref{lem:udelt0isc2} the averaged potential near the minimum is  $C^\infty$-smooth,  so we  impose the following genericity condition (which involves the Taylor expansion up to order 4).

\noindent\textbf{Box3: KAM assumption (the case of non-simultaneous impacts).} \textit{The local minimum of  \(\hat U\)  at \((\psi=0,\xi=\xi_{min})\)
is KAM-non-degenerate: the corresponding elliptic equilibrium of the Hamiltonian system (\ref{eq:Hrectavrg2assum}) has no resonances up to order 4
and its Birkhoff normal form satisfies the twist condition.}

\begin{mainthm}\label{thm:billiardbox1} Consider $N$  repelling particles that are confined to a box by a trapping potential satisfying the separability assumption Box1. Let the averaged interaction potential have a minima line, corresponding to non-simultaneous impacts and satisfying the  nondegeneracy assumptions Box2 and Box3. Then, for all sufficiently high values $h$ of the energy per particle,  the $N$-particle system
(\ref{eq:Hamparticleinbox}) has a non-degenerate elliptic periodic orbit accompanied by a positive measure set of quasi-periodic solutions. In particular, for this set of initial conditions, each particle stays bounded away from all other particles for all time, so the system is not ergodic. \end{mainthm}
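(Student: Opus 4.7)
The plan is: (i) introduce variables that separate the fast vertical oscillation from the slow horizontal and phase-difference dynamics, (ii) average over the fast phase to obtain precisely the model Hamiltonian (\ref{eq:Hrectavrg2assum}), and (iii) lift the KAM stability of its elliptic equilibrium back to the full system.

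For step (i), I would introduce action-angle variables $(I^{(n)},\theta^{(n)})$ for each particle's one-dimensional vertical Hamiltonian $p_d^2/2+\delta V_d(q_d)$. The power-law assumption Box1 together with the boundary-layer analysis of \cite{rom2012billiards,RapRKT07apprx} give that both $\omega(I;\delta)=\partial_I h_0$ and $\partial_I^2 h_0$ tend to $1$ as $\delta\to 0$ at the reference action $I^*$ corresponding to $|p_d|=1$, $l_d=\pi$, and that $q_d(I,\theta;\delta)$ converges to the sawtooth $q^*(\theta)$ uniformly away from the impact instants. Then I would pass to collective variables: the fast angle $\varphi=\frac{1}{N}\sum\theta^{(n)}$, its conjugate action, and linear-orthogonal combinations $(\psi,J)$ of the phase- and action-differences, as in Section \ref{sec:condonw}. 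Expanding $\sum h_0(I^{(n)};\delta)$ to second order around $I^*$, using the energy constraint to eliminate the total action, and rescaling time by $\delta$ produces a slow Hamiltonian of the form $\tfrac{1}{2N}J^2+\tfrac{1}{2}p_\xi^2+\sum_{n,i<d}V_i(\xi_i^{(n)})+\sum_{n\neq m}W(\xi^{(n)}-\xi^{(m)},q_d^{(n)}-q_d^{(m)})+O(\delta)$, which depends on $(\psi,J,\xi,p_\xi)$ and the fast angle $\varphi$ (through the phases $\theta^{(n)}=\varphi+\psi^{(n)}$ in the $W$ term only).

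For step (ii), averaging over $\varphi$ yields exactly (\ref{eq:Hrectavrg2assum}). The averaging is legitimate because of the smoothness of $W_{\text{avg}}$ in $(\psi,\xi)$ near $(0,\xi_{\min})$ provided by Lemma \ref{lem:wtildc2inthet}, and uniform in $\delta$ because the non-simultaneous impacts condition $\theta^{(n)}_{\min}\neq\theta^{(m)}_{\min}\bmod\pi$ guarantees that the boundary layers of different particles do not overlap at the same $\varphi$, keeping the integrand and its slow-variable derivatives controlled. By Box2, the point $(\psi=0,J=0,\xi=\xi_{\min},p_\xi=0)$ is then a non-degenerate elliptic equilibrium of the averaged system; by Box3, its Birkhoff normal form satisfies the KAM non-resonance and twist conditions, so a positive-measure family of KAM tori surrounds this equilibrium. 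For step (iii), the averaging transformation is symplectic and close to the identity uniformly in $\delta$, so the elliptic equilibrium lifts to an elliptic periodic orbit of the unaveraged fast system and the KAM tori lift to tori of the full system surrounding it. Because Box2 ensures $\|\xi^{(n)}_{\min}-\xi^{(m)}_{\min}\|>\rho$, the repulsive singularity of $W$ is never approached on these tori; replacing $W$ by a smooth bounded cutoff outside a neighborhood of the collision set, exactly as in the proof of Theorem \ref{thm2}, then allows the standard smooth KAM theorem to close the argument.

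The main obstacle will be the justification of averaging in the singular limit $\delta\to 0$: the vertical coordinate $q_d^{(n)}(I,\theta;\delta)$ has $\theta$-derivatives that diverge in the boundary layer, so naive averaging estimates on the remainder are not uniform in $\delta$. The non-simultaneous impacts assumption is precisely what rescues uniformity — the boundary layers of distinct particles are disjoint in $\varphi$, so each singular contribution to $\partial_\psi W$ is paired with another particle's smooth, bounded position. I would expect the delicate work to lie in proving the $C^\infty$-smoothness and uniform bounds for $W_{\text{avg}}$ and the averaging remainder in this singular limit (the content of the lemma referenced in Section \ref{sec:boxsection}), rather than in the final KAM step itself.
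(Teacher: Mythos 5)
Your plan follows the same route as the paper's: action-angle variables for the fast vertical motion, collective variables $(\varphi,P,\psi,J)$, averaging over $\varphi$ to produce the model Hamiltonian (\ref{eq:Hrectavrg2assum}), and KAM stability via Box2 and Box3, with the non-simultaneous impacts assumption controlling the singular limit. That overall structure is correct. But there is a genuine gap in your step~(iii).

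You assert that the averaging transformation is symplectic and ``close to the identity uniformly in $\delta$,'' so that KAM tori lift back. This is false in the $C^r$ topology that the KAM persistence step requires. The generating function of the averaging transformation has $\theta$-derivatives that diverge as $\delta\to 0$: in the boundary layer near an impact, the $k$-th derivative of $q^*_\delta$ is of order $\delta^{-(k-1)/\alpha}$, and these blow-ups propagate into the $\theta$-derivatives of $\Psi$. The transformation is therefore $O(\delta^{1/2})$-close to the identity only in $C^0$, not in $C^r$ for $r$ large. The non-simultaneous impacts condition does not erase this singularity; it only guarantees that at any instant at most one particle's phase lies in a boundary layer, so the blow-up is confined to one phase coordinate at a time. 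The paper formalizes exactly this via the notion of \emph{weak regularity} (Definition \ref{weaklyreg}): the transformation is $C^\infty$ in all variables except the phase of the impacting particle, and depends only continuously on that phase and on $\delta\geqslant 0$.

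Because the transformation is not $C^r$-close to identity, one cannot ``lift'' tori by a naive perturbation argument at the Hamiltonian level. The correct mechanism is to compare the Poincar\'e return maps of the full and the averaged systems to a cross-section $\Sigma_0:\sum\theta^{(n)}=0$ chosen (using (\ref{eq:noimpactthetnD})) so that all particles are away from impacts there; as in Lemma \ref{lem:poincaremapclosebilnew} these return maps are $o(\delta^{1/2})$-close in $C^r$ for any $r$, since the singular phase never appears at the cross-section and the interaction with the impacting particle can be temporarily switched off and absorbed into a flow-box time shift. Only after establishing this map-level closeness does the KAM step apply. You also omit the intermediate truncation that replaces $\hat q_d(\delta^{1/2}I^{(n)},\theta^{(n)};\delta)$ by $q^*_\delta(\theta^{(n)})$ in the interaction term before averaging; this separates the singular $I$-dependence and makes the weak-regularity bookkeeping tractable. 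Finally, the cutoff borrowed from Theorem~\ref{thm2} is superfluous here: Box2 already gives $\|\xi^{(n)}_{min}-\xi^{(m)}_{min}\|>\rho$, so the particles sit on distinct vertical paths and $W$ is smooth and bounded in the neighborhood of the choreographic orbit without any cutoff.
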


The assumption of non-simultaneous impacts is generic when the interaction potential has no special symmetries. However, for the most natural class of potentials which depend only on the Euclidian distance between particles, there is an inherit symmetry which can lock the impacts to become simultaneous. Such potentials satisfy

\noindent\textbf{Box4: Parity assumption.}  {\em The repelling interaction potential $W$  is
even in $q_i^{(n)}-q_i^{(m)}$ for each $i=1,\dots,d$.}

In this case, the average potential $U$ is an even function of
$(\theta^{(n)}-\theta^{(m)})$,  for any pair of $n$ and $m$. It is also $2\pi$-periodic in
$(\theta^{(n)}-\theta^{(m)})$. We conclude that if Box4 is satisfied, then\begin{equation}
U(\theta^{(n)}-\theta^{(m)},\cdot)=U(\theta^{(m)}-\theta^{(n)},\cdot) \text{ and } U(\theta^{(n)}-\theta^{(m)}-\pi,\cdot)=U(\pi-(\theta^{(n)}-\theta^{(m)}),\cdot).\label{eq:uparity}
\end{equation}  It follows that the first derivative of $U$ with respect to $\theta$ vanishes when
$\theta^{(n)}=\theta^{(m)} \mod \pi$ {\em for all} $m$ {\em and} $n$. Therefore, when the parity assumption holds, there can exist minima of $U$ for which {\em all the particles hit the boundary walls simultaneously} (each half-period, some particles hit $q_{d}=0$ while, at the same time, the others hit $q_{d}=\pi$); moreover, this {\em simultaneous impacts} property can persist for small perturbations of the potential $W$
within the class of potentials satisfying Assumption Box4.
The non-simultaneous impacts assumption, which is crucial for the averaged procedure we use in Theorems  \ref{thm:billiardsystem} and
\ref{thm:billiardbox1}, does not hold for such minima. We, therefore,  consider this case separately.

First note that the averaged potential $U$ in the simultaneous impact case is not, in general, $C^3$-smooth with respect to $\theta$
(see Lemma \ref{lem:wtildc2inthet}). Hence, there cannot be a direct analogue of the KAM-nondegeneracy assumption Box3, which involves
derivatives of $U$ up to order 4 (recall that the twist condition on  quadratic terms in the action variables corresponds to  algebraic relation between the Taylor coefficients of the Hamiltonian up to order 4 \cite{Arnold2007CelestialMechanics}). Instead of assumption Box3 which is formulated in terms of  the Hamiltonian system (\ref{eq:Hrectavrg2assum}), we formulate KAM nondegeneracy assumption in terms of the two auxiliary Hamiltonians:
\begin{equation}\label{eq:haveragedinabox}
H_{0}^\xi(p_{\xi},\xi)=\frac{p_{\xi}^2}{2}+U(\theta_{min},\xi),
\end{equation}
and
\begin{equation}\label{eq:haveragedthetasyncinabox}
H_{0}^{\theta}(p_{d},\theta) = \sum_{n=1,\dots N}  \frac{(p_{d}^{(n)})^2}{2}+
\mathop{\sum_{n,m=1,\dots,N}}_{n\neq m}\ \gamma_{nm}(\theta^{(n)}-\theta^{(m)}-\vartheta_{nm})^2 -\beta_{nm}(\theta^{(n)}-\theta^{(m)}-\vartheta_{nm})^4,
\end{equation}
where  $\vartheta_{nm}=\theta^{(n)}_{min}-\theta^{(m)}_{min} \mod 2\pi$ is either $ 0$ or $\pi,$ and
$$
\gamma_{nm} =\left\{\begin{array}{cl}
\frac{\partial^2 W}{\partial q_{d}^2} (0,\xi^{(n)}_{min}-\xi^{(m)}_{min}) & \text{for }\vartheta_{nm}=0 , \\
-\frac{1}{\pi}\frac{\partial W}{\partial q_{d}}(\pi,\xi^{(n)}_{min}-\xi^{(m)}_{min}) & \text{for }\vartheta_{nm}=\pi ,
\end{array}\right.$$

$$
\beta_{nm} =\left\{\begin{array}{cl}
-\frac{\partial^2 W}{\partial q_{d}^{2}}(0,\xi^{(n)}_{min}-\xi^{(m)}_{min}) & \text{for }\vartheta_{nm}=0 , \\
\frac{\partial^2 W}{\partial q_{d}^{2}}(\pi,\xi^{(n)}_{min}-\xi^{(m)}_{min}) & \text{for }\vartheta_{nm}=\pi .
\end{array}\right.$$
$$ $$
The quadratic part of the potential in \(H_{0}^{\theta}\) coincides with the quadratic term of the Taylor expansion of $U(\theta,\xi_{min})$
at $\theta=\theta_{min}$ (see Lemma \ref{lem:wavatorigin}). Due to the simultaneous impacts property, by redefining the constant $c$ in (\ref{lnqbox}), if necessary, we can always make $\theta^{(n)}_{min}=0 \mod \pi$. Then, by (\ref{eq:uparity}), the averaged potential $U$ is an even function of $(\theta-\theta_{min})$,
so the Hessian of $U$ is block-diagonal: the derivatives $\frac{\partial^2 U}{\partial \theta \partial \xi}$ vanish at the minimum
with simultaneous impacts. Therefore, the non-degenerate minimum assumption Box2 implies that both $U(\theta_{min},\xi)$ and $\hat U(\psi, \xi_{min})$
have non-degenerate minima (at $\xi=\xi_{min}$ and at $\psi=0$, respectively). We conclude that, under Assumption Box2, Hamiltonian (\ref{eq:haveragedinabox}) has an elliptic equilibrium at $\xi=\xi_{min}$, $p_{\xi}=0$, and Hamiltonian (\ref{eq:haveragedthetasyncinabox}) has a family of elliptic periodic orbits $\theta^{(n)}=\theta^{(n)}_{min} + \omega t$ ($n=1,\dots,N$), $|p_{d}^{(1)}|=\ldots=|p_{d}^{(N)}|=\omega=const$. This is a relative equilibrium, i.e., it becomes an equilibrium when we go to
translation invariant $(J,\psi)$-coordinates, like in Section \ref{sec:condonw}. Denote the reduced Hamiltonian  of (\ref{eq:haveragedthetasyncinabox}) by \( H^{\theta,0}(J,\psi)\).

\noindent\textbf{Box5: KAM assumption (the case of simultaneous impacts).} {\em Let a nondegenerate minimum line of $U$ satisfy $\theta^{(n)}_{min}=\theta^{(m)}_{min} \mod \pi$ for all $m$ and $n$. Assume that at the local minimum of  \(\hat U\)  at \((\psi=0,\xi=\xi_{min})\)
the frequencies of small oscillations  have no resonances up to order 4.
 Furthermore, assume that the elliptic equilibrium
of $H_{0}^\xi$ at $p_{\xi}=0,\xi=\xi_{min}$ and the elliptic  equilibrium of \(H^{\theta,0}\) at $(J,\psi)=0$ are KAM-non-degenerate, i.e. their Birkhoff normal forms satisfy the twist condition. }

\begin{mainthm}\label{thm:billiardbox2} Consider $N$  repelling particles that are confined to a box by a trapping potential satisfying the separability assumption Box1 with \(\alpha>6\). Assume the parity assumption Box4 holds. Assume the averaged interaction potential $U$ has a minima line with simultaneous impacts and let it satisfy the non-degeneracy assumptions of Box2 and  Box5. Then, for all sufficiently high values of the energy per particle, the $N$-particle system
(\ref{eq:Hamparticleinbox}) has a non-degenerate elliptic periodic orbit accompanied by a positive measure set of quasi-periodic solutions. In particular, for this set of initial conditions, each particle stays bounded away from all other particles for all time, so the system is not ergodic. \end{mainthm}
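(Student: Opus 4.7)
\textbf{Proof proposal for Theorem \ref{thm:billiardbox2}.} The plan is to build an effective Hamiltonian for the slow degrees of freedom and apply a KAM theorem, treating the non-smoothness of the averaged potential at simultaneous impacts by expanding the pairwise interaction near each impact event rather than by Taylor-expanding the averaged potential itself. First I would introduce, for each particle, action-angle variables $(I^{(n)},\theta^{(n)})$ for the single-particle vertical oscillator defined by $\frac{1}{2}(p_d^{(n)})^2+\delta V_d(q_d^{(n)})$. By the boundary-layer analysis of \cite{RapRKT07apprx,rom2012billiards}, for small $\delta$ the map $(\theta^{(n)},I^{(n)})\mapsto (q_d^{(n)},p_d^{(n)})$ is a $C^\infty$-smooth, $2\pi$-periodic perturbation of the saw-tooth motion (\ref{eq:sawtoothsol}), with the deviation from the saw-tooth concentrated in a boundary layer of width $O(\delta^{1/(\alpha+2)})$ at $q_d=0,\pi$; the constraint $\alpha>6$ will be used to bound the contribution of this layer to the effective quartic coefficients. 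After passing to translation-invariant coordinates $(\varphi,J,\psi)$ for the $\theta$-variables as in Section \ref{sec:condonw}, $\varphi$ is the unique fast angle; the slow variables are $(\psi,J,\xi,p_\xi)$, with the factor $\delta$ multiplying the horizontal and interaction terms making the $\xi$-motion slow.

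Next I would perform one step of averaging over $\varphi$ away from the simultaneous-impact set. Outside this set the averaged interaction potential $U(\theta,\xi)$ is $C^\infty$ by Lemma \ref{lem:wtildc2inthet} (whose $C^\infty$-smoothness statement is valid whenever $\theta^{(n)}\neq\theta^{(m)}\bmod\pi$). On this open set, standard perturbation theory already yields that the evolution of $(\psi,\xi)$ is, to leading order in $\delta$, governed by the averaged Hamiltonian
\begin{equation}
H_{\mathrm{avg}}(J,\psi,p_\xi,\xi)=\frac{1}{2N}J^2+\frac{p_\xi^2}{2}+\hat U(\psi,\xi)+O(\delta).
\end{equation}
Near the simultaneous-impact minimum $(\psi=0,\xi=\xi_{min})$, $\hat U$ is only $C^2$ in $\psi$, so the usual fourth-order Birkhoff normal form is unavailable. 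To get around this, I would expand the pairwise term $W(q^{(n)}-q^{(m)})$ directly, splitting the time average (\ref{eq:potsawtooth}) into contributions from the interior of each saw-tooth segment (which yield $C^\infty$ functions of the phase differences, not contributing at leading order to the simultaneous-impact singularity) and contributions from a shrinking neighborhood of each impact instant. In the impact neighborhoods, the vertical coordinates are small (or close to $\pi$) and the parity assumption Box4 lets one Taylor-expand $W$ in $q_d^{(n)}-q_d^{(m)}$. Using the saw-tooth behavior of $q^*(t+\theta^{(n)})-q^*(t+\theta^{(m)})$ near simultaneous impact, an explicit computation shows that the non-analytic part of $\hat U(\psi,\xi_{min})$ coincides (modulo $C^5$-smooth functions) with the non-quadratic part of the potential in $H_0^\theta$, i.e.\ with the $\beta_{nm}(\theta^{(n)}-\theta^{(m)}-\vartheta_{nm})^4$ terms. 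The condition $\alpha>6$ guarantees that the boundary-layer discrepancy between the smooth vertical motion and the saw-tooth contributes $o(\psi^4)$ and therefore does not spoil these quartic coefficients.

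Now I would introduce a slow rescaling $\psi=\sqrt[4]{\delta_1}\tilde\psi,\ J=\delta_1^{3/4}\tilde J$ (and a commensurate rescaling of time) in which the quartic potential $H_0^{\theta,0}(\tilde J,\tilde\psi)$ is of the same order as its gradient and Hessian, and write the slow subsystem as a perturbation
\begin{equation}
H_{\mathrm{slow}}=H_0^{\theta,0}(\tilde J,\tilde\psi)+H_0^\xi(p_\xi,\xi)+R(\tilde\psi,\tilde J,\xi,p_\xi;\delta),
\end{equation}
where $R$ collects the cross-coupling $\hat U(\psi,\xi)-\hat U(\psi,\xi_{min})-U(\theta_{min},\xi)$, the higher-order smooth remainder of $\hat U(\psi,\xi_{min})-H_0^{\theta,0}$, and the $O(\delta)$ corrections from the averaging step. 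The block-diagonal structure of the Hessian of $\hat U$ at simultaneous impacts (which follows from Box4 and Box2, as noted in the excerpt) makes $R$ small near the equilibrium $(\tilde J,\tilde\psi,p_\xi,\xi)=(0,0,0,\xi_{min})$. Applying Box2 we obtain that the unperturbed slow Hamiltonian has an elliptic equilibrium with simple, positive squared frequencies (from the two independent elliptic equilibria of $H_0^\xi$ and of the reduced $H^{\theta,0}$); Box5 provides the non-resonance up to order 4 and the twist condition of the Birkhoff normal forms of these two subsystems. Since the coupling $R$ is a small perturbation that preserves the elliptic character and the twist at the equilibrium, the combined equilibrium remains KAM-non-degenerate, and the standard KAM theorem \cite{Arnold2007CelestialMechanics} yields a positive-measure Cantor family of invariant tori in $H_{\mathrm{slow}}$. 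Re-inserting this family into the full system via the inverse averaging transformation produces a positive-measure set of quasi-periodic solutions of (\ref{eq:Hamparticleinbox}) that remain close to the synchronous simultaneous-impact configuration for all time; in particular, Box2 forces $\|\xi^{(n)}-\xi^{(m)}\|$ to stay bounded away from $\rho$, so the corresponding trajectories are collision-free and the system is non-ergodic.

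The main obstacle, as indicated, is the third paragraph: establishing rigorously that, up to terms of an order controlled by $\alpha>6$, the slow dynamics are indeed driven by $H_0^{\theta,0}+H_0^\xi$ even though $U$ itself is not $C^3$. This requires a careful, segment-by-segment expansion of the vertical-plus-interaction motion in the boundary layer, and a verification that the non-smooth contributions of $U$ beyond order four are small compared with the $\beta_{nm}$-quartic terms in the rescaled variables. Everything else reduces to standard averaging and KAM machinery once this effective Hamiltonian is isolated.
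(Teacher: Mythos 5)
Your high-level plan matches the paper's (slow averaged Hamiltonian, auxiliary Hamiltonians $H_0^\xi$ and $H_0^\theta$, block-diagonal Hessian from Box4, KAM), but the pivotal third paragraph---the one you correctly flag as the main obstacle---rests on a wrong picture of the singularity, and the remaining argument would not close.

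Concretely, you claim that ``the non-analytic part of $\hat U(\psi,\xi_{min})$ coincides (modulo $C^5$-smooth functions) with the non-quadratic part of the potential in $H_0^\theta$, i.e.\ with the $\beta_{nm}(\cdot)^4$ terms.'' This is false. At $\delta=0$ the averaged saw-tooth potential has a \emph{cubic cusp}, not a quartic one: from the identity $2\pi W_{avg}(\vartheta,\zeta)=2\pi W(\vartheta,\zeta)+2\int_0^\vartheta W(v,\zeta)\,dv-2\vartheta W(\vartheta,\zeta)$ (valid for $\vartheta\geq 0$ by parity), expanding $W$ even in $\vartheta$ gives $W_{avg}(\vartheta,\zeta)=W(\vartheta,\zeta)-\tfrac{1}{3\pi}\partial_q^2W(0,\zeta)\,|\vartheta|^3+O(\vartheta^4)$. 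The $|\vartheta|^3$ term is exactly what makes $W_{avg}$ $C^2$ but not $C^3$ (Lemma~\ref{lem:udelt0isc2}); the genuine $\vartheta^4$ term comes from the smooth $W(\vartheta,\zeta)$ itself. For $\delta>0$ the potential is smooth, but the fourth $\vartheta$-derivative \emph{diverges} like $\delta^{-1/\alpha}$ (Lemmas~\ref{lem:wtildc2inthet} and \ref{lem:wavatorigin}). So the quartic coefficient of $\hat U_\delta$ does not converge to $H_0^\theta$'s quartic; it blows up with a prefactor $\delta^{-1/\alpha}K(\alpha)Q_d'(0)$ multiplying the shape $\beta_{nm}$ (Lemma~\ref{lem:expansionupsi}). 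Consequently, the standard ``small perturbation of a KAM-nondegenerate equilibrium'' step you invoke does not apply: you have a one-parameter family of smooth Hamiltonians whose Birkhoff four-jet is unbounded as $\delta\to0$, and you must show that (a) the twist stays non-zero for every small $\delta>0$ despite this divergence, and (b) the error inherited from the averaging transformation stays subordinate to the divergent quartic.

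The paper handles these two points by entirely different means than your proposal: it never takes a $C^4$ limit of $\hat U$, instead isolating the diverging block $\delta^{-1/\alpha}K(\alpha)\tilde H_4^\theta$ in the normal form (Lemma~\ref{lem:normalformtheta}) and computing the twist determinant explicitly as $\delta^{-2N/\alpha}K(\alpha)^{2N}(|B_z||B_\psi|+O(\delta^{1/\alpha}))$ (Lemma~\ref{lem:nondegcombinednf}), which is non-zero precisely because Box5 asserts non-degeneracy of the \emph{bounded-coefficient} normal form of $H^{\theta,0}$. Here $\alpha>6$ is used not to make the boundary-layer mismatch ``contribute $o(\psi^4)$'' as you suggest, but to ensure that after averaging and after re-centering on the true periodic orbit, the $k$-th $\psi$-derivative of the remaining error $G_6$ obeys $O(\delta^{1/2-k/\alpha})$; for $k=4$ this is $o(\delta^{-1/\alpha})$ exactly when $\alpha>6$, i.e.\ strictly below the divergent quartic of $\hat U_\delta$ that carries the twist (Lemma~\ref{lem:lastNF}).

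Two smaller points: the natural boundary-layer scale in the vertical variable is $\delta^{1/\alpha}$ (Lemma~\ref{lem:qnearimpact}), not $\delta^{1/(\alpha+2)}$; and the rescaling $\psi=\delta_1^{1/4}\tilde\psi,\ J=\delta_1^{3/4}\tilde J$ you introduce is neither symplectic (Jacobian $\delta_1$) nor does it balance the quadratic against the quartic in a way that makes the twist argument close---the paper rescales only the global actions $I\to\delta^{1/2}I$ and works in the original $(\psi,J)$ coordinates near the minimum. To repair the proposal you would need to replace the $C^5$-matching claim with the correct $|\vartheta|^3$-cusp and $\delta^{-1/\alpha}$-divergent-quartic picture, and then supply the explicit normal-form twist computation rather than appealing to perturbation of a fixed KAM-nondegenerate equilibrium.
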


Thus, the gas of any number of highly-energetic repelling particles confined to a rectangular box by a sufficiently steep potential
is, generically, non-ergodic.

\section{\label{sec:proofmainthm}Smooth N-particle systems.}

We present here the proof of Theorem \ref{thm:mainbounded}. Its outline is as follows. First, we consider the  \(N\)-particle system (\ref{eq:multparsysy}) near \(\mathbf{L}^{*}(\theta_{min})\), the choreographic periodic orbit of the uncoupled system, and scale the action coordinates by
\(\delta^{1/2}\).  Then we average, i.e. we make a change of coordinates, after which the angle-dependent terms become
\(O(\delta^{3/4})\), see equation (\ref{eq:npartav}) in Lemma \ref{Lemma1.1}.
Next,  we study the behavior of the Poincar\'e map of the resulting system:  we show that near a line of minima, the Poincar\'e map  of  system (\ref{eq:npartav}) is
\(O(\delta^{3/4})\)-close to the flow map of the Hamiltonian system of a specific form  (\ref{hamtr}) (Lemma \ref{lem:poinc1}). We need to take
\(O(1/\delta^{1/2}\))  iterates of the return map.  In Lemma \ref{Lemma1.2}, we establish that taking \(O(1/\delta^{1/2}\))  iterates of any map which  is  \(O(\delta^{3/4})\) close to the return map of system (\ref{hamtr}) leads to a map which is
\(O(\delta^{1/4})\) close to the time-1 map for the Hamiltonian (\ref{eq:timeoneham}) near the origin. In Lemma \ref{lem:positivekam}, we prove that the Hamiltonian (\ref{eq:timeoneham}) admits a positive measure set of KAM tori. By
Lemma \ref{Lemma1.2}, one infers the results of Theorem  \ref{thm:mainbounded} from the KAM theorem.

The proof strategy, of using Poincar\'e maps (see Lemmas  \ref{lem:poinc1}-\ref{lem:positivekam}) instead of other partial averaging techniques that could be possibly applied here  \cite{Arnold2007CelestialMechanics},   is chosen because  it is also utilized for establishing the KAM-stability in\ the non-smooth case, i.e. in the proofs of Theorems \ref{thm:billiardsystem} and \ref{thm:billiardbox1}.

{\em Proof of Theorem  \ref{thm:mainbounded}}: Recall that  $(q^{(n)},p^{(n)})$ denotes the coordinates and conjugate momenta of the $n$-th particle, $n=1,\dots,N$. We apply the same symplectic transformation that brings the single-particle system to the normal form (\ref{eq:birkhoffsingle}) to each of the $N$ particles, namely, we let
$(q^{(n)},p^{(n)})=(\hat q(I^{(n)}_0,\theta^{(n)},z^{(n)}),\hat p(I^{(n)},\theta^{(n)},z^{(n)}))$.
As for the single particle case, this change of coordinates is smooth and preserves the standard symplectic form. In these coordinates
the \(N\)-particle system (\ref{eq:multparsysy}) takes the form:
\begin{equation}\label{eq:npartactionangle}\begin{array}{l}\displaystyle
H =\sum_{n=1}^N[\omega I^{(n)} +\frac{1}{2}I^{(n)} A I^{(n)}+g(I_0^{(n)},\theta^{(n)},z^{(n)})]+\\ \displaystyle \qquad\qquad
\qquad\qquad + \;\delta\sum_{n\neq m}\ W(\hat q(I_0^{(n)},\theta^{(n)},z^{(n)})-
\hat q(I_0^{(m)},\theta^{(m)},z^{(m)})),\end{array}
\end{equation}
where we denote
$I^{(n)}=(I_0^{(n)}, I_1^{(n)}\!\!=\!\frac{\left(z_1^{(n)}\right)^2}{2}, \dots, I_{d-1}^{(n)}\!=
\!\frac{\left(z_{d-1}^{(n)}\right)^2}{2})$.
We will look at the motion of the particles near the periodic trajectory $L^*$.
Namely, we will write
\begin{equation} \label{eq:periodicorbit}
\hat q(I_0^{(n)},\theta^{(n)},z^{(n)})=q^*(\theta^{(n)}) + O(|I_0^{(n)}|+\|z^{(n)}\|).
\end{equation}
Since the pairwise interaction potential \(W\) is smooth,
\begin{equation} \label{eq:wperiodicorbit}\begin{array}{l}
W(\hat q(I_0^{(n)},\theta^{(n)},z^{(n)})-\hat q(I_0^{(m)},\theta^{(m)},z^{(m)}))=
W(q^*(\theta^{(n)})-q^*(\theta^{(m)}))+\\ \qquad\qquad\qquad\qquad\qquad
+O(|I_0^{(n)}|+\|z^{(n)}\| + |I_0^{(m)}|+\|z^{(m)}\|).\end{array}
\end{equation}

We restrict our attention to the region of the phase space where the actions $I^{(n)}$ are of order $\delta^{1/2}$.
For that, we scale the variables $z^{(n)}$ to $\delta^{1/4}$ and the variables $I_0^{(n)}$ to $\delta^{1/2}$, i.e., we
make a replacement \(z^{(n)}\rightarrow \delta^{1/4} z^{(n)}\), \(I_0^{(n)}\rightarrow \delta^{1/2}I_0^{(n)}\).
With this scaling, we have
$$
W=W(q^*(\theta^{(n)})-q^*(\theta^{(m)}))+O(\delta^{1/4}),
$$
(see (\ref{eq:wperiodicorbit})) and so, by (\ref{eq:birkhoffsingle}), the remainder terms in (\ref{eq:npartactionangle}), \(g\), satisfy
$$g(I_0^{(n)},\theta^{(n)},z^{(n)})=O(\delta^{5/4}).$$
 The motion in the rescaled variables is described
by the rescaled Hamiltonian:
\begin{equation}\label{eq:npartnew}\begin{array}{ll}\displaystyle
 H_{scal} &= \delta^{-1/2}\; H\!\left((\delta^{1/2}I_0^{(n)},\theta^{(n)},\delta^{1/4}z^{(n)})_{_{n=1,\dots,N}}\right)
\\&=\sum_{n=1}^N \omega I^{(n)} + \delta^{1/2} \sum_{n=1}^N \frac{1}{2} I^{(n)} A I^{(n)}+
\delta^{1/2}\sum_{n\neq m}\ W(q^*(\theta^{(n)})-q^*(\theta^{(m)})) +O(\delta^{3/4}). \end{array}
\end{equation}
We emphasize that from now on, till the end of the proof, the variables are  rescaled, so the domain in which we study  \(H_{scal}\) is of order one in all variables. To avoid cumbersome notation we use the same letters for the rescaled and the original variables.  So,  \(O(\delta^{s})\)  stands for  terms which are \(O(\delta^{s})\)  small with all derivatives in a domain which is independent of \(\delta\).

We now average the Hamiltonian with respect to the motion along the periodic orbit.
\begin{lem}\label{Lemma1.1}
There exists a smooth symplectic change of coordinates which brings the Hamiltonian (\ref{eq:npartnew}) to the form
\begin{equation}\label{eq:npartav}\begin{array}{l}\displaystyle
 H =\sum_{n=1}^N \omega I^{(n)} + \delta^{1/2} \sum_{n=1}^N \frac{1}{2} I^{(n)} A I^{(n)}+
\delta^{1/2} U(\theta^{(1)},\dots, \theta^{(N)})+O(\delta^{3/4}), \end{array}
\end{equation}
where the averaged potential $U$ is given by
(\ref{eq:averagedpotu}).
\end{lem}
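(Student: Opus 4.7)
\medskip

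\noindent\emph{Proof plan.} The strategy is standard Hamiltonian averaging via a near-identity symplectic (Lie) transformation. First I would decompose the leading perturbation into its $\varphi$-average and its oscillatory part. Writing
\[
W(q^*(\theta^{(n)})-q^*(\theta^{(m)}))=W_{avg}(\theta^{(n)}-\theta^{(m)})+\widetilde W_{nm}(\theta^{(n)},\theta^{(m)}),
\]
with $W_{avg}$ as in (\ref{eq:averagedw}), the remainder $\widetilde W_{nm}$ by construction has zero mean with respect to the collective shift $\theta^{(k)}\mapsto\theta^{(k)}+s$ (since $q^*$ is $2\pi$-periodic), and is $C^\infty$ in $(\theta^{(n)},\theta^{(m)})$. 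Collecting, the $\delta^{1/2}$-order perturbation of $H_0=\sum_n\omega I^{(n)}$ in (\ref{eq:npartnew}) splits as $\delta^{1/2}U(\theta)+\delta^{1/2}\widetilde H_1(\theta)+O(\delta^{3/4})$, with $\widetilde H_1=\sum_{n\neq m}\widetilde W_{nm}$ having zero $\varphi$-mean.

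Next I would solve the homological equation $\{H_0,\chi\}=\delta^{1/2}\widetilde H_1$ by Fourier expansion in the angles. Since $H_0=\sum_n\omega\cdot I^{(n)}=\omega_0\sum_n I_0^{(n)}+\sum_{n,j}\omega_j I_j^{(n)}$ acts on a pure Fourier mode $e^{i\sum_n k_n\theta^{(n)}}$ (no $z$-angles appear in $\widetilde H_1$ at this order) by multiplication by $i\omega_0\sum_n k_n$, and since $\widetilde H_1$ is supported on modes with $\sum_n k_n\neq 0$ (the zero collective mode is exactly the average $U$), one has $|\omega_0\sum_n k_n|\geqslant\omega_0$ on the spectrum of $\widetilde H_1$. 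No small divisors appear, so $\chi$ is a $C^\infty$ function of size $O(\delta^{1/2})$.

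Then I would apply the Lie transformation $\Phi_\chi^1$ (the time-$1$ flow of the Hamiltonian vector field of $\chi$). This is symplectic and $O(\delta^{1/2})$-close to the identity, and
\[
H_{scal}\circ\Phi_\chi^1=H_0+\delta^{1/2}\!\!\sum_{n=1}^N\tfrac{1}{2}I^{(n)}AI^{(n)}+\delta^{1/2}U(\theta)+\bigl\{H_0,\chi\bigr\}+\delta^{1/2}\widetilde H_1+R,
\]
where the first two brackets cancel by construction and the remainder $R$ gathers $\tfrac12\{\widetilde H_1,\chi\}=O(\delta)$, $\{\delta^{1/2}\sum I A I,\chi\}=O(\delta)$, the pre-existing $O(\delta^{3/4})$ error from (\ref{eq:npartnew}), and its Poisson bracket with $\chi$ which is $O(\delta^{5/4})$. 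All of these are $O(\delta^{3/4})$, yielding exactly (\ref{eq:npartav}).

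The only step with substance is the solvability of the homological equation with a smooth generator; this would be the main obstacle in a more general setting but here it is clean because the unperturbed motion has a single fast frequency $\omega_0$ along the collective direction $\varphi=\frac{1}{N}\sum_n\theta^{(n)}$, and the oscillating part of the perturbation depends only on the angles $\theta^{(n)}$ (not on the transverse angles of $z^{(n)}$), so no resonance condition on $(\omega_0,\omega_1,\dots,\omega_{d-1})$ is needed at this stage and $\chi$ inherits the $C^\infty$ regularity of $W$.
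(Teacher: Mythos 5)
Your argument is correct and is essentially the proof in the paper: the paper also defines a generator $\Psi$ via Fourier modes with $k_1+k_2\neq 0$, observes that the divisor is $\omega_0(k_1+k_2)$ with no small denominators, and applies the symplectic shear $I_0^{(n)}\mapsto I_0^{(n)}-\delta^{1/2}\partial_{\theta^{(n)}}\Psi$, which is exactly the time-$1$ Lie transform $\Phi_\chi^1$ when $\chi$ depends only on the angles. The one thing to fix is the sign in your homological equation: for the displayed cancellation to hold you need $\{H_0,\chi\}=-\delta^{1/2}\widetilde H_1$ (equivalently $\{\chi,H_0\}=\delta^{1/2}\widetilde H_1$); as written, $\{H_0,\chi\}+\delta^{1/2}\widetilde H_1=2\delta^{1/2}\widetilde H_1$ rather than $0$.
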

\begin{proof}
Recall that the (non-averaged) interaction potential
$$\tilde W(\theta^{(1)},\dots,\theta^{(N)})=\sum_{n\neq m}\ W(q^*(\theta^{(n)})-q^*(\theta^{(m)}))$$
is $2\pi$-periodic in each of the variables $\theta^{(n)}$, $n=1,\dots, N$. Therefore, we can write its Fourier expansion:
\begin{equation}\label{eq:wfourier}
\tilde W(\theta^{(1)},\dots,\theta^{(N)})=\ \sum_{n\neq m} \;\sum_{(k_{1},k_{2})\in \mathbb{Z}^2 }
w_{k_1,k_2,n,m} e^{i (k_{1}\theta^{(n)}+k_{2}\theta^{(m)})}.
\end{equation}
The function $\tilde W$ is of class $C^\infty$, so the Fourier coefficients $w$ decay fast
as $k_{1,2}$ grow. In particular, the series
\begin{equation}\label{eq:Psismoothdef}
\Psi (\theta^{(1)},\dots,\theta^{(N)})
=-i\ \; \sum_{n\neq m} \;\sum_{k_{1}+k_{2}\neq 0} \frac{w_{k_1,k_2,n,m}}{\omega_0(k_1+k_2)}
 e^{i (k_{1}\theta^{(n)}+k_{2}\theta^{(m)})}\end{equation}
is absolutely convergent, and the sum is a $C^\infty$ function of $\theta^{(1)},\dots,\theta^{(N)}$.
By construction,
\begin{equation}\label{psiwu}
\omega_0 (\partial_{\theta^{(1)}}\Psi + \ldots +\partial_{\theta^{(N)}}\Psi)  = \tilde W - U,
\end{equation}
where
$$U = \sum_{n\neq m} \; \sum_{k_{1}+k_{2}=0} w_{k_1,k_2,n,m} e^{i(k_{1}\theta^{(n)}+k_{2}\theta^{(m)})}
= \sum_{n\neq m} \; \sum_{k\in \mathbb{Z}}
w_{k,-k,n,m} e^{i k(\theta^{(n)}-\theta^{(m)})}.$$
Substituting (\ref{eq:wfourier}) in (\ref{eq:averagedw}) and integrating over time shows that the above  $U$ is indeed the averaged potential given by (\ref{eq:averagedpotu}).

Now, we perform a symplectic coordinate change
\begin{equation}\label{subst1}
I_{0}^{(n)} \rightarrow  I_{0}^{(n)} - \delta^{1/2}\partial_{\theta^{(n)}}\Psi, \qquad n=1,\dots, N
\end{equation}
(all other variables remain unchanged).
The first term in the Hamiltonian (\ref{eq:npartnew}), $$\sum_{n=1}^N \omega I^{(n)}=\omega_0 (I_0^{(1)}+\dots + I_0^{(N)})+ \sum_{n=1}^N \sum_{j=1}^{d-1} \omega_j I_j^{(n)},$$
after substituting (\ref{subst1}),  produces,  by (\ref{psiwu}), the additional term
$-\delta^{1/2} (\tilde W-U)$. Substituting  (\ref{subst1}) in the $O(\delta^{1/2})$ terms leads only to $O(\delta)$ corrections. Hence, the Hamiltonian takes the required form (\ref{eq:npartav}).\end{proof}

Now, as explained in Section \ref{sec:setup}, we utilize the translation symmetry of the averaged potential
\(U(\theta^{(1)}, \dots, \theta^{(N)})\)
and introduce the collective phase
$\varphi=\frac{1}{N}(\theta^{(1)}+ \dots +\theta^{(N)})$
and coordinates $\psi=(\psi_1,\dots,\psi_{N-1})$ which measure small deviations of the longitudinal motion from the line  (\ref{lineq}) of minima defined by \(\theta_{min}\). The precise definition of the coordinates $\psi$
is as follows.
 We choose an $(N\times N)$ orthogonal matrix $R$ such that its first row is
 $\frac{1}{\sqrt{N}} \;(1, \dots, 1)$, and define
\begin{equation}
(\varphi,\psi)^\top=\frac{1}{\sqrt{N}} R \left( \begin{array}{c} \theta^{(1)}-\theta^{(1)}_{min}\\ \vdots \\
\theta^{(N)}-\theta^{(N)}_{min}\end{array} \right).\label{eq:phipsitrans}
\end{equation}
By construction, the second and further rows of $R$ are all orthogonal to $(1, \dots, 1)$, which implies that $\psi=0$ for every
point of the line (\ref{lineq}). Notice that while \(\varphi\) is an angle (the $O(\delta^{3/4})$ term in (\ref{eq:npartav}) is periodic in  \(\varphi\)),  the variables \(\psi_{n}\) correspond to small deviations from the minimum and we do not define them globally (so they are not angular variables). In the new coordinates the averaged potential $U$ is independent of $\varphi$, and  is given by $\hat U(\psi)$ of  (\ref{uhat}).
Next, we define conjugate momenta $(P, J_1,\dots,J_{N-1})$ corresponding to the variables $(\varphi,\psi)$:
\begin{equation}\label{eq:pjdef}
(P,J)^\top=\sqrt{N} R \left(\begin{array}{c} I_0^{(1)}\\ \vdots \\ I_0^{(N)}\end{array}\right).
\end{equation}
In particular,
$$P= \sum_{n=1}^N I_0^{(n)}.$$
Since the first row of the orthogonal matrix $R$ is
 $\frac{1}{\sqrt{N}} \;(1, \dots, 1)$, it follows that
$$I_0^{(n)}=\frac{1}{\sqrt{N}}(\frac{P}{\sqrt{N}}+\sum _{m=1}^{N-1}R_{m+1,n}J_m)).$$
This transformation $(\theta_0^{(1)}, \dots, \theta_0^{(N)},I_0^{(1)}, \dots, I_0^{(N)}) \mapsto (\varphi,\psi,P,J)$ is defined by the generating function $\sqrt{N}(\varphi,\psi) R  (I_0^{(1)}, \dots, I_0^{(N)})^\top$,
 so it is symplectic.

Therefore, we can perform this transformation in the Hamiltonian function (\ref{eq:npartav}) directly. As a result, we obtain the new Hamiltonian (recall that the matrix $A$ in (\ref{eq:npartav}) is given by (\ref{amatr})):
\begin{equation}
\begin{array}{l}
H=\omega_{0}P +\sum_{n=1}^N\sum_{j=1}^{d-1}\omega_j I^{(n)}_j +\\
\\ \quad + \delta^{1/2} (\; a \frac{P^2+J^2}{2N}+
\sum_{n=1}^{N} \frac{1}{\sqrt{N}}(\frac{P}{\sqrt{N}}+\sum _{m=1}^{N-1}R_{m+1,n}J_m) \sum_{j=1}^{d-1} b_{j} I^{(n)}_j+
\frac{1}{2} \sum_{n=1}^{N} \sum_{i,j=1}^{d-1} \hat a_{ij} I^{(n)}_jI^{(n)}_i)+\\ \\  \qquad
+ \delta^{1/2}\hat U(\psi)+O(\delta^{3/4}).
\end{array}\label{eq:hamlitafterphipsi}
\end{equation}
Denoting  $\hat \omega= (\omega_1,\dots,\omega_{d-1})$, $\hat I^{(n)}= (I^{(n)}_1,\dots,I^{(n)}_{d-1})^{\top}$
(recall that $\displaystyle I_j^{(n)}=\frac{(z_j^{(n)})^2}{2}= \frac{(x_j^{(n)})^2+(y_j^{(n)})^2}{2}$), and
$R_n=(R_{2,n}, \dots, R_{N,n})^\top$,  the Hamiltonian recasts as
\begin{equation}\label{hprep}
\!\!\!\!\!\!\begin{array}{l} \displaystyle
H(P,J,\varphi,\psi,\{z_{j}^{(n)}\}_{j=1,..,d-1,n=1,..N})\\=\omega_{0}P +\sum_{n=1}^N \hat\omega \hat I^{(n)}\! +
\delta^{1/2} \; \left(a\; \frac{P^2+J^2}{2N}+
\frac{1}{\sqrt{N}} \sum_{n=1}^{N} (\frac{P}{\sqrt{N}}+R_n^\top J) b \hat I^{(n)}+ \sum_{n=1}^{N}
\frac{1}{2}\; \hat I^{(n)} \hat A \hat I^{(n)}\right)+\\ \displaystyle \qquad\qquad\qquad
+ \delta^{1/2}\hat U(\psi)+O(\delta^{3/4}).
\end{array}
\end{equation}
Note that $\dot\varphi = \partial_P H = \omega_0 +O(\delta^{1/2}) >0$ in this system. Therefore, the Poincar\'e return map
from the hypersurface \(\varphi=0\) to \(\varphi=2\pi\) (i.e., to itself) is well-defined.
\begin{lem}\label{lem:poinc1}
The Poincar\'e return map for system (\ref{hprep}) restricted to the energy level $H=h$ is $O(\delta^{3/4})$-close to
the time-\(\frac{2\pi}{\omega_0}\) map of the system
\begin{equation} \label{hamtr}\begin{array}{l}
\dot J =-\delta^{1/2}\;\partial_\psi \hat U(\psi),  \qquad
\dot \psi =\delta^{1/2}\;(\frac{a}{N} J + \frac{1}{\sqrt{N}} \sum_{n=1}^N (b \hat I^{(n)})R_n),\\ \\
\dot x^{(n)}_j = \Omega_j^{(n)} y^{(n)}_j,\qquad
\dot y^{(n)}_j = - \Omega_j^{(n)} x^{(n)}_j\qquad (j=1,\dots, d-1; \;n=1, \dots, N),\\
\end{array}
\end{equation}
where
\begin{equation}\label{lastom}
\begin{array}{l}\Omega^{(n)}_j= \frac{h}{\omega_0 N} (b_j - \frac{a}{\omega_0}\omega_j)  +
\frac{1}{\omega_0\sqrt{N}} (R_n^\top J)b_j + \hat A_j  \hat I^{(n)}-\\  \\ \qquad\qquad\qquad\qquad -\;
 \frac{1}{\omega_0 N}\sum_{l=1}^{d-1}\sum_{m=1}^N(\omega_j b_l +
b_j\omega_l - \frac{a}{\omega_0}\omega_l\omega_j) I^{(m)}_l;
\end{array}
\end{equation}
here
$b_j$ stands for the $j$-th element of the vector $b$, and $\hat A_j$ for the $j$-th row of the matrix $\hat A$.
\end{lem}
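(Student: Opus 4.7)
The plan is to reduce the Poincar\'e map to the time-advance map of a lower-dimensional non-autonomous Hamiltonian flow by using $\varphi$ as the new independent variable on the energy level $H=h$, and then to show that this reduced flow coincides, up to an $O(\delta^{3/4})$ error, with the $\sigma$-rescaled version of the autonomous system (\ref{hamtr}).

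\emph{Energy reduction.} Since $\dot\varphi=\partial_P H=\omega_0+O(\delta^{1/2})$ is bounded away from zero uniformly in a neighborhood of the section $\varphi=0$, the energy equation $H=h$ can be solved smoothly for $P$:
\begin{equation*}
P=P_0(\hat I;h)-\frac{\delta^{1/2}}{\omega_0}\Bigl[\frac{a(P_0^2+J^2)}{2N}+\sum_{n=1}^N\frac{1}{\sqrt N}\bigl(\tfrac{P_0}{\sqrt N}+R_n^\top J\bigr)b\hat I^{(n)}+\sum_{n=1}^N\tfrac12\hat I^{(n)}\hat A\hat I^{(n)}+\hat U(\psi)\Bigr]+O(\delta^{3/4}),
\end{equation*}
where $P_0=(h-\sum_n\hat\omega\hat I^{(n)})/\omega_0$. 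The standard reduction then shows that, with $\varphi$ playing the role of time, the dynamics on the energy level are generated by the reduced Hamiltonian $K=-P$.

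\emph{Derivation of the reduced equations.} Direct differentiation of $K$ yields
\begin{equation*}
\frac{d\psi}{d\varphi}=\frac{\delta^{1/2}}{\omega_0}\Bigl(\frac{a}{N}J+\frac{1}{\sqrt N}\sum_n(b\hat I^{(n)})R_n\Bigr)+O(\delta^{3/4}),\qquad \frac{dJ}{d\varphi}=-\frac{\delta^{1/2}}{\omega_0}\partial_\psi\hat U+O(\delta^{3/4}),
\end{equation*}
while for the oscillator variables $\dot x^{(n)}_j=(\partial K/\partial I^{(n)}_j)\,y^{(n)}_j$ and $\dot y^{(n)}_j=-(\partial K/\partial I^{(n)}_j)\,x^{(n)}_j$. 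The key algebraic point is that $\partial K/\partial I^{(n)}_j$, after using the energy identity $h=\omega_0 P_0+\sum_n\hat\omega\hat I^{(n)}$ to eliminate $P_0$ in favour of $h$, matches exactly $\Omega^{(n)}_j/\omega_0$ as given by (\ref{lastom}), up to an $O(\delta^{3/4})$ remainder.

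\emph{Matching the two maps.} Reparameterizing (\ref{hamtr}) by $\sigma=\omega_0 t$ turns its time-$2\pi/\omega_0$ map into the $\sigma$-time-$2\pi$ flow of the system whose vector field is exactly $1/\omega_0$ times that of (\ref{hamtr}). A term-by-term comparison with the reduced $\varphi$-equations shows that the two vector fields agree to order $O(\delta^{3/4})$, uniformly on a compact neighborhood of the section. Integrating both flows over $\sigma\in[0,2\pi]$ and applying Gronwall's inequality to the propagated difference yields the claimed $O(\delta^{3/4})$-closeness of the two maps.

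\emph{Main obstacle.} The most delicate part is the algebraic identification of the $\delta^{1/2}$ correction to $\partial K/\partial I^{(n)}_j$ with the expression (\ref{lastom}) for $\Omega^{(n)}_j/\omega_0$: contributions arise from (i) the $\delta^{1/2}$ correction to $P$ coming from the quadratic-in-action terms in $H$, (ii) the implicit dependence of $P_0$ on $\hat I^{(n)}$ through $P_0=(h-\sum_n\hat\omega\hat I^{(n)})/\omega_0$, and (iii) the cross-term between the $O(\delta^{1/2})$-shift of the Poincar\'e return time and the leading frequency $\omega_j$. These three contributions must be combined and simplified via the energy identity so that the various pieces collapse into the single compact formula (\ref{lastom}); once this verification is done, the rest of the argument is a routine Gronwall estimate over a bounded interval.
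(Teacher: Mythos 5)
Your approach is essentially the same as the paper's: solve $H=h$ for $P$, pass to $\varphi$ (equivalently $\varphi/\omega_0$) as the new independent variable via the isoenergetic reduction $K=-P$, and compare the reduced flow with the $\sigma=\omega_0 t$ rescaling of (\ref{hamtr}); the paper performs the same reduction by dividing the vector field of (\ref{hamflow}) by $\dot\varphi/\omega_0$ rather than formally introducing $K=-P$, and your closing Gronwall estimate merely makes explicit what the paper states in one line.

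One point you should actually verify rather than assert: if you carry out the algebra you flag as the ``main obstacle,'' you will find $\partial K/\partial I^{(n)}_j=\frac{\omega_j}{\omega_0}+\frac{\delta^{1/2}}{\omega_0}\,\Omega^{(n)}_j+O(\delta^{3/4})$, \emph{not} $\Omega^{(n)}_j/\omega_0+O(\delta^{3/4})$ — the leading $\omega_j/\omega_0$ contribution, coming from $\partial P_0/\partial I^{(n)}_j=-\omega_j/\omega_0$, does not cancel, and $\Omega^{(n)}_j$ as defined in (\ref{lastom}) contains no $\omega_j$ term of order one. This is consistent with the explicit $\frac{2\pi}{\omega_0}\omega_j$ term appearing in (\ref{eq:mapijpsi}) of Lemma \ref{Lemma1.2}, which shows that (\ref{hamtr}) should be read as $\dot x^{(n)}_j=(\omega_j+\delta^{1/2}\Omega^{(n)}_j)y^{(n)}_j$; as stated, ``matches exactly $\Omega^{(n)}_j/\omega_0$'' is not correct and would lead you astray in the bookkeeping. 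Once the $\omega_j$ term is accounted for, the remaining comparison proceeds as you outline.
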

\begin{proof}
The system of differential equations defined by Hamiltonian (\ref{hprep}) is
\begin{equation} \label{hamflow}\begin{array}{l}
\dot P =O(\delta^{3/4}),  \\
\dot \varphi =\omega_{0} +\delta^{1/2}\; (\frac{a}{N}P+ \frac{1}{N}\sum_{n=1}^N b \hat I^{(n)})+O(\delta^{3/4}),\\ \\
\dot J =-\delta^{1/2}\; \partial_\psi \hat U(\psi)+O(\delta^{3/4}),  \\
\dot \psi =\delta^{1/2}\; (\frac{a}{N} J + \frac{1}{\sqrt{N}} \sum_{n=1}^N (b \hat I^{(n)}) R_n) +O(\delta^{3/4}),\\ \\
\dot x^{(n)}_j =  \partial_{y^{(n)}_j} H = \tilde \Omega_j^{(n)} y^{(n)}_j +O(\delta^{3/4}),\\
\dot y^{(n)}_j = -\partial_{x^{(n)}_j} H = - \tilde \Omega_j^{(n)} x^{(n)}_j +O(\delta^{3/4}) \qquad (j=1,\dots, d-1; \;n=1, \dots, N),\\
\end{array}
\end{equation}
where
$\tilde \Omega_j^{(n)}=\frac{\partial H}{\partial I_{j}^{(n)}}=\omega_j+\delta^{1/2}(\frac{1}{\sqrt{N}} (\frac{P}{\sqrt{N}}+R_n^\top J) b_j + \hat A_j \hat I^{(n)})$.
 Applying the inverse function theorem to (\ref{hprep}), we can express \(P\) as a function of all other variables on the energy level \(H=h \) $$P =\frac{1}{\omega_0}(h-\hat \omega\sum_{m=1}^N  \hat I^{(m)}) +O(\delta^{1/2}).$$
We substitute this expression into (\ref{hamflow}) and choose $\varphi/\omega_0$ as the new time variable (i.e., we divide
$\dot J$, $\dot\psi$, $\dot x^{(n)}_j$, and $\dot y^{(n)}_j$ to $\dot\varphi/\omega_0$). One can see that the result is $O(\delta^{3/4})$-close to system (\ref{hamtr}). Since the sought Poincar\'e map is the time-$\frac{2\pi}{\omega_0}$ map in the new time, we  immediately obtain the lemma.
\end{proof}
\begin{lem}\label{Lemma1.2}
Let $K=\left \lfloor\frac{\omega_0}{2\pi\delta^{1/2}}\right\rfloor$ and $\nu=(\nu_1,\dots,\nu_{d-1})$, where $\nu_j=2\pi\left\{K\frac{\omega_j}{\omega_0}\!\right\}$, $j=1,\dots,d-1$.
Then, for all small $h$, the $K$-th iteration of any map which is $O(\delta^{3/4})$-close to the time-\(\frac{2\pi}{\omega_0}\) map of system  (\ref{hamtr}), is $O(\delta^{1/4})$-close to
the time-1 map of the flow defined by the Hamiltonian
\begin{equation}\label{eq:timeoneham}
\begin{array}{l}\displaystyle
\bar H =\frac{a}{2N}J^2+\hat U(\psi)+ \frac{1}{\sqrt{N}} \sum_{n=1}^N  (b \hat I^{(n)}) (R_n^\top J)
+  (\nu+\frac{h}{\omega_0 N}(b - \frac{a }{\omega_0}\hat \omega)) \sum_{n=1}^N \hat I^{(n)} +\\ \\
\qquad\qquad\qquad\qquad\qquad \displaystyle +\;
\frac{1}{2} \sum_{n=1}^N \hat I^{(n)} \hat A \hat I^{(n)} -
\frac{1}{2} \sum_{n=1}^N\sum_{m=1}^N \hat I^{(n)} S \hat I^{(m)}, \end{array}
\end{equation}where the symmetric matrix $S$ is given by
\begin{equation}\label{smatrdef}
S=\frac{1}{\omega_0 N}(\hat \omega^{\top} b +b^{\top} \hat\omega\ - \frac{a}{\omega_0}\hat\omega^{\top} \hat\omega),\end{equation}
namely
$$S_{jl}=\frac{1}{\omega_0 N}(\omega_j b_l + \omega_l b_j - \frac{a}{\omega_0}\omega_l\omega_j),
\qquad j,l=1,\dots,d-1.$$
\end{lem}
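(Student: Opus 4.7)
The plan is to reduce the $K$-fold iterate of $\tilde\Phi$ --- any map which is $O(\delta^{3/4})$-close to the time-$\frac{2\pi}{\omega_0}$ map $\Phi$ of (\ref{hamtr}) --- to the time-$1$ flow of $\bar H$. The first ingredient is a shadowing/Gronwall bound: since $\Phi$ differs from the identity on the slow block $(J,\psi)$ only by $O(\delta^{1/2})$ and acts as an isometric rotation on each fast block $(x^{(n)}_j,y^{(n)}_j)$, per-step errors of size $O(\delta^{3/4})$ accumulate at most linearly, giving
\begin{equation*}
\tilde\Phi^{K}=\Phi^{K}+O(K\delta^{3/4})=\Phi^{K}+O(\delta^{1/4}),
\end{equation*}
since $K=\lfloor\omega_0/(2\pi\delta^{1/2})\rfloor=O(\delta^{-1/2})$. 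It therefore suffices to compare $\Phi^{K}$ directly with the time-$1$ map $\Psi$ of $\bar H$.

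Next I exploit the structure of (\ref{hamtr}): each pair $(x^{(n)}_j,y^{(n)}_j)$ undergoes a pure rotation, so the actions $\hat I^{(n)}_j=\tfrac{1}{2}((x^{(n)}_j)^2+(y^{(n)}_j)^2)$ are conserved along orbits, and the frequencies $\Omega^{(n)}_j$ in (\ref{lastom}) depend only on $J$ and on these (frozen) constants. Treating $\hat I$ as parameters, the $(J,\psi)$-subsystem of (\ref{hamtr}) is the Hamiltonian flow of
\begin{equation*}
\delta^{1/2}H_{slow}(J,\psi;\hat I):=\delta^{1/2}\Bigl[\tfrac{a}{2N}J^{2}+\hat U(\psi)+\tfrac{1}{\sqrt{N}}\sum_{n=1}^{N}(b\hat I^{(n)})(R_n^{\top}J)\Bigr],
\end{equation*}
which, after rescaling $\tau=\delta^{1/2}t$, is the flow of $H_{slow}$ itself. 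Using $K\cdot\frac{2\pi}{\omega_0}=\delta^{-1/2}+O(1)$, the slow block of $\Phi^{K}$ realizes the time-$(1+O(\delta^{1/2}))$ flow of $H_{slow}$, which differs from the time-$1$ flow by $O(\delta^{1/2})$; since $H_{slow}$ is exactly the $(J,\psi)$-dependent part of $\bar H$, the slow block of $\Phi^{K}$ matches that of $\Psi$ up to $O(\delta^{1/2})$.

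The crux is the oscillator phases. The total rotation of $(x^{(n)}_j,y^{(n)}_j)$ after $K$ iterates splits into a \emph{fast} piece $K\omega_j\cdot\frac{2\pi}{\omega_0}=2\pi K\omega_j/\omega_0$ from the free frequency $\omega_j$ --- whose residue modulo $2\pi$ is precisely $\nu_j$ by definition --- and a \emph{slow} piece accumulated from the remaining corrections in $\Omega^{(n)}_j$. Reparameterized by $\tau$, the slow piece equals $\int_{0}^{1}\Omega^{(n)}_j(J(\tau),\hat I)\,d\tau+O(\delta^{1/2})$. Differentiating $\bar H$ with respect to $\hat I^{(n)}_j$ produces exactly this integrand plus $\nu_j$: the coefficient $(\nu+\tfrac{h}{\omega_0 N}(b-\tfrac{a}{\omega_0}\hat\omega))$ supplies the $\nu_j$ and $h$-linear contributions, $\hat A_j\hat I^{(n)}$ and $\frac{1}{\sqrt N}(R_n^{\top}J)b_j$ match directly, and the symmetric matrix $S$ of (\ref{smatrdef}) is precisely the structure produced by the coefficient $\omega_jb_l+b_j\omega_l-\frac{a}{\omega_0}\omega_l\omega_j$ in the last sum of (\ref{lastom}). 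Matching slow and fast blocks gives $\Phi^{K}=\Psi+O(\delta^{1/2})$, and combining with the shadowing step proves the lemma.

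The main obstacle I anticipate is the bookkeeping in the third step: checking that the matrix $S$ and the $\nu$-shift fit together term-by-term with $\partial_{\hat I^{(n)}_j}\bar H$, with the correct normalization and the required symmetrization $S^{\top}=S$. A secondary technical point is controlling the Riemann-versus-integral approximation of the slow-piece rotation to order $O(\delta^{1/4})$, which requires uniform bounds on $\Omega^{(n)}_j$ and its derivatives on the compact domain of interest; these bounds follow from the smallness of $h$ together with bounded $|J|$ and $\|\hat I\|$.
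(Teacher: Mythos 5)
Your proposal is correct and follows essentially the same route as the paper: a linear-accumulation (Gronwall/shadowing) bound to reduce to the flow map $\Phi^K$, a rotating-frame reduction in which the fast rotation $2\pi K\omega_j/\omega_0$ contributes exactly the residue $\nu_j$ (the paper phrases this via $\alpha^{(n)}_j(k)=\phi^{(n)}_j(k)-2\pi k\lambda_j$ with $K\lambda_j\in\mathbb{Z}$), and a block-by-block identification of the resulting near-identity map with the time-$1$ flow of $\bar H$, using that $\hat I$ is conserved so the system is a skew product over the slow $(J,\psi)$ dynamics. The explicit identity $\partial_{\hat I^{(n)}_j}\bar H=\nu_j+\Omega^{(n)}_j$ that you flag as the main bookkeeping task is indeed the final verification step in the paper's proof.
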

\begin{proof}
Denote $(x^{(n)}_j, y^{(n)}_j)=
\sqrt{2 I^{(n)}_j} (\cos \phi^{(n)}_j,\sin \phi^{(n)}_j )$. For system (\ref{hamtr}), the actions
\(I^{(n)}_j=\frac{(x^{(n)}_j)^2+(y^{(n)}_j)^2}{2}\)
are constants of motion. So, for this system, the time-$\frac{2\pi}{\omega_0}$ map
$(J, \psi, I^{(n)}_j, \phi^{(n)}_j) \mapsto (\bar J, \bar \psi, \bar I^{(n)}_j, \bar \phi^{(n)}_j)$ ($j=1,\dots,d-1; n=1,\dots, N$)
is given by
\begin{equation}
\begin{array}{l}
\bar J = J - \frac{2\pi}{\omega_0} \delta^{1/2} \;\partial_\psi \hat U(\psi)+O(\delta),  \\
\bar \psi =  \psi +\frac{2\pi}{\omega_0} \delta^{1/2} \;
(\frac{a}{N} J + \frac{1}{\sqrt{N}} \sum_{n=1}^N (b \hat I^{(n)}) R_n)+O(\delta), \\ \\
\bar I^{(n)}_j = I^{(n)}_j,\\
\bar \phi^{(n)}_j = \phi_j^{(n)} +  \frac{2\pi}{\omega_0} \omega_j +
 \frac{2\pi}{\omega_0} \delta^{1/2}\; \Omega^{(n)}_j +O(\delta).
\end{array}\label{eq:mapijpsi}
\end{equation}

This map is \(O(\delta^{1/2})\)-close to an isometry (a rigid rotation of the variables $(x^{(n)}_j,y^{(n)}_j)$).  When iterating such maps,  a small error added at each iteration
will propagate linearly as long as the number of iterations is of order \(1/\delta^{1/2}\). Since
$K=O\left(\frac{1}{\delta^{1/2}}\right)$, it follows that the $K$-th iteration of any map which is $O(\delta^{3/4})$-close
to the time-$\frac{2\pi}{\omega_0}$ map of the flow of (\ref{hamtr}) is $O(K\delta^{3/4})$-close
to the time-$K\frac{2\pi}{\omega_0}$ map of (\ref{hamtr}), i.e., $O(\delta^{1/4})$-close to the $K$-th iteration of (\ref{eq:mapijpsi}).

Thus, to prove the lemma, it is enough to show that the $K$-th iteration of (\ref{eq:mapijpsi}) is $O(\delta^{1/4})$-close to
the time-$1$ map of (\ref{eq:timeoneham}) (in fact we prove that it is $O(\delta^{1/2})$-close). We do this by moving to a rotating coordinate frame. Denote
$$\lambda_j=\frac{1}{K}\left\lfloor K\frac{\omega_j}{\omega_0}\right\rfloor, \qquad \nu_j=2\pi K (\frac{\omega_j}{\omega_0}-\lambda_j)$$
(note that these are the same $\nu_j$ as in the statement of the lemma). The rotating coordinate frame corresponds to the new variables:
$$\alpha^{(n)}_j (k)= \phi^{(n)}_j (k)- 2\pi k  \lambda_j$$
 i.e., at each iteration of the map  (\ref{eq:mapijpsi}) we subtract  $2\pi \lambda_j$
from $\phi^{(n)}_j$. This brings the map (\ref{eq:mapijpsi}) to the form:
\begin{equation}\label{pmrot}
\begin{array}{l}
\bar J = J - \frac{1}{K} \;\partial_\psi \hat U(\psi)+O(\delta),  \\
\bar \psi =  \psi +\frac{1}{K} \;
(\frac{a}{N} J + \frac{1}{\sqrt{N}} \sum_{n=1}^N (b \hat I^{(n)}) R_n)+O(\delta), \\ \\
\bar I^{(n)}_j = I^{(n)}_j,\\
\bar \alpha^{(n)}_j = \alpha_j^{(n)} +  \frac{1}{K}\;(\nu_j+\Omega^{(n)}_j) +O(\delta)\end{array}
\end{equation}
(we use here that $\frac{1}{K}= \frac{2\pi}{\omega_0} \delta^{1/2}+ O(\delta)$). This is a near-identity map which is
$O(\delta)$-close to the time-$\frac{1}{K}$ map of the system
\begin{equation}\label{eqdh}
\begin{array}{l}
\dot J = - \partial_\psi \hat U(\psi),  \\
\dot \psi =  \frac{a}{N} J + \frac{1}{\sqrt{N}} \sum_{n=1}^N (b \hat I^{(n)}) R_n,\\
\dot I^{(n)}_j = 0,\\
\dot \alpha^{(n)}_j =\nu_j+ \Omega^{(n)}_j. \end{array}
\end{equation}
Therefore, the $K$-th iteration of the map (\ref{pmrot}) is $O(K \delta)$- (i.e.,  $O(\delta^{1/2})$-) close to the time-$1$ map
of this system. Returning to the non-rotating phases $\phi^{(n)}_j$ does not change the $K$-th iteration
of the map: since $K\lambda_j$ are integers by construction, $\phi^{(n)}_j$ coincides, after the $K$-th iteration,
with $\alpha^{(n)}_j$ modulo $2\pi$, for all $j$.

It remains to note that system (\ref{eqdh}) with \(\Omega^{(n)}_j\) defined by (\ref{lastom})  indeed corresponds to the Hamiltonian (\ref{eq:timeoneham}) (where
$\alpha^{(n)}_j$ are the angular variables conjugate to the actions $I_j^{(n)}$).
\end{proof}

We show next that the twist Assumption SP2 and the KAM assumption IP1 imply:
\begin{lem}\label{lem:positivekam} At \(h=0 \), the system (\ref{eqdh}) (corresponding to the Hamiltonian (\ref{eq:timeoneham})) has a positive measure set of KAM tori near its equilibrium at the origin.
\end{lem}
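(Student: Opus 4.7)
The plan is to exploit the partial integrability of (\ref{eqdh}): the Hamiltonian (\ref{eq:timeoneham}) at $h=0$ does not depend on the transverse angles $\alpha^{(n)}_j$, so the $N(d-1)$ transverse actions $\hat I^{(n)}_j$ are first integrals. For each fixed $\hat I$, the reduced $(J,\psi)$-subsystem is, after the parameter-dependent canonical shift $\tilde J=J+\frac{\sqrt N}{a}\sum_n(b\hat I^{(n)})R_n$ (canonical on the reduced space because $\hat I$ is just a parameter there), governed by $\frac{a}{2N}\tilde J^2+\hat U(\psi)$ modulo an $\hat I$-dependent additive constant. This is exactly the Hamiltonian (\ref{eq:hamav}) featured in Assumption IP1, so it carries a positive-measure set of KAM tori near the elliptic equilibrium $(\tilde J,\psi)=0$. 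Lifting such a torus by the transverse rotations $\dot\alpha^{(n)}_j=\nu_j+\Omega^{(n)}_j(J,\hat I)$ produces a $(Nd-1)$-dimensional invariant torus of the full system, so that the lemma reduces to showing that the frequency vector on these tori varies nondegenerately with the actions.

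\textbf{Computation of the twist.} On a reduced KAM torus the time-reversal symmetry $\tilde J\mapsto-\tilde J$, $t\mapsto-t$ of $\frac{a}{2N}\tilde J^2+\hat U(\psi)$ forces $\langle\tilde J\rangle=0$, whence $\langle J\rangle=-\frac{\sqrt N}{a}\sum_m(b\hat I^{(m)})R_m$. Substituting this together with the orthogonality identity $R_n^\top R_m=\delta_{nm}-\frac{1}{N}$ into the expression (\ref{lastom}) for $\Omega^{(n)}_j$ at $h=0$, the mean transverse frequencies $\tilde\omega^T_{n,j}=\nu_j+\langle\Omega^{(n)}_j\rangle$ become linear in $\hat I$ with Jacobian
$$
\frac{\partial\tilde\omega^T}{\partial\hat I}=\tilde C\otimes I_N+\tilde D\otimes J_N,\qquad \tilde C=\hat A-\frac{b^\top b}{\omega_0 a},\quad \tilde D=\frac{b^\top b}{\omega_0 aN}-S,
$$
where $J_N$ is the $N\times N$ all-ones matrix. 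Diagonalising the second tensor factor via the orthogonal splitting $\mathbb{R}^N=\mathrm{span}(1,\ldots,1)\oplus(1,\ldots,1)^\perp$ makes this matrix similar to $(\tilde C+N\tilde D)\oplus\tilde C^{\oplus(N-1)}$ with $\tilde C+N\tilde D=\hat A-NS$. A Schur-complement expansion of the iso-energetic twist matrix $A_\omega$ of (\ref{twistelw}) along its top-left $2\times 2$ block yields $\det A_\omega=-\omega_0^2\,\det(\hat A-NS)$, so Assumption SP3 gives $\det(\hat A-NS)\neq 0$. Combined with the KAM-nondegeneracy of the longitudinal block coming from Assumption IP1, the Birkhoff normal form of $\bar H$ at the origin has a nondegenerate action-action twist, and the classical KAM theorem then delivers a positive-measure set of Diophantine invariant tori.

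\textbf{Main obstacle.} The delicate step is verifying that the $(N-1)$-fold repeated block $\tilde C=\hat A-\frac{b^\top b}{\omega_0 a}$, which governs the ``antisymmetric'' transverse directions, is also nonsingular; this is not captured verbatim by SP3 because of the stray $\omega_0$ factor. I would handle it by invoking R\"ussmann's weak non-degeneracy criterion, which requires only that the image of the frequency map $(I^L,\hat I)\mapsto(\omega^L,\tilde\omega^T)$ not be contained in any hyperplane through the origin. This follows from the non-vanishing of $\det(\hat A-NS)$ together with the IP1 twist, and R\"ussmann's theorem still yields a positive measure of Diophantine tori. Alternatively, one can re-derive the quartic Birkhoff normal form by explicitly eliminating the cubic coupling $\frac{1}{\sqrt N}\sum_n(b\hat I^{(n)})(R_n^\top J)$ through a near-identity canonical transformation and show that the resulting transverse twist matrix is block-equivalent to one whose nonsingularity is a consequence of SP3 alone. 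In either case one must also check the low-order non-resonance between the longitudinal frequencies (controlled by IP1), the transverse frequencies $\nu_j$ (controlled by SP2 together with a generic choice of $\delta$, since $\nu_j=2\pi\{K\omega_j/\omega_0\}$ varies with $\delta$), and any linear combinations thereof; standard arguments show this condition is satisfied for a positive-measure subset of tori.
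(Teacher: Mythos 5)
Your decomposition is the same as the paper's: decouple $(J,\psi)$ from $(\hat I,\alpha)$ by the linear shift in $J$, feed the longitudinal block into Assumption IP1, and verify the transverse twist from SP3. Your reformulation of the transverse Hessian as $\tilde C\otimes I_N+\tilde D\otimes J_N$, diagonalized into $(\hat A-NS)\oplus\tilde C^{\oplus(N-1)}$ via the splitting $\mathbb{R}^N=\mathrm{span}(1,\dots,1)\oplus(1,\dots,1)^\perp$, is a tidier way to say what the paper does with row and column operations on the matrix $M$ in Lemma \ref{lem:HIhastwist}, and the identity $\det A_\omega=-\omega_0^2\det(\hat A-NS)$ is exactly the one the paper establishes. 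So the architecture is right.

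The "main obstacle'' you flag, however, is a phantom created by a computational slip. You took the $J$-dependence of the transverse frequencies from formula (\ref{lastom}), whose $(R_n^\top J)b_j$ term carries a factor $\frac{1}{\omega_0\sqrt N}$. But the governing Hamiltonian for the time-1 map is (\ref{eq:timeoneham}), and there the $J$--$\hat I$ coupling is $\frac{1}{\sqrt N}\sum_n(b\hat I^{(n)})(R_n^\top J)$, with no $\omega_0$; equivalently $\dot\psi=\frac{a}{N}J+\frac{1}{\sqrt N}\sum_n(b\hat I^{(n)})R_n$. Carrying the $\omega_0$-free coefficient through your computation gives $\tilde C=\hat A-\frac{1}{a}b^\top b$, and then the standard Schur factorization $\det A=a\,\det(\hat A-\frac{1}{a}b^\top b)$ makes $\det\tilde C\neq 0$ an immediate consequence of the twist condition (\ref{twistel}) in SP3. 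With that, the whole transverse determinant becomes $\det(\hat A-NS)\cdot\det(\hat A-\frac1a b^\top b)^{N-1}$, both factors nonzero by SP3, and no recourse to R\"ussmann non-degeneracy or a re-derivation of the normal form is needed. As written, though, you state that the block is "not captured verbatim by SP3'' and then sketch two alternative routes without carrying either one out, so the argument has a genuine gap at precisely the point it must close. Similarly, the additional "low-order non-resonance'' check between the longitudinal and transverse frequency groups is superfluous: once the joint twist matrix (block-diagonal, with the IP1 block and the $\hat I$-block) is invertible, a positive-measure subset of tori has Diophantine joint frequencies, and IP1 already encapsulates whatever Birkhoff non-degeneracy the longitudinal factor requires.
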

\begin{proof}
First, we make a change of coordinates which decouples the \((J,\psi)\) and
$(\hat  I,\alpha)$ degrees of freedom in (\ref{eq:timeoneham}). We achieve this goal by replacing
\begin{equation}\label{jjr}
J\rightarrow J - \frac{\sqrt{N}}{a} \sum_{n=1}^N (b\hat  I^{(n)}) R_n,
\end{equation}
where $R_n=(R_{2,n}, \dots, R_{N,n})$. Then, the right-hand side of the equation for $\dot\psi$ will be independent of $\hat I$ (i.e., independent of $(x,y)$). In order to make this a symplectic transformation,
we write it as
$$J_j= \tilde J_j - \frac{\sqrt{N}}{a} \sum_{n=1}^N \sum_{l=1}^{d-1} b_l  I^{(n)}_l R_{j+1,n}, \quad j=1,\dots, N-1$$
and also transform the $\alpha$-variables:
$$\alpha^{(n)}_l= \tilde \alpha^{(n)}_l +b_l \frac{\sqrt{N}}{a} \sum_{j=1}^{N-1}  \psi_j R_{j+1,n}, \quad l=1,\dots, d-1, \;\; n=1,\dots N.$$
The simplecticity of the transformation
\((J,\psi,\hat I,\alpha)\) to \((\tilde J,\tilde\psi=\psi,\tilde I=\hat I,\tilde\alpha)\) follows because it is defined by the generating function  \(\sum_{j=1}^{N-1}( (J_j + \frac{\sqrt{N}}{a} \sum_{n=1}^N \sum_{l=1}^{d-1} b_l   I^{(n)}_l R_{j+1,n})\tilde\psi_j+\sum_{n=1}^N I_j^{(n)}\tilde \alpha_j^{(n)}\)).

Performing this change of variables directly in the Hamiltonian (\ref{eq:timeoneham}) at $h=0$,  the new Hamiltonian is (omitting the tilde signs)
\begin{equation}\label{eq:sumofHIandUpsi}
\begin{array}{l}\displaystyle
H =\frac{a}{2N}J^2+\hat U(\psi) - \frac{1}{2a} \left(\sum_{n=1}^N  (b \hat I^{(n)}) R_n\right)^2
+  \nu \sum_{n=1}^N \hat I^{(n)} +\\ \\
\qquad\qquad\qquad\qquad\qquad \displaystyle +\;
\frac{1}{2} \sum_{n=1}^N \hat I^{(n)} \hat A \hat I^{(n)} -
\frac{1}{2} \sum_{n=1}^N\sum_{m=1}^N \hat I^{(n)} S \hat I^{(m)}. \end{array}
\end{equation}
This is the sum of the Hamiltonian (\ref{eq:hamav})
 that depends only on $J$ and $\psi$ and describes oscillations around the equilibrium at $(J=0,\psi=0)$,
and the Hamiltonian
\begin{equation}\label{eq:HsubI}
H_I = \nu \sum_{n=1}^N \hat I^{(n)} - \frac{1}{2a} \left(\sum_{n=1}^N  (b \hat I^{(n)}) R_n\right)^2
+
\frac{1}{2} \sum_{n=1}^N \hat I^{(n)} \hat A \hat I^{(n)} -
\frac{1}{2} \sum_{n=1}^N\sum_{m=1}^N \hat I^{(n)} S \hat I^{(m)},
\end{equation}which depends only on $\hat I$ variables and describes rotations of the phases $\alpha$.
By the KAM Assumption IP1,   the Hamiltonian (\ref{eq:hamav}) has a positive measure set of KAM tori near the zero equilibrium. This means we only need to check that the Hamiltonian $H_I$ also has a positive set of KAM tori near the origin, which is proved next, in Lemma \ref{lem:HIhastwist}.
\end{proof}

\begin{lem}\label{lem:HIhastwist} The Hamiltonian \(H_I\) of (\ref{eq:HsubI}) satisfies the twist condition.
\end{lem}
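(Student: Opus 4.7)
The plan is to exploit that the Hamiltonian $H_I$ in (\ref{eq:HsubI}) is affine-plus-quadratic in the action variables $\hat I^{(n)}$, so the twist condition reduces to verifying that the Hessian of its quadratic part, viewed as a symmetric $(d-1)N \times (d-1)N$ matrix, is nondegenerate. To reveal the structure, I would perform the linear change of action variables $\hat I^{(n)} = \bar I + \Delta^{(n)}$, with $\bar I = \frac{1}{N}\sum_n \hat I^{(n)}$ and hence $\sum_n \Delta^{(n)} = 0$, and show that the quadratic form splits as
\begin{equation*}
Q(\bar I, \Delta) = \frac{N}{2}\,\bar I\,\bigl(\hat A - N S\bigr)\,\bar I \;+\; \frac{1}{2}\sum_{n=1}^N \Delta^{(n)}\,\bigl(\hat A - \tfrac{1}{a}\, b^\top b\bigr)\,\Delta^{(n)},
\end{equation*}
with no cross terms between $\bar I$ and $\Delta$.

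The decoupling rests on two orthogonality facts about the matrix $R$ used to define the variables. First, $\sum_n R_n = 0$, which follows because the first row of $R$ is $\frac{1}{\sqrt N}(1,\dots,1)$ and the remaining rows must be orthogonal to it; this removes the mean $\bar I$ from the term $\sum_n (b\hat I^{(n)}) R_n$. Second, $R_n^\top R_m = \delta_{nm} - \frac{1}{N}$, which together with $b\sum_n \Delta^{(n)} = 0$ collapses the cross-Gram $\sum_{n,m}(b\Delta^{(n)})(b\Delta^{(m)})(R_n^\top R_m)$ to $\sum_n (b\Delta^{(n)})^2$. The $S$-term is purely in $\bar I$ because $\sum_{n,m}\hat I^{(n)} S \hat I^{(m)} = (\sum_n \hat I^{(n)})\, S\, (\sum_m \hat I^{(m)}) = N^2\,\bar I S \bar I$. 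Setting $M_0 := \hat A - \tfrac{1}{a}b^\top b$ and $M_1 := \hat A - N S$, the Hessian of $H_I$ becomes block-diagonal with a $\bar I$-block equal to $N M_1$ and a $\Delta$-block, subject to $\sum_n \Delta^{(n)}=0$, whose determinant, after eliminating $\Delta^{(N)}$, factors as $N^{d-1}(\det M_0)^{N-1}$.

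The final step is to identify both $M_0$ and $M_1$ as Schur complements of the single-particle twist matrices from Assumption SP3. For $A = \bigl(\begin{smallmatrix} a & b \\ b^\top & \hat A\end{smallmatrix}\bigr)$, a Schur complement against $a$ (nonzero by SP1) gives $\det A = a\,\det M_0$, so the twist condition (\ref{twistel}) yields $\det M_0 \neq 0$. For $A_\omega$, a Schur complement against its top-left $2\times 2$ block (whose determinant is $-\omega_0^2$) gives, after direct computation of $Y^\top X^{-1} Y$ with $Y=\bigl(\begin{smallmatrix}\hat\omega\\ b\end{smallmatrix}\bigr)$ and comparison with (\ref{smatrdef}), the identity $\det A_\omega = -\omega_0^2 \det M_1$, so the iso-energetic twist (\ref{twistelw}) yields $\det M_1 \neq 0$. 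Combined with the previous step this proves that the Hessian of $H_I$ is nondegenerate, which is exactly the twist condition for this action-only Hamiltonian.

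The main obstacle I anticipate is not conceptual but bookkeeping: confirming that the explicit formula (\ref{smatrdef}) for $S$ reproduces $Y^\top X^{-1} Y$ on the nose and tracking the factors of $N$ and $\omega_0$ through both Schur complements. Once this algebraic identification is made, the argument is purely linear-algebraic and requires no small-divisor or further KAM input; in particular, the twist for $H_I$ is simply a repackaging of the two-line twist assumption SP3 on $L^*$.
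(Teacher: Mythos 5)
Your proof is correct, and it takes a genuinely different path from the paper's. The paper restores the cross terms with $J$ by inverting the substitution (\ref{jjr}), thereby recognizing the relevant quadratic form as the one whose Hessian is the $(d-1)N + (N-1)$-dimensional matrix $M$ of (\ref{matrm}); it then performs a sequence of row and column eliminations on $M$ to reach a block-triangular form whose determinant factors as $\left(\frac{a}{N}\right)^{N-1}\det\left(\hat A - \tfrac{1}{a}b^\top b\right)^{N-1}\det(\hat A - NS)$, and finally identifies the two factors with $\det A$ and $\det A_\omega$ via Schur complements. You instead stay entirely in the action space of $H_I$: the mean--deviation substitution $\hat I^{(n)} = \bar I + \Delta^{(n)}$ with $\sum_n\Delta^{(n)}=0$ block-diagonalizes the quadratic form in one step (using exactly the same relations (\ref{rmn}) and (\ref{rsm}) for $R$ that the paper uses), and the two blocks are then identified with the same Schur complements. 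I checked the key computation: with $X=\bigl(\begin{smallmatrix}0 & \omega_0\\ \omega_0 & a\end{smallmatrix}\bigr)$ and $Y=\bigl(\begin{smallmatrix}\hat\omega\\ b\end{smallmatrix}\bigr)$, one gets $Y^\top X^{-1}Y = \frac{1}{\omega_0}\left(\hat\omega^\top b + b^\top\hat\omega - \frac{a}{\omega_0}\hat\omega^\top\hat\omega\right) = NS$ by (\ref{smatrdef}), so $\det A_\omega = (\det X)\det(\hat A - Y^\top X^{-1}Y)=-\omega_0^2\det(\hat A - NS)$, exactly as you assert. The benefit of your route is that the block structure appears immediately from an invariant (translation-averaged) splitting of the action variables, replacing the paper's sequence of elementary matrix operations on $M$ with a coordinate choice; the paper's route has the virtue of directly producing the scalar identity (\ref{eq:detM}) with all prefactors, which your argument only determines up to the (harmless) Jacobian of the linear change of variables.
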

\begin{proof}
This condition is the requirement that the matrix of second derivatives
of $H_I$ with respect to $\hat I^{(n)}_j$ is non-degenerate, i.e., the quadratic form
\begin{equation}
\; -\frac{1}{2a} \left(\sum_{n=1}^N  (b \hat I^{(n)}) R_n\right)^2
+
\frac{1}{2} \sum_{n=1}^N \hat I^{(n)} \hat A \hat I^{(n)} -
\frac{1}{2} \sum_{n=1}^N\sum_{m=1}^N \hat I^{(n)} S \hat I^{(m)}
\label{eq:quadraticform1}
\end{equation}
is non-degenerate.
This is equivalent to the non-degeneracy of the quadratic form
$$\frac{a}{2N} J^2 -
\frac{1}{2a} \left(\sum_{n=1}^N  (b \hat I^{(n)}) R_n\right)^2
+
\frac{1}{2} \sum_{n=1}^N \hat I^{(n)} \hat A \hat I^{(n)} -
\frac{1}{2} \sum_{n=1}^N\sum_{m=1}^N \hat I^{(n)} S \hat I^{(m)},$$
where we added the dummy variables $J=(J_1,\dots, J_{N-1})^{\top}$. Replacing
$$J=\tilde  J + \frac{\sqrt{N}}{a} \sum_{n=1}^N (b \hat I^{(n)}) R_n$$
(note that this is the inverse of (\ref{jjr})), and omitting the tilde sign, we obtain the quadratic form
$$\frac{a}{2N} J^2 + \frac{1}{\sqrt{N}} \sum_{n=1}^N  (b \hat I^{(n)}) (R_n^\top J)
+
\frac{1}{2} \sum_{n=1}^N \hat I^{(n)} \hat A \hat I^{(n)} -
\frac{1}{2} \sum_{n=1}^N\sum_{m=1}^N \hat I^{(n)} S \hat I^{(m)};$$
Proving its non-degeneracy amounts to
showing the non-vanishing of the determinant of the following matrix
\begin{equation}\label{matrm}
M=\begin{pmatrix}\begin{pmatrix}\frac{a}{N} & \ldots & 0 \\
0 & \ddots\  &0 \\
0 & \ldots &  \frac{a}{N}\\
\end{pmatrix} & \frac{1}{\sqrt{N}}R_{1} b& \ldots & \ldots & \ldots & \!\frac{1}{\sqrt{N}}R_{N} b \\
 \frac{1}{\sqrt{N}}b^\top R_1^\top & \hat A-S & -S & \ldots & \ldots & \!\!-S\\
 \vdots& -S & \hat A-S & -S & \ldots &\!\!-S \\
\vdots & \vdots\ & \ldots & \ddots & \ldots & \!\vdots \\ \vdots & \!\vdots & &  &\; \ddots\\
\frac{1}{\sqrt{N}}b^\top R_N^\top & -S & \ldots & \ldots & \!\!-S & \!\!\!\!\hat A-S
\end{pmatrix}.
\end{equation}
Let us show that
\begin{equation}\label{eq:detM}
\det M =- \; \frac{1}{\omega_0^2} \left(\frac{1}{N}\det  A\right)^{N-1}\; \det  A_\omega.
\end{equation}
so, by the single particle twist Assumption, SP3,
$$\det M\neq0,$$
which will prove the lemma and the theorem.

Recall that \(R_n\)'s in the expression (\ref{matrm}) are the columns of an orthogonal matrix \(R\) without its first row. The first row of $R$ equals to
$\frac{1}{\sqrt{N}} \;(1, \dots, 1)$. Hence, it follows from $R^\top R=id$ that
\begin{equation}\label{rmn}
R_n^\top R_m = \left\{\begin{array}{cl} -1/N & \;\mbox{if} \;\; m\neq n,\\ 1-1/N & \;\mbox{if} \;\; m= n,\end{array}\right.
\end{equation}
and since \(R R^\top =id\)
\begin{equation}\label{rsm}
R_1+ \ldots + R_N=0.
\end{equation}
Subtract the last column in formula (\ref{matrm}) from each other column, except for the first one.
The resulting matrix
$$\begin{pmatrix}\begin{pmatrix}\frac{a}{N} & \ldots & 0 \\
0 & \ddots\  &0 \\
0 & \ldots &  \frac{a}{N}\\
\end{pmatrix} & \frac{1}{\sqrt{N}}(R_{1}-R_N) b& \!\frac{1}{\sqrt{N}}(R_2-R_N)b &\!\!\!\ldots & \ldots & \!\frac{1}{\sqrt{N}}R_{N} b \\
 \frac{1}{\sqrt{N}}b^\top R_{1}^\top & \qquad\qquad\;\;\hat A & 0 & \!\!\!\ldots& \ldots & \!\!-S\\
 \vdots& \qquad\qquad\;\;0 & \hat A& \!\!\!\!0 & \ldots &\!\!-S \\
\vdots &\qquad\qquad\;\; \vdots\ & \ldots & \!\!\!\!\!\!\ddots & \ldots & \!\vdots \\
\vdots &\qquad\qquad\;\; 0 & \ldots &  \!\!\!\!0&\; \hat A & \!-S\\
\frac{1}{\sqrt{N}}b^\top R_N^\top & \qquad\qquad-\hat A & \ldots & \!\!\!\ldots & \!\!-\hat A & \hat A-S
\end{pmatrix}$$
has the same determinant as $M$. We again get a matrix with the same determinant when, in the last formula,
we add all
rows, except for the first one, to the last row. The result is
$$\begin{pmatrix}\begin{pmatrix}\frac{a}{N} & \ldots & 0 \\
0 & \ddots\  &0 \\
0 & \ldots &  \frac{a}{N}\\
\end{pmatrix} & \frac{1}{\sqrt{N}}(R_1-R_N) b& \!\frac{1}{\sqrt{N}}(R_2-R_N)b &\!\!\!\ldots & \ldots & \!\frac{1}{\sqrt{N}}R_{N} b \\
 \frac{1}{\sqrt{N}}b^\top R_{1}^\top & \qquad\qquad\;\;\hat A & 0 & \!\!\!\ldots& \ldots & \!\!-S\\
 \vdots& \qquad\qquad\;\;0 & \hat A& \!\!\!\!0 & \ldots &\!\!-S \\
\vdots & \qquad\qquad\;\;\vdots\ & \ldots & \!\!\!\!\!\!\ddots & \ldots & \!\vdots
\\  \frac{1}{\sqrt{N}}b^\top R_{N-1}^\top & \qquad\qquad\;\;0 & \ldots &  \!\!\!\!0& \!\!\hat A & \!-S\\
0 & \qquad\qquad\;\;0 & \ldots & \!\!\!\ldots & \!\!0 & \;\hat A- NS
\end{pmatrix}.$$
Note that the utmost left bottom block in this matrix equals to $\frac{1}{\sqrt{N}}b^\top (R_1+\ldots+R_N)^\top$ and
is zero by (\ref{rsm}).

Next, for each $n=1, \dots, N-1$ we multiply the first row in this formula by $\frac{\sqrt{N}}{a}b^\top R_n^\top$ and subtract the result from the $(n+1)$-th row
(so we do not change the first and the last rows). By (\ref{rmn}), this gives us the block-triangular matrix
$$\begin{pmatrix}\begin{pmatrix}\frac{a}{N} & \ldots & 0 \\
0 & \ddots\  &0 \\
0 & \ldots &  \frac{a}{N}\\
\end{pmatrix} & \frac{1}{\sqrt{N}}(R_1-R_N) b& \!\frac{1}{\sqrt{N}}(R_2-R_N)b &\!\!\!\ldots & \ldots & \!\frac{1}{\sqrt{N}}R_{N} b \\
0 & \qquad\qquad\;\;\hat A - \frac{1}{a} b^\top b & 0 & \!\!\!\ldots& \ldots & \!\!-S+\frac{1}{aN} b^\top b\\
 \vdots& \qquad\qquad\;\;0 & \hat A-\frac{1}{a} b^\top b& \!\!\!\!0 & \ldots &\!\!-S+\frac{1}{aN} b^\top b \\
\vdots & \qquad\qquad\;\;\vdots\ & \ldots & \!\!\!\!\!\!\ddots & \ldots & \!\vdots
\\  \vdots & \qquad\qquad\;\;0 & \ldots &  \!\!\!\!0& \hat A- \frac{1}{a} b^\top b & \!-S+\frac{1}{aN} b^\top b\\
0 & \qquad\qquad\;\;0 & \ldots & \!\!\!\ldots & \!\!0 & \;\hat A- NS
\end{pmatrix}.$$
By construction, its determinant equals to the determinant of $M$, which gives
$$\det M = \left(\frac{a}{N}\right)^{N-1} \; \det (\hat A - \frac{1}{a} b^\top b)^{N-1}\; \det(\hat A- NS).$$
Now, formula (\ref{eq:detM}) follows, since
$$\det A=\det \begin{pmatrix}a & b \\
b^\top  & \hat A \\
\end{pmatrix}=\det \begin{pmatrix}a & 0 \\
b^\top  & \hat A-\frac{1}{a}b^\top b \\
\end{pmatrix} \begin{pmatrix}1 & \frac{1}{a}b \\
0  &  I \\
\end{pmatrix}=a \det (\hat A - \frac{1}{a} b^\top b)  ,$$
 and, by (\ref{smatrdef}),
 $$-\omega_0^2 \det (\hat A - NS) = -\omega_0^2 \det(\hat A -  \frac{1}{\omega_0}(\hat \omega^{T} b +b^{T} \hat\omega\ - \frac{a}{\omega_0}\hat\omega^{T} \hat\omega))=\det\left(\!\!\begin{array}{ccc}
 -\frac{\omega_0^2}{a} & \omega_0 & \hat\omega \\  0 & a &  b \\ 0 &  0 & \hat A+\frac{a}{\omega_0^2}(\hat\omega^\top-\frac{\omega_0}{a}b^\top)\hat\omega-\frac{\hat\omega^\top b}{\omega_0} \end{array}\!\!\right)$$
$$=\det\left(\begin{array}{ccc} -\frac{\omega_0^2}{a} & \omega_0 & \hat\omega \\  0 & a &  b \\ 0 &  b^\top+\frac{a}{\omega_0}(\hat\omega^\top-\frac{\omega_{0}}{a}b^\top) & \hat A+\frac{a}{\omega_0^2}(\hat\omega^\top-\frac{\omega_{0}}{a}b^\top)\hat\omega \end{array}\right)=
\det\left(\begin{array}{ccc} -\frac{\omega_0^2}{a} & \omega_0 & \hat\omega \\  0 & a &  b \\ \hat\omega^\top-\frac{\omega_{0}}{a}b^\top &  b^\top & \hat A \end{array}\right)=$$
$$= \det\left(\begin{array}{ccc} 0 & \omega_0 & \hat\omega \\  \omega_0 & a &  b \\ \hat\omega^\top &  b^\top & \hat A \end{array}\right)
=\det  A_\omega.$$

\end{proof}

This completes the proof of Lemma \ref{lem:positivekam}, showing that the Hamiltonian system (\ref{eq:timeoneham}) has a positive measure set of KAM tori.

Now, the claim of Theorem \ref{thm:mainbounded} follows. Indeed, as KAM-tori persist at small perturbations, Lemmas \ref{lem:poinc1} and \ref{Lemma1.2} imply that system (\ref{hprep}) also has a positive measure set of KAM-tori on every energy level $H=h$ with small $h$. Since the system defined by (\ref{hprep}) is smoothly conjugate to the original  \(N\)-particle  system (\ref{eq:npartactionangle})  near \(\mathbf{L}^{*}(\theta_{min})\), with a scaling factor \(O(\delta^{1/4})\), the KAM theory implies that there are quasi-periodic orbits that are \(O(\delta^{1/4})\)-close to \(\mathbf{L}^{*}(\theta_{min})\).
By (\ref{hamflow}),  on such tori, the return time to the cross-section \(\varphi=0\) is  \(O(\delta^{1/2})\)-close to
\(\frac{2\pi}{\omega_{0}}\). It follows that the averaged return time,  \(\frac{2\pi}{\bar \omega}\) is also
\(O(\delta^{1/2})\)-close to \(\frac{2\pi}{\omega_{0}}\),  completing the proof of Theorem \ref{thm:mainbounded}.

\section{\label{sec:proofbilliards}Mutually repelling particles in a container }
The motion of mutually repelling particles confined in a bounded domain $D$ is described by the Hamiltonian  (\ref{eq:nparticlesbillnorescale}).  In the limit of high energy per particle (i.e. for \(\delta=\frac{1}{2h}\rightarrow0)\), one can view the system as a set of weakly interacting particles in a steep billiard-like potential as described by (\ref{eq:multparsysybill0}), with the single-particle dynamics governed by  (\ref{eq:singlepartbiliarddel}).

The singularity of the single-particle system at $\delta=0$ makes the proof of Theorem \ref{thm:billiardsystem} more involved
than for Theorem \ref{thm:mainbounded}. The outline of the proof is as follows. In Section \ref{sec:singlepartbil}, we study  the single-particle system  (\ref{eq:singlepartbiliarddel}) in the small \(\delta\) limit.  Recall that this system  has an elliptic periodic orbit for \(\delta=0\), and hence, by assumptions Box1 and Box2, this orbit persists also for sufficiently small \(\delta\). By studying the singular behavior near impacts, we construct a transformation to action-angle coordinates  near this periodic orbit, with a singularity of the transformation near the billiard boundary. We prove that the Hamiltonian expressed in these coordinates has a smooth limit at \(\delta=0\)  (see Section \ref{sec:periodofL}). We then show that for all sufficiently small \(\delta\geqslant0\) this periodic orbit satisfies Assumptions SP1-SP3 (Lemma \ref{lem:billasum1to3}). In Section \ref{sec:billiardmultipart}, we study the multi-particle dynamics. Here, using the analysis of
Section \ref{sec:singlepartbil}, we show that  the  return map to a cross-section at which all particles are away from the billiards' boundary is not singular at $\delta=0$ and is $o(\delta^{1/2})$-close to the return map of a certain truncated system
(system (\ref{eq:nparttrunbillnew}), see Lemma \ref{lem:poincaremapclosebilnew}). We then analyze the return map of the truncated system by averaging  (Lemma \ref{Lemmaavbil}). Due to the singular nature of the impacts, we use the pair-wise structure of the interaction terms and not the Fourier expansion which was used in Lemma \ref{Lemma1.1}. We then establish that the return map of the truncated system  (\ref{eq:nparttrunbillnew}) is close to that of the truncated averaged system (\ref{eq:nparttrunbill}). This system is of the same form as the truncated averaged system considered in Section \ref{sec:proofmainthm} (cf. (\ref{eq:npartav})), with the only difference that the coefficients now depend on \(\delta\). Since the dependence of the coefficients is non-singular (continuous)  for all \(\delta\geqslant0\), we can conclude the proof as in Theorem \ref{thm:mainbounded}.

\subsection{\label{sec:singlepartbil} A single particle at high energy.}
Here we study the solutions of   (\ref{eq:singlepartbiliarddel})  for small \(\delta\).
Away from the boundary, \(\delta V(q)\) is uniformly small, so the trajectories follow closely the corresponding billiard trajectories. Near the billiard boundary, a more precise analysis is needed.

\subsubsection{The boundary layer dynamics}
By the boundary layer, we mean a sufficiently small neighborhood of the boundary of $D$ where we have
\(V(q)=1/Q(q)^{\alpha} \), with \(Q(q)\) measuring the distance to the boundary of $D$ (see Assumption BD1).

\begin{lem}\label{lem:qnearimpact} Take a small neighborhood of a regular point
$M\in\partial D$.
Then, one can define functions $\tilde q$, $\tilde p$, $q_{impact}$, $t_{in}$
such that the following holds. Given any initial condition \((q_0,p_0) \) which is close to a regular
impact (i.e., \(q_0\) is in the small neighborhood of $M$ but is bounded away from \(\partial D\),
and \(p_{0}\cdot\nabla Q(M)\) is negative and bounded away from zero), the trajectory
$(q(t,q_0,p_0), p(t,q_0,p_0))$
of \((q_0,p_0)\) can be written, in the boundary layer, in the following form:
\begin{equation}\label{eq:scaledq}
\begin{array}{ll}
q(t,q_0,p_0)&=q_{impact}+\delta^{1/\alpha} \tilde q(t_{s},q_0,p_0;\delta),\\
p(t,q_0,p_0)&=\tilde p(t_{s},q_0,p_0;\delta),\\
\end{array}
\end{equation}
where \(t_{s}\) denotes the rescaled time
\begin{equation}\label{eq:deftimpact}
t_{s}=\frac{t-t_{in}(q_0,p_0;\delta)}{\delta^{1/\alpha}}.
\end{equation}
The functions \(\tilde p,\tilde q\) depend smoothly on \((t_s,q_{0},p_0)\) and, along with the derivatives, depend continuously on $\delta$ for all \(\delta\geqslant0\): the function $\tilde p$, along with the derivatives, is
uniformly bounded and uniformly continuous for all \(t_{s}\) and \(\delta\geqslant0\), and
the function $\tilde q$ is given by
\begin{equation}\label{formtq}
\tilde q(t_{s},q_0,p_0;\delta)=\tilde q(0,q_0,p_0;\delta)+\int^{t_s}_0\tilde p(u,q_0,p_0;\delta)du,
\end{equation}
where $\tilde q(0,q_0,p_0;\delta)$ depends continuously on $\delta$, along with the derivatives, for all
$\delta\geqslant 0$.
The function \(q_{impact}(q_0,p_0)\) is smooth and independent of $\delta$, and is determined by the billiard impact event:
\begin{equation}\label{impbev}
q_{impact}(q_0,p_0)\in \partial D.
\end{equation}
The function $t_{in}(q_0,p_0;\delta)$ depends smoothly on \((q_{0},p_0)\)
and, along with the derivatives, depends continuously on \(\delta\) for all
\(\delta\geqslant0\). In the limit
\(\delta\rightarrow0\), the trajectory approaches the billiard trajectory, i.e.,
\begin{equation}\label{bimp}
q_0 + t_{in} p_0 = q_{impact} \;\mbox{ for }\; \delta=0,
\end{equation}
and
\begin{equation}\label{impp}
\lim_{t_s\rightarrow -\infty}\tilde p(t_s,q_0,p_0;0)=p_0, \qquad
\lim_{t_s\rightarrow +\infty}\tilde p(t_s,q_0,p_0;0)=p_0 - 2 (p_0\cdot \vec{n}) \vec{n}
\end{equation}
(the billiard reflection law) where $\vec{n}$ is the outer normal to $\partial D$ at the impact point
$q_{impact}(q_0,p_0;0)$. \end{lem}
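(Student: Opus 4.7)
The plan is to introduce spatial and temporal scalings adapted to the boundary layer of width $O(\delta^{1/\alpha})$ and to show that, in the scaled variables, Hamilton's equations form a regular family of ODEs depending continuously on $\delta$ down to $\delta=0$, whose $\delta=0$ limit flow implements the billiard reflection.

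First I would define $q_{impact}(q_0,p_0)$ as the unique transversal intersection of the incoming ray $\{q_0+tp_0:t>0\}$ with $\partial D$, which is $C^\infty$ in $(q_0,p_0)$ by the implicit function theorem thanks to $p_0\cdot\nabla Q(M)$ being negative and bounded away from zero. I would then set $\tilde q=(q-q_{impact})/\delta^{1/\alpha}$, $\tilde p=p$, and $t_s=(t-t_{in})/\delta^{1/\alpha}$. Taylor-expanding the smooth boundary-defining function $Q$ about $q_{impact}\in\partial D$ yields
\[
Q(q_{impact}+\delta^{1/\alpha}\tilde q)=\delta^{1/\alpha}\,\Phi(\tilde q;\delta^{1/\alpha}),\qquad \Phi(\tilde q;0)=\tilde q\cdot\nabla Q(q_{impact}),
\]
with $\Phi$ jointly $C^\infty$ in its arguments. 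Therefore $\delta V(q)=\Phi(\tilde q;\delta^{1/\alpha})^{-\alpha}$ is a $C^\infty$ function of $\tilde q$ which, together with its $\tilde q$-derivatives, depends jointly continuously on $\delta\geqslant 0$ on the region where $\Phi$ is positive, and the scaled Hamilton equations become the autonomous system $d\tilde q/dt_s=\tilde p$, $d\tilde p/dt_s=-\partial_{\tilde q}\Phi(\tilde q;\delta^{1/\alpha})^{-\alpha}$.

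Next I would define $t_{in}(q_0,p_0;\delta)$ for $\delta>0$ as the turning point time, i.e.\ the first $t>0$ at which $p(t)\cdot\nabla Q(q(t))=0$; smoothness in $(q_0,p_0)$ follows from the implicit function theorem since the time derivative of $p\cdot\nabla Q$ at the turning point is dominated by the strong repulsion $\delta\alpha Q^{-\alpha-1}|\nabla Q|^2\neq 0$. At $\delta=0$ I would set $t_{in}$ to the straight-line impact time required by (\ref{bimp}); continuity of $t_{in}$ and its $(q_0,p_0)$-derivatives as $\delta\to 0$ follows because the turning point of the full system sits within $O(\delta^{1/\alpha})$ of $\partial D$. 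Energy conservation of the original Hamiltonian pins $|\tilde p|=|p|$ and keeps $\Phi$ uniformly bounded away from zero along the trajectory, so standard smooth-dependence results for ODEs with continuous parameter dependence give the claimed smoothness in $(q_0,p_0)$ and continuity in $\delta$ of $\tilde p$ on any fixed compact interval in $t_s$; the relation (\ref{formtq}) is then the integral of $d\tilde q/dt_s=\tilde p$, with $\tilde q(0,q_0,p_0;\delta)$ the rescaled position at the turning instant. The $\delta=0$ limit potential $\Phi(\tilde q;0)^{-\alpha}=(|\nabla Q(q_{impact})|\,q_n)^{-\alpha}$ depends only on the normal coordinate $q_n=\tilde q\cdot\vec n$, so the tangential part of $\tilde p$ is conserved and $q_n$ executes a one-dimensional elastic bounce, yielding (\ref{impp}).

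The main obstacle is promoting these estimates to uniform ones in $t_s\in\mathbb{R}$: the physical initial condition sits at distance $O(1)$ from $q_{impact}$, hence at $|\tilde q|=O(\delta^{-1/\alpha})$ in scaled coordinates, so the compact-interval argument does not by itself cover the actual initial time. I would handle this by splitting into two regimes. On a fixed compact window $|t_s|\leqslant T$ standard smooth-dependence applies directly; for $|t_s|>T$ the scaled orbit is essentially straight and $\Phi^{-\alpha}=O(|t_s|^{-\alpha})$ along it, so $d\tilde p/dt_s=O(|t_s|^{-\alpha-1})$ is integrable, giving uniformly convergent asymptotics $\tilde p\to p_0$ as $t_s\to-\infty$ and $\tilde p\to p_0-2(p_0\cdot\vec n)\vec n$ as $t_s\to+\infty$ at rate $|t_s|^{-\alpha}$, uniformly in $\delta\geqslant 0$. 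A secondary care item is that the natural regularity parameter is $\delta^{1/\alpha}$ rather than $\delta$; since the lemma requires only continuity in $\delta$ (not smoothness), this is compatible, but one must track $C^\infty$-smoothness in $(q_0,p_0)$ separately from continuity in $\delta$ throughout the bookkeeping.
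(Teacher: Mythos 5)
Your architecture is essentially the same as the paper's: you rescale positions by $\delta^{1/\alpha}$ and time by $\delta^{1/\alpha}$, split the analysis into a compact window near the impact and a far-field regime, invoke energy conservation to keep $\Phi$ bounded away from zero, and read off the reflection law from the $\delta=0$ limit being an effectively one-dimensional bounce in the normal coordinate. The differences are primarily conventional (you take $t_{in}$ to be the turning time rather than the crossing time of a fixed cross-section $Q=K\delta^{1/\alpha}$; both are admissible and shift $t_s$ by a bounded amount).

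The one place where the proposal is not yet a proof is the far field (your $|t_s|>T$). The bound $d\tilde p/dt_s = O(|t_s|^{-\alpha-1})$ and the claim $\Phi\gtrsim |t_s|$ are conclusions of a bootstrap, not free facts: you are integrating over a time interval of length $O(\delta^{-1/\alpha})$ during which the rescaled position $\tilde q$ is huge, and you must show that the deviation of $\tilde p$ from $p_0$ stays small and that $\Phi$ grows linearly, simultaneously, before either estimate is available to justify the other. Moreover, in your Cartesian rescaling the expansion $\Phi(\tilde q;\delta^{1/\alpha})=\tilde q\cdot\nabla Q(q_{impact})+O(\delta^{1/\alpha}|\tilde q|^2)$ has error comparable to the leading term once $|\tilde q|\sim\delta^{-1/\alpha}$, so you cannot simply read off $\Phi^{-\alpha}=O(|t_s|^{-\alpha})$ from the linearization; you need to work with $Q$ itself or with a time reparameterization. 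The paper resolves both issues at once by changing the time variable in the outer layer to $s=\delta^{-1/\alpha}Q(q)$: this turns the equations for $(q,p,t)$ into the integral system (\ref{eq:scaledoutint}) whose integration intervals are uniformly small (of order $\eta$ for $q,t$ and of order $K^{-\alpha}$ for $p$, by the change of variables $ds\mapsto d(s^{-\alpha})$), so the contraction mapping principle delivers smooth dependence on $(q_0,p_0)$ and continuity in $\delta\geqslant 0$ --- with all derivatives --- in one shot, with no separate bootstrap needed. This also gives you the smoothness of $t_{in}$ and of $\tilde q(0,\cdot)$ that you currently assert but do not derive: your implicit-function-theorem argument for the turning time presupposes that the solution of the flow map restricted to the outer layer is already known to be smooth in $(q_0,p_0)$ uniformly in $\delta$, which is precisely what the contraction argument supplies. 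So the outline is right and the gap is localized, but as written the far-field step circles back to itself; you need to replace "the orbit is essentially straight" by an actual fixed-point estimate (contraction mapping on the integral equations in the variable $s=\delta^{-1/\alpha}Q$, as in (\ref{eq:scaledoutint}), is the cleanest way).
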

\begin{proof} We follow the strategy of \cite{RapRKT07apprx,turaev1998elliptic}. Put the origin of the coordinate system at the point $M$,
and let $q_{\parallel}$ denote coordinates corresponding to directions tangent to the boundary at the origin, and the $q_{\bot}$-coordinate axis be
orthogonal to the boundary at $M$. So,
near the origin, we can write
\begin{equation}\label{qf}
Q(q_{\parallel},q_{\bot})=q_{\bot} + O(q_{\parallel}^2+q_{\bot}^2).
\end{equation}
The equation of motion for the single-particle Hamiltonian (\ref{eq:singlepartbiliarddel}) are
\begin{equation}\label{eq:qparp}
\begin{array}{ll}
\frac{d}{dt}  q_{\parallel} = p_{\parallel},  \qquad\qquad\qquad\quad\ \frac{d}{dt}   q_{\bot} =  p_{\bot}, \\
\frac{d}{dt}   p_{\parallel} =  \frac{\alpha\delta\nabla_{\parallel}  Q}{ Q^{\alpha+1}} , \qquad \qquad \quad
\frac{d}{dt}   p_{\bot} =  \frac{\alpha\delta\nabla_{\bot}  Q}{ Q^{\alpha+1}}.
\end{array}
\end{equation}
We take a small \(\eta>0\) and consider an $O(\eta)$-neighborhood of \(M\). In this region,
\(q=O(\eta)\), and \(\nabla Q( q)=(\nabla_{\parallel} Q,\nabla_{\bot} Q)_{ q}=(O(\eta),1+O(\eta))\).
Then, we see that the value of $ p_{\bot}$ monotonically increases with time and the change in $p_{\parallel}$ is much smaller than the change of $p_{\bot}$ (because
$d p_{\parallel}/d p_{\bot} =O(\eta)$ which is small).
Since the range of possible values of $ p_{\bot}$ is bounded by the energy conservation, it follows that $p_{\parallel}$ is an almost conserved quantity (can change at most by \(O(\eta)\)).

Take a sufficiently large constant $K$. Define the outer boundary layer as the region
\(\delta^{1/\alpha}K<Q(q)<\eta\) (in particular,  we consider sufficiently small \(\delta\) so that \(\delta^{1/\alpha}\ll\eta\)).   For sufficiently small
\(\delta\), the initial value \(q_0\) belongs to this region. In this outer layer, the value of the potential energy
$\frac{\delta}{Q^\alpha}$ is small of order \(O(K^{-\alpha})\), so the maximal possible change in the kinetic energy is
$O(K^{-\alpha})$ as long as the trajectory stays in the layer. Thus, the velocity vector
$p=\dot q$ remains almost constant, $p(t)= p_0+O(K^{-\alpha}+\eta)$, by the approximate conservation of the kinetic energy and of
\(p_{\parallel}\). It follows that the trajectory is close to a straight line (i.e., to the billiard trajectory) and $q(t)$ moves inward, towards the impact. Therefore, there exists some time,
\(t_{in}(q_0,p_0;\delta)=O(\eta)\) at which the trajectory crosses the surface
$S_{K,\delta}:  Q( q)=\delta^{1/\alpha}K$.

\begin{figure}
\begin{centering}
\includegraphics[scale=0.3]{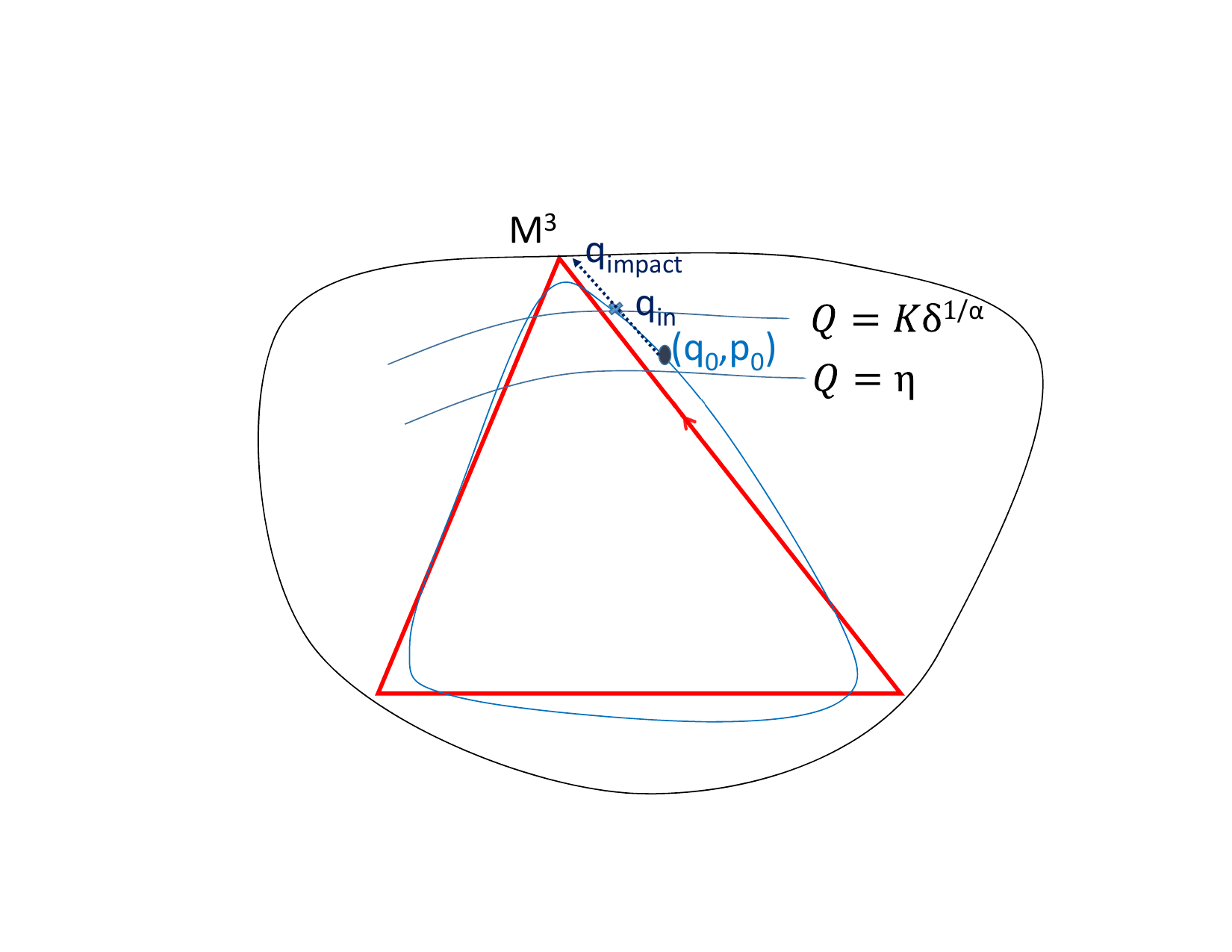}
\par\end{centering}
\protect\caption{\label{fig:boundarylayer} Boundary layers near impact for small \(\delta\). The outer boundary layer lies between the level sets \(Q=\eta\) and  \(Q=K\delta^{1/\alpha}\) whereas the inner layer is defined by \(Q<K\delta^{1/\alpha}\).
}
\end{figure}

Let us obtain more precise estimates for $(q(t),p(t))$ for $t\leq t_{in}$. On this time interval,
\(\frac{dQ}{dt}=\nabla Q(q)\cdot p=p_{\bot}(t)+O(\eta)=p_{\bot}(0)+O(K^{-\alpha}+\eta)<0\). Hence, we can choose \(Q\) as a new time. In fact, it is more convenient to choose
\(s=\delta^{-1/\alpha}Q\) as the new rescaled time, so the equations become:
\begin{equation}\label{eqm}
\begin{array}{ll}
\frac{dq}{ds}   &=\delta^{1/\alpha}\frac{p}{ f(q,p)}, \\
\frac{dp}{ds}   &=\frac{\alpha}{s^{\alpha+1}}  \frac{\nabla  Q(q)}{f(q,p)},\\
 \frac{dt}{ds}   &=\delta^{1/\alpha}  \frac{1}{f(q,p)},
\end{array}
\end{equation}
where \(f=\nabla Q(q)\cdot p\) is a smooth function of \((q,p)\).
Hence,
\begin{equation}\label{eq:scaledoutint}
\begin{array}{ll}
q(s)   &  \displaystyle=q_{0}+\delta^{1/\alpha}\int_{s_0}^{s}\frac{p(\sigma)}{ f(q(\sigma),p(\sigma))}d\sigma =
q_{0}+\int_{\delta^{1/\alpha}s_0}^{\delta^{1/\alpha}s}\frac{p(\sigma)}{ f(q(\sigma),p(\sigma))}d(\delta^{1/\alpha}\sigma),  \\ \\
p(s)   & \displaystyle=p_{0}+\int_{s_{0}}^{s}\frac{\nabla  Q(q(\sigma),p(\sigma))}{ f(q(\sigma),p(\sigma))}\frac{\alpha d\sigma}{\sigma^{\alpha+1}} =
p_{0}-\int_{s^{-\alpha}_0}^{s^{-\alpha}}\frac{\nabla  Q(q(\sigma),p(\sigma))}{ f(q(\sigma),p(\sigma))}d(\sigma^{-\alpha}) ,\\ \\
 t(s)   & \displaystyle= \delta^{1/\alpha}\int_{s_0}^{s}\frac{1}{ f(q(\sigma),p(\sigma))}d\sigma =\int_{\delta^{1/\alpha}s_{0}}^{\delta^{1/\alpha}s}    \frac{d(\delta^{1/\alpha}\sigma)}{ f(q(\sigma),p(\sigma))}
\end{array}
\end{equation}
where  \(s\in[K,\delta^{-1/\alpha}\eta]\). The solution of this system
of integral equations is obtained by the contraction
mapping principle: the integrands are bounded with all derivatives with respect to \(q,p \) and the integration intervals are small (of order
\(O(\eta)\) in the first and third equations and \(O(K^{-\alpha})\) in the second equation). It follows that, for any
fixed \(s_{0},s\) in the outer boundary layer, we have smooth dependence of \((q,p,t)\) on \((q_{0},p_0)\),
for all small \(\delta\), including the limit \(\delta=0\). The smoothness with respect to \(s_{0},s\) follows from the system (\ref{eqm}), as \(s\) is bounded away from zero.

We denote the solution of this system by
\((q^{int}(s,q_{0},p_0;\delta),p^{int}(s,q_{0},p_0;\delta),t^{int}(s,q_{0},p_0;\delta))\). Note that
 \((q^{int}(s_{0},q_{0},p_0;\delta), p^{int}(s_{0},q_{0},p_0;\delta),
t^{int}(s_{0},q_{0},p_0,t_{0};\delta))=(q_{0},p_0,0)\). Since the time $t_{in}$ corresponds to the time instance the trajectory hits the cross-section $S_{K,\delta}$, i.e., it corresponds to $s=\delta^{-1/\alpha} Q = K$, we obtain that
$$t_{in}(q_{0},p_0;\delta)=t^{int}(K,q_{0},p_0;\delta).$$
We also define
$$q_{in}(q_{0},p_0;\delta)=q^{int}(K,q_{0},p_0;\delta),$$
i.e., the point where the orbit of $(q_0,p_0)$ hits $S_{K,\delta}: Q(q)=K\delta^{1/\alpha}$.
As we have shown, these are smooth functions of $(q_0,p_0)$, uniformly for all \(\delta\geqslant 0\).

Introducing the rescaled time $t_s$ by formula (\ref{eq:deftimpact}), we
find that \(\frac{\partial}{\partial s}t_s=\frac{\partial}{\partial s}\frac{t^{int}}{\delta^{1/\alpha}} =  \frac{1}{ f(q^{int},p^{int})}\),  which is bounded with all derivatives and is bounded away from zero. Hence, \(s(t_s,q_0,p_0;\delta)\) is a smooth function of its arguments for all \(\delta\geqslant0\) (large
\(s\) corresponds to approaching the outer boundary of the boundary layer, where
\(t_s\rightarrow-\infty\)). Therefore, in the outer boundary layer, the function
\begin{equation}\label{tldpeqpint}
\tilde p(t_{s},q_0,p_0;\delta)=p^{int}(s(t_{s},q_{0},p_0;\delta),q_{0},p_0;\delta)
\end{equation}
is a smooth function of its arguments. Since $\dot q=p$, and
$q(t_{in},q_0,p_0)=q_{in}$,
we have
$$
q(t;q_0,p_0)=q_{in}+\int_{t_{in}}^t\tilde p(t_{s},q_0,p_0;\delta)dt=
q_{impact}+(q_{in}-q_{impact}) +\delta^{1/\alpha} \int_{0}^{t_s}\tilde p(u,q_0,p_0;\delta)du.
$$
Note that $q_{in}$ is the point on $S_{K,\delta}$, i.e.,
$Q(q_{in})=K\delta^{1/\alpha}$. We also have $Q(q_{impact})=0$ (see (\ref{impbev})), hence
$Q(q_{in})-Q(q_{impact})=K\delta^{1/\alpha}$. Since the gradient of $Q$ is bounded away from zero,
it follows that   \((q_{in}-q_{impact})\) is \(\delta^{1/\alpha}\)  times  a smooth function of $(q_0,p_0)$,
continuously depending on $\delta\geqslant 0$ with all derivatives. Hence it can be incorporated into \(\tilde q\)  : \begin{equation}
\tilde q(t_{s},q_0,p_0;\delta)=\delta^{-1/\alpha} (q_{in}-q_{impact})+\int^{t_s}_0\tilde p(u,q_0,p_0;\delta)du.
\nonumber \end{equation}As we see, the claim of the lemma, including formulas (\ref{eq:scaledq}),(\ref{formtq}), follows for the initial segment of the orbit (i.e., as long as it stays in the outer layer).

Let us now prove formula (\ref{bimp}). By (\ref{eq:scaledq}), (\ref{eq:deftimpact}), (\ref{formtq}), we have
$$q_0 + p_0 t_{in}(q_{0},p_0;\delta)=q_{impact}+O(\delta^{1/\alpha})+
\delta^{1/\alpha} \int_0^{-\delta^{-1/\alpha} t_{in}}\!\!\!\!\!\!\!\!\!\!(\tilde p(t_s,q_0,p_0;\delta) - p_0) dt_s.
$$
 This implies (\ref{bimp}) because the last term in the above formula tends to zero, along with all derivatives, as $\delta\to 0$. Indeed, by (\ref{tldpeqpint}) and by the second equation of (\ref{eq:scaledoutint}) we have
\begin{equation}\label{dvpo}
\tilde p(t_s,q_0,p_0;\delta) - p_0 = O(s^{-\alpha}),
\end{equation}
along with derivatives up to any given order. So, since $\frac{dt_s}{ds}=\frac{dt_s}{dt} \frac{dt}{ds}$ is uniformly bounded with derivatives (by (\ref{eq:deftimpact}) and the third equation of (\ref{eqm})), we have that (recall that \(s_{0}\)  is bounded by \(\eta\delta^{-1/\alpha}\)):\begin{equation}\label{qvpo}
\delta^{1/\alpha}\int^{s_{0}}_{K}(\tilde p(t_s,q_0,p_0;\delta) - p_0)\frac{dt_s}{ds}ds= \begin{cases}\delta^{1/\alpha}O(s_0^{1-\alpha})=O(\delta), & \alpha<1, \\
\delta^{1/\alpha}O(\ln (s_{0}))=O(\delta^{1/\alpha}\ln \delta),\ & \alpha=1, \\
 O(\delta^{1/\alpha}), & \alpha>1, \\
\end{cases}
\end{equation}
as required.

Next, we study the trajectory in the inner layer $Q(q)\leqslant\delta^{1/\alpha}K$. Here, \(p_{\bot}\) changes rapidly, so the trajectory quickly exits this inner layer, intersects the surface $S_{K,\delta}$
again, and returns back to the outer layer. After the rescaling
\(\bar q=\delta^{-1/\alpha}(q-q_{in}), t_s= \delta^{-1/\alpha}(t-t_{in}\)), the Hamiltonian (\ref{eq:singlepartbiliarddel}) becomes
\begin{equation}\label{eq:rescbilsingle}
H_0=\frac{p^2}{2} + \bar Q(\bar q,\delta)^{-\alpha}
\end{equation}
where
\begin{equation}
\bar Q(\bar q,\delta)=\frac{Q(q_{in}+\delta^{1/\alpha} \bar q)}{\delta^{1/\alpha}}.
\end{equation}
Note that \(\bar Q(\bar q,\delta)\) is a smooth function of $\bar q$ with bounded derivatives for all
\(\delta\geqslant0\). By construction, the inner layer is given by $K \geqslant \bar Q \geqslant H_0^{-1/\alpha}>0$ (where $H_0$ is the conserved energy, see (\ref{eq:rescbilsingle})).

The rescaled system, as given by the Hamiltonian (\ref{eq:rescbilsingle}), is
\begin{equation}\label{eq:qbarpar}
\frac{d\bar q}{d t_s}  =p,\qquad
\frac{d p_{\parallel}}{d t_s} =  \frac{\alpha\nabla_{\parallel} \bar Q}{\bar Q^{\alpha+1}} , \qquad \quad
\frac{d p_{\bot}}{d t_s} =  \frac{\alpha\nabla_{\bot} \bar Q}{\bar Q^{\alpha+1}},
\end{equation}
where
$\nabla_{\parallel} \bar Q = \nabla_{\parallel} Q = O(\|q_{in}\|+\delta^{1/\alpha})$
and \(\nabla_{\bot} \bar Q=\nabla_{\bot}  Q= 1+O( K\delta^{1/\alpha}) > 0\).
Since $\bar Q$ is uniformly bounded in the inner layer, it follows that $\frac{d}{d t_s} p_{\bot}$ is positive and bounded away from zero. By the conservation of energy, $p_{\bot}$ cannot grow unbounded, hence the orbit must leave the inner layer in a finite time, which we denote
\(t_{s,out}\). This time is bounded for all small $\delta\geqslant 0$ (so the unscaled passage time is of
order \(O(\delta^{1/\alpha})\)).

The system (\ref{eq:qbarpar}) is well-defined at $\delta=0$, so the solution on any finite interval
of the integration time $t_s$ is a smooth function of the initial conditions and parameters, continuously depending on $\delta$ for all small $\delta\geqslant 0$. Note that the initial condition at $t_s=0$ is
$\bar q= 0$,
$p = p^{int}(K,q_0,p_0;\delta)$; the right-hand side also depends smoothly on the value of
$q_{in}$, which is a smooth function of $q_0,p_0$. Thus, we have a smooth
dependence on $q_0, p_0$ and
the scaled time $t_s$ for all small $\delta\geqslant0$, i.e., the claim of the lemma continues to hold
as long as the solution is in the inner layer.

Let us show that the exit time $t_{s,out}$ is a smooth function of the initial conditions and parameters of the system, i.e., it is a smooth function of $q_0$ and $p_0$. This moment of time corresponds to arriving at the cross-section
$S_{K,\delta}: \bar Q=K$, so we just need to show that $\frac{d}{d t_s} \bar Q$ is bounded away from zero. To do that, note that
since the time in the boundary layer is bounded, the change in $p_{\parallel}$ is small, of order
$O(\nabla_{\parallel} \bar Q)  = O(\|q_{in}\|+\delta^{1/\alpha})$. Since the energy (\ref{eq:rescbilsingle}) is conserved and the value of $\bar Q$ at the entrance and the exit from the boundary layer is the same, the kinetic energy
$\frac{1}{2} (p_{\parallel}^2+ p_{\bot}^2)$ is also the same. Hence $|p_{\bot}|$ at the moment of exit is
$O(\|q_{in}\|+\delta^{1/\alpha})$-close to the value of $|p_{\bot}|$ at the moment of entrance, so it is
$O(K^{-\alpha})$-close to $|p_{0\bot}|$ by (\ref{dvpo}) (the sign of $p_\bot$ must change since the orbit is going away from the billiard boundary now). It follows that
$$\frac{d}{d t_s} \bar Q = \nabla_{\bot} Q\cdot\ p_\bot + \nabla_{\parallel} Q\cdot p_\parallel = - p_{0\bot} + O(\|q_{in}\|+\delta^{1/\alpha}+K^{-\alpha}) >0$$
is bounded away from zero, as required.

As $t_{s,out}$ depends smoothly on $q_0$ and $p_0$ for all $\delta\geqslant 0$, the values of $q=q_{out}$ and $p=p_{out}$ at the moment of exiting the inner layer also depend smoothly on $q_0$ and $p_0$. Note that
we have just shown that
\begin{equation}\label{impoutp}
p_{out}=(p_{0\parallel}, - p_{0\bot}) + O(\|q_{in}\|+\delta^{1/\alpha}+K^{-\alpha}).
\end{equation}

Once the trajectory crosses $S_{K,\delta}$ towards the outer boundary layer
 (i.e., \(p_{\bot}>0\) now), we can again use the integral equations (\ref{eq:scaledoutint}) to establish the smooth dependence on the initial conditions
$(q_{out}, p_{out})$ - hence on $(q_0,p_0)$ - and the scaled time. So, the solutions in this final segment also satisfy the claim of the lemma.

It remains to establish the reflection law (\ref{impp}). For a given initial condition, we choose the origin of coordinates to be the billiard impact point. The billiard reflection law then is that $p_\parallel$ remains the same and $p_\bot$ changes sign.
By (\ref{dvpo}), we have that
$$\tilde p=(p_{0\parallel}, p_{0\bot}) + O(K^{-\alpha})$$
before entering the inner layer and, taking into
account the change in $p$ in the inner layer, as given by (\ref{impoutp}) we find that in the limit
$\delta\to 0$
$$\tilde p=(p_{0\parallel}, - p_{0\bot}) + O(K^{-\alpha})$$
after exiting the inner layer (we have $q_{in}\to 0$ as $\delta\to 0$ because we put
the coordinate origin at the billiard impact point $q_{impact}=\lim_{\delta\to 0} q_{in}$).
Thus, except for the bounded interval of the rescaled time $t_s$ for which the orbit is in the inner layer, the deviation from the billiard reflection law is bounded by
$O(K^{-\alpha})$ as $\delta\to0$. Since $K$ can be chosen as large as we want, and the result cannot depend on \(K\), this means that the deviation from the billiard law in the limit $\delta=0$ is zero, i.e., the billiard reflection
law is approached indeed.
\end{proof}

\subsubsection{Flow-box coordinates in the boundary layer}
Consider a billiard trajectory
near a regular impact point  \(M\in\partial D\).
\begin{lem}\label{lem:fixedtau} One can choose cross-sections  \(S_{\delta}^{-}\) and
\(S_{\delta}^+\) in the phase space such that the billiard trajectory intersects, transversely,
the cross-section \(S^{-}_0\) before the impact (at a point $(q^\prime_0,p^\prime_0)$) and
the cross-section \(S^{+}_0\) after the impact, and the following holds.
For all \((q_{0},p_{0})\in S_{\delta}^{-}\) the impact time \(t_{in}(q_{0},p_{0};\delta)\)
of Lemma \ref{lem:qnearimpact} is constant and equal to
$t_{in}(q^\prime_{0},p^\prime_{0};\delta)>0\),
and the flight time
from \( S_{\delta}^-\) to  \( S_{\delta}^+\) is constant for the orbits of system (\ref{eq:singlepartbiliarddel}) and equals to  \(2t_{in}(q^\prime_{0},p^\prime_{0};\delta)\). Such cross-sections are bounded away from the billiard boundary and depend continuously on
\(\delta\). The Poincar\'e map \( S_{\delta}^- \to S_{\delta}^+\) by the orbits of the system
tends, with all derivatives, to the Poincar\'e map \( S_0^- \to S_0^+\) by the billiard flow.
\end{lem}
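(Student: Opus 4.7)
The plan is to take $S_\delta^-$ as a level set of the smooth time-to-impact function $t_{in}(\cdot,\cdot;\delta)$ supplied by Lemma \ref{lem:qnearimpact}, and to define $S_\delta^+$ as the image of $S_\delta^-$ under the time-$2\,t_{in}(q_0',p_0';\delta)$ map of the flow of (\ref{eq:singlepartbiliarddel}); this makes the constant-flight-time property tautological. Pick a reference point $(q_0',p_0')$ on the billiard orbit lying in the outer boundary layer, bounded away from $M$ and from the subsequent reflection for all small $\delta \geq 0$. Put $\tau_\delta := t_{in}(q_0',p_0';\delta)$: this is a $C^\infty$ function of $(q_0',p_0')$, continuous in $\delta$ down to $\delta = 0$ together with all derivatives, and equal at $\delta = 0$ to the billiard flight time from $(q_0',p_0')$ to $M$ by (\ref{bimp}). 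Define
$$
S_\delta^- := \{(q,p) \in \{H_0 = \tfrac{1}{2}\} : t_{in}(q,p;\delta) = \tau_\delta\}.
$$
Transversality is automatic: since $t_{in}$ is a time-to-reach function, along orbits of the smooth flow one has $t_{in}(\Phi_\delta^s(q,p);\delta) = t_{in}(q,p;\delta) - s$ as long as the orbit stays in the outer layer, so $\nabla t_{in}$ is never parallel to the vector field and $S_\delta^-$ is a smooth $(2d-2)$-dimensional cross-section containing $(q_0',p_0')$; the continuity of $t_{in}$ in $\delta$ yields the corresponding $C^\infty$-continuity of $S_\delta^-$, and the choice of $(q_0',p_0')$ keeps $S_\delta^-$ bounded away from $\partial D$.

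With $\Phi_\delta^t$ denoting the time-$t$ map of (\ref{eq:singlepartbiliarddel}), set $S_\delta^+ := \Phi_\delta^{2\tau_\delta}(S_\delta^-)$, so that the flight time from $S_\delta^-$ to $S_\delta^+$ is identically $2\tau_\delta = 2\,t_{in}(q_0',p_0';\delta)$ for every orbit. At $\delta = 0$ the billiard orbit through $(q_0',p_0')$ hits $M$ at time $\tau_0$, reflects elastically, and reaches $\Phi_0^{2\tau_0}(q_0',p_0')$ at time $2\tau_0$; by the choice of $(q_0',p_0')$ this post-reflection endpoint is bounded away from $\partial D$, so $S_0^+$ is a smooth cross-section to the billiard flow, and $S_\delta^+$ inherits these properties by continuity for all small $\delta \geq 0$.

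To compare $\Pi_\delta : S_\delta^- \to S_\delta^+$ with the billiard Poincar\'e map $\Pi_0 : S_0^- \to S_0^+$, decompose every orbit over $[0,2\tau_\delta]$ into three pieces: (i) the outer-pre segment from $S_\delta^-$ to the inner surface $S_{K,\delta}$, of duration $\tau_\delta - O(\delta^{1/\alpha})$; (ii) the inner-layer passage, of duration $O(\delta^{1/\alpha})$; (iii) the symmetric outer-post segment from $S_{K,\delta}$ to $S_\delta^+$. On the outer segments the integral equations (\ref{eq:scaledoutint}) and the contraction argument of Lemma \ref{lem:qnearimpact} give a flow map that is $C^\infty$ in initial data and continuous in $\delta$ for all $\delta \geq 0$, tending to the straight-line billiard flow as $\delta \to 0$. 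On the inner segment, after the rescaling (\ref{eq:rescbilsingle}) the equations (\ref{eq:qbarpar}) have $C^\infty$ right-hand sides uniformly in $\delta \geq 0$ on a bounded rescaled-time interval, and by (\ref{impp}) the end-to-end map on $(q,p)$ converges in $C^\infty$ to the elastic reflection at $q_{impact}$. Composing the three $C^\infty$-convergent maps, together with the $C^\infty$-convergence of $S_\delta^\pm$ themselves, yields $\Pi_\delta \to \Pi_0$ with all derivatives.

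The only genuinely non-routine point is the survival of smoothness of all orders through the inner layer, where the unscaled flow is singular in $\delta$. That obstacle has already been dissolved inside Lemma \ref{lem:qnearimpact} by the scaling (\ref{eq:rescbilsingle}), which turns the singular limit into a regular Cauchy problem on a bounded $t_s$-interval; the remaining work is the packaging of those uniform estimates, via the transverse level-set construction of $S_\delta^-$ and the flow-defined choice of $S_\delta^+$, into a statement about the Poincar\'e map between two well-chosen flow-box cross-sections.
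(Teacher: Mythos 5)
Your proposal follows essentially the same route as the paper: define $S^-_\delta$ as a level set of the time-to-impact function $t_{in}(\cdot,\cdot;\delta)$ from Lemma \ref{lem:qnearimpact}, let $S^+_\delta$ be the time-$2\tau_\delta$ flow image so the constant-flight-time property is built in, and reduce the $C^\infty$-convergence of the Poincar\'e map to the uniform-in-$\delta$ estimates already established in Lemma \ref{lem:qnearimpact}. The paper's proof gets the $C^\infty$-convergence of $S^-_\delta \to S^+_\delta$ directly from the representation formula (\ref{sdeltaplus}) together with (\ref{formtq}) and a (\ref{qvpo})-type estimate, whereas you repackage the same ingredients as a composition of an outer-pre, inner-layer, and outer-post segment; both arguments are sound, and your explicit transversality computation via $t_{in}(\Phi^s_\delta(q,p);\delta) = t_{in}(q,p;\delta) - s$ is a clean way to state what the paper leaves more implicit.

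A few small points to tidy. First, you restrict $S^-_\delta$ to the energy level $\{H_0 = \tfrac12\}$, giving a $(2d-2)$-dimensional cross-section; the paper (as needed for Lemma \ref{lem:energytimecoor}, where $\mathcal{E}$ serves as one of the flow-box coordinates on the cross-section) takes the full level set of $t_{in}$ in phase space, which is $(2d-1)$-dimensional. Dropping the energy constraint costs nothing and restores compatibility with the later flow-box construction. Second, the transversality condition is that $dt_{in}(X) = \nabla t_{in}\cdot X \neq 0$, i.e.\ the gradient is not \emph{orthogonal} to the vector field, not ``not parallel''; your actual computation $dt_{in}(X)=-1$ is the right one. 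Third, since $t_{in}$ as defined in Lemma \ref{lem:qnearimpact} is the travel time to the surface $S_{K,\delta}$, the outer-pre segment has duration exactly $\tau_\delta$, while it is the outer-post segment whose duration is $\tau_\delta - O(\delta^{1/\alpha})$. None of these affects the validity of the argument.
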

\begin{proof} For the billiard flow, the function \( t_{in}(q_{0},p_{0};0)\) satisfies
\(Q(q_{0}+p_{0}\cdot t_{in})=0 \). Since the impact at point $M$ is regular, we have
\(p_{0}\cdot\nabla Q(q_{0})\neq 0\), hence the  equation
 \( t_{in}(q,p;0)= t_{in}(q^\prime_0,p^\prime_0;0) \) defines a smooth hypersurface, which is the  cross-section
\(S^-_0\). By the continuous dependence of  \(t_{in}(q,p;\delta)\) and its derivatives
on \(\delta\), the equation
\(t_{in}(q,p;\delta) = t_{in}(q^\prime_0,p^\prime_0;\delta)\) defines a smooth hypersurface, which is the cross-section \(S^-_\delta\).

To satisfy the lemma, we must choose \(S^+_\delta\) as the image of \(S^-_\delta\) by the
time-$2t_{in}$ flow map. Therefore, by Lemma \ref{lem:qnearimpact}, the cross-section
\(S^+_\delta\) consists of the points $(q,p)$ satisfying
\begin{equation}\label{sdeltaplus}
q =q_{impact}
+\delta^{1/\alpha} \tilde q(t_{in}\delta^{-1/\alpha},q_0,p_0;\delta),\qquad
p = \tilde p(t_{in}\delta^{-1/\alpha},q_0,p_0;\delta),
\end{equation}
where $(q_0,p_0)$ is in \(S^-_\delta\). Formula (\ref{sdeltaplus}) defines the Poincar\'e map
\( S_{\delta}^- \to S_{\delta}^+\). By (\ref{formtq}),
$$q =q_{impact} + p t_{in}+O(\delta^{1/\alpha}) +\delta^{1/\alpha}\int_0^{t_{in}\delta^{-1/\alpha}} (\tilde p(u,q_0,p_0;\delta)-p) du.$$
As in Lemma \ref{lem:qnearimpact}, it follows  (similar to (\ref{qvpo})) that the last term in this formula tends to zero
as $\delta\to 0$, along with all derivatives with respect to $(q_0,p_0)$. Therefore, $q$ and $p$
in (\ref{sdeltaplus}) have a well-defined limit as $\delta\to0$, and \(S^+_\delta\) tends to
$S_0^+$, the time-$2t_{in}$ image of $S_0^-$ by the billiard flow. For regular impacts,
$S_0^+$ is a well-defined smooth hypersurface and is bounded away from the boundary. By continuity, the same is true for  \(S^+_\delta\) for all small $\delta\geqslant\ 0$.
\end{proof}

Next, we introduce flow-box coordinates \cite{abraham2008foundations} for the union of trajectory segments that are close to the billiard trajectory $(q^*,p^*)$ in the boundary layer. Precisely, let
\(\mathcal{U}_\delta\) denote the union of the segments of trajectories of system (\ref{eq:singlepartbiliarddel}) that cross \(S^{-}_\delta\) at $t=0$ and correspond to an open time-interval containing the flight-time interval
\([0,2t_{in}] \) (at $\delta=0$, we take
\(\mathcal{U}_0\) as the union of the corresponding billiard trajectories).
\begin{lem} \label{lem:energytimecoor}
In  \(\mathcal{U}_\delta\), one can make a symplectic change of coordinates
\((q,p)\rightarrow(\tau,\mathcal{E},\mathcal{P})\in
\mathbb{R}^{1}\times\mathbb{R}^{1}\mathbb{\times R}^{d-2}\),
such that \((\tau,\mathcal{E})\) is the symplectic time-energy pair,
\(\tau=0 \) at  \(S_{\delta}^{-}\) and
\(\tau(q,p;\delta)=2t_{in}\) at \(S_{\delta}^{+}\),
and the system in \(\mathcal{U}_\delta\) acquires the form
\begin{equation}\label{tauep}
\dot \tau=1,  \qquad \mathcal{\dot E}=0, \qquad \mathcal{\dot P}=0.
\end{equation}
The coordinate transformation is smooth and depends continuously on $\delta$, along with the derivatives, for $\delta>0$.
Moreover, uniformly for all $\delta\geqslant 0$, the variable $q$ depends $C^\infty$-smoothly on $(\mathcal{E},\mathcal{P})$ for every fixed $\tau$, and it depends continuously on $\tau$ and $\delta$, along with all derivatives with respect to
$(\mathcal{E},\mathcal{P})$.

Away from the billiard boundary (in particular, near \(S_{\delta}^{-}\) and  \(S_{\delta}^{+}\) ), the transition maps between the coordinates $(q,p)$ and $(\tau,\mathcal{E},\mathcal{P})$ are smooth, and depend on $\delta$ continuously, in $C^\infty$, for all
\(\delta\geqslant 0\).
\end{lem}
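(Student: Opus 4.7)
The plan is to construct the desired coordinates by the classical symplectic flow-box recipe applied on the cross-section $S_\delta^-$ and then transported along the Hamiltonian flow, with the singular boundary layer handled via the explicit representation of trajectories in Lemma \ref{lem:qnearimpact}.

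First I would build symplectic coordinates on $S_\delta^-$. Because $S_\delta^-$ lies in the interior of $D$ bounded away from $\partial D$ (Lemma \ref{lem:fixedtau}), the Hamiltonian $H_0(\cdot;\delta)$ is smooth on $S_\delta^-$ together with its $\delta$-dependence at $\delta=0$, and its gradient is nonzero because the flow is transverse to $S_\delta^-$. Darboux's theorem, applied with the energy $\mathcal{E}=H_0$ as the first coordinate, produces symplectic coordinates $(\mathcal{E},\mathcal{P})$ on $S_\delta^-$ depending continuously on $\delta\geqslant 0$ in $C^\infty$. I then define $\tau$ to be the time elapsed along the flow of (\ref{eq:singlepartbiliarddel}) from $S_\delta^-$, so that $\tau=0$ on $S_\delta^-$ and, by the defining property of the cross-section $S_\delta^+$ in Lemma \ref{lem:fixedtau}, $\tau=2t_{in}$ on $S_\delta^+$. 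By construction $\dot\tau=1$, $\dot{\mathcal{E}}=0$, $\dot{\mathcal{P}}=0$; and since for every $\delta>0$ the flow $\phi_\tau$ is symplectic, the coordinates $(\tau,\mathcal{E},\mathcal{P})$ are symplectic on the flow-box $\mathcal{U}_\delta$, with $\tau$ canonically conjugate to $\mathcal{E}$.

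Away from $\partial D$ everything reduces to standard smooth ODE theory, so the real issue is the passage of an orbit through the boundary layer. Here I would apply Lemma \ref{lem:qnearimpact} directly. For $(q_0,p_0)\in S_\delta^-$, the trajectory can be written for $\tau\in[0,2t_{in}]$ as
\begin{equation*}
q(\tau)=q_{impact}(q_0,p_0)+\delta^{1/\alpha}\,\tilde q\!\left(\tfrac{\tau-t_{in}}{\delta^{1/\alpha}},q_0,p_0;\delta\right),\qquad p(\tau)=\tilde p\!\left(\tfrac{\tau-t_{in}}{\delta^{1/\alpha}},q_0,p_0;\delta\right),
\end{equation*}
with $\tilde q$ of the form (\ref{formtq}). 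For every fixed $\tau$, the parameter $(q_0,p_0)$ is a smooth function of $(\mathcal{E},\mathcal{P})$ uniformly in $\delta\geqslant 0$, and $\tilde p(\cdot,q_0,p_0;\delta)$ is bounded together with all its derivatives in $(q_0,p_0)$ for all $t_s\in\mathbb{R}$ and all $\delta\geqslant 0$; the prefactor $\delta^{1/\alpha}$ times the bounded integral of $\tilde p$ gives a function of $(\mathcal{E},\mathcal{P})$ that is $C^\infty$ for each fixed $\tau$, with derivatives depending continuously on $\tau$ and $\delta$ at $\delta=0$. Thus $q$ inherits the regularity claimed by the lemma, while at points bounded away from $\partial D$ (in particular near $S_\delta^\pm$) the entire transformation $(q,p)\leftrightarrow(\tau,\mathcal{E},\mathcal{P})$ is $C^\infty$ in $\delta\geqslant 0$.

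The main obstacle is reconciling the symplectic nature of the construction with the fact that the flow map itself is singular at $\delta=0$ inside the boundary layer: the velocity $p(\tau)$ is only continuous in $\tau$ at $\delta=0$ (billiard reflection), so the full change-of-variables $(q,p)\mapsto(\tau,\mathcal{E},\mathcal{P})$ cannot be $C^1$ in $\tau$ at $\delta=0$. I would circumvent this by insisting on the weaker statement actually needed, namely smoothness in the transverse variables $(\mathcal{E},\mathcal{P})$ for each fixed $\tau$; this is exactly what the scaled representation above provides, because the $\delta^{1/\alpha}$ factor cancels the growth of $\tilde q$ in $t_s$ and uniformizes the dependence on $(\mathcal{E},\mathcal{P})$. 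Once this is established, symplecticity for every $\delta>0$ is automatic from Hamiltonian flow theory, and the continuity in $\delta\geqslant 0$ of the Darboux coordinates on $S_\delta^-$ together with Lemma \ref{lem:qnearimpact} extends the conclusion to the limit.
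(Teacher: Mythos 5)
Your construction is essentially the paper's: Darboux-type coordinates $(\mathcal{E},\mathcal{P})$ on the interior cross-section $S_\delta^-$, transported by the flow, with $\tau$ the elapsed time, giving (\ref{tauep}) and symplecticity for $\delta>0$; and the scaled representation of Lemma \ref{lem:qnearimpact} to control the $\delta\to 0$ limit of $q$ as a function of $(\tau,\mathcal{E},\mathcal{P})$. Your treatment of the middle claim (smoothness of $q$ in $(\mathcal{E},\mathcal{P})$ for fixed $\tau$, continuity in $\tau$ and $\delta$) matches formula (\ref{eq:qhatEPtau}) in the paper and is correct.

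The one place you are short is the final claim of the lemma: that near $S_\delta^+$ the full transition map $(q,p)\leftrightarrow(\tau,\mathcal{E},\mathcal{P})$ is smooth and converges in $C^\infty$ as $\delta\to0$. You assert this follows from ``standard smooth ODE theory'' plus Lemma \ref{lem:qnearimpact}, but that lemma parametrizes trajectories forward from $S_\delta^-$; to read off $(\tau,\mathcal{E},\mathcal{P})$ from a given $(q,p)$ near $S_\delta^+$ one must flow backward through the boundary layer, and the forward scaled representation does not by itself establish that this passage gives a local diffeomorphism with $\delta$-uniform inverse. The paper closes exactly this gap by splitting $\mathcal{U}_\delta$ minus a boundary-layer collar into two components and, on the component meeting $S_\delta^+$, re-coordinating by the flow-box based at $S_\delta^+$ via $\tau=2t_{in}-\tau'$, where $\tau'$ is the time to reach $S_\delta^+$. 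Then the only nontrivial fact needed is that the correspondence between the cross-section data $(\mathcal{E}',\mathcal{P}')$ on $S_\delta^+$ and $(\mathcal{E},\mathcal{P})$ on $S_\delta^-$ — i.e.\ the Poincar\'e map $S_\delta^-\to S_\delta^+$ — is smooth and converges in $C^\infty$ as $\delta\to0$; this is precisely the conclusion of Lemma \ref{lem:fixedtau}, which you invoke only for the constancy of $t_{in}$. Inserting this one observation makes your argument complete.
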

\begin{proof} We use the standard flow-box construction: Given a point in $\mathcal{U}_\delta$, we take the value of the
Hamiltonian (\ref{eq:singlepartbiliarddel}) at this point and define $\mathcal{E}$ as this value minus $\frac{1}{2}$; we define $\tau$ as the time it takes a trajectory of system (\ref{eq:singlepartbiliarddel}) to reach this point from \(S_{\delta}^-\). The
coordinates \(\mathcal{P}\) are taken constant along the trajectory and, hence,
equal to the coordinates \(\mathcal{P}\) of the intersection point of the trajectory
with \(S_{\delta}^-\).  One can choose coordinates
\(\mathcal{P}\) on \(S_{\delta}^-\) such that the resulting coordinate system
\((\tau,\mathcal{E},\mathcal{P})\) is symplectic in $\mathcal{U}_\delta$  \cite{abraham2008foundations}. Formula (\ref{tauep})
is immediate from the construction (recall that the energy $\mathcal{E}$ is conserved).

Let us examine the regularity of the coordinate transformation in the limit $\delta\to 0$.
Away from the billiard boundary, the flow of
(\ref{eq:singlepartbiliarddel}) has a regular limit (the billiard flow). It follows that
if we remove from $\mathcal{U}_\delta$ the points whose $q$-component belongs to a given small
neighborhood of the boundary, then the resulting set will have two connected components
for all sufficiently small $\delta$. One component is comprised by trajectory pieces that intersect
$S_{\delta}^-$, and the other by pieces that intersect $S_{\delta}^+$.
It is immediate by construction that in the first connected component
the flow-box coordinate transformation is regular up to $\delta= 0$, as required.

For each point of the second component, one has well-defined (for all $\delta\geqslant 0$)
correspondence between the $(q,p)$ coordinates of the point and the flow-box coordinates $(\tau', \mathcal{E}',\mathcal{P}')$,
where $(\mathcal{E}',\mathcal{P}')$ are the coordinates of the intersection of the orbit of the point with $S_{\delta}^+$ and
$\tau'$ is the time
the orbit needs to arrive to $S_{\delta}^+$. We have $\tau = 2t_{in}-\tau'$, so since $t_{in}$ is a constant,
it follows that to establish the regularity of the transformation $(q,p)\to (\tau,\mathcal{E},\mathcal{P})$ in the second connected component, we just need to show the regularity of the transformation
$(\mathcal{E}',\mathcal{P}')\to (\mathcal{E},\mathcal{P})$ for the points on the cross-section $S_{\delta}^+$ only.

It remains to note that the coordinates $(\mathcal{E},\mathcal{P})$ stay constant along a trajectory, so for any point on
$S_{\delta}^+$ the correspondence $(\mathcal{E},\mathcal{P})\to (\mathcal{E}',\mathcal{P}')$ defines the
Poincar\'e map \(S_{\delta}^- \to S_{\delta}^+\) whose regularity  for all $\delta\geqslant 0$, is established by Lemma \ref{lem:fixedtau}.

We have shown the regularity of the flow-box coordinates away from the billiard boundary. Now, to finish the lemma, we
discuss the dependence of $q$ on the flow-box coordinates. Since $\tau$ is the time variable and $(\mathcal{E},\mathcal{P})$
give the initial conditions on the cross-section $S^-_{\delta,j}$, we can write (\ref{eq:scaledq}) as
\begin{equation}\label{eq:qhatEPtau}
q = q_{impact}+
\delta^{1/\alpha} \tilde q(\frac{\tau - t_{in}}{\delta^{1/\alpha}},\mathcal{E},\mathcal{P};\delta).
\end{equation}
Since the value of  $t_{in}$ is independent of
$(\mathcal{E},\mathcal{P})\in S^-_{\delta,j}$, the required regularity of $q$
follows immediately from
Lemma \ref{lem:qnearimpact}.
\end{proof}

\subsubsection{\label{sec:periodofL}The periodic orbit \(L^*_\delta\)}
Choose, near each regular impact point \(M^j\), $j=1,\ldots, k^*$, of the billiard orbit $L^*$,
the cross-sections
\(S_{\delta,j}^{-}\)  and  \(S_{\delta,j}^{+}\) as in  Lemma \ref{lem:fixedtau}. The cross-sections depend continuously on $\delta$ and stay bounded away from the billiard boundary.
Therefore, in a neighborhood of the piece of $L^*$ between the consecutive cross-sections
\(S_{\delta,j}^+\) and $S^-_{\delta,j+1}$, the potential tends to zero as $\delta\to 0$
and the motion tends to the constant speed motion, as in the billiard. Hence, for the orbits of
system (\ref{eq:singlepartbiliarddel}) the flight time from $S_{\delta,j}^+$ to
$S^-_{\delta,j+1}$ tends to the billiard flight time and the Poincar\'e map
\(S_{\delta,j}^{+}\rightarrow S^{-}_{\delta,j+1}\) tends to the corresponding Poincar\'e map of the billiard flow,
along with the derivatives with respect to the initial conditions.

By Lemma \ref{lem:fixedtau}, the same is true for the Poincar\'e map and the flight time
between \(S_{\delta,j}^-\) and \(S_{\delta,j}^+\) (the flight time does not depend on initial conditions and equals to \(2t_{in}\), so it tends to the billiard flight time by Lemma \ref{lem:qnearimpact}). Thus, we obtain that the return map
to $S_{\delta,1}^-$  for the flow (\ref{eq:singlepartbiliarddel}) is \(C^r\) close, for all \(r\),
to the return map of the billiard flow (the return map is the composition of the Poincar\'e maps
\(S_{\delta,j}^-\to S_{\delta,j}^+\) and
\(S_{\delta,j}^{+}\rightarrow S^{-}_{\delta,j+1 \mod k^*}\) for $j=1,\ldots, k^*$). Also, the return time to $S_{\delta,1}^-$ is
\(C^r\) close to the return time for the billiard flow.

By Assumption BD2, the intersection point of \(L^*\) with $S_{0,1}^-$ is a non-degenerate
non-resonant (up to order 4) elliptic fixed point of the billiard return map at the energy level
$H=\frac{1}{2}$ (this corresponds to the motion with the speed $1$). Such fixed points persist at small perturbations, so the return map to $S_{\delta,1}^-$ also has a KAM-non-degenerate elliptic fixed point at the energy level to $H=\frac{1}{2}$ for all small $\delta$.

This gives us an elliptic periodic orbit \(L^*_\delta\) of system (\ref{eq:singlepartbiliarddel})
such that \(L^*_\delta\cap S_{\delta,1}^-\)
tends to $L^*\cap S_{0,1}^-$ as $\delta\to 0$. By the continuous dependence of the return time on $\delta$, the period of
\(L^*_\delta\) tends to the period of the billiard orbit $L^*$. Recall that we use the notation
$L^*=(q^*(\omega_0 t), p^*(\omega_0 t)$,
where $\omega_0=\frac{2\pi }{|L^{*}|}$ and $(q^*,p^*)$ are $2\pi$-periodic functions,
$q^*$ is continuous and piece-wise linear, and $p^*$ is discontinuous and piece-wise constant.
We denote $L^*_\delta=(q^*_\delta(\omega_0(\delta) t), p_\delta^*(\omega_0(\delta) t)$
for some smooth $2\pi$-periodic functions $(q^*_\delta, p^*_\delta)$. Here,
$\omega_0(\delta)\to \omega_0$ as $\delta\to 0$, and
$(q^*_\delta, p_\delta^*)\to (q^*,p^*)$, with derivatives, if $q^*$ stays away from the billiard boundary. Since $q^*_\delta$ is uniformly Lipshitz and the time spent in the boundary layer is small, it follows that $q^*_\delta\to q^*$ in $C^0$ for all $t$.

The same holds true in every energy level close to $H=\frac{1}{2}$. The billiard has a periodic
orbit $L^*(E)$ in the energy level \(H=E\) which follows the same path as $L^*$ in the $q$- space, with the speed
\(\|p\|=\sqrt{2E}\) and period  \(T(E)=\frac{|L^{*}|}{\sqrt{2E}}\). By the same arguments as above, the system (\ref{eq:singlepartbiliarddel}) for all sufficiently small
\(\delta\) has an elliptic periodic orbit \(L_{\delta}^*(E)\) in the energy level \(H=E\), and
the family of the orbits \(L_{\delta}^*(E)\) approaches \(L^*(E)\) as $\delta\to 0$. In particular,
the period \(T_{\delta}(E)\) tends to \(T(E)\) along with derivatives with respect to $E$
(because the return time to $S_{\delta,1}^-$ tends to the billiard return time with derivatives
with respect to the initial conditions).

For every $\delta>0$, for each impact point $M^j$, $j=1,\dots, k^*$, let us choose the region
$\mathcal{U}_{\delta,j}$ as in Lemma \ref{lem:energytimecoor} (i.e., this region consists of the
orbits of (\ref{eq:singlepartbiliarddel}) that connect the cross-sections
\(S_{\delta,j}^-\) and \(S_{\delta,j}^+\)). Since \(L^{*  }\) is regular periodic orbit, its impacts are distinct, so the regions $\mathcal{U}_{\delta,j}$ do not overlap for different $j$. Let
$\mathcal{U}^0$ be a sufficiently small, yet independent of $\delta$, open neighborhood of
the part of $L^*_\delta$ which is not covered by
the union of $\mathcal{U}_{\delta,j}$.

We use $(q,p)$ as coordinates in $\mathcal{U}^0$; note that for the points
$(q,p)\in\mathcal{U}^0$, the $q$-component is bounded away from the billiard boundary.
In $\mathcal{U}_{\delta,j}$, we use the flow-box coordinates
$(\tau,\mathcal{E},\mathcal{P})_j$ given by Lemma \ref{lem:energytimecoor}. We restrict the freedom in the choice of the
flow-box coordinates by the requirement that $(\mathcal{E},\mathcal{P})_j=0$ on the periodic orbit $L_{\delta}^*$.

Thus, we have covered a neighborhood of \(L_{\delta}^*\) by a system of coordinate charts
for all $\delta>0$.
The overlap region between $\mathcal{U}^0$ and $\mathcal{U}_{\delta,j}$
near the cross-sections \(S_{\delta,j}^{-}\) and  \(S_{\delta,j}^{+}\) stays bounded away from
the billiard boundary. Therefore, by Lemma \ref{lem:energytimecoor} the transition map between the $(q,p)$-coordinates in
$\mathcal{U}^0$ and the flow-box coordinates in
$\mathcal{U}_{\delta,j}$ in the overlap region is symplectic, depends continuously on $\delta$,
and has a well-defined limit, in $C^\infty$, as $\delta\to 0$.

In other words, we have introduced the structure of a smooth symplectic manifold in the neighborhood of
\(L_\delta^*\) for \(\delta>0\), and this structure has a regular limit at $\delta=0$. The equations of motion also have a regular limit in these coordinates: in $\mathcal{U}^0$ the equations converge to
$$\dot q=p, \qquad \dot p=0$$
at $\delta=0$, and in $\mathcal{U}_{\delta,j}$ the equations are the same for all $\delta$:
$$\dot \tau=1, \qquad \dot{\mathcal{E}}=0, \qquad \dot{\mathcal{P}}=0.$$

The elliptic periodic orbit $L^*_\delta$ tends, as $\delta\to0$, to the billiard periodic orbit
$L^*=L^*_0$, which is a smooth curve in these  coordinates (for every $\delta\geqslant 0$,
the curve $L^*_\delta$ has the same equation $(\mathcal{E}, \mathcal{P})=0$ in the flow-box coordinates). As in Section \ref{sec:setupsinglepart}, we can introduce
action-angle coordinates near \(L^*_\delta\) for all $\delta\geqslant 0$,
with the resulting Hamiltonian as in (\ref{eq:birkhoffsingle}), yet here the coefficients depend (continuously) on \(\delta\):
\begin{equation} \label{eq:birkhoffsinglebilliard}
H_0(I_{0},\theta,z;\delta)=\frac{1}{2}+
\omega (\delta )I+\frac{1}{2}I^\top A(\delta )I+g(I_{0},\theta,z;\delta),
\end{equation}
where $g=g_0(I_0;\delta)+g_1(I_0;\delta)\hat I+O(\|z\|^4 |I_{0}|+\|z\|^{5})$
with $g_0=O(I_0^3)$, $g_1 = O(I_0^2)$ (see notations after (\ref{eq:birkhoffsingle})).
We stress that the symplectic transformation between the action-angle coordinates \((I_{0},\theta,z)\) and the energy-time coordinates   \((\tau,\mathcal{E}, \mathcal{P})\)  defined in the near-impact regions,  $\mathcal{U}_{\delta,j}$, is smooth for all \(\delta\geqslant0\).

Similar to Section \ref{sec:setupsinglepart}, we denote the relation between the action-angle coordinates and the $(q,p)$-coordinates as $(q,p)=(\hat q(I_{0},\theta,z),\hat p(I_{0},\theta,z))$.
Away from the billiard boundary, i.e., in $\mathcal{U}^0$, this coordinate transformation is well-defined in the limit $\delta=0$. However, near the impacts (in the regions
$\mathcal{U}_{\delta,j}$) the relation between
$(q,p)$ and the action-angle coordinates acquires singularities at $\delta=0$.

The periodic orbit $L^*_\delta$ corresponds to $(I_0,z)=0$. We have
$\dot \theta = \omega_0(\delta)$ on $L^*_\delta$, i.e., $\theta=\omega_0(\delta) t$.
Thus, in our notations, $L^*_\delta=(q^*_\delta(\theta), p_\delta^*(\theta))=(\hat q(0,\theta,0),\hat p(0,\theta,0))$.
On \(L^*_\delta\),   equation (\ref{eq:qhatEPtau}) near the $j$-th impact point, where for \(\delta=0\) the impact occurs at \(\theta_{j}\), becomes
\begin{equation}\label{eq:qdeltabound}
q^*_\delta(\theta)= M^j +
\delta^{1/\alpha} \tilde q_{j,\delta}\left(\frac{\theta-\theta_{j}}{\omega_0(\delta)\delta^{1/\alpha}}\right),
\end{equation}
where $\tilde q_{j,\delta}$ has all derivatives bounded, see Lemma \ref{lem:qnearimpact}. So, near impacts, the \(k\)-th derivative of
\(q^*_\delta(\theta) \) is of order \(\delta^{-(k-1)/\alpha}\).

Like in Section \ref{sec:proofmainthm}, we will also perform the scaling
\(z\rightarrow \delta^{1/4} z\), \(I_0\rightarrow \delta^{1/2}I_0\). In the scaled variables, the motion in
a small neighborhood of $L^*_\delta$ is described by the scaled single-particle Hamiltonian
\begin{equation} \label{eq:birkhoffsinglebilliardscaled}
H_{0,scal}=\delta^{-1/2}\left(H_0(\delta^{1/2}I_0,\theta,\delta^{1/4}z;\delta)-\frac{1}{2}\right)
=\omega (\delta )I+\frac{1}{2} \delta^{1/2}I^\top A(\delta )I+O(\delta^{3/4}).
\end{equation}

Note that away from the billiard boundary, we have
\begin{equation}\label{qdeltaaway}
\hat q (\delta^{1/2}I_{0},\theta,\delta^{1/4}z)- q^*_\delta(\theta) = O(\delta^{1/4}),
\end{equation}
with all derivatives with respect to $\theta$ and the scaled variables $z$ and $I_0$. Note also
that the scaling of $I_0$ and $z$ induces the scaling
\((\mathcal{E},\mathcal{P})\rightarrow ( \delta^{1/2}\mathcal{E},\delta^{1/4} \mathcal{P})\) for the flow-box coordinates near impacts; the scaled energy $\mathcal{E}$ equals to $H_{0,scal}$ in (\ref{eq:birkhoffsinglebilliardscaled}).
Recall that  the symplectic transformation between the unscaled \((I_{0},\theta,z)\) and \((\tau,\mathcal{E}, \mathcal{P})\)  is smooth for all \(\delta\geqslant0\). By   (\ref{eq:birkhoffsinglebilliardscaled}), scaling \(I_{0}\) and \(\mathcal{E}\) by $\delta^{1/2}$ remains a smooth transformation in the limit \(\delta=0\). Similarly, since \( \mathcal{P}\) depends smoothly on \((\delta^{1/2}I_{0},\delta^{1/4}z)\) and vanishes at the origin, it follows that its scaling by \(\delta^{1/4}\)    also remains smooth in this limit. Therefore, we conclude that the symplectic transformation between the scaled \((I_{0},\theta,z)\) and the scaled \((\tau,\mathcal{E}, \mathcal{P})\)  is smooth  for all \(\delta\geqslant0\).

\subsubsection{Conditions SP1-SP3 for the orbit \(L^*_\delta\).}
 \begin{lem}\label{lem:billasum1to3}
Under Assumptions BD1 and BD2 on the billiard orbit, the periodic orbit  \(L^*_\delta\) satisfies Assumptions SP1-SP3  for all sufficiently small \(\delta\geqslant 0\).
\end{lem}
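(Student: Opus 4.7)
The construction in Section~\ref{sec:periodofL} yields, for each small $\delta\geq 0$, the Birkhoff normal form~(\ref{eq:birkhoffsinglebilliard}) of $H_0$ around $L^*_\delta$, with frequency vector $\omega(\delta)=(\omega_0(\delta),\hat\omega(\delta))$ and twist block
$A(\delta)=\begin{pmatrix} a(\delta) & b(\delta) \\ b(\delta)^\top & \hat A(\delta)\end{pmatrix}$
depending continuously on $\delta$, including at $\delta=0$. Each of SP1--SP3 is an open condition on these coefficients, so it suffices to verify them at $\delta=0$ and then invoke continuity to extend them to all sufficiently small $\delta>0$.

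For SP1 and SP2 at $\delta=0$, the verification is essentially bookkeeping. The unperturbed billiard family $L^*(E)$ traces the same closed path at every energy with period $T(E)=|L^*|/\sqrt{2E}$ (strictly decreasing) and action $I_0=\sqrt{2E}\,|L^*|/(2\pi)$; hence $H_0|_{z=0}=2\pi^2 I_0^2/|L^*|^2$ and $a(0)=4\pi^2/|L^*|^2>0$, which is SP1. For SP2, the multipliers of $L^*_\delta$ depend continuously on $\delta$ and, at $\delta=0$, coincide with the eigenvalues of $(B_D)^{k^*}$ at $M^1$ by the standard correspondence between multipliers of a periodic orbit of a Hamiltonian flow and eigenvalues of its iso-energetic Poincar\'e map. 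Thus the frequencies $(\omega_0,\hat\omega)$ appearing in~(\ref{eq:birkhoffsinglebilliard}) at $\delta=0$ are precisely those of Assumption BD2, and its fourth-order non-resonance clause is exactly~(\ref{eq:singlepartnonres}).

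For SP3 the iso-energetic twist $\det A_\omega(0)\neq 0$ is immediate from $\det\Omega\neq 0$ in BD2, via the identification of the twist matrix $\Omega$ of the iso-energetic Poincar\'e map with (a nonzero multiple of) the Schur-complement--type matrix $\hat A-S_1$, where $S_1$ denotes the $N=1$ specialization of~(\ref{smatrdef}); indeed, the computation already carried out at the end of the proof of Lemma~\ref{lem:HIhastwist} gives $\det A_\omega=-\omega_0^2\det(\hat A-S_1)$. The delicate point, which I expect to be the main obstacle, is the ordinary twist $\det A(0)\neq 0$, which is \emph{not} directly asserted by BD2. Here I would exploit the billiard-specific rigidity: the scaling $(q,p)\mapsto(q,cp)$ sends $L^*(E)$ to $L^*(c^2E)$ along the same geometric path and conjugates the transverse linearized Poincar\'e map by a mere rescaling of time, so the billiard-map multipliers $e^{\pm i\phi_j}$ at $M^1$ are energy-independent. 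Combined with $\omega_j=\phi_j/T(E)$, this forces the ratios $\omega_j/\omega_0=\phi_j/(2\pi)$ to be constant along the family at $\delta=0$; differentiating in $I_0$ then gives the key relation $b(0)=(a(0)/\omega_0)\,\hat\omega$. Substituting this into $\det A=a\det(\hat A-b^\top b/a)$ reduces it to $a(0)\det(\hat A-(a(0)/\omega_0^2)\hat\omega^\top\hat\omega)=a(0)\det(\hat A-S_1)$, which agrees with $\det A_\omega(0)$ up to the nonzero factor $-a(0)/\omega_0^2$. Hence $\det A(0)\neq 0$ also follows from $\det\Omega\neq 0$, and openness in $\delta$ concludes the proof.
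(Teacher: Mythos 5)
Your proposal is correct and follows essentially the same route as the paper: reduce to $\delta=0$ by continuity, then use the explicit billiard formulas $T(E)=|L^*|/\sqrt{2E}$ and the energy-scaling invariance of the billiard return map to extract the key relation $b(0)=(a(0)/\omega_0)\hat\omega$, from which both twist determinants collapse to multiples of $\det\Omega$. The only cosmetic difference is that you import the algebra from Lemma~\ref{lem:HIhastwist} (the $\hat A-S$ identity with $N=1$) rather than redoing the determinant manipulation from scratch as the paper does, and you frame the iso-energetic twist as the easy case and the ordinary twist as the one requiring the billiard relation --- whereas the paper derives the billiard relation first and then treats both twists symmetrically; these are presentational choices, not a different argument.
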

\begin{proof}
For the family $L^*(E)$ of billiard periodic orbits that follow, with the speed \(\|p\|=\sqrt{2E}\), the same path as $L^*_0$ in the $q$- space, the period  \(T(E)\) equals to $\frac{|L^{*}|}{\sqrt{2E}}$. It decreases with the energy $E$.
Since for small \(\delta\) the period of  \(L^*_\delta(E)\) is close to the period of  \(L^*(E)\)  with the derivatives with respect to $E$ (see Section \ref{sec:periodofL}), the period of
\(L^*_\delta(E)\) also decreases with $E$, i.e.,
Assumption SP1 is verified.

By the closeness of the return maps near \(L^*_\delta\) and $L^*_0$, the multipliers of
\(L^*_\delta\) are close to the multipliers of $L^*_0$. Hence, by assumption BD2, there are no low-order resonances for all small $\delta$, i.e., assumption SP2 is satisfied.

The twist assumption SP3 for the orbit \(L^*_\delta\) is an open condition on
coefficients of system (\ref{eq:birkhoffsinglebilliard}). Because of the continuity in $\delta$,
it is enough to check this condition at \(\delta=0\). We evaluate the necessary coefficients of
(\ref{eq:birkhoffsinglebilliard}) at \(\delta=0\) by analyzing the return map near
$L^*_0=(I_0=0, z=0)$.

The system of differential equations defined by the Hamiltonian (\ref{eq:birkhoffsinglebilliard})
is, in restriction to the energy level $H_0=E$ close to $E=\frac{1}{2}$, given by
\begin{equation}\label{system00b} \begin{array}{l}
 \omega_{0}I_{0} = E -\frac{1}{2}-\hat \omega\hat I+O(\hat I^{2}),\\
\dot \theta =\omega_{0}+ (aI_0+b \hat I)+O(I^2),\\
\dot x_j =  (\omega_j+b_{j}I_{0}+(\hat A \hat I)_j + O(I_0^2))y_j+O(\|z\|^3|I_0|+\|z\|^4),\\
\dot y_j = -(\omega_j+b_{j}I_{0}+(\hat A \hat I)_j + O(I_0^2))x_j +O(\|z\|^3|I_0|+\|z\|^4) \qquad (j=1,\ldots, d-1),
\end{array}
\end{equation}
where all the coefficients are taken at $\delta=0$.
Recall the notation: $I=(I_0,\hat I)$, $z_j=(x_j,y_j)$, $I_j = \frac{1}{2} z_j^2$,
$\omega=(\omega_0,\hat\omega)$,  $A=\begin{pmatrix}a &  b \\  b^\top &   \hat A \\ \end{pmatrix}$, see Section \ref{sec:setupsinglepart}.

For each value of $E$, the line $(x,y)=0$ (equivalently \(\hat I=0\)) corresponds to a periodic orbit. Its period
equals to the travel time from $\theta=0$ to $\theta=2\pi$, i.e.,
$$T(E)=\frac{2\pi}{\omega_0 + a\frac{E-\frac{1}{2}}{\omega_0} +O((E-\frac{1}{2})^2)}=\frac{2\pi}{\omega_0 }(1- a\frac{E-\frac{1}{2}}{\omega_0^2} +O((E-\frac{1}{2})^2))$$
The linear part of the system for $(x_j,y_j)$ is rotation with the frequency
$\omega_j(E)=\omega_j+b_j\frac{E-\frac{1}{2}}{\omega_0}+O((E-\frac{1}{2})^2)$.  It follows that
the linearization of the  return map
$(x,y)\mapsto (\bar x,\bar y)$ at the fixed point $(x,y)=0$ is the rotation
of $(x_j,y_j)$, $j=1,\ldots,d-1$, by the angle
\begin{equation}\label{omgprime}
\Delta_j =T(E)\omega_j(E)= \frac{2\pi}{\omega_0}(\omega_j+
\frac{E-\frac{1}{2}}{\omega_0}(b_{j}-\frac{\omega_j a}{\omega_0})+O((E-\frac{1}{2})^2)).
\end{equation}

System (\ref{eq:birkhoffsinglebilliard}) at $\delta=0$ describes the billiard flow, so its return map
is the return map of a billiard flow. The billiard flow is invariant with respect to
the energy and time scaling, in particular the return map at the energy level \(E=\frac{1}{2}\) is  conjugate to the return map at any \(E\) by the scaling \((q,p)\rightarrow(q,p/\sqrt{2E})\).
The conjugacy implies that the rotation angles $\Delta_j$ in (\ref{omgprime}) are independent of $E$. Therefore,
\begin{equation}\label{bawbil}
b_j=\frac{\omega_j a}{\omega_0}, \qquad j=1, \ldots, d-1.
\end{equation}

Now, at $E=\frac{1}{2}$ we rewrite system (\ref{system00b}) as
$$\begin{array}{l}
\dot \theta =\omega_{0} + O(\|z\|^4),\\
\dot x_j =  (\omega_j-\frac{b_j}{\omega_0}\hat\omega\hat I+(\hat A \hat I)_j)y_j+
O(\|z\|^4),\\
\dot y_j = -(\omega_j-\frac{b_j}{\omega_0}\hat\omega\hat I+(\hat A \hat I)_j)x_j +
O(\|z\|^4) \qquad (j=1,\ldots, d-1),
\end{array}
$$
The return map $(\theta=0)\to(\theta=2\pi)$ for this system
coincides, up to $O(\|z\|^4)$-terms
with the (nonlinear) rotation of $(x_j,y_j)$, $j=1\ldots,d-1$, to the angles
$$\frac{2\pi}{\omega_0}(\omega_j-\frac{b_j}{\omega_0}\hat\omega\hat I+(\hat A \hat I)_j).$$

Thus, the map coincides, up to $O(\hat I^2)$-terms, with the normal form (\ref{eq:birkhoffbillmap}) where the action-angle coordinates $(I_j,\Phi_j)$
are introduced such that
$(x_j,y_j)=\sqrt{2I_j}(\cos(\Phi_j),\sin(\Phi_j))$, and the matrix $\Omega$
is given by
$$
\Omega_{jk} =\frac{2\pi}{\omega_0}(\hat A_{jk} - \frac{\omega_k b_j }{\omega_0})=
\frac{2\pi }{\omega_0}(\hat A_{jk}  -\frac{a\omega_j \omega_k}{\omega^2_0}).
$$
We have \(\det\Omega \ne 0\) by Assumption BD2. This implies that conditions
(\ref{twistel}) and (\ref{twistelw}) of Assumption SP3 are satisfied. Indeed,
using (\ref{bawbil}), we find
$$
\det A=a \det (\hat A - \frac{1}{a} b^\top b)=a\det(\frac{\omega_0}{2\pi}\Omega)\neq 0,
$$
and
$$
\det A_\omega=\det \left(\begin{array}{ccc} 0 & \omega_0 & \hat\omega \\  \omega_0 & a &  \frac{a}{\omega_{0}}\hat \omega \\ \hat\omega^\top & \frac{a}{\omega_{0}}\hat \omega^\top & \hat A \end{array}\right)=\det \left(\begin{array}{ccc} -\frac{\omega_{0}^2}{a} & 0 & 0 \\  \omega_0 & a &  \frac{a}{\omega_{0}}\hat \omega \\ \hat\omega^\top & \frac{a}{\omega_{0}}\hat \omega^\top & \hat A \end{array}\right)= -\frac{\omega_{0}^2}{a}\det A\neq 0.
$$
\end{proof}

\subsection{Multi-particle system\label{sec:billiardmultipart}}
Now we analyze the multi-particle system (\ref{eq:multparsysybill0}) near the minimal line (\ref{eq:lineminbill}) of the averaged interaction potential  \(U\) of (\ref{eq:averagedpotuD}).
We have established that for all small \(\delta\geqslant0\) the single-particle Hamiltonian can be brought to the form (\ref{eq:birkhoffsinglebilliard}). In these variables the multi-particle system is of the form
\begin{equation}\label{eq:multparwsingle}
H=\sum_{n=1}^N\left[\omega (\delta )I^{(n)}+\frac{1}{2}{I^{(n)}}^\top A(\delta )I^{(n)}+
g(I_{0}^{(n)},\theta^{(n)},z^{(n)};\delta )\right]+
\delta\mathop{\sum_{n,m=1,\dots,N}}_{n\neq m} W(q^{(n)}-q^{(m)}),
\end{equation}
where, as \(\delta \rightarrow 0\),
the dependence \(q^{(n)}=\hat q(I_0^{(n)},\theta^{(n)},z^{(n)})\) becomes singular near impacts,
as described in Section \ref{sec:singlepartbil}. This singularity is quite mild in the flow-box coordinates
$(\tau^{(n)},\mathcal{E}^{(n)},\mathcal{P}^{(n)})$: we have, uniformly for all $\delta\geqslant 0$, continuous dependence of $q^{(n)}$ on $(\tau^{(n)},\mathcal{E}^{(n)},\mathcal{P}^{(n)})$ and, for every fixed value of $\tau^{(n)}$, smooth dependence on
$(\mathcal{E}^{(n)},\mathcal{P}^{(n)})$, see Lemma \ref{lem:energytimecoor}.

Let us make the scaling
\(z^{(n)}\rightarrow \delta^{1/4} z^{(n)}\), \(I_0^{(n)}\rightarrow \delta^{1/2}I_0^{(n)}\).
By (\ref{eq:birkhoffsinglebilliardscaled}), we obtain the scaled version of (\ref{eq:multparwsingle}):
\begin{equation}\label{eq:wperhaatbil}\begin{array}{ll}
H_{scal} &= \delta^{-1/2}\; H\!\left((\delta^{1/2}I_0^{(n)},\theta^{(n)},\delta^{1/4}z^{(n)})_{_{n=1,\dots,N}}\right)
\\& \displaystyle
=\sum_n \left[\omega  (\delta )I^{(n)} + \delta^{1/2} \frac{1}{2} {I^{(n)}}^\top A  (\delta )I^{(n)}+O(\delta^{3/4})\right]\\& \displaystyle
\qquad\qquad+\delta^{1/2}\sum_{n\neq m} W(\hat q(\delta^{1/2}I_0^{(n)},\theta^{(n)},\delta^{1/4}z^{(n)})-\hat q(\delta^{1/2}I_0^{(m)},\theta^{(m)},\delta^{1/4}z^{(m)})).  \end{array}
\end{equation}
As we consider a neighborhood of the minimal line (\ref{eq:lineminbill}) which satisfies the collision-free Assumption IP2, we can assume that \(W\) is bounded with its derivatives (as in Theorem \ref{thm2}). Therefore, away from the impacts,
$$
W(\hat q(I_0^{(n)},\theta^{(n)},z^{(n)})-\hat q(I_0^{(m)},\theta^{(m)},z^{(m)}))=
W(q^*_\delta(\theta^{(n)};\delta)-q^*_\delta(\theta^{(m)};\delta))+O(\delta^{1/4}),$$
see (\ref{qdeltaaway}).

Thus, when all the particles stay away from the billiard boundary, the Hamiltonian (\ref{eq:wperhaatbil}) is $O(\delta^{3/4})$-close,
with all derivatives, to the truncated Hamiltonian
\begin{equation}\label{eq:nparttrunbillnew}
H_{trun} =\sum_{n} \omega (\delta)I^{(n)} + \delta^{1/2} \sum_n \frac{1}{2} {I^{(n)}}^\top A(\delta) I^{(n)}+
\delta^{1/2}\sum_{n\neq m} W(q_\delta^*(\theta^{(n)};\delta)-q^*_\delta(\theta^{(m)};\delta)).
\end{equation}
Note that we cannot claim the same near impacts, so there we need a more accurate comparison of systems
(\ref{eq:wperhaatbil}) and (\ref{eq:nparttrunbillnew}).

We consider the Hamiltonians (\ref{eq:wperhaatbil}) and (\ref{eq:nparttrunbillnew})  close to the line
$$
\mathcal{L}_{min}^{*}=\{(I_0^{(n)}=0,\theta^{(n)}=\theta^{(n)}_{min}+c,z^{(n)}=0)_{n=1,..N},c\in S^{1}\},
$$
the phase space image of the minima line (\ref{eq:lineminbill}), i.e. the solution curve  (\ref{uncoupledch})  in  action angle coordinates).
Take the codimension-one hypersurface
\begin{equation}\label{eq:sectionthetan}
\Sigma_{0}:\quad\sum_{n=1}^N\theta^{(n)}=0 \mod2\pi.
\end{equation}
By (\ref{eq:sumtminD}), this surface intersects the  line \(\mathcal{L}_{min}^{*}\) at
\(\theta^{(n)}=\theta^{(n)}_{min}\). By (\ref{eq:noimpactthetnD}),  in the neighborhood of this intersection point,  all \(q^{(n)}\) are bounded away from the billiard boundary.
As \(\dot \varphi=\frac{1}{N}\sum_{n=1}^N \dot \theta^{(n)}= \omega _{0}(\delta)+O( \delta^{1/2})\neq0\ \), the hyper-surface (\ref{eq:sectionthetan}) is transverse to the flows of both  Hamiltonians (\ref{eq:wperhaatbil}) and  (\ref{eq:nparttrunbillnew}).
Now we show that despite the fact that we can guarantee the closeness of these Hamiltonians to each other only away from the billiard boundary, their Poincar\'e return maps to the cross-section  (\ref{eq:sectionthetan}) are sufficiently close to each other.

\begin{lem}\label{lem:poincaremapclosebilnew} When Assumption IP3 (non-simultaneous impacts) is satisfied, the Poincar\'e return map  of  system (\ref{eq:wperhaatbil}) to the cross-section \(\sum_{n=1}^N \theta^{(n)}=0\)\ is
 $o(\delta^{1/2})$-close, along with all derivatives, to the corresponding return map of the truncated system
(\ref{eq:nparttrunbillnew}).
\end{lem}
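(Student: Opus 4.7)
The plan is to decompose the return map to $\Sigma_{0}$ into a composition of Poincaré maps between intermediate cross-sections chosen so that within each sub-interval at most one particle traverses its boundary layer. By Assumption IP3 the $N k^{*}$ impact phases of the unperturbed choreography are pairwise distinct modulo $2\pi$, so one can place in the full phase space $2 N k^{*}$ transverse cross-sections bracketing the passage of each particle through each of its $k^{*}$ boundary layers --- these are the product embeddings, into the multi-particle phase space, of the single-particle cross-sections $S^{\pm}_{\delta,j}$ of Lemma \ref{lem:fixedtau}. By the construction of Section \ref{sec:singlepartbil} the cross-sections, together with the vector fields of both systems, depend continuously on $\delta\geqslant 0$.

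In each sub-interval I use two different coordinate charts. In the sub-intervals where all particles are bounded away from the billiard boundary I work in the scaled action-angle coordinates $(\theta^{(n)},I_{0}^{(n)},z^{(n)})$, in which both Hamiltonians (\ref{eq:wperhaatbil}) and (\ref{eq:nparttrunbillnew}) are smooth with derivatives uniformly bounded in $\delta$, and their difference is $O(\delta^{3/4})$ in every $C^{k}$ topology. In sub-intervals during which one particle $n$ is in its boundary layer I switch, for that particle, to the flow-box coordinates $(\tau^{(n)},\mathcal{E}^{(n)},\mathcal{P}^{(n)})$ of Lemma \ref{lem:energytimecoor}, while keeping action-angle coordinates for the $N-1$ particles which, by IP3, remain away from their own boundary layers. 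In these mixed coordinates the single-particle part of the Hamiltonian for particle $n$ reduces to $\mathcal{E}^{(n)}+\tfrac{1}{2}$ and the interaction enters only through $\hat q^{(n)}(\tau^{(n)},\mathcal{E}^{(n)},\mathcal{P}^{(n)};\delta)$, which by Lemma \ref{lem:energytimecoor} is smooth in $(\mathcal{E}^{(n)},\mathcal{P}^{(n)})$ uniformly in $\tau^{(n)}$ and $\delta\geqslant 0$ and satisfies
\begin{equation*}
\hat q^{(n)}-q_{\delta}^{*}(\theta^{(n)})\;=\;O(\|(\mathcal{E}^{(n)},\mathcal{P}^{(n)})\|)\;=\;O(\delta^{1/4}).
\end{equation*}
An analogous uniform bound for $m\neq n$ follows from (\ref{qdeltaaway}), and $W$ is bounded with all its derivatives away from the collision set (using IP2 and the truncation argument of Theorem \ref{thm2}); hence the smoothness of $W$ implies that the two Hamiltonians in this mixed chart differ by $O(\delta^{3/4})$ in the $C^{k}$ topology of the non-time variables.

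Standard Gronwall estimates applied to the variational equations of the two flows then show that in each sub-interval the Poincaré map between the chosen cross-sections differs between the full and the truncated systems by $O(\delta^{3/4})$ in $C^{k}$ for every fixed $k$. Composing the finitely many sub-maps and noting $\delta^{3/4}=o(\delta^{1/2})$ finishes the proof.

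The main technical obstacle circumvented by this coordinate choice is the singular $\theta^{(n)}$-dependence of both $\hat q^{(n)}$ and $q_{\delta}^{*}$ inside the boundary layers, whose $k$-th $\theta^{(n)}$-derivatives blow up like $\delta^{-(k-1)/\alpha}$: a $C^{k}$ estimate performed in the global action-angle chart would lead to variational equations with unbounded coefficients. Inside the boundary layer the flow-box coordinates push this singularity into the time-like variable $\tau^{(n)}$, along which the variational equation is only integrated; only derivatives in $(\mathcal{E}^{(n)},\mathcal{P}^{(n)})$ and in the other particles' coordinates enter the Gronwall estimate, and these remain uniformly bounded by Lemma \ref{lem:energytimecoor} and by Assumption IP3.
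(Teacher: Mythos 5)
Your decomposition into sub-intervals where at most one particle is in a boundary layer is the right structural idea, and it matches the paper's. But the central step fails: you claim that, in the mixed chart (flow-box for particle $n_0$, action-angle for the others), $H_{scal}$ of (\ref{eq:wperhaatbil}) and $H_{trun}$ of (\ref{eq:nparttrunbillnew}) differ by $O(\delta^{3/4})$ in every $C^k$ topology of the non-time variables. This is false. The truncated Hamiltonian contains $W(q^*_\delta(\theta^{(n_0)}) - \cdot)$, and $\theta^{(n_0)}$ is a smooth function of $(\tau^{(n_0)},\mathcal{E}^{(n_0)},\mathcal{P}^{(n_0)})$ with bounded derivatives, but $q^*_\delta(\cdot)$ near an impact has $k$-th derivative of order $\delta^{-(k-1)/\alpha}$ (see (\ref{eq:qdeltabound})). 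By the chain rule, $\partial^2_{\mathcal{E}\mathcal{E}}\,q^*_\delta(\theta^{(n_0)}(\tau,\mathcal{E},\mathcal{P}))$ picks up a term $(q^*_\delta)''\cdot(\partial_{\mathcal{E}}\theta^{(n_0)})^2 = O(\delta^{-1/\alpha})$, and higher $(\mathcal{E},\mathcal{P})$-derivatives diverge faster. Consequently $\partial^2_{\mathcal{E}\mathcal{E}}(H_{scal}-H_{trun}) = O(\delta^{1/2-1/\alpha})$, which is \emph{larger} than $\delta^{1/2}$, not smaller, and the blow-up worsens with each additional derivative. The flow-box coordinates are adapted to the \emph{full} flow, so $\hat q^{(n_0)}$ depends on them smoothly, but $q^*_\delta(\theta^{(n_0)})$ does not: you cannot compare the two Hamiltonians directly in any single chart and get uniform $C^k$-smallness of the difference. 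Gronwall on the variational equations then breaks down exactly at the $C^1$-level.

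The paper avoids this by a triangle inequality that you are missing. It introduces the auxiliary Hamiltonians $\hat H_{scal}$ of (\ref{eq:wperhaatbilnnn}) and $\hat H_{trun}$ of (\ref{eq:switchedoffnnn}) in which the interaction with the impacting particle $n_0$ is \emph{switched off}. The comparison $H_{scal}$ versus $\hat H_{scal}$ is done in the flow-box chart, where $H_{scal}$ is nice; the comparison $H_{trun}$ versus $\hat H_{trun}$ is done in the action-angle chart with $\theta^{(n_0)}$ as the time variable, where $H_{trun}$ is nice (the bad $\theta^{(n_0)}$-derivatives never get taken because $\theta^{(n_0)}$ is time); and $\hat H_{scal}$ versus $\hat H_{trun}$ is nice in any chart because the singular term is absent from both. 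Each leg of the triangle is compared in the chart adapted to it, and the cumulative error from the passage through the boundary layer is $O(\eta\delta^{1/2})$, recovering $o(\delta^{1/2})$ by letting $\eta\to 0$ slowly. To salvage your proof you would need this (or an equivalent) intermediate decoupling step; a direct single-chart comparison cannot work because no chart is simultaneously adapted to $\hat q^{(n_0)}$ and to $q^*_\delta(\theta^{(n_0)})$.
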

\begin{proof}
We take several cross-sections \(\Sigma_{c_k}:\frac{1}{N}\sum_{n=1}^N\theta^{(n)}=c_{k},\quad0=c_{1}<c_{2}<\ldots<c_K=2\pi\), and consider the return map of the flow of (\ref{eq:wperhaatbil})
to \(\Sigma_{0}\) as a composition of the maps from each consecutive cross-section to the next one (intermediate Poincar\'e maps). We show that each of these
maps is well defined in the neighborhood of \(\mathcal{L}_{min}^{*}\) and is
sufficiently close to the corresponding map for (\ref{eq:nparttrunbillnew}).

In a small neighborhood of \(\mathcal{L}_{min}^{*}\), by the non-simultaneous impacts Assumption IP3, when $q^{(n)}$ is close to an impact with the billiard boundary for particle $n$, all the other particles stay away from the billiard boundary. Therefore, we can choose the cross-sections \(\Sigma_{c_k}\) in such a way that when the phase point travels between
\(\Sigma_{c_k}\) and \(\Sigma_{c_{k+1}}\),
either all the particles are bounded away from the billiard boundary, or one particle gets close to a regular impact while all the other particles stay away from the boundary. In the first case, systems (\ref{eq:wperhaatbil})  and  (\ref{eq:nparttrunbillnew}) are uniformly \(O(\delta^{3/4})\)-close in \(C^{r}\) for any $r$ in the region of the phase space between these cross-sections.
Since the flight time between the cross-sections is bounded, this implies that the corresponding intermediate Poincar\'e maps are also
\(O(\delta^{3/4})\)-close, as required.

It remains to consider the second case, when one of the particles gets close to an impact. Let it be particle $n_{0}$. Let the cross-sections before and after the impact be \(\Sigma_{c_k}\) and
\(\Sigma_{c_{k+1}}\), and let the value of $q^{(n_{0})}$ on these cross-sections stay at a distance of order
$\eta>0$
from the impact for all small $\delta$. Let us show that the map \(\Sigma_{c_k}\to \Sigma_{c_{k+1}}\) by system
(\ref{eq:wperhaatbil}) is $O(\eta\delta^{1/2})$ close, in \(C^r\) for any $r$, to the corresponding map by the
system
\begin{equation}\label{eq:wperhaatbilnnn}\begin{array}{l}\displaystyle
\hat H_{scal} =\sum_n \left[\omega(\delta)I^{(n)} +  \delta^{1/2} \frac{1}{2} {I^{(n)}}^\top\! A(\delta) I^{(n)}+
O(\delta^{3/4})\right]\\ \displaystyle \qquad\qquad
+\; \delta^{1/2}\mathop{\sum_{n,m\neq n_0}}_{n\neq m} W(\hat q(\delta^{1/2}I_0^{(n)},\theta^{(n)},\delta^{1/4}z^{(n)})-\hat q(\delta^{1/2}I_0^{(m)},\theta^{(m)},\delta^{1/4}z^{(m)}))\end{array}
\end{equation}
(i.e., we switched off the interaction with the particle $n_0$), and, similarly, the map \(\Sigma_{c_k}\to \Sigma_{c_{k+1}}\) by system (\ref{eq:nparttrunbillnew} ) is $O(\eta\delta^{1/2})$-close, in \(C^r\) for any $r$, to the corresponding map by the
system
\begin{equation}\label{eq:switchedoffnnn}
\hat H_{trun}  =\sum_n \left[\omega  (\delta )I^{(n)} + \delta^{1/2} \frac{1}{2} {I^{(n)}}^\top A  (\delta )I^{(n)})\right]
+\delta^{1/2}\mathop{\sum_{n,m\neq n_0}}_{n\neq m} W(q^*_\delta(\theta^{(n)})-q^*_\delta(\theta^{(m)}).
\end{equation}
Indeed, since the particle $n_0$ between \(\Sigma_{c_k}\) and \(\Sigma_{c_{k+1}}\) is close to impact, we can use the scaled flow-box coordinates $(\tau^{(n_{0})},\mathcal{E}^{(n_{0})},\mathcal{P}^{(n_{0})})$ for the $n_{0}$-th particle. As explained at the end of section \ref{sec:periodofL}, the transformation between the scaled $(\tau^{(n_{0})},\mathcal{E}^{(n_{0})},\mathcal{P}^{(n_{0})})$ coordinates and the scaled $(\theta^{(n_{0})},I_0^{(n_{0})}, z^{(n_{0})})$
is non-singular in the limit $\delta=0$.
We use the notation
 \((\tau=\tau^{(n_{0})},\mathcal{E}=\mathcal{E}^{(n_{0})},\mathcal{P}=\mathcal{P}^{(n_{0})},
 X = \{(I_0^{(n)},\theta^{(n)},z^{(n)}) \}_{n\neq n_{0}})\) in the layer between \(\Sigma_{c_k}\) and \(\Sigma_{c_{k+1}}\).
The Hamiltonian (\ref{eq:wperhaatbil}) in these scaled coordinates can be schematically written as
\begin{equation}\label{eq:hbl86}
H_{scal} = \mathcal{E}+H_{1}(X;\delta)+\delta^{1/2} H_{2}(\tau, \delta^{1/2}\mathcal{E}, \delta^{1/4}\mathcal{P},X;\delta),
\end{equation}
and the Hamiltonian (\ref{eq:wperhaatbilnnn}) is given by
\begin{equation}\label{eq:hbl86nn}
\hat H_{scal}  = \mathcal{E}+H_{1}(X;\delta).
\end{equation}
Here $H_1$ is the Hamiltonian of the subsystem corresponding to all the particles but $n_0$:
$$\begin{array}{l}\displaystyle
H_1=\sum_{n\neq n_0} \left[\omega(\delta)I^{(n)} +  \delta^{1/2} \frac{1}{2} {I^{(n)}}^\top\! A(\delta) I^{(n)}+
O(\delta^{3/4})\right]\\ \displaystyle \qquad\qquad
+\; \delta^{1/2}\mathop{\sum_{n,m\neq n_0}}_{n\neq m} W(\hat q(\delta^{1/2}I_0^{(n)},\theta^{(n)},\delta^{1/4}z^{(n)})-\hat q(\delta^{1/2}I_0^{(m)},\theta^{(m)},\delta^{1/4}z^{(m)})).\end{array}
$$
Since these particles stay away of impacts, $H_1$ is $C^r$ for any fixed $r$, uniformly
for all small $\delta\geqslant 0$. The term $H_2$ describes the interaction of the particle $n_0$ with the rest of the particles\footnote{Recall that  for the convenience of notation each term $W(q^{(n)}-q^{(m)})$ appears twice in the double sum in (\ref{eq:multparwsingle}).}:
$$H_2=2\sum_{m\neq n_0} W(q(\tau,\delta^{1/2}\mathcal{E},\delta^{1/4}\mathcal{P})-\hat q(\delta^{1/2}I_0^{(m)},\theta^{(m)},\delta^{1/4}z^{(m)}))$$
where we denote
$q(\tau,\delta^{1/2}\mathcal{E},\delta^{1/4}\mathcal{P})=
\hat q(\delta^{1/2}I_0^{(n_0)},\theta^{(n_0)},\delta^{1/4}z^{(n_0)})$. By Lemma \ref{lem:energytimecoor},
\(H_2\) is a $C^\infty$ function of $(\delta^{1/2}\mathcal{E},\delta^{1/4}\mathcal{P},X)$ and depends continuously, in $C^\infty$, on $\tau$ and
$\delta\geqslant 0$. Moreover, as long as $q^{(n_0)}$ stays away from the billiard boundary, $H_2$ depends smoothly on $\tau$
as well, for all $\delta\geqslant 0$.

Since \(\Sigma_{c_k}\) and \(\Sigma_{c_{k+1}}\) stay bounded away from the impact, one obtains that near these cross-sections
$$\frac{d}{dt}\tau = \partial_{\mathcal{E}} H_{scal}\   = 1+O(\delta)$$
uniformly for all $\delta\geqslant 0$, i.e., it stays bounded away from zero. Hence, hypersurfaces of constant
\(\tau\) are cross-sections to the flow of \(H_{scal}\) for all
$\delta\geqslant 0$;  the same is true for the flow of \(\hat H_{scal}\).
Thus, we choose constant-\(\tau\) cross-sections,  \(\Sigma^{\tau}_{k}\)  and
\(\Sigma^{\tau}_{k+1}\) close to \(\Sigma_{c_k}\) and,
respectively, \(\Sigma_{c_{k+1}}\). Because all these cross-sections are bounded away from impacts, the flows of \(H_{scal}\)  and \(\hat H_{scal}\) are
\(O(\delta^{1/2})\)-close   in $C^r$ in between
\(\Sigma_{c_k}\) and \(\Sigma^{\tau}_{k}\) and in between \(\Sigma^{\tau}_{k+1}\) and
 \(\Sigma_{c_{k+1}}\), for any $r$.
Since the flight time from \(\Sigma_{c_k}\) and \(\Sigma_{c_{k+1}}\) is  $O(\eta)$, the flight times between the cross-sections
\(\Sigma_{c_k}\) and \(\Sigma^{\tau}_{k}\) and between \(\Sigma^{\tau}_{k+1}\) and  \(\Sigma_{c_{k+1}}\) are  also bounded by   $O(\eta)$, so the corresponding maps for \(H_{scal}\)  and \(\hat H_{scal}\) between these cross-sections are   $O(\eta\delta^{1/2})$-close.

We are left to show that the maps from \(\Sigma^{\tau}_{k}\) to \(\Sigma^{\tau}_{k+1}\)  for system (\ref{eq:hbl86}) (\(H_{scal}\)) and system (\ref{eq:hbl86nn})
(\(\hat H_{scal}\)) on any given energy level are $O(\eta \delta^{1/2})$-close in $C^r$ for any $r$.
The equations of motion defined by (\ref{eq:hbl86}) on the energy level \(H_{scal}=E\) are given by
$$\begin{array}{l}\displaystyle
\dot \tau=1+\delta^{1/2} \partial_{\mathcal{E}} H_{2},\\
\mathcal{\dot P} =\delta^{1/2} \{\mathcal{P}, H_{2}\},\\
 \dot X =\{X,H_{1}\}+\delta^{1/2} \{X,H_{2}\},\end{array}
$$
where \(\{\cdot,\cdot\}\) are the Poisson brackets in the \((\mathcal{P},X)\) space. To have the equations in a closed form, we substitute the energy $\mathcal{E}$ of particle \((n_{0})\) by its expression which can be found from (\ref{eq:hbl86}) by the implicit function theorem:
$$\mathcal{E}=E-H_{1}(X;\delta)-
\delta^{1/2} H_2(\tau,\delta^{1/2}\mathcal{E},\delta^{1/4}\mathcal{P},X;\delta) = E-H_{1}(X;\delta)-
\delta^{1/2} H_3(\tau,\delta^{1/4}\mathcal{P},X;\delta),$$
where \(H_3\) is  continuous in \(\tau\) and $C^\infty$-smooth in \(\delta^{1/4}\mathcal{P},X\) uniformly for all $\tau$
and \(\delta\geqslant 0\).

Choosing $\tau$ as the time variable, we obtain a non-autonomous system of the form
$$\frac{d}{d\tau} \mathcal{\dot P} =O(\delta^{3/4}), \qquad \frac{d}{d\tau} X =\{X,H_{1}\}+O(\delta^{1/2}),$$
where the right-hand sides are continuous in the new time $\tau$ and $C^\infty$ in \(\mathcal{P},X\) for all small \(\delta\geqslant 0\).
The right-hand side of this system is \(O(\delta^{1/2} )\)-close with all derivatives  with respect to
\(\mathcal{P},X\), to
$$\frac{d}{d\tau} \mathcal{\dot P} =0, \qquad \frac{d}{d\tau} X =\{X,H_{1}\},$$
which is the system (\ref{eq:hbl86nn}) restricted to any constant energy level. The constant-\(\Delta\tau\) maps of these systems are therefore $O(\Delta\tau\delta^{1/2})$-close in \(C^r\) for any \(r\). This is what we need since \(\Delta\tau\), the flight time from  \(\Sigma^{\tau}_{k}\) to \(\Sigma^{\tau}_{k+1}\), is  \(O(\eta) \).

This proves that the Poincar\'e map \(\Sigma_{c_k}\to \Sigma_{c_{k+1}}\) for the Hamiltonian
\(H_{scal}\)   of
(\ref{eq:wperhaatbil}) is $O(\eta\delta^{1/2})$-close, in \(C^r\) for any $r$, to the corresponding map for the Hamiltonian  \(\hat H_{scal}\) of (\ref{eq:wperhaatbilnnn}).

Next we show that the corresponding maps for systems (\ref{eq:nparttrunbillnew})
and (\ref{eq:switchedoffnnn}) are also $O(\eta\delta^{1/2})$-close in \(C^r\) for any $r$. Here, we do not use the flow-box coordinates and instead of showing the $O(\eta\delta^{1/2})$-closeness of the Poincar\'e maps between constant-$\tau$
cross-sections we show the $O(\eta\delta^{1/2})$-closeness of the Poincar\'e maps between
constant-$\theta^{(n_0)}$
cross-sections, \(\Sigma^{\theta}_{k}\)  before impact and  \(\Sigma^{\theta}_{k+1}\) after impact. Recall that only the particle \(n_{0}\) is near impact for the flow between  \(\Sigma_{c_k}\to \Sigma_{c_{k+1}}\).
Hence the flows of   (\ref{eq:nparttrunbillnew})
and (\ref{eq:switchedoffnnn}) between  \(\Sigma_{c_k}\) and \(\Sigma^{\theta}_{k}\) and between
\(\Sigma^{\theta}_{k+1}\) and  \(\Sigma_{c_{k+1}}\) are \(O(\delta^{1/2})\)-close, with derivatives, for all
$\delta\geqslant 0$. Since the flight time between these cross-sections is bounded by \(\eta\), the difference between the corresponding Poincar\'e maps for systems  (\ref{eq:nparttrunbillnew})
and (\ref{eq:switchedoffnnn})  is  $O(\eta\delta^{1/2})$ small. So we need to examine only the flow between \(\Sigma^{\theta}_{k}\)  and  \(\Sigma^{\theta}_{k+1}\).   As explained in Section \ref{sec:periodofL}, the terms \(W(q^*_\delta(\theta^{(n)})-q^*_\delta(\theta^{(m)}))\) in $H_{trun}$ of (\ref{eq:nparttrunbillnew}) and $\hat H_{trun}$ of (\ref{eq:switchedoffnnn}) are  $C^\infty$ functions of all the variables  if \(n,m\neq n_0\). The terms
\(W(q^*_\delta(\theta^{(n_{0})})-q^*_\delta(\theta^{(m)}))\) with \(m\neq n_0\) in (\ref{eq:nparttrunbillnew}) are $C^\infty$ functions of \(\theta^{(m)}\)  uniformly  for all $\theta^{(n_0)}$ and
$\delta\geqslant 0$. Restricting the flow to a constant energy level and choosing
$\theta^{(n_0)}$ as a new time variable, the right-hand sides of the corresponding non-autonomous systems obtained from
(\ref{eq:nparttrunbillnew}) and (\ref{eq:switchedoffnnn}) are $O(\delta^{1/2})$-close with derivatives with all the variables
but $\theta^{(n_0)}$. This implies the $O(\eta\delta^{1/2})$-closeness of their constant-$\theta^{(n_0)}$ maps, which implies
the required $O(\eta\delta^{1/2})$-closeness of the maps \(\Sigma_{c_k}\to \Sigma_{c_{k+1}}\) for the Hamiltonians $H_{trun}$  and $\hat H_{trun}$.

Because there is no dependence on the $q$-coordinates of the particle $n_0$ in equations (\ref{eq:wperhaatbilnnn}) and
(\ref{eq:switchedoffnnn}),
and this is the only source of singularity at $\delta=0$ (all the other particles are away from the impacts), the Hamiltonians $\hat H_{scal}$ and $\hat H_{trun}$ are $O(\delta^{3/4})$-close in $C^r$ for any $r$. Therefore, their Poincar\'e maps are also
$O(\delta^{3/4})$-close. This, finally, proves that the maps \(\Sigma_{c_k}\to \Sigma_{c_{k+1}}\) by system
(\ref{eq:wperhaatbil}) and system (\ref{eq:nparttrunbillnew}) are $O(\eta\delta^{1/2})$-close.

Taking the composition of the Poincar\'e maps from each consecutive cross-section to the next one, we obtain that
the return maps to \(\Sigma_{0}\) for systems (\ref{eq:wperhaatbil}) and (\ref{eq:nparttrunbillnew}) are
$O(\eta\delta^{1/2})$-close, in \(C^r\) for any $r$. Since $\eta$ can be taken as small as we want, we can also allow it to go
sufficiently slowly to zero as $\delta\to 0$, thus completing the proof of the lemma.
\end{proof}

We now evaluate the Poincar\'e map for the truncated system (\ref{eq:nparttrunbillnew}).
We average the truncated Hamiltonian \(H_{trun }\) with respect to the motion along the periodic orbit
\(L^*_\delta\) to establish an analogue of Lemma \ref{Lemma1.1}, but, due to the loss of smoothness at impacts, the proof is different -- we utilize explicitly the pairwise structure of the interaction potential instead of using the Fourier expansion of Lemma \ref{Lemma1.1}.

First, we introduce the following terminology:

\begin{defn}\label{weaklyreg} A function $G$ of
\(\left((I_0^{(1)},\theta^{(1)},z^{(1)}), \ldots, (I_0^{(N)},\theta^{(N)},z^{(N)})\right)\) is called {\em weakly regular} if it is $C^\infty$ at $\delta>0$ and satisfies the following conditions (i) and (ii) in the limit
$\delta\to 0$:

(i) When all $q^{(n)}(I_0^{(n)},\theta^{(n)},z^{(n)})$ stay bounded away from the billiard boundary, the function $G$ is of class $C^\infty$ for all $\delta\geqslant\ 0$ and depends continuously, in $C^\infty$, on
$\delta\geqslant 0$;

(ii) When exactly one of the particles is in the billiard boundary layer, i.e., for some $n_0$,
$q^{(n_{0})}(I_0^{(n_{0})},\theta^{(n_{0})},z^{(n_{0})})$ is near an impact point,
the function $G$ is $C^\infty$ with respect to
$(I^{(n_{0})}, z^{(n_{0})})$ and $(\{I_0^{(n)},\theta^{(n)},z^{(n)}\}_{n\neq n_{0}})$ for each
fixed $\theta^{(n_{0})}$ and all $\delta\geqslant 0$, and it depends continuously, as a $C^\infty$ function of
$(I^{(n_{0})}, z^{(n_{0})})$ and $(\{I_0^{(n)},\theta^{(n)},z^{(n)}\}_{n\neq n_{0}})$,
on $\theta^{(n_{0})}$ and $\delta\geqslant 0$.
\end{defn}

In particular, since we showed that if the particle \(n_{0}\) is close to impact \(q^*_\delta(\theta^{(n_{0})})\)  depends continuously on \(\theta^{(n_{0})},\)  the functions $W(q^*_\delta(\theta^{(n)})-q^*_\delta(\theta^{(m)}))$
in (\ref{eq:nparttrunbillnew}) are weakly regular. Note that we do not consider the case when there are more than
one particle in the boundary layer because it is not possible by the non-simultaneous impacts assumption IP3.

\begin{lem}\label{Lemmaavbil}
There exists a weakly regular, symplectic change of coordinates which brings the truncated Hamiltonian (\ref{eq:nparttrunbillnew}) to the form
\begin{equation}\label{eq:npartavbill}\begin{array}{l}\displaystyle
H =\sum_{n=1}^N \omega(\delta) I^{(n)} + \delta^{1/2} \sum_{n=1}^N \frac{1}{2} I^{(n)} A(\delta) I^{(n)}+
\delta^{1/2} U(\theta^{(1)},\dots, \theta^{(N)};\delta)+\delta^{3/4} G. \end{array}
\end{equation}
Here \( G\) is a weakly regular function, and the averaged potential is given by:
\begin{equation}\label{eq:averagedpotub}
U(\theta^{(1)}, \dots,\theta^{(N)};\delta)=\mathop{\sum_{n,m=1,\dots,N}}_{n\neq m}
 W_{avg}(\theta^{(n)}-\theta^{(m)};\delta),
\end{equation}
where
\begin{equation}\label{eq:averagedwb}
 W_{avg}(\theta^{(n)}-\theta^{(m)};\delta)=
\frac{1}{2\pi}\int_0^{2\pi} W(q^*_\delta(s+\theta^{(n)})-q^*_\delta(s+\theta^{(m)}))ds.
\end{equation}
All the derivatives of  \(W_{avg}\) with respect to  \(\theta\)  depend continuously on
\(\delta\) for small \(\delta\geqslant0.\)
\end{lem}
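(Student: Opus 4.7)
The plan is to follow the overall strategy of Lemma \ref{Lemma1.1}, but to replace its Fourier-expansion solution of the co-homological equation by an explicit, pairwise primitive. The reason for this substitution is that near billiard impacts the path $q^*_\delta(\theta)$ has derivatives that blow up like $\delta^{-(k-1)/\alpha}$ (see (\ref{eq:qdeltabound})), so the Fourier coefficients of $W(q^*_\delta(\theta^{(n)})-q^*_\delta(\theta^{(m)}))$ do not decay uniformly in $\delta$ and the series (\ref{eq:Psismoothdef}) is not uniformly controlled. The pairwise construction, by contrast, needs only bounded first derivatives of $q^*_\delta$, which Lemma \ref{lem:qnearimpact} supplies.

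Concretely, for each ordered pair $(n,m)$ with $n\neq m$, I would construct a primitive $\Psi_{nm}(\theta^{(n)},\theta^{(m)};\delta)$ solving the pairwise co-homological equation
$$\omega_0(\delta)(\partial_{\theta^{(n)}}+\partial_{\theta^{(m)}})\Psi_{nm}=W(q^*_\delta(\theta^{(n)})-q^*_\delta(\theta^{(m)}))-W_{avg}(\theta^{(n)}-\theta^{(m)};\delta).$$
In the variables $\sigma=(\theta^{(n)}+\theta^{(m)})/2$, $d=\theta^{(n)}-\theta^{(m)}$ the left-hand side is $\omega_0\partial_\sigma\Psi_{nm}$, and the right-hand side has zero mean over one $\sigma$-period by the very definition of $W_{avg}$; thus $\Psi_{nm}$ is obtained as the unique $\sigma$-periodic, mean-zero (in $\sigma$) primitive. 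Next I would set $\Psi=\sum_{n\neq m}\Psi_{nm}$ and perform the symplectic shift $I_0^{(n)}\mapsto I_0^{(n)}-\delta^{1/2}\partial_{\theta^{(n)}}\Psi$ (generated by $\sum_n I_0^{(n)}\theta^{(n)}+\delta^{1/2}\Psi$). Substituting into (\ref{eq:nparttrunbillnew}), the linear term $\omega_0(\delta)\sum_n I_0^{(n)}$ acquires an extra $-\delta^{1/2}\omega_0\sum_k\partial_{\theta^{(k)}}\Psi=-\delta^{1/2}\sum_{n\neq m}[W-W_{avg}]$, which exactly cancels the singular interaction in $H_{trun}$ and leaves $\delta^{1/2}U$. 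The $O(\delta^{1/2})$-quadratic action term produces an additional $O(\delta\cdot\partial_\theta\Psi)$ correction which, once weak regularity is in hand, is absorbed into the $\delta^{3/4}G$ remainder.

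The main obstacle will be establishing the weak regularity of $\Psi$ and of the resulting remainder, and in particular the claim that the derivatives of $W_{avg}$ in $d$ extend continuously to $\delta=0$. After a shift of the dummy variable, (\ref{eq:averagedwb}) reads $W_{avg}(d;\delta)=\frac{1}{2\pi}\int_0^{2\pi}W(q^*_\delta(s+d)-q^*_\delta(s))ds$; the first $d$-derivative brings down $\dot q^*_\delta(s+d)$, which is uniformly bounded by Lemma \ref{lem:qnearimpact}. Any higher-order $\theta$-derivative of $q^*_\delta$ that would blow up near an impact can be removed by integration by parts in $s$, trading the singular factor for bounded first derivatives $\dot q^*_\delta$ and bounded derivatives of $W$; boundedness of the $W$-derivatives on the relevant compact set is guaranteed by the no-collision assumption IP2. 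The same integration-by-parts trick controls the smoothness of each $\Psi_{nm}$ in $\theta^{(m)}$ whenever $m$ is not the (at most one) particle $n_0$ currently in the boundary layer, and only continuous (not smooth) dependence on $\theta^{(n_0)}$ is required by the definition of weak regularity. Assumption IP3 is essential here: it guarantees that at most one particle is in a boundary layer at any instant, so two \emph{different} singular directions of differentiation are never required simultaneously, and the pairwise primitives inherit precisely the regularity demanded by Definition \ref{weaklyreg}.
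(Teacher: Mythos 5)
Your overall strategy matches the paper's: replace the Fourier-series solution of the cohomological equation used in Lemma \ref{Lemma1.1} by an explicit pairwise primitive $\Psi_{nm}$ (the paper's $\Psi_0$ in (\ref{eq:psinmdefbil})), perform the same symplectic shift of $I_0^{(n)}$, and invoke the non-simultaneous impacts assumption IP3 to ensure at most one of the two arguments $s\pm\frac{1}{2}(\theta^{(n)}-\theta^{(m)})$ is in a boundary layer at any given $s$. Your argument that $W_{avg}$ itself is $C^\infty$ is clean (the integral is over a full period, so integration by parts produces no boundary terms) and is a valid alternative to the paper's translation trick in (\ref{eq:wavgispsi1}).

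The part I am not convinced by is the claim that the \emph{same} integration-by-parts trick establishes the needed regularity of $\Psi_{nm}$. There $\Psi_1(u,u_0,v)=\int_{u_0}^{u}W(q^*_\delta(s+v)-q^*_\delta(s-v))\,ds$ is a partial-interval integral with $u=\tfrac{1}{2}(\theta^{(n)}+\theta^{(m)})$, and integration by parts in $s$ leaves a boundary term at $s=u$. Already at the level of $\partial_v^3$ one encounters $\int W'(\dddot q_+ +\dddot q_-)\,ds$, and a single integration by parts produces the boundary contribution $W'(\cdot)\,\ddot q^*_\delta(\theta^{(n_0)})$, which is $O(\delta^{-1/\alpha})$ when $\theta^{(n_0)}$ sits at an impact phase. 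This factor is not continuous in $(\theta^{(n_0)},\delta)$ at $\delta=0$, so weak regularity (Definition \ref{weaklyreg}(ii)) is not directly obtained. It is possible that the particular combination $\frac{1}{2}(\partial_u-\partial_v)$ appearing in $\partial_{\theta^{(m)}}$ produces cancellations that remove these divergent boundary terms, but you have not exhibited this, and the calculation does not obviously terminate: each integration by parts trades one singular factor inside the integral for another inside the integrand plus a boundary term. The paper's Lemma \ref{lem:psi1} sidesteps this entirely by (a) decomposing the integration interval into subintervals $[u_j,u_{j+1}]$ with endpoints away from impacts, so that on each subinterval at most one of $s\pm v$ is near an impact, and (b) shifting the integration variable so the near-impact position appears as $q^*_\delta(s')$ with \emph{no phase dependence at all} — after that no differentiation of the singular factor is ever required. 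If you want to retain the integration-by-parts idea you should at minimum combine it with the subinterval decomposition (so the partial boundary is at a point bounded away from impacts) and, on the last subinterval, apply the change of variable so that the near-impact argument is the integration variable itself; otherwise the boundary terms at $s=\tfrac{1}{2}(\theta^{(n)}+\theta^{(m)})$ will continue to carry uncontrolled derivatives $q^{*(k)}_\delta(\theta^{(n_0)})$.
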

\begin{proof}
Let  \((u,v)\ =(\frac{1}{2}(\theta^{(n)}+\theta^{(m)}),\frac{1}{2}(\theta^{(n)}-\theta^{(m)}))\).
Define
\begin{equation}\label{eq:psinmdefbil}
\Psi_0(u,v;u_0,\delta)= \frac{1}{\omega_0(\delta)}\left[
\int_{u_{0}}^{u}W(q^*_\delta(s+v)-q^*_\delta(s-v))  ds-(u-u_{0})W_{avg}(2v;\delta)\right],
\end{equation}
so \(\Psi_0(u,-v;u_0,\delta)=\Psi_0(u,v;u_0,\delta)\). It follows from (\ref{eq:averagedwb}) and the \(2\pi\)-periodicity of
\(q^*_\delta\) that
\begin{equation}\label{psip2p}
\Psi_0(u+2\pi,v;u_0,\delta)=\Psi_0(u,v;u_0,\delta).
\end{equation}
Let
\begin{equation}\label{eq:Psiall}
\Psi(\theta^{(1)}, \ldots,\theta^{(N)};u_{0},\delta)=\sum_{m_{1}\neq m_2}\Psi_0(\frac{1}{2}(\theta^{(m_{1})}+\theta^{(m_{2})}),\frac{1}{2}(\theta^{(m_{1})}-\theta^{(m_{2})} );u_{0},\delta).
\end{equation}
For positive \(\delta\), it is identical to the function  (\ref{eq:Psismoothdef}) of Section \ref{sec:proofmainthm}. Consider the symplectic coordinate change (its smoothness properties in the limit
\(\delta\rightarrow0\) are discussed below)
\begin{equation}\label{subst1bil}
I_{0}^{(n)} \rightarrow  I_{0}^{(n)} - \delta^{1/2}\partial_{\theta^{(n)}}\Psi, \qquad n=1,\dots, N.
\end{equation}
By (\ref{psip2p}), the right-hand side does not change when we add $2\pi$ to all phases $\theta^{(1)},\dots, \theta^{(N)}$,
i.e., this is indeed a well-defined coordinate transformation in a neighborhood of \(\mathcal{L}_{min}^{*}\).
Differentiating (\ref{eq:psinmdefbil}), we obtain
$$\omega_{0}(\delta)  \partial_{u}\Psi  =\omega_{0}(\delta)  \partial_{\theta^{(n)}}\Psi+\omega_{0}(\delta) \partial_{\theta^{(m)}}\Psi= W(q^*_\delta(\theta^{(n)})-q^*_\delta(\theta^{(m)}))- W_{avg}(\theta^{(n)}-\theta^{(m)};\delta).$$
As \(\partial_{\theta^{(n)}}\Psi=  \sum_{m\neq n}\partial_{u}\Psi_0(\frac{1}{2}(\theta^{(n)}+\theta^{(m)}),\frac{1}{2}(\theta^{(n)}-\theta^{(m)} );u_0,\delta) \),
the transformation (\ref{subst1bil}) makes
$$
 \sum_{n=1}^N \omega_{0}(\delta) I_{0}^{(n)}\rightarrow\sum_{n=1}^N \omega_{0}(\delta)( I_{0}^{(n)}- \delta^{1/2}  \sum_{m\neq n}\partial_{u}\Psi_0(\frac{1}{2}(\theta^{(n)}+\theta^{(m)}),\frac{1}{2}(\theta^{(n)}-\theta^{(m)} );u_0,\delta)
$$
so it brings the Hamiltonian (\ref{eq:nparttrunbillnew}) to the required form (\ref{eq:npartavbill}).

We need to show that one can choose $u_0$ such that the expression
$$\partial_{\theta^{(n)}} \Psi_0(\frac{1}{2}(\theta^{(n)}+\theta^{(m)}),\frac{1}{2}(\theta^{(n)}-\theta^{(m)}); u_0,\delta)$$
in the formula for the coordinate transformation (\ref{subst1bil}) is weakly regular for all $n$ and $m$.
Let
$$ \Psi_{1}(u,v;u_0,\delta)=\int_{u_{0}}^{u}W(q^*_\delta(s+v)-q^*_\delta(s-v))  ds.$$

\begin{lem}\label{lem:psi1}
 For  \(u_{0}\) chosen such that \(({u_{0}}\pm\frac{1}{2}(\theta_{min}^{(n)}-\theta_{min}^{(m)}))/\omega_{0}\) are bounded away from the impact moments \(t^{j}\) for all $m$, $n$, and $j$, the function
 \(\Psi_{1}(\frac{\theta^{(n)}+\theta^{(m)}}{2},\frac{\theta^{(n)}-\theta^{(m)}}{2};u_0,\delta)\) and
 its derivatives with respect to \(\theta^{(n)},\theta^{(m)}\) are weakly regular.
\end{lem}
\begin{proof}
Let us divide the interval of integration to subintervals \([u_{j},u_{j+1}],j=0,\ldots,J\) such that the following holds:
the last interval is \([u_J,u_{J+1}=\frac{\theta^{(n)}+\theta^{(m)}}{2}]\);
 on each sub-interval at most one of the particles is in the boundary layer (i.e., for all \(s\) in each sub-interval, either \(q^*_\delta(s+\frac{1}{2}(\theta^{(n)}-\theta^{(m)}))\), or
\(q^*_\delta(s-\frac{1}{2}(\theta^{(n)}-\theta^{(m)})) \), or both are outside the boundary layer);
and all end points but the last one are at a finite distance from the impact points: \(({u_{j}}\pm\frac{1}{2}(\theta^{(n)}_{min}-\theta^{(m)}_{min}))/\omega_{0}\) are bounded away from the impact moments for \(j=0,\ldots,J\).

Such choice of sufficiently small intervals is possible: because we consider $\theta^{(n)}$ and $\theta^{(m)}$ close to
$\theta^{(n)}_{min}$ and $\theta^{(m)}_{min}$, the non-simultaneous impacts assumption IP3 implies that
$\theta^{(n)}-\theta^{(m)}$ can not get close to $\omega_0(t^j-t^k)$ for $n\neq m$, hence it is impossible that
$s_1+\frac{1}{2}(\theta^{(n)}-\theta^{(m)})$  and  $s_2-\frac{1}{2}(\theta^{(n)}-\theta^{(m)})$
get simultaneously close to the impact phases $\omega_0t^j$ and $\omega_0 t^k$ if $s_1$ and $s_2$  belong to the
same small interval.

By construction,
\begin{equation}\label{eq:95}
\Psi_1(\frac{\theta^{(n)}+\theta^{(m)}}{2},\frac{\theta^{(n)}-\theta^{(m)}}{2};u_0,\delta)=\sum_{j=0}^J\Psi_{1}(u_{j+1},\frac{\theta^{(n)}-\theta^{(m)}}{2};u_j,\delta).
\end{equation}

For all \(j<J\),  we show that \(\Psi_{1}(u_{j+1},\frac{\theta^{(n)}-\theta^{(m)}}{2};u_j,\delta)\) is $C^\infty$ for all small
$\delta\geqslant 0$ (hence it is weakly regular, and its derivatives are weakly regular as well). Indeed, if both \(q^*_\delta(s\pm \frac{\theta^{(n)}-\theta^{(m)}}{2})\) are not in the  boundary layer,
for all $s$ from the integration interval $[u_j,u_{j+1}]$, then the integrand $W(q^*_\delta(s+\frac{\theta^{(n)}-\theta^{(m)}}{2})-q^*_\delta(s-\frac{\theta^{(n)}-\theta^{(m)}}{2};\delta))$ is a $C^\infty$ function for all $\delta\geqslant0$ and the claim follows.
If, say, the first term, \(q^*_\delta(s+\frac{\theta^{(n)}-\theta^{(m)}}{2})\) is in the boundary layer for some value of
\(s\in(u_{j},u_{j+1})\), then we shift the integration interval by \(\frac{1}{2}(\theta^{(n)}-\theta^{(m)})\) to establish  \begin{equation}\label{eq:94}
\Psi_{1}(u_{j+1},\frac{\theta^{(n)}-\theta^{(m)}}{2};u_j,\delta)=
\int_{u_{j}+\frac{1}{2}(\theta^{(n)}-\theta^{(m)})}^{u_{j+1}+\frac{1}{2}(\theta^{(n)}-\theta^{(m)})}W(q^*_\delta(s')-q^*_\delta(s'-(\theta^{(n)}-\theta^{(m)}))) ds'.
\end{equation}
The term \(q^*_\delta(s'-(\theta^{(n)}-\theta^{(m)}))\) is away from the billiard boundary for the integration interval,
\(q^*_\delta(s')\) is away from the boundary layer at the limits of integration. Therefore, the integrand
$W(q^*_\delta(s')-q^*_\delta(s'-(\theta^{(n)}-\theta^{(m)})))$ is a
\(C^{\infty}\)-function of $\theta^{(n)}$ and $\theta^{(m)}$, uniformly continuous in $C^\infty$ for all $s'$ and all
$\delta\geqslant 0$. Moreover, it is $C^\infty$ in $s$ also near the limits of integration. It immediately follows that $\Psi_1$
given by (\ref{eq:94}) is \(C^{\infty}\) as required. Similarly, if  the second term, \(q^{*}(s-\frac{\theta^{(n)}-\theta^{(m)}}{2})\) is near an impact, we shift the integration interval by  \(-\frac{1}{2}(\theta^{(n)}-\theta^{(m)}) \) and establish the same smoothness result.

For the last segment, since \(u_{J+1}=\frac{\theta^{(n)}+\theta^{(m)}}{2}\), and \(v=\frac{\theta^{(n)}-\theta^{(m)}}{2} \), if both \(\theta^{(n)}/\omega_{0}\)  and \(\theta^{(m)}/\omega_{0}\)  are bounded away from the impact moments, the same arguments as above show that \(\Psi_{1}(\frac{\theta^{(n)}+\theta^{(m)}}{2},\frac{1}{2}(\theta^{(n)}-\theta^{(m)});u_J,\delta)\)  are smooth as required. On the other hand, if, say, the particle \(n\) is in the boundary layer (and hence the particle $m$ is not in the boundary layer) we write the integral in the form of (\ref{eq:94}):
\begin{equation}\label{eq:96}
\Psi_{1}(u_{J+1},\frac{\theta^{(n)}-\theta^{(m)}}{2};u_J,\delta)=
\int_{u_{J}+\frac{1}{2}(\theta^{(n)}-\theta^{(m)})}^{\theta^{(n)}}W(q^*_\delta(s')-q^*_\delta(s'-(\theta^{(n)}-\theta^{(m)})))ds'.
\end{equation}
As above, we have that $W(q^*_\delta(s')-q^*_\delta(s'-(\theta^{(n)}-\theta^{(m)})))$  is a
\(C^{\infty}\)-function of $\theta^{(n)}$ and $\theta^{(m)}$, uniformly continuous in $C^\infty$ for all $s'$ and all
$\delta\geqslant 0$. Moreover, it is $C^\infty$ in $s$ also near the lower limit of integration. The upper limit of integration does not depend on $\theta^{(m)}$, so we conclude that the integral is $C^\infty$ with respect to $\theta^{(m)}$, i.e., $\Psi_1$ and
$\partial_{\theta^{(m)}}\Psi_1$ given by (\ref{eq:96}) are weakly regular, and
$$\begin{array}{l}
\partial_{\theta^{(n)}}\Psi_1(u_{J+1},\frac{\theta^{(n)}-\theta^{(m)}}{2};u_J,\delta)  =W(q^*_\delta(\theta^{(n)})-q^*_\delta(\theta^{(m)}))\\
\qquad \qquad\qquad- \frac{1}{2}
W(q^*_\delta(u_J\!+\!\frac{1}{2}(\theta^{(n)}\!-\!\theta^{(m)}))\!-\!q^*_\delta(u_J\!-
\!\frac{1}{2}(\theta^{(n)}\!-\!\theta^{(m)}))) \\
\qquad\qquad\qquad-
\int_{u_{J}+\frac{1}{2}(\theta^{(n)}-\theta^{(m)})}^{\theta^{(n)}}W^\prime(q^*_\delta(s')-q^*_\delta(s'-(\theta^{(n)}-\theta^{(m)}))) \partial_{\theta^{(n)}} q^*_\delta(s'-(\theta^{(n)}-\theta^{(m)}))ds'
\end{array}$$
is also weakly regular since \(s'-(\theta^{(n)}-\theta^{(m)})\) is away from impact along this integration interval.
The similar weakly regularity results hold true if the particle \(m\), and not the particle $n$, is in the boundary layer.

Thus, all the terms in (\ref{eq:95}) are weakly regular, along with the derivatives with respect to $\theta^{(n)}$ and
$\theta^{(n)}$, which gives the lemma.
\end{proof}

Note that by the periodicity of $q^*_\delta$,
\begin{equation}\label{eq:wavgispsi1}
W_{avg}(\theta^{(n)}-\theta^{(m)};\delta) =
 \frac{1}{2\pi}\Psi_1(u_0+2\pi, u_0, \frac{\theta^{(n)}-\theta^{(m)}}{2};\delta),
\end{equation}
 and so the right-hand side  does not depend on $u_0$. In particular we can choose \(u_{0}\)  as needed for  Lemma \ref{lem:psi1},  so the average potential $W_{avg}$ is weakly regular.  Since we can always shift both \(\theta^{(n)}\) and \(\theta^{(m)}\) away from the impacts, the weak regularity of the average potential $W_{avg}$ means that it is
is $C^\infty$ for all small $\delta\geqslant\ 0$.

Now, by Lemma \ref{lem:psi1}, we obtain that since
$$\Psi_0(u,v;u_{0},\delta)=\frac{1}{\omega_0} \Psi_1(u,u_0,v;\delta) - \frac{u-u_0}{\omega_{0}} W_{avg}(2v;\delta),$$
the  transformation (\ref{subst1bil})  is also weakly regular.  \end{proof}

Like in Lemma \ref{lem:poincaremapclosebilnew}, omitting the weakly regular $O(\delta^{3/4})$-term in the Hamiltonian
 (\ref{eq:npartavbill}) results only in $O(\delta^{3/4})$ corrections to the return map to an interior cross-section near
 \(\mathcal{L}_{min}^{*}\).
Thus, the return map for the truncated averaged Hamiltonian
\begin{equation}\label{eq:nparttrunbill}\begin{array}{l}\displaystyle
H (I,\theta)=\sum_{n=1}^N \omega(\delta) I^{(n)} + \delta^{1/2} \sum_{n=1}^N \frac{1}{2} I^{(n)} A(\delta) I^{(n)}+
\delta^{1/2} U(\theta^{(1)},\dots, \theta^{(N)};\delta) \end{array}
\end{equation}
near \(\mathcal{L}_{min}^{*}\) is $o(\delta^{1/2})$-close the return map for the scaled Hamiltonian (\ref{eq:wperhaatbil}).

This Hamiltonian has the same form as the truncated Hamiltonian of (\ref{eq:npartav}). The parameters  $\omega(\delta)$,
$A(\delta)$, and the averaged potential \(U(\theta^{(1)},\dots, \theta^{(N)};\delta)\) depend continuously on $\delta$ and
satisfy, for all small $\delta\geqslant 0$,
the non-degeneracy assumptions as in Theorem \ref{thm:mainbounded}.  So we finish the proof of Theorem \ref{thm:billiardsystem}
in the same way as in Theorem \ref{thm:mainbounded}. Namely,
applying Lemma \ref{lem:poinc1}, we find that the Poincar\'e return map for system (\ref{eq:nparttrunbill}) is $O(\delta^{3/4})$-close, with all derivatives, to the time-\(\frac{2\pi}{\omega_0}\) map for system (\ref{hamtr}).
Hence, the return map for (\ref{eq:wperhaatbil}) is $o(\delta^{1/2})$-close to the time-\(\frac{2\pi}{\omega_0}\) map for system (\ref{hamtr}). Now, the same arguments as in Lemma \ref{Lemma1.2} show that if we make $O(\delta^{-1/2})$ iterations  of the Poincar\'e map of (\ref{eq:wperhaatbil}), the result is $o(1)_{\delta\to 0}$-close to
the time-1 map for the Hamiltonian (\ref{eq:timeoneham}). The latter map has, by Lemma \ref{lem:positivekam}, a positive
measure set of invariant KAM tori, hence so does the rescaled system (\ref{eq:wperhaatbil}), as well as the original system (\ref{eq:multparwsingle}).

\section{\label{sec:boxsection}Gas in a rectangular box}

Here we study the motion near the  family of fast vertical periodic orbits in a box, proving Theorem \ref{thm:billiardbox1} (the non-simultaneous impacts case, see Sections  \ref{sec:aabox},   \ref{sec:asynchproof} and \ref{sec:proofsmoothpot}) and Theorem  \ref{thm:billiardbox2} (the simultaneous impacts case, Sections   \ref{sec:aabox},  \ref{sec:proofsmoothpot} and \ref{sec:synchronproof}).

\subsection{\label{sec:aabox}Action-angle coordinates near vertical periodic orbits.}
Since the single-particle degrees of freedom decouple, and only the vertical direction dynamics are fast and billiard-like, the single-particle theory developed in Section \ref{sec:singlepartbil} applies to the one-dimensional vertical motion,  where $q=q_{d}\in R^1$ and the billiard corresponds to
a particle bouncing between the end points of the interval \([0,\pi]\). At positive $\delta$, the vertical motion of a single particle is described by the Hamiltonian
\begin{equation}\label{hsdofsp}
H_d=\frac{p^2}{2} + \delta V_d(q) = \frac{p^2}{2} + \frac{\delta}{Q_d(q)^\alpha},
\end{equation}
see (\ref{eq:vinbox}), (\ref{eq:potentialboundarylayerbox}). One can introduce action-angle variables
$(I,\theta)$ for this system. The periodic orbit that lies in the energy level $H=\frac{1}{2}$
is denoted as $q=q^*_\delta(\theta)$;  it tends to the saw-tooth (\ref{eq:potsawtooth}) as $\delta\to 0$.
The vertical action $I$ is a one-to-one, smooth function of the  vertical energy and is a constant of motion. Thus, the single-particle vertical motion is governed by
\begin{equation}\label{eq:H1ofIdlta}
H_{d}(I;\delta)=\frac{(\hat p_{d}(I,\theta;\delta))^2}{2}+ \delta V_{d}(\hat q_{d}(I,\theta;\delta))-\frac{1}{2}=\omega_{0}(\delta )I+\frac{1}{2}a(\delta )I^2+O(I^3), \qquad I\in\mathbb{R}^1.
\end{equation}
This is analogous to formula (\ref{eq:birkhoffsinglebilliard}) of
Section \ref{sec:singlepartbil} but, contrary to the multidimensional single-particle theory, there are no $z$-variables nor the corresponding $\mathcal{P}$-variables.

Using the action-angle coordinates in the vertical direction, the multi-particle Hamiltonian (\ref{eq:multparsysybox}) takes the following form:
\begin{equation}\label{orih}
\begin{split}
H&=\sum^N_{n=1 }H_d(I^{(n)};\delta)+
\delta\sum_{n=1,\dots, N}\sum_{i=1}^{d-1}\left[\frac{(p_{\xi ,i}^{(n)})^2}{2}+ V_{i}(\xi_{i}^{(n)})\right]\\
& \qquad +
\delta\mathop{\sum_{n,m=1,\dots,N}}_{n\neq m} \ W(\hat q_{d}(I^{(n)},\theta^{(n)} ;\delta)-\hat q_{d}(I^{(m)},\theta^{(m)} ;\delta),\xi^{(n)}-\xi^{(m)}));
\end{split}
\end{equation}
recall that\( (\xi,p_{\xi})\) denote the non-vertical coordinates:
\(\xi_{i}^{(n)}=q_{i}^{(n)},p_{\xi ,i}^{(n)}=p_{i}^{(n)}, i=1,\dots,d-1,\ n=1,\dots N\).

Scaling the  vertical actions as  \(\delta^{1/2}I\) and dividing the Hamiltonian by  \(\delta^{1/2}\), we obtain
\begin{equation}\label{eq:Hrectexpanded}
\begin{split}
H&_{scal}=\sum^N_{n=1 }\omega_{0}(\delta )I^{(n)}+\delta^{1/2}\sum^N_{n=1 }\left(\frac{1}{2}a(\delta )(I^{(n)})^2 +\sum_{i=1 }^{d-1}\left[\frac{(p_{\xi ,i}^{(n)})^2}{2}+ V_{i}(\xi_{i}^{(n)})\right]\right)
+O(\delta)\\
& \qquad +\delta^{1/2}
\mathop{\sum_{n,m=1,\dots,N}}_{n\neq m} \ W(\hat q_{d}(\delta^{1/2} I^{(n)},\theta^{(n)} ;\delta)-\hat q_{d}(\delta^{1/2} I^{(m)},\theta^{(m)} ;\delta),\xi^{(n)}-\xi^{(m)})).
\end{split}\end{equation}

Below, we establish the existence of an elliptic periodic orbit for system (\ref{eq:Hrectexpanded}) by comparing it with the averaged system defined by the Hamiltonian
\begin{equation}\label{eq:Hrectavrg}
\begin{split}
H_{avg}&=\sum^N_{n=1 }\omega_{0}(\delta )I^{(n)}+\delta^{1/2}\sum^N_{n=1 }\left(\frac{1}{2}a(\delta )(I^{(n)})^2 +\sum_{i=1 }^{d-1}\left[\frac{(p_{\xi ,i}^{(n)})^2}{2}+ V_{i}(\xi_{i}^{(n)})\right]\right)\\
& \qquad +\delta^{1/2}
\mathop{\sum_{n,m=1,\dots,N}}_{n\neq m} W_{avg}(\theta^{(n)} -\theta^{(m)} ,\xi^{(n)}-\xi^{(m)};\delta)+\delta \tilde  G(\theta,I,\xi,p;\delta).
\end{split}\end{equation}
This is an analogue of the averaged system (\ref{eq:npartavbill}) (just the higher order terms are of order \(\delta\) and not of order \(\delta^{3/4}\)  as in (\ref{eq:npartavbill})). Here
\(W_{avg}\) is the pairwise interaction potential  averaged over the vertical oscillations in the system where
the interaction between particles is switched off (see (\ref{orih})):
$$
H=\sum^N_{n=1 }H_d(I^{(n)};\delta)+
\delta\sum_{n=1,\dots, N}\sum_{i=1}^{d-1}\left[\frac{(p_{\xi ,i}^{(n)})^2}{2}+ V_{i}(\xi_{i}^{(n)})\right].$$
The vertical oscillations at $I^{(n)}=0$ correspond to the choreographic solution
(cf. (\ref{uncoupledch})): \begin{equation}\label{uncoupledchbox}
\mathbf{L}^{*}(\theta,\xi)=\{q_{1,\ldots d-1}^{(n)}=\xi_{i}^{(n)},q^{(n)}_{d}=q_\delta^{*}(\omega _{0}t+\theta^{(n)}), \;\; p_{1,\ldots d-1}^{(n)}=0,p_{d}^{(n)}=p_{\delta}^*(\omega _{0}t+\theta^{(n)})\}^{N}_{n=1}.
\end{equation}

 Thus,
\begin{equation}\label{eq:averagedpotboxdelta}
 W_{avg}(\theta^{(n)} -\theta^{(m)} ,\xi^{(n)}-\xi^{(m)};\delta)=\frac{1}{2\pi}\int_0^{2\pi} W(q^{*}_{\delta}(s+\theta^{(n)})-q^{*}_{\delta}(s+\theta^{(m)}),\xi^{(n)}-\xi^{(m)})ds.
\end{equation}
We denote the potential  of the averaged system (\ref{eq:Hrectavrg})
by
\begin{equation}\label{eq:averagedparalpotential}
U_\delta(\theta,\xi)=
\mathop{\sum_{n,m=1,\dots,N}}_{n\neq m} W_{avg}(\theta^{(n)} -\theta^{(m)} ,\xi^{(n)}-\xi^{(m)};\delta)+\sum^N_{n=1 }\sum_{i=1}^{d-1}  V_{i}(\xi_{i}^{(n)}),
\end{equation}
so \(U_0(\theta,\xi)=U(\theta,\xi)\) of (\ref{eq:averagedpotuD}).
The regularity properties of the potential \(U_\delta\) and of the correction term $\delta G$ in (\ref{eq:Hrectavrg}) in the limit
$\delta\to 0$, and their influence on the dynamics, are evaluated differently in the case of non-simultaneous and simultaneous impacts.

\subsection{\label{sec:asynchproof}The non-simultaneous impacts case}
The proof of Theorem \ref{thm:billiardbox1} is, essentially, the same as for Theorem \ref{thm:billiardsystem} - we just do not have here the fast variables $z$ but, instead, have slow variables $\xi$ (which, in fact, makes the situation simpler). As in Lemma \ref{lem:poincaremapclosebilnew} of Section \ref{sec:billiardmultipart}, in the case where the impacts are non-simultaneous, the Poincar\'e map for the Hamiltonian (\ref{eq:Hrectexpanded}) is
\(o(\delta^{1/2})\)-close to the  Poincar\'e map for the truncated Hamiltonian  \(H_{trun}\):
\begin{equation}\label{eq:Hrectexpandedtruncated}
\begin{split}
H_{trun}&=\sum^N_{n=1 }\omega_{0}(\delta )I^{(n)}+\delta^{1/2}\sum^N_{n=1 }\left(\frac{1}{2}a(\delta )(I^{(n)})^2 +\sum_{i=1 }^{d-1}\left[\frac{(p_{\xi ,i}^{(n)})^2}{2}+ V_{i}(\xi_{i}^{(n)})\right]\right)\\
& \qquad +\delta^{1/2}
\mathop{\sum_{n,m=1,\dots,N}}_{n\neq m} W( q^*_\delta(\theta^{(n)} )- q^*_\delta(\theta^{(m)} ),\xi^{(n)}-\xi^{(m)}).
\end{split}\end{equation}
As in Lemma \ref{Lemmaavbil}, we  average the truncated Hamiltonian \(H_{trun}\) by performing the symplectic coordinate  transformation defined by the \(\delta^{1/2}\)-time map for the Hamiltonian
\(\Psi\) defined as in (\ref{eq:Psiall}), where  \(\Psi_{0}\) is defined as in (\ref{eq:psinmdefbil}), i.e.,
\begin{equation}\label{eq:Psiall00}
\Psi(\theta^{(1)}, \ldots,\theta^{(N)},\xi^{(1)},\ldots, \xi^{(N)};u_{0},\delta)=\sum_{m_{1}\neq m_2}\Psi_0(\frac{1}{2}(\theta^{(m_{1})}+\theta^{(m_{2})}),\frac{1}{2}(\theta^{(m_{1})}-\theta^{(m_{2})}), \xi^{(m_1)}-\xi^{(m_2)};u_{0},\delta),
\end{equation}
where
$$\Psi_0(u,v,w;u_0,\delta)= \frac{1}{\omega_0(\delta)}\left[
\int_{u_{0}}^{u}W(q^*_\delta(s+v)-q^*_\delta(s-v),w;\delta)  ds-(u-u_{0})W_{avg}(2v,w;\delta)\right].$$
The only difference with Section \ref{sec:billiardmultipart} is that now $\Psi$ also depends on the $\xi$ variables.
The same computations as in Lemma \ref{Lemmaavbil} show that the resulting near-identity transformation
\((I^{(n)},p_{\xi}^{(n)})\rightarrow (I^{(n)},p_{\xi}^{(n)})-\delta^{1/2}(\partial_{\theta^{(n)}},\partial_{\xi^{(n)}})\Psi\)
is weakly regular and brings the truncated Hamiltonian (\ref{eq:Hrectexpandedtruncated}) to the averaged form (\ref{eq:Hrectavrg}), where the error term $\delta G$ is weakly regular (see Definition \ref{weaklyreg}); the potential $W_{avg}$ is a $C^\infty$ function and depends continuously on $\delta$, along with all derivatives, for al
$\delta\geq 0$.

Like in Theorem \ref{thm:billiardsystem}, since no more than one particle can be near the box boundary at any given moment of time (this is the non-simultaneous impacts assumption), the weak regularity of $G$ implies that omitting
the $\delta G$ terms in the averaged system (\ref{eq:Hrectavrg}) results only in $O(\delta)$-corrections to the Poincar\'e return map. Thus, we obtain that the Poincar\'e return map of the original system (\ref{eq:Hrectexpanded}) is
\(o(\delta^{1/2})\)-close to the Poincar\'e return map for the system
\begin{equation}\label{eq:averagebox}
\begin{split}
H &=\sum^N_{n=1 }\omega_{0}(\delta )I^{(n)}+\delta^{1/2}\sum^N_{n=1 }\left(\frac{1}{2}a(\delta )(I^{(n)})^2 +\sum_{i=1 }^{d-1}\frac{(p_{\xi ,i}^{(n)})^2}{2}\right)+\delta^{1/2} U_\delta(\theta,\xi)
\end{split}
\end{equation}where  \(U_\delta(\theta,\xi)\) is defined  by (\ref{eq:averagedparalpotential}). Since \(U_\delta(\theta,\xi)\) and all its derivatives depend on \(\delta\) continuously,  the non-degeneracy assumption Box2 implies that the minimal line at \(\delta=0\) is non-degenerate and persists for small \(\delta\). So, we introduce local normal coordinates near this line:
$$(\theta^{(n)},I^{(n)})\rightarrow(\varphi,\psi,P,J),$$
in the same way as in (\ref{eq:phipsitrans}),(\ref{eq:pjdef}). By the translation invariance of $U_\delta$, it
is independent of $\varphi$:
 \begin{equation}\label{eq:uparhatbox}
 U_{\delta}(\theta,\xi)=\hat  U_\delta(\psi,\xi).
\end{equation}
The potential $\hat U_\delta$ has a non-degenerate minimum at $(\psi=0,\xi=\xi_{min})$. The Hamiltonian
 \(H\) of (\ref{eq:averagebox}) becomes
\begin{equation}\label{eq:Hrectavrg1}
H(P,\varphi ,J,\psi,p_{\xi},\xi;\delta)=\omega_{0}(\delta)P+\delta^{1/2}\left(\frac{1}{2N}a(\delta)P^{2}+\frac{1}{2N}a(\delta)J^{2} + \frac{p_{\xi}^2}{2} +\hat U_\delta(\psi,\xi)\right).
\end{equation}
This Hamiltonian is similar to that in (\ref{eq:hamlitafterphipsi}) but it is simpler, as there is only one fast degree of freedom
$(\varphi, P)$. Moreover, $H$ is independent of $\varphi$, so \(P\) is an integral  which controls the period of the fast motion.

With this simplification in mind, let us follow the same procedure as is applied in Section \ref{sec:proofmainthm}
to Hamiltonian (\ref{eq:hamlitafterphipsi}) and compute  the Poincar\'e return map from \(\varphi=0\) to
\(\varphi=2\pi\). Since the flight time depends only on \(P\), the restriction of this map onto a fixed level of \(P\) is a constant-time map for the Hamiltonian  (\ref{eq:Hrectavrg1}) or, equivalently, an \(O(\delta^{1/2})\)-time map for the Hamiltonian
\begin{equation}\label{eq:Hrectavrg2}
\begin{array}{ll}
H_{P}(J,\psi,p_{\xi},\xi;\delta)&=\frac{1}{2N}a(\delta)J^{2} +\ \frac{p_{\xi }^2}{2} +\hat U_{\delta}(\psi,\xi).  \end{array}
\end{equation}
Thus, as in Lemma \ref{lem:poincaremapclosebilnew}, the Poincar\'e map of  (\ref{eq:Hrectexpanded}) is
\(o(\delta^{1/2})\)-close to the  \(O(\delta^{1/2})\)-time map for  the Hamiltonian  (\ref{eq:Hrectavrg2}).
Since \(\hat U_{\delta}(\psi,\xi)\) depends continuously on \(\delta\) with all derivatives, it follows from the non-degeneracy assumption Box2 and the KAM non-degeneracy assumption Box3 that the Hamiltonian  (\ref{eq:Hrectavrg2})  has a KAM-non-degenerate elliptic fixed point near \((J,\psi,p_{\xi},\xi)=(0,0,0,\xi_{min})\) with a positive measure set of KAM tori around it. Hence the Poincar\'e map of  the Hamiltonian (\ref{eq:Hrectexpanded}) also has such elliptic fixed point,  proving  Theorem \ref{thm:billiardbox1}.

\subsection{\label{sec:proofsmoothpot}Smoothness properties of the averaged potential}
Before proceeding to the proof of Theorem \ref{thm:billiardbox2} (the case of simultaneous impacts) we investigate regularity properties of the averaged potential $W_{avg}$ in (\ref{eq:averagedpotboxdelta}).
We start with the case $\delta=0$, i.e., let the function $q^{*}$ in (\ref{eq:averagedpotboxdelta}) be the billiard's saw-tooth solution (\ref{eq:sawtoothsol}).
\begin{lem}\label{lem:udelt0isc2} Provided \(||\xi^{(n)}-\xi^{(m)}||>\rho \), the averaged potential along the saw-tooth solution,  \(W_{avg}\) of (\ref{eq:potsawtooth}), is  $C^\infty$ for $\theta^{(n)}-\theta^{(m)} \neq 0 \mod \pi$.
Under the parity assumption Box4, at the singularities $\theta^{(n)}-\theta^{(m)} = 0 \mod \pi$,
the potential $W_{avg}$ is \(C^{2}\)-smooth, yet it is not \(C^{3}\) smooth in general.
\end{lem}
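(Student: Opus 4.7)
The plan is to derive an explicit closed form for $W_{avg}(\Delta\theta,\Delta\xi)$, where $\Delta\theta = \theta^{(n)}-\theta^{(m)}$ and $\Delta\xi=\xi^{(n)}-\xi^{(m)}$, by exploiting the piecewise linear structure of the saw-tooth $q^*$; once the formula is in hand, every smoothness claim reduces to inspection.

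First, since the integrand in \eqref{eq:potsawtooth} depends on $s$ only through $s+\theta^{(m)}$, a shift of the integration variable reduces $W_{avg}$ to a function of $(\Delta\theta,\Delta\xi)$ alone. For $\Delta\theta\in(0,\pi)$, split $[0,2\pi]$ at the corner points of $q^*(\cdot)$ and $q^*(\cdot+\Delta\theta)$ into the four sub-intervals $(0,\pi-\Delta\theta)$, $(\pi-\Delta\theta,\pi)$, $(\pi,2\pi-\Delta\theta)$, $(2\pi-\Delta\theta,2\pi)$. On each sub-interval $q^*(s+\Delta\theta)-q^*(s)$ is affine in $s$ with slope $0$, $-2$, $0$, or $+2$, and a linear substitution in the two non-trivial pieces converts both of them into integrals over $u\in[-\Delta\theta,\Delta\theta]$ with Jacobian $1/2$. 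Collecting the four contributions yields the clean identity
\begin{equation}\label{eq:wavgexplicit}
2\pi\, W_{avg}(\Delta\theta,\Delta\xi) = (\pi-|\Delta\theta|)\,F(\Delta\theta,\Delta\xi) + G(\Delta\theta,\Delta\xi),
\end{equation}
valid for $\Delta\theta\in(-\pi,\pi)$, where
\[
F(\Delta\theta,\Delta\xi)=W(\Delta\theta,\Delta\xi)+W(-\Delta\theta,\Delta\xi),\qquad G(\Delta\theta,\Delta\xi)=\int_{-|\Delta\theta|}^{|\Delta\theta|}W(u,\Delta\xi)\,du.
\]
Since $\|\Delta\xi\|>\rho$ keeps $W$ away from its singularity, both $F$ and $G$ are $C^\infty$ in $\Delta\theta$, and are moreover \emph{even} in $\Delta\theta$. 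On each open interval $\Delta\theta\in(0,\pi)$ or $\Delta\theta\in(-\pi,0)$ the identity \eqref{eq:wavgexplicit} is manifestly $C^\infty$, which proves the first assertion of the lemma. The case $\Delta\theta\equiv\pi\pmod{2\pi}$ follows by the $2\pi$-periodicity of $W_{avg}$ together with the reflection symmetry $W_{avg}(\Delta\theta,\cdot)=W_{avg}(-\Delta\theta,\cdot)$ already visible in \eqref{eq:wavgexplicit}, so smoothness at all non-singular points is established simultaneously.

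For the behavior at $\Delta\theta=0$, note that because $F$ is even, $F'(0)=F'''(0)=0$, and that $\partial_{\Delta\theta}G(\Delta\theta,\Delta\xi)=\mathrm{sign}(\Delta\theta)\,F(\Delta\theta,\Delta\xi)$. Differentiating \eqref{eq:wavgexplicit} once, the two $\mathrm{sign}(\Delta\theta)\,F$ terms cancel exactly, giving
\[
2\pi\,\partial_{\Delta\theta}W_{avg} = (\pi-|\Delta\theta|)\,F'(\Delta\theta,\Delta\xi),
\]
which is $C^1$ across $\Delta\theta=0$ (since $F'(0)=0$), so $W_{avg}$ is $C^2$ there. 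One further differentiation produces
\[
2\pi\,\partial^{2}_{\Delta\theta}W_{avg} = -\mathrm{sign}(\Delta\theta)\,F'(\Delta\theta,\Delta\xi) + (\pi-|\Delta\theta|)\,F''(\Delta\theta,\Delta\xi),
\]
and a third one gives the one-sided limits $\mp 2F''(0,\Delta\xi)+\pi F'''(0,\Delta\xi)$; because $F'''(0,\Delta\xi)=0$ by evenness, the jump of the third derivative at $\Delta\theta=0$ equals $-4F''(0,\Delta\xi)/(2\pi)$. Under the parity assumption Box4 one has $F(\Delta\theta,\Delta\xi)=2W(\Delta\theta,\Delta\xi)$, hence $F''(0,\Delta\xi)=2W_{qq}(0,\Delta\xi)$, which is generically nonzero for a repelling even $W$; this shows $W_{avg}$ is not $C^3$ in general. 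The singularity at $\Delta\theta=\pi$ is treated identically, with the analogous jump $-4F''(\pi,\Delta\xi)/(2\pi)$ in the third derivative.

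The main bookkeeping obstacle is simply getting the piecewise decomposition and the two changes of variable right so that the off-diagonal pieces combine into the single integral $\int_{-|\Delta\theta|}^{|\Delta\theta|}W(u,\Delta\xi)\,du$; once \eqref{eq:wavgexplicit} is established, the only subtle point is the automatic vanishing of $F'(0)$ from evenness (not parity), which is what makes $W_{avg}$ land exactly at the $C^2$ regularity threshold needed for the KAM assumption Box5 and simultaneously forces the loss of $C^3$ under parity.
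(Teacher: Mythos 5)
Your proof is correct and uses the same explicit piecewise-integration strategy as the paper: split the integral at the kinks of the saw-tooth, collapse the two sloped pieces by change of variables, and read off the one-sided limits of the derivatives of $W_{avg}$ at $\Delta\theta=0,\pi$. Two remarks. Your unified closed form $2\pi W_{avg}=(\pi-|\Delta\theta|)F+G$ with $G=\int_{-|\Delta\theta|}^{|\Delta\theta|}W(u,\Delta\xi)\,du$ is slightly sharper than the paper's presentation: since $F$ and $G$ are manifestly even in $\Delta\theta$, the $C^2$ conclusion at the matching points follows from $F'(0)=0$ without invoking Box4 at all --- the paper instead uses Box4 up front to reduce to $\partial_\vartheta W_{avg}=(1-\vartheta/\pi)\partial_\vartheta W$ and then verifies the vanishing of the first derivative; Box4 is likewise inessential for the non-$C^3$ step, since $F''(0)=2\partial^2_q W(0,\Delta\xi)$ holds automatically from the definition of $F$. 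A small thing to state explicitly: you derive the closed form only for $\Delta\theta\in(0,\pi)$ and then assert validity on all of $(-\pi,\pi)$; the extension is correct but deserves a line --- either repeat the four-piece decomposition for $\Delta\theta<0$, or quote the saw-tooth half-period antisymmetry $q^*(s+\pi)=\pi-q^*(s)$, which gives $W_{avg}(-\Delta\theta,\Delta\xi)=W_{avg}(\Delta\theta,\Delta\xi)$ directly from the defining integral and then carries the formula from $(0,\pi)$ to $(-\pi,0)$ by this evenness (reading the evenness off the formula itself before its domain is established would be circular).
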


\begin{proof}
Setting the shorthand notation \(\vartheta=\theta^{(m)}-\theta^{(n)}\) and \( \zeta=\xi^{(n)}-\xi^{(m)}\), and using the periodicity of \(q^{*}\),  the averaged potential of (\ref{eq:potsawtooth}) becomes
$$W_{avg}(\vartheta ,\zeta) = \frac{1}{2\pi}\int_0^{2\pi}\ W(q^{*}(s)-q^{*}(s+\vartheta),\zeta)ds.$$
By (\ref{eq:sawtoothsol}), for \(\vartheta\in(0,\pi)\),
\begin{equation}\label{wbila}
\begin{array}{ll}
2\pi  W_{avg}(\vartheta,\zeta)
 & =\int_{0}^{\pi-\vartheta}\ W(-\vartheta,\zeta)ds+\int_{\pi-\vartheta}^{\pi}\ W(2s-2\pi+\vartheta,\zeta)ds \\
 & \qquad \qquad\qquad\qquad
 +\int_{\pi}^{2\pi-\vartheta}W(\vartheta,\zeta)ds+\int_{2\pi-\vartheta}^{2\pi}W(4\pi-2s-\vartheta,\zeta)ds
 \\
 & =(\pi-\vartheta)(W(-\vartheta,\zeta)+W(\vartheta,\zeta))\\
 & \qquad\quad+\int_{\pi-\vartheta}^{\pi} (W(2s-2\pi+\vartheta,\zeta)+W(-2s+2\pi-\vartheta,\zeta))ds
\end{array}
\end{equation}
so for \(\|\zeta\|>\rho\) it is \(C^{\infty}\) for \(\vartheta\in(0,\pi)\). Now, recall that \( W_{avg}(\vartheta,\zeta)\) is an even function, so its extension to \(\vartheta\in(-\pi,0)\) is simply  \(W_{avg} (\vartheta,\zeta)= W_{avg}(-\vartheta,-\zeta) \), hence it is $C^\infty$ for all \(\vartheta\) away from the matching points \(\vartheta=0 \mod \pi\).

It remains to prove the \(C^{2}\)-smoothness at the matching points. By the periodicity and the parity assumption Box4,
$$W_{avg} (\vartheta,\zeta)= W_{avg}(-\vartheta,\zeta)=W_{avg}(2\pi-\vartheta,\zeta),$$
so it is enough to verify only that the first derivatives of $W_{avg}$ in (\ref{wbila}) vanish at $\vartheta=0,\pi$.

By the parity assumption the interaction potential $W$ is even in \(\vartheta\), so  (\ref{wbila}) becomes\begin{equation}
\begin{array}{ll}
2\pi  W_{avg}(\vartheta,\zeta)
 & =2(\pi-\vartheta)W(\vartheta,\zeta)+2\int_{\pi-\vartheta}^{\pi} W(2s-2\pi+\vartheta,\zeta)ds\\
 &=2\pi W(\vartheta,\zeta)+2\int_0^{\vartheta} (W(-2u+\vartheta,\zeta)-W(\vartheta,\zeta))du.
\end{array}
\end{equation}
Therefore,
$$
\frac{\partial}{\partial\vartheta}W_{avg}(\vartheta,\zeta) = \frac{\partial}{\partial\vartheta}W(\vartheta,\zeta)+\frac{1}{\pi}\int_{0}^{\vartheta} \frac{\partial}{\partial\vartheta}(W(-2u+\vartheta,\zeta)-W(\vartheta,\zeta))du
=(1-\frac{\vartheta}{\pi} )\frac{\partial}{\partial\vartheta}W(\vartheta,\zeta),
$$
so \(\frac{\partial}{\partial\vartheta}W_{avg}(\vartheta,\zeta)|_{\vartheta=0,\pi}=0\), as required.

Differentiating further, we find
\begin{equation}\label{avgbil2der}
\frac{\partial^{2}}{\partial\vartheta^{2}}W_{avg}(\vartheta,\zeta)  =(1-\frac{\vartheta}{\pi} )\frac{\partial^{2}}{\partial\vartheta^{2}}W(\vartheta,\zeta)-\frac{1}{\pi}\frac{\partial}{\partial\vartheta}W(\vartheta,\zeta).
\end{equation}
Next, we obtain
$$ \frac{\partial^{3}}{\partial\vartheta^{3}}W_{avg}(\vartheta,\zeta)  =(1-\frac{\vartheta}{\pi} )\frac{\partial^{3}}{\partial\vartheta^{3}}W(\vartheta,\zeta)-\frac{2}{\pi}\frac{\partial^{2}}{\partial\vartheta^{2}}W(\vartheta,\zeta),
$$
so  \(\frac{\partial^{3}}{\partial\vartheta^{3}}W_{avg}(\vartheta,\zeta)|_{\vartheta\rightarrow0,\pi}=-\frac{2}{\pi}\frac{\partial^{2}}{\partial\vartheta^{2}}W(\vartheta,\zeta)|_{\vartheta\rightarrow0,\pi}\).
Generically, these values  do not vanish, so the even extension  of \(W_{avg}\) to negative $\vartheta$ cannot be \(C^{3}\)
(i.e., it is only piecewise smooth). \end{proof}

Let us now consider the case of $\delta>0$. As in the lemma above, we use the notation
  \(\vartheta=\theta^{(n)}-\theta^{(m)} \) and   \( \zeta=\xi^{(n)}-\xi^{(m)}\).
\begin{lem}\label{lem:wtildc2inthet}
Away from   \(\vartheta=0 \mod \pi\), the averaged potential  \( W_{avg}(\vartheta,\zeta;\delta)\) is \(C^{\infty}\)-close, for small $\delta$ to the saw-tooth averaged potential
\(W_{avg}(\vartheta,\zeta;0)\).

When the parity assumption Box4 is satisfied, the averaged potential near \(\vartheta = 0 \mod \pi\) is  \(C^2\)-close
to \(W_{avg}(\vartheta,\zeta;0)\), along with the derivatives with respect to \(\zeta\).
The higher order derivatives with respect to \(\vartheta\) do not, in general, have a continuous limit near the singular values
\(\vartheta= 0\mod \pi\) as $\delta\to 0$.  The following estimates hold true:
$$\begin{array}{l}
\frac{\partial^{3}}{\partial\vartheta^{3}}W_{avg}(\vartheta,\zeta;\delta)=O(1),\\
\frac{\partial^{4}}{\partial\vartheta^{4}}W_{avg}(\vartheta,\zeta;\delta)=O(\delta^{-\frac{1}{\alpha}}),\\
\frac{\partial^{5}}{\partial\vartheta^{5}}W_{avg}(\vartheta,\zeta;\delta)=O(\delta^{-\frac{2}{\alpha}});
\end{array}$$
differentiation with respect to $\zeta$ does not affect these estimates:
$$
\frac{\partial^{i+k}}{\partial\vartheta^{i}\partial\zeta^k}W_{avg}(\vartheta,\zeta;\delta)=O(\delta^{-\frac{i-3}{\alpha}}),
\qquad i=3,4,5; \ \ k\geqslant 0.
$$
\end{lem}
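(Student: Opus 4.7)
The plan is to adapt the strategy of Lemma \ref{lem:udelt0isc2} to the smoothed setting $\delta>0$, using (\ref{eq:qdeltabound}) to control the impact layer of width $\delta^{1/\alpha}$. The key inputs are: (i) for each $\delta>0$ the integrand of (\ref{eq:averagedpotboxdelta}) is $C^\infty$ in $(\vartheta,\zeta)$ since $q^*_\delta$ is smooth and $\|\zeta\|>\rho$, so $W_{avg}(\cdot,\cdot;\delta)$ is automatically $C^\infty$ and only its behaviour as $\delta\to 0$ needs analysis; (ii) $q^*_\delta\to q^*$ in $C^\infty$ on every compact subset of $[0,2\pi]\setminus\pi\mathbb{Z}$, while near each impact phase $q^*_\delta$ is described by the rescaled profile (\ref{eq:qdeltabound}); (iii) from Hamilton's equations $\dot p^*_\delta=-\delta V_d'(q^*_\delta)$ with $V_d=Q_d^{-\alpha}$, induction on $k$ gives $|(p^*_\delta)^{(k)}(s)|=O(\delta\cdot Q_d(q^*_\delta)^{-\alpha-k})$ near impact, hence pointwise size $O(\delta^{-k/\alpha})$ concentrated in a layer of width $O(\delta^{1/\alpha})$, yielding $\int_0^{2\pi}|(p^*_\delta)^{(k)}|\,ds=O(\delta^{-(k-1)/\alpha})$ for $k\geq 1$ (in particular $\int_0^{2\pi}|(p^*_\delta)'|\,ds=O(1)$ since $p^*_\delta$ has bounded total variation).

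For $\vartheta$ in a compact $K\subset(0,\pi)$ and $\zeta$ in a compact set with $\|\zeta\|>\rho$, I partition $[0,2\pi]$ into an outer region where both $s$ and $s+\vartheta$ stay at distance $\geq\eta$ from $\{0,\pi\}\bmod 2\pi$, and at most four impact bands of width $O(\eta)$ around $s=0,\pi,-\vartheta,\pi-\vartheta$. Since $\vartheta\in K$ is bounded away from $0,\pi\bmod 2\pi$, for small $\delta$ at most one of $s,s+\vartheta$ enters an impact layer in any given band. On the outer region, $q^*_\delta\to q^*$ in $C^\infty$ jointly in all arguments, so the integrand and its $\vartheta,\zeta$-derivatives of any order converge uniformly to the saw-tooth versions. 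On each impact band, I rescale the near-impact argument via $t_s=(s-\text{impact})/(\omega_0(\delta)\delta^{1/\alpha})$; by (\ref{eq:qdeltabound}) the integrand becomes uniformly $C^\infty$ and uniformly bounded in $(t_s,\vartheta,\zeta)$, the measure supplies $\omega_0(\delta)\delta^{1/\alpha}\,dt_s$, and the $t_s$-range is $O(\eta\delta^{-1/\alpha})$; a band-by-band comparison with the corresponding saw-tooth integral (which has a regular $\delta=0$ limit in the rescaled picture) yields an $O(\delta^{1/\alpha})$ $C^\infty$-difference. Letting $\eta\to 0$ slowly enough with $\delta$ delivers the $C^\infty$-closeness of $W_{avg}(\cdot,\cdot;\delta)$ to $W_{avg}(\cdot,\cdot;0)$ on $K$.

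For $\vartheta$ near $0\bmod\pi$ (say near $0$) under Box4, I fix the phase of $\theta$ using $Q_d(\pi-q)=Q_d(q)$ so that $q^*_\delta$ is even about $0$ and about $\pi$, and hence $p^*_\delta$ is odd about both points. Differentiating under the integral and using periodicity together with $(d/ds)W(q^*_\delta(s+\vartheta)-q^*_\delta(s),\zeta)=W'_q(\cdots)[p^*_\delta(s+\vartheta)-p^*_\delta(s)]$ lets me rewrite the first two derivatives in a form that contains no factor $(p^*_\delta)^{(j)}$ with $j\geq 1$:
\begin{equation*}
\partial_\vartheta W_{avg}=\frac{1}{2\pi}\int_0^{2\pi}W'_q(q^*_\delta(s+\vartheta)-q^*_\delta(s),\zeta)\,p^*_\delta(s)\,ds,
\end{equation*}
\begin{equation*}
\partial^2_\vartheta W_{avg}=\frac{1}{2\pi}\int_0^{2\pi}W''_{qq}(q^*_\delta(s+\vartheta)-q^*_\delta(s),\zeta)\,p^*_\delta(s+\vartheta)\,p^*_\delta(s)\,ds.
\end{equation*}
These integrands are uniformly bounded in $\delta$ and converge pointwise a.e.\ as $\delta\to 0$ to their saw-tooth counterparts (since $p^*_\delta\to\pm 1$ a.e.); dominated convergence, combined with the analogous statement for all $\zeta$-derivatives (which act only on the smooth dependence of $W$ on its second argument), gives uniform convergence of $W_{avg}$ and of its first two $\vartheta$-derivatives to the saw-tooth values in Lemma \ref{lem:udelt0isc2}, which is the asserted $C^2$-closeness. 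Differentiating once or twice more brings down factors $(p^*_\delta)'(s+\vartheta),(p^*_\delta)''(s+\vartheta),\dots$ via Fa\`a di Bruno; the leading contribution at order $k\geq 3$ is an integral containing one factor $(p^*_\delta)^{(k-2)}(s+\vartheta)$ with bounded-in-$\delta$ prefactor, whose $L^1$-norm by (iii) equals $O(\delta^{-(k-3)/\alpha})$, while all other terms are of the same or lower order. This yields the declared uniform bounds $\partial^3_\vartheta W_{avg}=O(1)$, $\partial^4_\vartheta W_{avg}=O(\delta^{-1/\alpha})$, $\partial^5_\vartheta W_{avg}=O(\delta^{-2/\alpha})$, and $\zeta$-differentiation does not alter the $\delta$-order. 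The main obstacle is the integration-by-parts manipulation: checking that the cancellation absorbing $(p^*_\delta)'$ into $p^*_\delta(s)$ genuinely operates at orders one and two (which preserves the $C^2$-limit) but necessarily breaks at order three --- this is exactly what mirrors the $C^2$-but-not-$C^3$ regularity of the saw-tooth average in Lemma \ref{lem:udelt0isc2}.
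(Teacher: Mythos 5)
Your proposal is essentially correct, and it takes a genuinely different route from the paper's proof near the singular values \(\vartheta=0\bmod\pi\). The paper isolates a ``singular part'' of the integral living over the impact layers, passes to the rescaled variable \(u=s/(\omega_0\delta^{1/\alpha})\) via (\ref{eq:qdeltabound}), and then works out the derivatives of the singular part explicitly, using integration by parts and the decay estimate \(\tilde p_\delta^{(k)}(u)=O(|u|^{-\alpha-k})\) to absorb the boundary terms. Your approach instead uses a global periodicity trick: since \(\partial_\vartheta W=\partial_s W+W'_q\cdot p^*_\delta(s)\) and the full-period integral of \(\partial_s W\) vanishes, you obtain representations of \(\partial_\vartheta W_{avg}\) and \(\partial^2_\vartheta W_{avg}\) that contain only bounded factors, giving the \(C^2\)-limit by dominated convergence; the higher derivatives are then controlled by the \(L^1\)-norms \(\|(p^*_\delta)^{(k)}\|_{L^1}=O(\delta^{-(k-1)/\alpha})\). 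Your argument is cleaner for producing the \emph{orders} of the derivative bounds and avoids the explicit band-by-band bookkeeping, whereas the paper's local rescaled computation has the advantage that it directly yields the leading coefficients of the divergent terms — which are needed in Lemma~\ref{lem:wavatorigin} for the \(\gamma_{nm},\beta_{nm}\) asymptotics — while your method only gives the magnitude.

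Two steps could use a little more care. First, the claim that dominated convergence yields \emph{uniform} \(C^2\)-closeness in \((\vartheta,\zeta)\) is not automatic from pointwise a.e.\ convergence of the integrands: you should observe that \(p^*_\delta\to p^*_0\) in \(L^1\) (with rate \(O(\delta^{1/\alpha})\) from (\ref{dvpo}) and the energy bound \(Q\gtrsim\delta^{1/\alpha}\)), uniformly over translations, and combine with the uniform \(C^0\)-convergence \(q^*_\delta\to q^*\). Second, the Fa\`a di Bruno bookkeeping needs to be checked term by term: e.g.\ for \(\partial^5_\vartheta\) the contributions are \(W'''''(p_\vartheta)^4\), \(W''''(p_\vartheta)^2p'_\vartheta\), \(W'''p_\vartheta p''_\vartheta\), \(W'''(p'_\vartheta)^2\), and \(W''p'''_\vartheta\), each multiplied by \(p^*_\delta(s)\); the \(L^1\) bounds are respectively \(O(1)\), \(O(1)\), \(O(\delta^{-1/\alpha})\), \(O(\delta^{-1/\alpha})\), \(O(\delta^{-2/\alpha})\), so the last term dominates, as you claim. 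Once these details are included, the argument is complete and correct.
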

\begin{proof} In order to establish the regularity for $\vartheta\neq 0 \mod \pi$ (i.e., for non-simultaneous impacts), we take
$\vartheta\in (0,\pi)$ and let $\eta>0$ be a sufficiently small number so that \(\eta<\min(\vartheta,\pi-\vartheta)\). By the periodicity of $q^*_\delta$, we write
(\ref{eq:averagedpotboxdelta}) as$$\begin{array}{ll}2\pi  W_{avg}(\vartheta,\zeta;\delta) &=
\int_{-\eta}^{2\pi-\eta} W(q_{\delta}^{*}(s)-q_{\delta}^{*}(s+\vartheta),\zeta)ds \\&=(
\int_{-\eta}^{\pi-\vartheta-\eta}
+\int_{\pi-\vartheta-\eta}^{\pi-\vartheta+\eta} +
\int_{\pi-\vartheta+\eta}^{2\pi-\vartheta-\eta} +
\int_{2\pi-\vartheta-\eta}^{2\pi-\eta} )W(q_{\delta}^{*}(s)-q_{\delta}^{*}(s+\vartheta),\zeta)ds\\& =
\int_{-\eta}^{\pi-\vartheta-\eta} W(q_{\delta}^{*}(s)-q_{\delta}^{*}(s+\vartheta),\zeta)ds
+\int_{\pi-\eta}^{\pi+\eta} W(q_{\delta}^{*}(s-\vartheta)-q_{\delta}^{*}(s),\zeta)ds\\ & +
\int_{\pi-\vartheta+\eta}^{2\pi-\vartheta-\eta} W(q_{\delta}^{*}(s)-q_{\delta}^{*}(s+\vartheta),\zeta)ds +
\int_{2\pi-\eta}^{2\pi+\vartheta-\eta} W(q_{\delta}^{*}(s-\vartheta)-q_{\delta}^{*}(s),\zeta)ds.
\end{array}$$
The function $q^*_\delta$ is $C^\infty$ for all $\delta\geqslant 0$ when its argument is bounded away from
$0 \mod \pi$, so every integral in this sum is $C^\infty$ function of \(\vartheta\) for all $\delta\geqslant 0$ (this is an explicit version of a similar statement in Lemma \ref{Lemmaavbil}). Similarly, one proves the regularity
of $W_{avg}$ for $\vartheta\in (\pi,2\pi)$.

Let us now examine the case where $\vartheta$ is close to $0$ or $\pi$. We have
\begin{equation}\label{eq:wavgsandr}
2\pi  W_{avg}(\vartheta,\zeta;\delta)\ =\int_0^{2\pi}\ W(q_{\delta}^{*}(s)-q_{\delta}^{*}(s+\vartheta),\zeta)ds
:= S(\vartheta,\zeta;\delta,\eta)+R(\vartheta,\zeta;\delta,\eta),
\end{equation}
where \(S\), the ``singular part'', corresponds to the integration intervals with both particles  \(\eta\)-close to impacts:
\begin{equation}\label{seqwav}
S(\vartheta,\zeta;\delta,\eta)= \int_{-\eta}^{\eta}W(q_{\delta}^{*}(s)-q_{\delta}^{*}(s+\vartheta),\zeta)ds+\int_{\pi-\eta}^{\pi+\eta}W(q_{\delta}^{*}(s)-q_{\delta}^{*}(s+\vartheta ),\zeta)ds,
\end{equation}
and \(R\), the ``regular part'', corresponds to the integration intervals  for which both particles are at a distance larger than
\(\eta\) from impacts:
\begin{equation}\label{reqwav}
R(\vartheta,\zeta;\delta,\eta)=\int_{\eta}^{\pi-\eta} W(q_{\delta}^{*}(s)-q_{\delta}^{*}(s+\vartheta),\zeta)ds +\int_{\pi+\eta}^{2\pi-\eta}W(q_{\delta}^{*}(s)-q_{\delta}^{*}(s+\vartheta),\zeta)ds.
\end{equation}

For small \(\eta\), and \(\vartheta\) close to \(0\) (both particles have the same phase) or \(\pi\) (the anti-phase state), the term \(R(\vartheta,\zeta;\delta,\eta)\) is \(C^{\infty}\) for all \(\delta\geqslant 0\). Thus, we need to evaluate the singular term, \(S\), and its derivatives. Recall  that
\(q_{\delta}^{*}(s)\rightarrow q_{0}^{*}(s)\) in \(C^0\), so \(S\) converges in  \(C^0\) to the billiard limit \(S(\vartheta,\zeta;0,\eta)\). We will show that up to order 3 the derivatives of \(S\) are uniformly bounded for all small $\delta$. This proves, by compactness argument, the \(C^{2}\)-closeness of \(S\) and, hence, of
\(W_{avg}\), to their
\(C^{0}\) limits at \(\delta=0\), as claimed.

So, to prove the lemma, we only need to estimate the derivatives of $S$.
In order to do this, we use formula (\ref{eq:qdeltabound}) for $q^*_\delta$, where
the first impact is at $M^1: q=0$ (this corresponds to $\theta_1=0$) and the second impact is at $M^2: q=\pi$
(this corresponds to $\theta_2=\pi$).
Recall that $q^*_\delta$ is the periodic orbit in the
energy level $H=\frac{1}{2}$ of the system (\ref{hsdofsp}).
Since this system has one degree of freedom and
is reversible, the periodic orbit $q^*_\delta(\theta)$ is even and $2\pi$-periodic. Moreover, by the symmetry of
the potential $V_d(q)$ (see Assumption Box1), we have $q^*_\delta(\theta)=q^*_\delta(\pi-\theta)$.
Thus, we may write (\ref{eq:qdeltabound}) as
\begin{equation}\label{qdeltabo}
\begin{array}{l}\displaystyle
q^*_\delta(\theta)=
\delta^{1/\alpha} \tilde q_\delta\left(\frac{\theta}{\omega_0(\delta)\delta^{1/\alpha}}\right) \;\mbox{ near }\; \theta=0,\\
\displaystyle
q^*_\delta(\theta)= \pi -
\delta^{1/\alpha} \tilde q_\delta\left(\frac{\theta-\pi}{\omega_0(\delta)\delta^{1/\alpha}}\right) \;\mbox{ near }\; \theta=\pi.
\end{array}
\end{equation}
where $\tilde q_\delta(\cdot)$ is an even function with bounded derivatives (uniformly for all $\delta\geqslant 0$).
By (\ref{hsdofsp})
$$\frac{d}{dt} q^*_\delta = p^*_\delta, \qquad \frac{d}{dt} p^*_\delta = - \delta V_d'(q^*_\delta).$$
Therefore, if we denote
\begin{equation}\label{tilpj}
\tilde p_{\delta}\left(\frac{\theta}{\omega_0(\delta)\delta^{1/\alpha}}\right) = \tilde q_{\delta}' = \frac{d}{dt} q^*_{\delta}
\end{equation}
(see (\ref{eq:scaledq}) and (\ref{eq:qdeltabound})), then
\begin{equation}\label{eq:pprimebl}
\tilde p_{\delta}'=-\frac{\alpha\delta^{1+1/\alpha}}{Q_d(\delta^{1/\alpha} \tilde q)^{\alpha+1}}Q_d'(\delta^{1/\alpha} \tilde q)=- \; \frac{1}{\tilde q_{\delta}^{\alpha+1}}\left(\frac{\alpha}{(Q_d'(0))^{\alpha}}+
O(\delta^{1/\alpha})\right).
\end{equation}
 Notice that by
Lemma \ref{lem:qnearimpact}, the behavior of
\(\tilde q\) is asymptotically linear at large \(u\):
\begin{equation}\label{behtlq}
\tilde q_\delta(u) = |u| + o(u),
\end{equation}
so
\begin{equation}\label{derpkal1}
\tilde p_{\delta}^{(k)}(u) = O(|u|^{-\alpha -k}), \qquad k\geqslant 1.
\end{equation}
Note also that the frequency $\omega_0$ of the vertical oscillations tends to that of the billiard motion, i.e.,
\begin{equation}\label{omega0box5}
\omega_0(0)=1.
\end{equation}

Now we can return to analyze the behavior of the singular term \(S\). For \(\vartheta\) close to $0$,  using formulas (\ref{qdeltabo}) and that \(W\) is even, we write (\ref{seqwav}) as
\begin{equation}\nonumber
\begin{array}{l}\displaystyle \!\!\!\!\!\!\!\!\!\!
S(\vartheta,\zeta;\delta,\eta) =(\int_{-\eta}^{\eta} +\int_{\pi-\eta}^{\pi+\eta} )\ W(q_{\delta}^{*}(s)-q_{\delta}^{*}(s+\vartheta),\zeta)ds =2\int_{-\eta}^{\eta} W\left(\delta^{1/\alpha} \tilde q_{\delta}(\frac{s}{\omega_0\delta^{1/\alpha}})-\delta^{1/\alpha} \tilde q_{\delta}(\frac{s+\vartheta}{\omega_0\delta^{1/\alpha}}),\ \zeta\right) ds
 \\ \\ \displaystyle
 \qquad\qquad=2\omega_0\delta^{1/\alpha}\int_{-\eta/(\omega_0\delta^{\frac{1}{\alpha}})}^{\eta/(\omega_0\delta^{\frac{1}{\alpha}})} W\left(\delta^{1/\alpha} \tilde q_{\delta}(u)-\delta^{1/\alpha} \tilde q_{\delta}(u+\frac{\vartheta}{\omega_0\delta^{1/\alpha}}),\ \zeta\right) du.
\end{array}
\end{equation}
Similarly, near \(\vartheta=\pi\), we have
\begin{equation}\nonumber
\begin{array}{l}\displaystyle \!\!\!\!\!\!\!\!\!
S(\vartheta,\zeta;\delta,\eta) =(\int_{-\eta}^{\eta} + \int_{\pi-\eta}^{\pi+\eta})   W(q_{\delta}^{*}(s)-q_{\delta}^{*}(s+\vartheta),\zeta)ds
\\   \!\!\!\!\!\!\!\!\!\!\!\!\!\!\!\!
\displaystyle =\int_{-\eta}^{\eta}\!\!W(\delta^{1/\alpha} \tilde q_\delta(\frac{s}{\omega_0\delta^{1/\alpha}})+
\delta^{1/\alpha} \tilde q_\delta(\frac{s+\vartheta-\pi}{\omega_0\delta^{1/\alpha}}) -\pi,\zeta)ds
 +\int_{\pi-\eta}^{\pi+\eta}\!\!W(\pi-\delta^{1/\alpha} \tilde q_\delta(\frac{s-\pi}{\omega_0\delta^{1/\alpha}})-\delta^{1/\alpha} \tilde q_\delta(\frac{s-2\pi+\vartheta}{\omega_0\delta^{1/\alpha}}),\zeta)ds
\\ \displaystyle
=2\omega_0\delta^{1/\alpha}\int_{-\eta/(\omega_0\delta^{\frac{1}{\alpha}})}^{\eta/(\omega_0\delta^{\frac{1}{\alpha}})} W\left(\pi-\delta^{1/\alpha} \tilde q_\delta(u)-\delta^{1/\alpha} \tilde q_\delta(u +\frac{\vartheta-\pi}{\omega_0\delta^{1/\alpha}}),\ \zeta\right)\ ds.
\end{array}
\end{equation}
These two formulas can be written in a unified way:
\begin{equation}\nonumber
S(\vartheta,\zeta;\delta,\eta) =2\omega_0\delta^{1/\alpha}\int_{-\eta/(\omega_0\delta^{\frac{1}{\alpha}})}^{\eta/(\omega_0\delta^{\frac{1}{\alpha}})} W\left(\sigma \pm \delta^{1/\alpha} \tilde q_{\delta}(u)-\delta^{1/\alpha} \tilde q_{\delta}(u+\frac{\hat\vartheta}{\omega_0\delta^{1/\alpha}}),\ \zeta\right) du,
\end{equation}
where one chooses $\sigma=0$, $\hat\vartheta=\vartheta$ and the plus sign in front of $\delta^{1/\alpha} \tilde q_{\delta}(u)$ in the case of $\vartheta$ close to zero, and $\sigma=\pi$, $\hat\vartheta=\vartheta-\pi$ and the minus sign in front of $\delta^{1/\alpha} \tilde q_{\delta}(u)$ in the case of $\vartheta$ close to $\pi$.

The first derivative of $S$ with respect to $\vartheta$ is
\begin{equation}
\frac{\partial}{\partial\vartheta}S(\vartheta,\zeta;\delta,\eta)=-2\delta^{1/\alpha}\int_{-\eta/(\omega_0\delta^{\frac{1}{\alpha}})}^{\eta/(\omega_0\delta^{\frac{1}{\alpha}})} \frac{\partial W}{\partial q}\left(\sigma \pm \delta^{1/\alpha} \tilde q_{\delta}(u)-\delta^{1/\alpha} \tilde q_{\delta}(u+\frac{\hat\vartheta}{\omega_0\delta^{1/\alpha}}),\ \zeta\right) \tilde p_{\delta}(u+\frac{\hat\vartheta}{\omega_0\delta^{1/\alpha}})du,
\nonumber \end{equation}
where $\tilde p_{\delta}$ is the derivative of $\tilde q_{\delta}$ (see (\ref{tilpj})), so it is bounded with all derivatives by Lemma \ref{lem:qnearimpact}. Since the integrand is bounded (along with all derivatives with respect to $\zeta$), it follows that $\frac{\partial S}{\partial\vartheta}=O(\eta)$, along with its derivatives with respect to $\zeta$.

Next, we check the second derivative:
\begin{equation}\nonumber
\!\!\!\!\!\!\!\!\!\!\begin{array}{ll}
 \frac{\partial^{2}}{\partial\vartheta^{2}}S(\vartheta,\zeta;\delta,\eta)&=2\omega_0^{-1}\delta^{1/\alpha}\int_{-\eta/(\omega_0\delta^{\frac{1}{\alpha}})}^{\eta/(\omega_0\delta^{\frac{1}{\alpha}})} \frac{\partial^{2}W}{\partial q^{2}}(\sigma \pm \delta^{1/\alpha} \tilde q_\delta(u)-\delta^{1/\alpha} \tilde q_\delta(u+\frac{\hat\vartheta}{\omega_0\delta^{1/\alpha}}),\ \zeta)\ \tilde p_\delta(u+\frac{\hat\vartheta}{\omega_0\delta^{1/\alpha}})^{2}du\\
& -2\omega_0^{-1}\int_{-\eta/(\omega_0\delta^{\frac{1}{\alpha}})}^{\eta/(\omega_0\delta^{\frac{1}{\alpha}})} \frac{\partial W}{\partial q}(\sigma \pm \delta^{1/\alpha} \tilde q_\delta(u)-\delta^{1/\alpha} \tilde q_\delta(u+\frac{\hat\vartheta}{\omega_0\delta^{1/\alpha}}),\ \zeta)\ \tilde p_\delta'(u+\frac{\hat\vartheta}{\omega_0\delta^{1/\alpha}})du.
\end{array}
\end{equation}
As before, the first line of the right-hand side is \(O(\eta)\). Since $\tilde p'$ decays as $|u|^{-\alpha-1}$ (see (\ref{derpkal1})), the
integral in the second line is uniformly convergent. Thus, $\frac{\partial^2 S}{\partial\vartheta^2}$ is uniformly bounded for all small $\delta$. The same is true for its derivatives with respect to $\zeta$.

Differentiating further, we obtain that
\begin{equation}\label{eq:sthirdderv} \begin{array}{l}
 \frac{\partial^{3}}{\partial\vartheta^{3}}S(\vartheta,\zeta;\delta,\eta) =-2\omega_0^{-2}\delta^{\frac{1}{\alpha}}\int_{-\eta/(\omega_0\delta^{\frac{1}{\alpha}})}^{\eta/(\omega_0\delta^{\frac{1}{\alpha}})} \frac{\partial^{3}W}{\partial q^{3}}(\sigma \pm \delta^{1/\alpha} \tilde q_\delta(u)-\delta^{1/\alpha} \tilde q_\delta(u+\frac{\hat\vartheta}{\omega_0\delta^{1/\alpha}}),\ \zeta)\ \tilde p_\delta(u+\frac{\hat\vartheta}{\omega_0\delta^{1/\alpha}})^{3} du\\
\qquad+6\omega_0^{-2}\int_{-\eta/(\omega_0\delta^{\frac{1}{\alpha}})}^{\eta/(\omega_0\delta^{\frac{1}{\alpha}})} \frac{\partial^{2}W}{\partial q^{2}}(\sigma \pm \delta^{1/\alpha} \tilde q_\delta(u)-\delta^{1/\alpha} \tilde q_\delta(u+\frac{\hat\vartheta}{\omega_0\delta^{1/\alpha}}),\ \zeta)\ \tilde p_\delta(u+\frac{\hat\vartheta}{\omega_0\delta^{1/\alpha}}) \tilde p_\delta'(u+\frac{\hat\vartheta}{\omega_0\delta^{1/\alpha}}) du\\ \qquad
-2\omega_0^{-2}\delta^{-\frac{1}{\alpha}}\int_{-\eta/(\omega_0\delta^{\frac{1}{\alpha}})}^{\eta/(\omega_0\delta^{\frac{1}{\alpha}})} \frac{\partial W}{\partial q}(\sigma \pm \delta^{1/\alpha} \tilde q_\delta(u)-\delta^{1/\alpha} \tilde q_\delta(u+\frac{\hat\vartheta}{\omega_0\delta^{1/\alpha}}),\ \zeta)\ \tilde p_\delta''(u+\frac{\hat\vartheta}{\omega_0\delta^{1/\alpha}}) du.
\end{array}
\end{equation}
As above, the first term in the right-hand side is
\(O(\eta)\). Integrating the last term by parts, we obtain
\begin{equation}\nonumber
\begin{array}{l}
\frac{\partial^{3}}{\partial\vartheta^{3}}S_j(\vartheta,\zeta;\delta,\eta) = O(\eta)+\\ \quad\
+ 2\omega_0^{-2}\int_{-\eta/(\omega_0\delta^{\frac{1}{\alpha}})}^{\eta/(\omega_0\delta^{\frac{1}{\alpha}})}\!\!
\frac{\partial^{2}W}{\partial q^{2}}(\sigma \pm \delta^{1/\alpha} \tilde q_\delta(u)-\delta^{1/\alpha} \tilde q_\delta(u+\frac{\hat\vartheta}{\omega_0\delta^{1/\alpha}}),\ \zeta)\
(2\tilde p_\delta(u+\frac{\hat\vartheta}{\omega_0\delta^{1/\alpha}}) \pm \tilde p_\delta(u))\ \tilde p_\delta'(u+\frac{\hat\vartheta}{\omega_0\delta^{1/\alpha}}) du\\
\quad\ -2\delta^{-\frac{1}{\alpha}}[\frac{\partial W}{\partial q}(\sigma \pm \delta^{1/\alpha} \tilde q_\delta(u)-\delta^{1/\alpha} \tilde q_\delta(u+\frac{\hat\vartheta}{\omega_0\delta^{1/\alpha}}),\ \zeta)\ \tilde p_\delta'(u+\frac{\hat\vartheta}{\omega_0\delta^{1/\alpha}})]_{-\eta/(\omega_0\delta^{\frac{1}{\alpha}})}^{\eta/(\omega_0\delta^{\frac{1}{\alpha}})}
\end{array}
\end{equation}
Since $\tilde p_\delta'$ decays as $|u|^{-\alpha-1}$, all the terms here are uniformly bounded, i.e.,
\(\frac{\partial^{3}}{\partial\vartheta^{3}}S(\vartheta,\zeta;\delta,\eta)\) is uniformly bounded, with all its $\zeta$-derivatives.

Similarly, differentiating (\ref{eq:sthirdderv}) and integrating by parts, we obtain:
\begin{equation}\nonumber
\begin{array}{l}
\frac{\partial^{4}}{\partial\vartheta^{4}}S(\vartheta,\zeta;\delta,\eta) = 2\omega_0^{-3}\delta^{\frac{1}{\alpha}}\int_{-\eta/(\omega_0\delta^{\frac{1}{\alpha}})}^{\eta/(\omega_0\delta^{\frac{1}{\alpha}})} \frac{\partial^{4 }W}{\partial q^{4}}(\sigma \pm \delta^{1/\alpha} \tilde q_\delta(u)-\delta^{1/\alpha} \tilde q_\delta(u+\frac{\hat\vartheta}{\omega_0\delta^{1/\alpha}}),\  \zeta)\ \tilde p_\delta(u+\frac{\hat\vartheta}{\omega_0\delta^{1/\alpha}})^{4} du\\
\quad\ -12\omega_0^{-3}\int_{-\eta/(\omega_0\delta^{\frac{1}{\alpha}})}^{\eta/(\omega_0\delta^{\frac{1}{\alpha}})} \frac{\partial^{3} W}{\partial q^{3}}(\delta^{1/\alpha} \sigma \pm \tilde q_\delta(u)-\delta^{1/\alpha} \tilde q_\delta(u+\frac{\hat\vartheta}{\omega_0\delta^{1/\alpha}}),\ \zeta)\ \tilde p_\delta(u+\frac{\hat\vartheta}{\omega_0\delta^{1/\alpha}})^2 \tilde p_\delta'(u+\frac{\hat\vartheta}{\omega_0\delta^{1/\alpha}}) du\\
\quad\ +6\omega_0^{-3}\delta^{-\frac{1}{\alpha}}
\int_{-\eta/(\omega_0\delta^{\frac{1}{\alpha}})}^{\eta/(\omega_0\delta^{\frac{1}{\alpha}})}\!\! \frac{\partial^{2}W}{\partial q^{2}}(\sigma \pm \delta^{1/\alpha} \tilde q_\delta(u)-\delta^{1/\alpha} \tilde q_\delta(u+\frac{\hat\vartheta}{\omega_0\delta^{1/\alpha}}), \zeta)\ (\tilde p_\delta'(u+\frac{\hat\vartheta}{\omega_0\delta^{1/\alpha}}))^{2} du \\
\quad\ +2\omega_0^{-3}\delta^{-\frac{1}{\alpha}}\int_{-\eta/(\omega_0\delta^{\frac{1}{\alpha}})}^{\eta/(\omega_0\delta^{\frac{1}{\alpha}})}\!\! \frac{\partial^{2}W}{\partial q^{2}}(\sigma \pm \delta^{1/\alpha} \tilde q_\delta(u)-\delta^{1/\alpha} \tilde q_\delta(u+\frac{\hat\vartheta}{\omega_0\delta^{1/\alpha}}), \zeta)\
 (3\tilde p_\delta(u+\frac{\hat\vartheta}{\delta^{1/\alpha}}) \pm \tilde p_\delta(u))\ \tilde p_\delta''(u+\frac{\hat\vartheta}{\omega_0\delta^{1/\alpha}}) du\\
\quad\ -2\omega_0^{-3}\delta^{-\frac{2}{\alpha}}[\frac{\partial W}{\partial q}(\sigma \pm \delta^{1/\alpha} \tilde q_\delta(u)-\delta^{1/\alpha} \tilde q_\delta(u+\frac{\hat\vartheta}{\omega_0\delta^{1/\alpha}}), \zeta)\ \tilde p_\delta''(u+\frac{\hat\vartheta}{\omega_0\delta^{1/\alpha}})]_{-\eta/(\omega_0\delta^{\frac{1}{\alpha}})}^{\eta/(\omega_0\delta^{\frac{1}{\alpha}})}
\end{array}
\end{equation}
As before, the first term is \(O(\eta)\). By (\ref{derpkal1}), we have \( \tilde p_\delta'(u)\approx 1/|u|^{\alpha +1}\) and  \( \tilde p_\delta''(u)\approx1/|u|^{\alpha+2}\) at  large \(|u|\). This implies that the integrals in the 2-4 lines are uniformly bounded, so the second line is $O(1)$ and the third and fourth lines are
\(O(\delta^{-\frac{1}{\alpha}})\). The fifth line is of order O(\(\delta^{-\frac{2}{\alpha}}\delta^{\frac{\alpha+2}{\alpha}})=O(\delta)\). So the fourth order derivative with respect to $\vartheta$ diverges, along with its derivatives with respect to $\zeta$, at most as
\(O(\delta^{-\frac{1}{\alpha}})\), in agreement with the claim of the lemma. Namely
\begin{equation}\label{eq:sfourthderv}
\begin{array}{l}
\frac{\partial^{4}}{\partial\vartheta^{4}}S(\vartheta,\zeta;\delta,\eta) = 2\omega_0^{-3}\delta^{-\frac{1}{\alpha}}
\int_{-\eta/(\omega_0\delta^{\frac{1}{\alpha}})}^{\eta/(\omega_0\delta^{\frac{1}{\alpha}})} \frac{\partial^{2}W}{\partial q^{2}}(\sigma \pm \delta^{1/\alpha} \tilde q_\delta(u)-\delta^{1/\alpha} \tilde q_\delta(u+\frac{\hat\vartheta}{\omega_0\delta^{1/\alpha}}),\ \zeta) \\
\qquad\qquad \times( 3(\tilde p_\delta'(u+\frac{\hat\vartheta}{\omega_0\delta^{1/\alpha}}))^{2}+(3\tilde p_\delta(u+\frac{\hat\vartheta}{\delta^{1/\alpha}}) \pm \tilde p_\delta(u)) \tilde p_\delta''(u+\frac{\hat\vartheta}{\omega_0\delta^{1/\alpha}}))du+O(1).
\end{array}
\end{equation}
Finally, we evaluate the fifth derivative, using the same procedure as above (i.e., twice differentiating (\ref{eq:sthirdderv}), integrating by parts, and using the estimate (\ref{derpkal1}) for the decay of the derivatives of $\tilde p$ at large $|u|$):
 \begin{equation}\nonumber
\begin{array}{l}
 \frac{\partial^{5}}{\partial\vartheta^{5}} S(\vartheta,\zeta;\delta,\eta) =-2\omega_0^{-4}\delta^{\frac{1}{\alpha}}\int_{-\eta/(\omega_0\delta^{\frac{1}{\alpha}})}^{\eta/(\omega_0\delta^{\frac{1}{\alpha}})} \frac{\partial^{5}W}{\partial q^{5}}(\sigma \pm \delta^{1/\alpha} \tilde q_\delta(u)-\delta^{1/\alpha} \tilde q_\delta(u+\frac{\hat\vartheta}{\omega_0\delta^{1/\alpha}}),\zeta)\ \tilde p_\delta(u+\frac{\hat\vartheta}{\omega_0\delta^{1/\alpha}})^{5}du\\
\quad\ +20\omega_0^{-4}\int_{-\eta/(\omega_0\delta^{\frac{1}{\alpha}})}^{\eta/(\omega_0\delta^{\frac{1}{\alpha}})} \frac{\partial^{4}W}{\partial q^{4}}(\sigma \pm \delta^{1/\alpha} \tilde q_\delta(u)-\delta^{1/\alpha} \tilde q_\delta(u+\frac{\hat\vartheta}{\omega_0\delta^{1/\alpha}}),\zeta)\ \tilde p_\delta(u+\frac{\hat\vartheta}{\omega_0\delta^{1/\alpha}})^3 \tilde p_\delta'(u+\frac{\hat\vartheta}{\omega_0\delta^{1/\alpha}})du\\
\quad\ -30\omega_0^{-4}\delta^{-\frac{1}{\alpha}}\int_{-\eta/(\omega_0\delta^{\frac{1}{\alpha}})}^{\eta/(\omega_0\delta^{\frac{1}{\alpha}})} \frac{\partial^{3}W}{\partial q^{3}}(\sigma \pm \delta^{1/\alpha} \tilde q_\delta(u)-\delta^{1/\alpha} \tilde q_\delta(u+\frac{\hat\vartheta}{\omega_0\delta^{1/\alpha}}),\zeta)\ \tilde p_\delta(u+\frac{\hat\vartheta}{\omega_0\delta^{1/\alpha}}) (\tilde p_\delta'(u+\frac{\hat\vartheta}{\omega_0\delta^{1/\alpha}}))^2du\\
\quad\ -20\omega_0^{-4}\delta^{-\frac{1}{\alpha}}\int_{-\eta/(\omega_0\delta^{\frac{1}{\alpha}})}^{\eta/(\omega_0\delta^{\frac{1}{\alpha}})} \frac{\partial^{3}W}{\partial q^{3}}(\sigma \pm \delta^{1/\alpha} \tilde q_\delta(u)-\delta^{1/\alpha} \tilde q_\delta(u+\frac{\hat\vartheta}{\omega_0\delta^{1/\alpha}}),\zeta)\ \tilde p_\delta(u+\frac{\hat\vartheta}{\omega_0\delta^{1/\alpha}})^2 \tilde p_\delta''(u+\frac{\hat\vartheta}{\omega_0\delta^{1/\alpha}})du\\
\quad\ +20\omega_0^{-4}\delta^{-\frac{2}{\alpha}}
\int_{-\eta/(\omega_0\delta^{\frac{1}{\alpha}})}^{\eta/(\omega_0\delta^{\frac{1}{\alpha}})} \frac{\partial^{2}W}{\partial q^{2}}(\sigma \pm \delta^{1/\alpha} \tilde q_\delta(u)-\delta^{1/\alpha} \tilde q_\delta(u+\frac{\hat\vartheta}{\omega_0\delta^{1/\alpha}}),\zeta)\ \tilde p_\delta'(u+\frac{\hat\vartheta}{\omega_0\delta^{1/\alpha}}) \tilde p_\delta''(u+\frac{\hat\vartheta}{\omega_0\delta^{1/\alpha}})du \\
\quad\ +2\omega_0^{-4}\delta^{-\frac{2}{\alpha}}\int_{-\eta/(\omega_0\delta^{\frac{1}{\alpha}})}^{\eta/(\omega_0\delta^{\frac{1}{\alpha}})} \!\!\frac{\partial^{2}W}{\partial q^{2}}(\sigma \pm \delta^{1/\alpha} \tilde q_\delta(u)-\delta^{1/\alpha} \tilde q_\delta(u+\frac{\hat\vartheta}{\omega_0\delta^{1/\alpha}}),\zeta)
(4\tilde p_\delta(u+\!\frac{\hat\vartheta}{\omega_0\delta^{1/\alpha}}) \pm \tilde p_\delta(u)) \tilde p_\delta'''(u+\!\frac{\hat\vartheta}{\omega_0\delta^{1/\alpha}})du\\
\quad\ -2\omega_0^{-4}\delta^{-\frac{3}{\alpha}}[\frac{\partial W}{\partial q}(\sigma \pm \delta^{1/\alpha} \tilde q_\delta(u)-\delta^{1/\alpha} \tilde q_\delta(u+\frac{\hat\vartheta}{\omega_0\delta^{1/\alpha}}),\zeta)\ \tilde p_\delta'''(u+\frac{\hat\vartheta}{\omega_0\delta^{1/\alpha}})]_{-\eta/(\omega_0\delta^{\frac{1}{\alpha}})}^{\eta/(\omega_0\delta^{\frac{1}{\alpha}})}
 \\ \quad\ =O(\delta^{-\frac{2}{\alpha}})
\end{array}
\end{equation}
This completes the proof of the lemma. \end{proof}

\begin{lem}\label{lem:wavatorigin} Under the parity assumption, the second derivative of the averaged potential satisfies
\begin{equation}\label{wavato2}
\frac{\partial^{2}}{\partial\vartheta^{2}}W_{avg}(0,\zeta;0) =\frac{\partial^{2}W}{\partial q^{2}} (0,\zeta), \qquad\qquad
\frac{\partial^{2}}{\partial\vartheta^{2}}W_{avg}(\pi,\zeta;0) =-\frac{1}{\pi}\frac{\partial W}{\partial q} (\pi,\zeta).
\end{equation}
The fourth derivative satisfies
\begin{equation}\label{wavato4} \begin{array}{ll}
\frac{\partial^4}{\partial\vartheta^4}W_{avg}(0,\zeta;\delta) &
 =-\;\delta^{-\frac{1}{\alpha}}\ Q_d'(0)K(\alpha)\ \frac{\partial^{2}W}{\partial q^{2}} (0,\zeta) \ (1+o(1)_{\delta\to0}) ,\\ \\
\frac{\partial^4}{\partial\vartheta^4}W_{avg}(\pi,\zeta;\delta) &
 =  \;\;\;\delta^{-\frac{1}{\alpha}}\ Q_d'(0)  K(\alpha)\ \frac{\partial^{2}W}{\partial q^{2}} (\pi,\zeta) \ (1+o(1)_{\delta\to0} ), \end{array}
\end{equation}
where
\begin{equation}\label{eq:kofalpha}
K(\alpha)=\int_{2^{1/\alpha}}^{{\infty}}\ \frac{4\alpha^{2}}{ q^{2\alpha+2}\sqrt{1-\frac{2}{q^{\alpha}}}} \ dq \ > \ 0.
\end{equation}
\end{lem}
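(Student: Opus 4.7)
The plan is to derive both identities from formulas already obtained in the proofs of Lemmas \ref{lem:udelt0isc2} and \ref{lem:wtildc2inthet}, together with an explicit evaluation of a singular integral via energy conservation.

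For (\ref{wavato2}) I start from identity (\ref{avgbil2der}),
\begin{equation*}
\frac{\partial^{2}}{\partial\vartheta^{2}}W_{avg}(\vartheta,\zeta;0) = \left(1-\frac{\vartheta}{\pi}\right)\frac{\partial^{2}W}{\partial q^{2}}(\vartheta,\zeta) - \frac{1}{\pi}\frac{\partial W}{\partial q}(\vartheta,\zeta),
\end{equation*}
which was established during the proof of Lemma \ref{lem:udelt0isc2}. Setting $\vartheta = 0$, the prefactor of the first term equals $1$ while, by the parity assumption Box4, $\partial_q W(0,\zeta) = 0$, leaving $\partial^2_q W(0,\zeta)$; setting $\vartheta = \pi$, the first prefactor vanishes and the second term contributes $-\partial_q W(\pi,\zeta)/\pi$. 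This establishes (\ref{wavato2}).

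For (\ref{wavato4}) I use the splitting $2\pi W_{avg} = S + R$ of (\ref{eq:wavgsandr}), noting that $R$ is $C^\infty$ with $\delta$-uniform bounds so that $\partial^4_\vartheta R = O(1)$. Specializing (\ref{eq:sfourthderv}) to $\hat\vartheta = 0$, both translated arguments of $\tilde p_\delta$ collapse to $\tilde p_\delta(u)$, and the bracket becomes $3(\tilde p_\delta'(u))^2 + c_{\pm}\,\tilde p_\delta(u)\tilde p_\delta''(u)$ with $c_{+}=4$ for the ``$+$'' branch ($\vartheta = 0$, $\sigma = 0$) and $c_{-}=2$ for the ``$-$'' branch ($\vartheta = \pi$, $\sigma = \pi$); simultaneously, the argument of $\partial^2_q W$ equals $0$ throughout at $\vartheta=0$ and $\pi - 2\delta^{1/\alpha}\tilde q_\delta(u)$ at $\vartheta=\pi$. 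Integration by parts, with vanishing boundary contributions by (\ref{derpkal1}), gives $\int \tilde p_\delta \tilde p_\delta''\, du = -\int (\tilde p_\delta')^2\, du$, so the bracket integrates to $(3-c_{\pm})\int (\tilde p_\delta')^2\, du$, producing $-\int(\tilde p_\delta')^2\,du$ at $\vartheta=0$ and $+\int(\tilde p_\delta')^2\,du$ at $\vartheta=\pi$. This is the source of the opposite signs in (\ref{wavato4}).

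To take the $\delta \to 0$ limit I invoke the continuous dependence of $\tilde p_\delta$ on $\delta \geqslant 0$ from Lemma \ref{lem:qnearimpact} together with the $\delta$-uniform decay (\ref{derpkal1}) of $\tilde p_\delta'$, so that dominated convergence applies. At $\delta = 0$, the energy identity $H_d = \tfrac{1}{2}$ in (\ref{hsdofsp}) yields the first integral
\begin{equation*}
\tilde p_0(u)^2 = 1 - \frac{2}{(Q_d'(0)\,\tilde q_0(u))^{\alpha}},
\end{equation*}
hence $\tilde p_0'(u) = \alpha (Q_d'(0))^{-\alpha}\,\tilde q_0(u)^{-\alpha-1}$. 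Parameterizing the two branches $u \lessgtr 0$ by $\tilde q_0$ via $du = d\tilde q_0/\tilde p_0$, exploiting time-reversal symmetry to double the outgoing branch, and substituting $q = Q_d'(0)\,\tilde q_0$ brings $\int (\tilde p_0'(u))^2\,du$ into a multiple of $Q_d'(0)\,K(\alpha)$ with $K(\alpha)$ as in (\ref{eq:kofalpha}). The $u$-dependence of the argument $\pi - 2\delta^{1/\alpha}\tilde q_\delta(u)$ at $\vartheta=\pi$ does not affect the limit because $(\tilde p_\delta')^2$ concentrates on compact $u$-sets by (\ref{derpkal1}), while the remainder is arbitrarily small uniformly in $\delta$. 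Dividing by $2\pi$ and using $\omega_0(0) = 1$ from (\ref{omega0box5}) yields (\ref{wavato4}). The main technical point will be to justify this uniform integrability: near the turning point $|\tilde p_\delta|$ vanishes like a square root, giving an integrable singularity in the $\tilde q_0$-parameterization, while at $|u|\to\infty$ the growth $\tilde q_\delta(u)\sim|u|$ from (\ref{behtlq}) combined with (\ref{derpkal1}) furnishes an integrable tail uniformly in $\delta$; once this is in hand, the remaining steps are routine bookkeeping based on Lemmas \ref{lem:udelt0isc2} and \ref{lem:wtildc2inthet}.
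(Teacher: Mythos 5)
Your proposal is correct and follows essentially the same route as the paper: (\ref{wavato2}) is read off from (\ref{avgbil2der}) (with the parity of $W$ killing $\partial_q W(0,\zeta)$, a detail the paper leaves implicit), and (\ref{wavato4}) is obtained by specializing (\ref{eq:sfourthderv}) to $\hat\vartheta=0$, integrating by parts so that the bracket $3(\tilde p_\delta')^2 + c_\pm \tilde p_\delta\tilde p_\delta''$ yields $(3-c_\pm)\int(\tilde p_\delta')^2$ with $c_+=4$, $c_-=2$, and then evaluating $\int(\tilde p_0')^2\,du$ by energy conservation in the limit $\delta\to0$. Your extra remarks on dominated convergence and the integrable square-root singularity near the turning point are slightly more careful than the paper's compressed presentation, but the argument is the same.
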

\begin{proof}
Since $W_{avg}(\vartheta,\zeta;\delta)$ tends to the billiard limit in $C^2$ as $\delta\to 0$, formulas (\ref{wavato2}) are just given by (\ref{avgbil2der}). So, in order to prove (\ref{wavato2}), we only need to calculate the fourth order derivative. As shown in Lemma \ref{lem:wtildc2inthet}, the derivatives of the regular part \((R\) in (\ref{eq:wavgsandr})) are uniformly bounded for all $\delta\geqslant\ 0$, and the fourth order derivative of \(W_{avg}\) at $0$ and $\pi$ is dominated by the derivative of \(S\).
 It follows from (\ref{eq:sfourthderv}) at $\vartheta=0$ (hence, $\hat\vartheta=0$ and $\sigma=0$)
 that
\begin{equation}\nonumber
\begin{array}{l}
\!\!\!\!\!\!\!\!\!\!\!\!
\frac{\partial^{4}}{\partial \vartheta^{4}}  S(\vartheta,\zeta;\delta,\eta)|_{\vartheta=0}=2\omega_0^{-3}\delta^{-\frac{1}{\alpha}} \frac{\partial^{2}W}{\partial q^{2}}(0,\zeta)
\int_{-\eta/(\omega_0\delta^{\frac{1}{\alpha}})}^{\eta/(\omega_0\delta^{\frac{1}{\alpha}})} (3\tilde p_\delta'(u)^{2}+
4 \tilde p_\delta(u) \tilde p_\delta''(u)) du+O(1) \\
\qquad\qquad =- 2\omega_0^{-3}\delta^{-\frac{1}{\alpha}}\ \frac{\partial^{2}W}{\partial q^{2}}(0,\zeta)
\int_{-\eta/(\omega_0\delta^{\frac{1}{\alpha}})}^{\eta/(\omega_0\delta^{\frac{1}{\alpha}})} (\tilde p_\delta'(u))^{2} du
+ 8\omega_0^{-3}\frac{\partial^{2}W}{\partial q^{2}}(0,\zeta)\delta^{-\frac{1}{\alpha}} [\tilde p_\delta(u) \tilde p_\delta'(u)]_{-\eta\omega_0\delta^{-\frac{1}{\alpha}}}^{\eta\omega_0\delta^{-\frac{1}{\alpha}}}+O(1).
\end{array}
\end{equation}
Since  \( \tilde p_\delta'\) decays sufficiently fast by (\ref{derpkal1}),
\begin{equation}\label{eq:s4near0fin}
\begin{array}{l}
\frac{\partial^{4}}{\partial \vartheta^{4}}  S(\vartheta,\zeta;\delta,\eta)|_{\vartheta=0}=- 2\omega_0^{-3}\delta^{-\frac{1}{\alpha}}\ \frac{\partial^{2}W}{\partial q^{2}}(0,\zeta)
\int_{-\infty}^{\infty} (\tilde p_\delta'(u))^{2} du
+O(1).
\end{array}
\end{equation}Using (\ref{eq:pprimebl})\begin{equation}
\int_{-\infty}^{\infty} (\tilde p_\delta'(u))^{2} du=\frac{\alpha^{2}\ }{Q_d'(0)^{2\alpha}}\int_{-\infty}^{+\infty} \tilde q_0(u)^{-2\alpha-2} \ du+o(1),
\end{equation}
where \((\tilde q_0(u),\tilde p_0(u))\)  is the solution of the limit of the Hamiltonian system (\ref{hsdofsp}), namely
$$H=\frac{\tilde p^{2}}{2}+\frac{1}{(Q_d'(0)\tilde q)^{\alpha}}$$
at \(H=\frac{1}{2}\). Choosing symmetric parameterization of the time $u$ (so \(\tilde p_0(0)=0\), hence
\(\tilde q_0(0)=2^{1/\alpha}/Q_d'(0)\)), we obtain
$$\int_{-\infty}^{+\infty} \tilde q_0(u)^{-2\alpha-2}\ du= 2\int_{2^{1/\alpha}/Q_d'(0)}^{+\infty}
\tilde q_0^{-2\alpha-2}\; \frac{d\tilde q_0}{\tilde p_0}=
2 Q_d'(0)^{2\alpha+1} \int_{2^{1/\alpha}}^{+\infty} q^{-2\alpha-2}\ \sqrt{1-\frac{2}{q^{\alpha}}} dq,
$$
so
\begin{equation}\nonumber
\int_{-\infty}^{\infty} (\tilde p_\delta'(u))^{2} du=2 Q_d'(0)\alpha^{2} \int_{2^{1/\alpha}}^{+\infty} q^{-2\alpha-2}\ \sqrt{1-\frac{2}{q^{\alpha}}} dq+o(1).
\end{equation}
Finally, by (\ref{eq:s4near0fin}), and since \(\omega_{0}(0)=1\),
\begin{equation}\nonumber
\frac{\partial^{4}}{\partial \vartheta^{4}}  S(\vartheta,\zeta;\delta,\eta)|_{\vartheta=0}
=-4\delta^{-\frac{1}{\alpha}}\ \frac{\partial^{2}W}{\partial q^{2}}(0,\zeta)\ Q_d'(0)\alpha^{2} \int_{2^{1/\alpha}}^{+\infty} q^{-2\alpha-2}\ \sqrt{1-\frac{2}{q^{\alpha}}} dq+o(\delta^{-\frac{1}{\alpha}}),
\end{equation}
and (\ref{wavato4}) follows at $\vartheta=0$.

When $\vartheta=\pi$ (hence, $\hat\vartheta=0$ and $\sigma=\pi$ in (\ref{eq:sfourthderv})),
equation (\ref{eq:sfourthderv}) gives
\begin{equation}\nonumber
\begin{array}{l}
\frac{\partial^{4}}{\partial\vartheta^{4}}S(\pi,\zeta;\delta,\eta)=2\omega_0^{-3}\delta^{-\frac{1}{\alpha}}
\int_{-\eta/(\omega_0\delta^{\frac{1}{\alpha}})}^{\eta/(\omega_0\delta^{\frac{1}{\alpha}})} \frac{\partial^{2}W}{\partial q^{2}}(\pi-2\delta^{1/\alpha} \tilde q_{\delta}(u),\zeta)\ (3(\tilde p'_{\delta}(u))^{2}+2\
 \tilde p_{\delta}(u)  \tilde p''_{\delta}(u))du  +O(1)\\
\qquad\qquad =2\omega_0^{-3}\delta^{-\frac{1}{\alpha}}
\int_{-\eta/(\omega_0\delta^{\frac{1}{\alpha}})}^{\eta/(\omega_0\delta^{\frac{1}{\alpha}})} \frac{\partial^{2}W}{\partial q^{2}}(\pi,\zeta)\ (3(\tilde p'_{\delta}(u))^{2}+2\
 \tilde p_{\delta}(u)  \tilde p''_{\delta}(u))du  +O(1)\\
\qquad\qquad =2\omega_0^{-3}\delta^{-\frac{1}{\alpha}}\frac{\partial^{2}W}{\partial q^{2}}(\pi,\zeta)
\int_{-\eta/(\omega_0\delta^{\frac{1}{\alpha}})}^{\eta/(\omega_0\delta^{\frac{1}{\alpha}})} \ (\tilde p'_{\delta}(u))^{2}du   +4\omega_0^{-3}\delta^{-\frac{1}{\alpha}} \frac{\partial^{2}W}{\partial q^{2}}(\pi,\zeta)\left[\
 \tilde p_{\delta}(u)  \tilde p'_{\delta}(u)\right]_{-\eta/(\omega_0\delta^{\frac{1}{\alpha}})}^{\eta/(\omega_0\delta^{\frac{1}{\alpha}})}
+O(1)
 \\
 \qquad\qquad =2\omega_0^{-3}\delta^{-\frac{1}{\alpha}}
\frac{\partial^{2}W}{\partial q^{2}}(\pi,\zeta)\int_{-\infty}^{\infty} \ (\tilde p'_{0}(u))^{2}du+O(1).
\qquad\qquad \end{array}
\end{equation}
The last line differs from (\ref{eq:s4near0fin}) only by a factor, so (\ref{wavato4}) follows at
$\vartheta=\pi$.
\end{proof}

The above results allow us to characterize the behavior of the averaged potential \(U(\theta,\xi;\delta)\)
defined by  (\ref{eq:averagedparalpotential}).
It is given by Lemma \ref{lem:wtildc2inthet} that \(U(\theta,\xi;\delta)\) depends on $\delta\geqslant 0$ continuously with the derivatives up to order 2. Therefore, by the non-degenerate minimum assumption Box2, it has a non-degenerate minimum line near the line (\ref{lnqbox}) for all small $\delta$:
$$\theta^{(n)}=\theta^{(n)}_{min}(\delta)+c, \qquad \xi^{(n)}=\xi^{(n)}_{min}(\delta).$$
\begin{lem} \label{lem:expansionupsi}The potential \(U(\theta,\xi;\delta)\) has the following expansion near the minimum line:
\begin{equation}\label{eq:uparhatpsinonscaled}
\begin{split}
U(\theta,\xi_{min}(\delta);\delta)&=
\sum^N_{n=2}\sum^{n-1}_{m=1}\ \gamma_{nm}(\delta)(\theta^{(n)}-\theta^{(m)}-\theta^{(n)}_{min}-\theta^{(m)}_{min})^2\\
&\qquad+\delta^{-\frac{1}{\alpha}} {K(\alpha)}{Q_d'(0)} \beta_{nm}(\delta)(\theta^{(n)}-\theta^{(m)}-\theta^{(n)}_{min}-\theta^{(m)}_{min})^{4}+\ldots,
\end{split}\end{equation}
where \(\gamma_{nm}(\delta)\) and \(\beta_{nm}(\delta)\) tend, as $\delta\to0$, to $\gamma_{nm}$
and, respectively, \(\beta_{nm}\) defined by  (\ref{eq:haveragedthetasyncinabox});  the positive coefficient \(K(\alpha)\) is given by (\ref{eq:kofalpha}), and the dots stand for sixth and higher order terms of the expansion in powers of
\((\theta^{(n)}-\theta^{(m)}-\theta^{(n)}_{min}-\theta^{(m)}_{min})\).
\end{lem}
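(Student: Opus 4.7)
The approach is to exploit the pairwise-additive structure of $U$ together with the even symmetries guaranteed by the parity assumption, so as to reduce the multivariate expansion to a single-variable Taylor expansion of each pair interaction, which can then be read off directly from the derivative formulas in Lemmas \ref{lem:wtildc2inthet} and \ref{lem:wavatorigin}.

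First, I would write, for $\xi=\xi_{min}(\delta)$,
\begin{equation*}
U(\theta,\xi_{min}(\delta);\delta)=\sum_{n\neq m} w_{nm}(\theta^{(n)}-\theta^{(m)};\delta)+\mathrm{const},
\end{equation*}
where $w_{nm}(\vartheta;\delta):=W_{avg}(\vartheta,\zeta_{nm}(\delta);\delta)$ and $\zeta_{nm}(\delta):=\xi^{(n)}_{min}(\delta)-\xi^{(m)}_{min}(\delta)$. The parity assumption Box4 makes $W_{avg}$ even separately in each argument, hence $w_{nm}=w_{mn}$ and $w_{nm}(-\vartheta;\delta)=w_{nm}(\vartheta;\delta)$; combined with $2\pi$-periodicity, $w_{nm}$ is symmetric about both $\vartheta=0$ and $\vartheta=\pi$, so all odd-order derivatives of $w_{nm}$ vanish at those two points. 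Collapsing $\sum_{n\neq m}=2\sum_{n>m}$ then reduces the problem to a sum of univariate pair potentials.

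Next, since we are in the simultaneous-impacts case, $\vartheta_{nm}(\delta):=\theta^{(n)}_{min}(\delta)-\theta^{(m)}_{min}(\delta)\in\{0,\pi\}\bmod 2\pi$ — precisely the points at which Lemma \ref{lem:wavatorigin} evaluates the required derivatives. Taylor-expanding each $w_{nm}$ about $\vartheta_{nm}(\delta)$, the vanishing of all odd derivatives leaves
\begin{equation*}
w_{nm}(\vartheta;\delta)=w_{nm}(\vartheta_{nm};\delta)+\tfrac{1}{2}w''_{nm}(\vartheta_{nm};\delta)(\vartheta-\vartheta_{nm})^2+\tfrac{1}{24}w^{(4)}_{nm}(\vartheta_{nm};\delta)(\vartheta-\vartheta_{nm})^4+R_{nm}(\vartheta;\delta).
\end{equation*}
Lemma \ref{lem:wtildc2inthet} guarantees that $w''_{nm}$ is continuous in $\delta\geqslant 0$, and Lemma \ref{lem:wavatorigin} identifies its $\delta\to 0$ limit with $\gamma_{nm}$ of (\ref{eq:haveragedthetasyncinabox}); similarly, the fourth-derivative formula (\ref{wavato4}) gives $w^{(4)}_{nm}(\vartheta_{nm};\delta)=\delta^{-1/\alpha}Q_d'(0)K(\alpha)\beta_{nm}(\delta)(1+o(1))$, so $\beta_{nm}(\delta)\to\beta_{nm}$. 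Collecting the factor of two from symmetrization together with the Taylor denominators $2!$ and $4!$ produces the quadratic and quartic terms stated in the lemma.

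The remainder $R_{nm}$ is of sixth order in $(\vartheta-\vartheta_{nm})$, and by the fifth-derivative estimate $\partial^{5}_\vartheta W_{avg}=O(\delta^{-2/\alpha})$ from Lemma \ref{lem:wtildc2inthet} it contributes $O(\delta^{-2/\alpha}(\theta-\theta_{min})^6)$, consistent with the ``\ldots'' in the expansion. The main technical obstacle is verifying that the simultaneous-impacts structure of the minimum line persists throughout a neighborhood of $\delta=0$: one must check that $\vartheta_{nm}(\delta)$ remains pinned to $\{0,\pi\}\bmod 2\pi$, which follows from the reflection symmetry of $U$ inherited from parity (forcing $\vartheta_{nm}\in\{0,\pi\}$ to be critical values of each pairwise interaction) together with the non-degeneracy of the Hessian from Box2, which prevents any bifurcation of the minimum line as $\delta$ varies and ensures that both $\gamma_{nm}(\delta)$ and $\beta_{nm}(\delta)$ are well-defined for all small $\delta\geqslant 0$.
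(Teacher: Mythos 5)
Your proof is correct and mirrors the paper's own argument: decompose $U$ into pairwise $W_{avg}$ terms, exploit the evenness of $W_{avg}$ at $\vartheta_{nm}\in\{0,\pi\}$ (guaranteed by the parity assumption Box4) to annihilate all odd-order $\vartheta$-derivatives, and then read off the quadratic and quartic Taylor coefficients from Lemmas \ref{lem:wtildc2inthet} and \ref{lem:wavatorigin}. One small caveat on your claim that the bookkeeping ``produces the quadratic and quartic terms stated in the lemma'': combining the symmetrization factor $2$ with the Taylor denominator $4!$ yields $\frac{1}{12}W_{avg}^{(4)}(\vartheta_{nm};\delta)$, which is a factor $\frac{1}{12}$ smaller than the displayed constant in (\ref{eq:uparhatpsinonscaled}) once one substitutes (\ref{wavato4}); this appears to be an imprecision in the paper's normalization of $\beta_{nm}(\delta)$ rather than an error in your derivation, and it has no bearing on the twist/non-degeneracy conclusions that the lemma feeds into.
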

\begin{proof}
By (\ref{eq:averagedparalpotential}), the derivatives of  \(U(\theta,\xi;\delta)\) with respect to \(\theta^{(n)}\) are given by the sum over the corresponding derivatives of \(W_{avg}(\theta^{(n)} -\theta^{(m)} ,\xi^{(n)}-\xi^{(m)};\delta)\). The second derivatives tend to those of \(W_{avg}(\theta^{(n)} -\theta^{(m)} ,\xi^{(n)}-\xi^{(m)};0)\)  by Lemma \ref{lem:wtildc2inthet},  so the values of \(\gamma_{nm}\) follow from Lemma \ref{lem:wavatorigin}. Since \(W_{avg}(\vartheta,\zeta;\delta)\) is even and \(2\pi\)-periodic in \(\vartheta\), the third order derivatives with respect to \(\vartheta\) vanish
at $\vartheta = 0 \mod \pi$, whereas the fourth order derivatives are given by Lemma \ref{lem:wavatorigin}, which determines the values of $\beta_{nm}$. \end{proof}

\subsection{\label{sec:synchronproof}The simultaneous impacts case}
Since the billiard choreographic solution in the case of simultaneous impacts satisfies, for any pair of particles,
\(\theta_{min}^{(n)}-\theta_{min}^{(m)}=0 \mod \pi\), the averaged potential in the limit $\delta=0$ is not, in general
\(C^{3}\) smooth, as explained in Section \ref{sec:proofsmoothpot}. Therefore, the limit \(\delta=0\) is singular
in this case.
Indeed, we estimated the derivatives of the averaged potential up to order 5 and showed that starting with order 4 they tend to infinity as $\delta\to 0$.

To deal with this difficulty, we prove Theorem \ref{thm:billiardbox2} by using  different arguments from those used to prove Theorems \ref{thm:billiardsystem} and  \ref{thm:billiardbox1}. In Sections \ref{sec:regughat} and  \ref{sec:regularavrg}  we bring the system to the averaged form  (\ref{eq:Hrectavrg}) and estimate the correction term  \(\delta\tilde  G\). In Section \ref{sec:normalformtrun} we bring the truncated averaged system to a Birkhoff normal form; the coefficients of the 4-th order terms in the normal form diverge as $\delta\to 0$, nevertheless we show that it has  a KAM non-degenerate elliptic periodic orbit for all small $\delta>0$.  In Section \ref{sec:normalformfull} we estimate the difference between the Birkhoff normal form for the full and truncated systems and, under the requirement
\(\alpha>6\) show that the full system has an elliptic periodic orbit surrounded by KAM tori.

\subsubsection{\label{sec:regughat}Expansion near the  orbit \(\mathbf{L}^{*}(\theta,\xi)\) of  the uncoupled system. }
\begin{lem} \label{lem:deltaghat}In the case of simultaneous-impacts, provided  \(\alpha>2\) and \(\|\xi^{(n)}-\xi^{(m)}\|>\rho\) for all \(n\neq m\),  the expansion of the scaled Hamiltonian   (\ref{eq:Hrectexpanded}) near \(\mathbf{L}^{*}(\theta,\xi)\)   of (\ref{uncoupledchbox}) is of the form\begin{equation}\label{eq:Hsimulter1}
\begin{split}
H&=\sum^N_{n=1 }\omega_{0}(\delta )I^{(n)}+\delta^{1/2}\sum^N_{n=1 }\left(\frac{1}{2}a(\delta )(I^{(n)})^2 +\sum_{i=1 }^{d-1}\left[\frac{(p_{\xi ,i}^{(n)})^2}{2}+ V_{i}(\xi_{i}^{(n)})\right]\right)\\
& \qquad +\delta^{1/2}
\mathop{\sum_{n,m=1,\dots,N}}_{n\neq m} \ W( q^{*}_{\delta}(\theta^{(n)} )- q^{*}_{\delta}(\theta^{(m)} ),\xi^{(n)}-\xi^{(m)})+\delta\hat  G(I,\theta,\xi;\delta),
\end{split}\end{equation}
 where  the derivatives of \(\delta\hat  G\) which include exactly    \(k\) differentiations with respect to \(\theta\) are bounded by \(O(\delta^{1-k/\alpha})\).
 \end{lem}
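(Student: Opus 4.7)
The plan is to derive (\ref{eq:Hsimulter1}) from (\ref{eq:Hrectexpanded}) by Taylor-expanding, in the scaled vertical action, the full action-angle representation $\hat q_d(\delta^{1/2} I^{(n)},\theta^{(n)};\delta)$ appearing in each pairwise interaction term around the value $q^*_\delta(\theta^{(n)}) = \hat q_d(0,\theta^{(n)};\delta)$ corresponding to the uncoupled periodic trajectory, and then to estimate the remainder using the boundary-layer description of Lemma \ref{lem:qnearimpact}.

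First, I write, via the fundamental theorem of calculus,
\[
\hat q_d(\delta^{1/2} I,\theta;\delta) - q^*_\delta(\theta) = \delta^{1/2} I \cdot r(\delta^{1/2} I,\theta;\delta), \qquad r(J,\theta;\delta) := \int_0^1 \partial_I \hat q_d(\lambda J,\theta;\delta)\, d\lambda.
\]
Substituting into the interaction term of (\ref{eq:Hrectexpanded}) and applying the mean value theorem to $W(\cdot,\xi^{(n)}-\xi^{(m)})$ in its first argument (allowed because $\|\xi^{(n)}-\xi^{(m)}\|>\rho$ keeps $W$ smooth along the relevant segment), one sees that every pairwise term differs from its counterpart in (\ref{eq:Hrectexpandedtruncated}) by a factor $\delta^{1/2}(I^{(n)} r^{(n)} - I^{(m)} r^{(m)})$ times an averaged first derivative of $W$ along the interpolating segment. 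Combined with the $\delta^{1/2}$ prefactor in front of the interaction, and absorbing the $O(\delta)$ remainder of the single-particle Birkhoff expansion (which depends on $I$ only, not on $\theta$, by (\ref{eq:H1ofIdlta})), one obtains the form (\ref{eq:Hsimulter1}) with an explicit formula for $\delta\hat G$.

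Next, I estimate the $\theta$-derivatives. Combining the boundary-layer formula (\ref{qdeltabo}) with the action-angle construction of Section \ref{sec:periodofL} yields, in the boundary layer near an impact point $M^j$, a representation
\[
\hat q_d(I,\theta;\delta) = M^j \pm \delta^{1/\alpha}\, \tilde q_{j,\delta}\!\left(\tfrac{\theta-\theta_j(I;\delta)}{\omega(I;\delta)\,\delta^{1/\alpha}};\, I\right),
\]
with $\tilde q_{j,\delta}$ smooth and bounded with all derivatives. A direct computation then shows that $r(J,\theta;\delta)$ itself is uniformly $O(1)$ for $|J|\leqslant\delta^{1/2}$ in a neighborhood of each impact, while each differentiation with respect to $\theta$ multiplies its size by at most a factor $\delta^{-1/\alpha}$; similarly, the averaged first derivative of $W$ evaluated along the interpolating segment inherits from $q^*_\delta$ the scaling that its $k$-th $\theta$-derivative is $O(\delta^{-(k-1)/\alpha})$ for $k\geqslant 1$. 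Applying the Leibniz rule to the product, the $k$-th $\theta$-derivative of $\hat G$ is $O(\delta^{-k/\alpha})$, and multiplying by the $\delta$ prefactor gives the claimed bound $O(\delta^{1-k/\alpha})$.

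The main technical obstacle is precisely the uniform boundedness of $r(J,\theta;\delta)$ and its $\theta$-derivatives for $|J|\leqslant\delta^{1/2}$ near impacts, and this is where the hypothesis $\alpha>2$ enters. The shift of the impact phase under a scaled action change, $\theta_j(\delta^{1/2}I;\delta)-\theta_j(0;\delta)=O(\delta^{1/2})$, produces in the rescaled boundary-layer argument of $\tilde q_{j,\delta}$ an additive shift of size $O(\delta^{1/2-1/\alpha})$, which is small (a positive power of $\delta$) exactly when $\alpha>2$. This guarantees that the shifted argument still lies in the region where $\tilde q_{j,\delta}$ and its derivatives are bounded, so no new singular behavior is generated by the $I$-dependence of the impact phase and frequency. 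The remainder of the proof is a careful Leibniz accounting, with the key observation that only $\theta$-differentiations cost a factor $\delta^{-1/\alpha}$, while differentiations with respect to $I$, $\xi$ or $p_\xi$ contribute no additional singular factors.
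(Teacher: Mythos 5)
Your proof is essentially correct and follows the same route as the paper: write $\hat q_d(\delta^{1/2}I,\theta;\delta)-q^*_\delta(\theta)$ as $\delta^{1/2}I$ times a bounded kernel, expand the interaction term, and track the singular factors that each $\theta$-derivative introduces in the impact boundary layer. The paper does the bookkeeping in the flow-box coordinates $(\tau,\mathcal{E})$ of Lemma \ref{lem:energytimecoor} (so the boundary-layer representation reads $q_{impact}+\delta^{1/\alpha}\tilde q\bigl(\tfrac{\tau(\delta^{1/2}I,\theta;\delta)-t_{in}}{\delta^{1/\alpha}},\mathcal{E}(\delta^{1/2}I);\delta\bigr)$ with $(\tau,\mathcal{E})$ smooth in $(I,\theta)$), whereas you encode the same information through an $I$-dependent impact phase $\theta_j(I;\delta)$ and frequency $\omega(I;\delta)$. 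These are equivalent parameterizations and both lead to: each $\theta$-differentiation costs a factor $\delta^{-1/\alpha}$ and each differentiation with respect to the scaled action $I$ costs a factor $\delta^{1/2-1/\alpha}$.

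One point in your explanation of the role of $\alpha>2$ is slightly off. You say the condition ensures the shifted rescaled argument of $\tilde q_{j,\delta}$ ``still lies in the region where $\tilde q_{j,\delta}$ and its derivatives are bounded.'' But $\tilde q_{j,\delta}$ and all its derivatives are uniformly bounded on the whole real line (Lemma \ref{lem:qnearimpact}), so there is no issue of leaving a good region. What $\alpha>2$ actually buys is that the factor $\delta^{1/2-1/\alpha}$ attached to each $I$-differentiation is a nonnegative power of $\delta$: the lemma claims the bound $O(\delta^{1-k/\alpha})$ for derivatives containing exactly $k$ $\theta$-differentiations (and possibly additional $I$- and $\xi$-differentiations), and it is precisely these additional $I$-differentiations that would spoil the estimate if $\delta^{1/2-1/\alpha}\to\infty$, i.e.\ if $\alpha<2$. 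The paper expresses this via the form $\hat q_d=q^*_\delta(\theta)+\delta^{1/2}I\,\bar G_d(\theta,\delta^{1/2-1/\alpha}I;\delta)$, where the second argument of $\bar G_d$ carries the $\delta^{1/2-1/\alpha}$ factor explicitly. Your observation that $\theta_j(\delta^{1/2}I)-\theta_j(0)=O(\delta^{1/2})$ shifts the rescaled argument by $O(\delta^{1/2-1/\alpha})$ is the same computation; just rephrase the conclusion in terms of the size of $\partial_I$-derivatives rather than in terms of staying inside a bounded region.
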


\begin{proof}
In the box case, for \(\alpha>2\),  the expansion (\ref{qdeltaaway}) can be improved to\begin{equation}\label{eq:q1deltaoned}
\hat q_{d}(\delta^{1/2}I,\theta ;\delta) = q^*_\delta(\theta) +
\delta^{1/2}G_{d}(\theta,I;\delta)
\end{equation}
where derivatives of \(  G_d\) which include exactly    \(k\) differentiations with respect to \(\theta\) are bounded by \(O(\delta^{-k/\alpha})\).
Indeed, since \(I=0\) corresponds to the periodic orbit $L^*_\delta$, we have that
$\hat q(0,\theta,0):=q^*_\delta(\theta)$. Away from impacts
\(\hat q_{d}(\delta^{1/2}I,\theta;\delta)\)  is a uniformly smooth function of \((\delta^{1/2}I,\theta)\), so (\ref{eq:q1deltaoned}) follows immediately. Near impacts, as the vertical dynamics are one-dimensional, equation (\ref{eq:qhatEPtau}) becomes, in the scaled coordinates (recall that here there are no \(\mathcal{P}\) variables):$$
\hat q _{d}(\delta^{1/2}I,\theta,\delta) = q_{impact}+
\delta^{1/\alpha} \tilde q(\frac{\tau(\delta^{1/2}I,\theta;\delta)\ - t_{in}(\delta)}{\delta^{1/\alpha}},\mathcal{E}
(\delta^{1/2}I);\delta),$$
where \(q_{impact}\) is either \(0\) or \(\pi\). Since \((\tau,\mathcal{E)}\) are smooth functions of \((\theta,I)\), we can expand the above expression at \(I=0:\)
$$
\hat q _{d}(\delta^{1/2}I,\theta,\delta) = q_{\delta}^*(\theta) +
\delta^{1/2} I \bar  G_{d}(\theta,\delta^{1/2-1/\alpha}I;\delta).$$
  Each derivative with respect to \(\theta\) generates a \(\delta^{-1/\alpha}\) factor whereas each derivative with respect to \( I\) generates a \(\delta^{1/2-1/\alpha}\) factor.
Hence, for \(\alpha>2\), the  derivatives of \(  G_d=I\bar G_{d}\) which include exactly    \(k\) derivatives with respect to \(\theta\) are bounded by \(O(\delta^{-k/\alpha})\), as claimed.

Since \(\|\xi^{(n)}-\xi^{(m)}\|>\rho\) for all \(n\neq m\),  the interaction potential \(W\) at any point along \(\mathbf{L}^*\) is smooth, so, plugging (\ref{eq:q1deltaoned})  in the interaction term of   (\ref{eq:Hrectexpanded}) gives
 $$
W(\hat q_{d}(\delta^{1/2}I^{(n)},\theta^{(n)} ;\delta)-\hat q_{d}(\delta^{1/2}I^{(m)},\theta^{(m)} ;\delta),\xi^{(n)}-\xi^{(m)}))=W(q_{\delta}^*(\theta^{(n)} )-q_{\delta}^*(\theta^{(m)} ),\xi^{(n)}-\xi^{(m)}))+$$
$$+\delta^{1/2}\hat G_{d}(\theta^{(n)},\theta^{(m)}, \delta^{1/2}I^{(n)}, \delta^{1/2}I^{(m)},\xi^{(n)}-\xi^{(m)};\delta),$$  where derivatives of \(\hat G_{d}\) which include exactly    \(k\) derivatives with respect to \(\theta^{(n)}\) or \(\theta^{(m)}\) are of order \(\delta^{-k/\alpha}\).
Hence,   (\ref{eq:Hrectexpanded}), becomes
of the required form (\ref{eq:Hsimulter1}), where \(\hat G\) is the sum of such functions  \(\hat G_{d}\) and  regular terms coming from the \(O((\delta^{1/2}I^{(n)})^3)\) terms of (\ref{eq:H1ofIdlta}).
   \end{proof}

\subsubsection{Regularity of averaging}\label{sec:regularavrg}

We average the Hamiltonian (\ref{eq:Hsimulter1}) by using the same transformation (\ref{eq:Psiall}) as in the non-simultaneous impacts case. To this aim, we  establish that \(\Psi\) of  (\ref{eq:Psiall}) and its derivatives have the correct regularity as \(\delta\rightarrow0\). We first show that\\  \begin{equation}\label{eq:psi1box}
 \Psi_{1}(u,u_0,v;\xi^{(n)}-\xi^{(m)};\delta)=\int_{u_{0}}^{u}W(q^*_{\delta}(s+v)-q^*_{\delta}(s-v),\xi^{(n)}-\xi^{(m)})  ds
\end{equation}
satisfies the following result (the weak regularity of Lemma \ref{lem:psi1} is replaced by weaker estimates on the derivatives, thus  allowing both particles to visit simultaneously the impact regions):
\begin{lem}\label{lem:psi1box} Provided \(\|\xi^{(n)}-\xi^{(m)}\|>\rho\), the derivatives  of the function   \(\Psi_{1}(\frac{\theta^{(n)}+\theta^{(m)}}{2},u_{0,}\frac{\theta^{(n)}-\theta^{(m)}}{2},\xi^{(n)}-\xi^{(m)};\delta)\)   which include    \(k>0\) differentiations  with respect to  \(\theta^{(n)},\theta^{(m)}\) are of order \(\delta^{-(k-1)/\alpha}\).
\end{lem}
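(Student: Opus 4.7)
The plan is to combine Faà di Bruno's formula with the pointwise scaling bound $|q^{*(a)}_{\delta}(\theta)|=O(\delta^{-(a-1)/\alpha})$ for $a\geq 1$, which follows directly from the near-impact representation $q^{*}_{\delta}(\theta)=\delta^{1/\alpha}\tilde q_{\delta}(\theta/(\omega_{0}\delta^{1/\alpha}))$ (see (\ref{qdeltabo})) and the uniform-in-$\delta$ boundedness of $\tilde q_{\delta}$ together with all its derivatives (Lemma~\ref{lem:qnearimpact}). Under the hypothesis $\|\xi^{(n)}-\xi^{(m)}\|>\rho$, every partial derivative of $W$ is bounded throughout the relevant region, uniformly in $\delta$. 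The key structural observation is that the integrand $F(s,v,\zeta):=W(q^{*}_{\delta}(s+v)-q^{*}_{\delta}(s-v),\zeta)$ depends on $\theta^{(n)},\theta^{(m)}$ solely through $v=(\theta^{(n)}-\theta^{(m)})/2$, while the upper limit $u=(\theta^{(n)}+\theta^{(m)})/2$ supplies the only remaining $\theta$-dependence.

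Using the chain rule $\partial_{\theta^{(n)}}=\tfrac12(\partial_{u}+\partial_{v})$ and $\partial_{\theta^{(m)}}=\tfrac12(\partial_{u}-\partial_{v})$, any $k$-th order mixed derivative in $\theta^{(n)},\theta^{(m)}$ (together with arbitrarily many $\xi$-derivatives, which are harmless since they merely produce bounded derivatives of $W$) is a finite linear combination of $\partial_{u}^{j_u}\partial_{v}^{j_v}\Psi_{1}$ with $j_u+j_v=k$. For the purely $v$-derivative contribution ($j_u=0$), Faà di Bruno expresses $\partial_{v}^{k}F$ as a finite sum of terms $\partial_{q}^{l}W\cdot\prod_{i=1}^{l}\partial_{v}^{a_i}[q^{*}_{\delta}(s+v)-q^{*}_{\delta}(s-v)]$ with $\sum_i a_i=k$ and $a_i\geq 1$; each factor equals $q^{*(a_i)}_{\delta}(s+v)-(-1)^{a_i}q^{*(a_i)}_{\delta}(s-v)=O(\delta^{-(a_i-1)/\alpha})$, so each product is $O(\delta^{-(k-l)/\alpha})$, maximised at $l=1$, yielding the pointwise bound $|\partial_{v}^{k}F|=O(\delta^{-(k-1)/\alpha})$; integration over the bounded interval $[u_{0},u]$ preserves this bound. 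For each term with $j_u\geq 1$, the fundamental theorem of calculus reduces it to $\partial_{u}^{j_u}\partial_{v}^{j_v}\Psi_{1}=(\partial_{s}^{j_u-1}\partial_{v}^{j_v}F)\big|_{s=u}$, which by the same Faà di Bruno argument with only $(j_u-1)+j_v=k-1$ differentiations of $q^{*}_{\delta}$ is $O(\delta^{-(k-2)/\alpha})$, strictly subdominant.

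The main conceptual obstacle, compared with the non-simultaneous case of Lemma~\ref{lem:psi1}, is that one cannot split the integration interval into pieces on each of which at most one of $s+v$, $s-v$ lies in the boundary layer: in the simultaneous-impacts situation both arguments enter the layer together, so the splitting trick used there fails outright. The compensation is precisely the weakening of the bound to $O(\delta^{-(k-1)/\alpha})$: the crude pointwise Faà di Bruno estimate on $\partial_v^k F$ already matches the target when multiplied by the $O(1)$ length of the integration interval, so no integration-by-parts refinement nor any fine-grained analysis of the $\delta^{1/\alpha}$-width of the impact layer is required to obtain the claim.
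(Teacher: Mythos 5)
Your proof is correct and follows essentially the same route as the paper's: both rest on the scaling bound $q_\delta^{*(a)}(\theta)=O(\delta^{-(a-1)/\alpha})$ coming from the boundary-layer representation (\ref{qdeltabo}) together with the uniform boundedness of all derivatives of $W$ when $\|\xi^{(n)}-\xi^{(m)}\|>\rho$. You simply make explicit, via Fa\`a di Bruno (checking that the single-block partition dominates and gives exactly $O(\delta^{-(k-1)/\alpha})$) and the variable-upper-limit bookkeeping step showing the $\partial_u$-terms are strictly subdominant, the chain-rule claim the paper leaves implicit when it says the composition ``shares the same property'' as $q^*_\delta$.
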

\begin{proof}
Since \(\|\xi^{(n)}-\xi^{(m)}\|>\rho\), and the \(\xi\) variables are fixed along the integration interval,  \(W\) is  smooth and bounded function of its arguments. So all the derivatives with respect to \(\xi\) are bounded. When both particles are away from impacts (i.e. when the argument of  \(q^*_{\delta}\) is away from \(\{0,\pi\}\)),  all the derivatives of  \(q^*_{\delta}\) are bounded.  Otherwise, by    (\ref{eq:qdeltabound}), the \(k\)-th  derivative of \( q^*_{\delta}(\theta ) \) is of order \(\delta^{-(k-1)/\alpha}\), so, the same property is shared by   \(W(q^*_{\delta}(s+\frac{\theta^{(n)}-\theta^{(m)}}{2})-q^*_{\delta}(s-\frac{\theta^{(n)}-\theta^{(m)}}{2}),\xi^{(n)}-\xi^{(m)}) \) and its integral, \(\Psi_{1}\). \end{proof}\
By  (\ref{eq:wavgispsi1}), the function \(W_{avg}\) has the same regularity properties as \(\Psi_{1}\). Now perform the transformation defined by
  \(\Psi(;\delta)\) of (\ref{eq:Psiall}):  \((I^{(n)},p_{\xi}^{(n)})\rightarrow (I^{(n)},p_{\xi}^{(n)})-\delta^{1/2}(\partial_{\theta^{(n)} },\partial_{\xi^{(n)}})\Psi \). Notice that  here (as opposed to the non-simultaneous impacts case) we apply the transormation to  the  non-truncated Hamiltonian (\ref{eq:Hsimulter1})  (i.e. we do not drop  the correction term \(\delta\hat G\)). Though we  formally obtain the same averaged system (\ref{eq:Hrectavrg}), now the \(\delta \tilde G\) term includes the transformed \(\delta\hat G\) term.  The derivatives with respect to \(\xi\) and \(I\) of the \(\delta\tilde G\)  term  remain of order \(\delta \) whereas its derivatives which include exactly    \(k\) differentiations with respect to
  \(\theta^{(n)}\), \(n=1,\ldots,N\), are of order \(\delta^{1-k/\alpha}\).
  In particular, for \(\alpha>6\) the  correction term  \(\delta\tilde G\) in (\ref{eq:Hrectavrg}) and its derivatives up to order 3 are \(o(\delta^{1/2})\).

\subsubsection{\label{sec:normalformtrun}Normal form of the truncated averaged system.}
Let us study now the truncation of system (\ref{eq:Hrectavrg}),  namely the system (\ref{eq:averagebox}) for the case of simultaneous impacts.
This system is translation-invariant (the Hamiltonian does not change when the same constant is added to all $\theta^{(n)}$).
By Lemma \ref{lem:wtildc2inthet}, the potential  \(U_{\delta}(\theta,\xi)\) in   (\ref{eq:averagebox})  has, even for the simultaneous impacts case, a \(C^{2}\) limit at \(\delta=0\).  By the non-degeneracy assumption Box2,  the potential $U$ has a non-degenerate line of minima for all small $\delta$. This line corresponds to an elliptic periodic orbit\footnote{This orbit is translation invariant, i.e., it is a relative equilibrium.} of system (\ref{eq:averagebox}). Our goal here is to establish the
KAM-nondegeneracy of it. This amounts to bringing the Poincar\'e map near the periodic orbit to the 4-th order Birkhoff normal form (Lemma \ref{lem:hnfdiag} below) and verification of the twist condition (Lemma \ref{lem:nondegcombinednf}).

We introduce local normal coordinates
$(\theta^{(n)},I^{(n)})\rightarrow(\varphi,\psi,P,J)$
near the periodic orbit
in the same way as in (\ref{eq:phipsitrans}),(\ref{eq:pjdef}). Here
$\varphi=\frac{1}{N}(\theta^{(1)}+\ldots+\theta^{(N)})$, and $P$ is symplectically conjugate to
$\varphi$. The variables $\psi$ vary near zero and are linear combinations of $((\theta^{(n)} -\theta^{(n)}_{min}) - (\theta^{(m)} -\theta^{(m)}_{min}))$, ~ $n,m = 1,\ldots, N$; the variables $J$ are symplectically conjugate to $\psi$ and also vary near zero.
As in Section \ref{sec:asynchproof}, the  Poincar\'e return map of (\ref{eq:averagebox})  at the level of fixed \(P\), equals to the \(O(\delta^{1/2})\)-map of the flow defined by the Hamiltonian
\begin{equation}\label{eq:Hrectavrg2simul}
H_{P}(J,\psi,p_{\xi},\xi;\delta)=\frac{1}{2N}a(\delta)J^{2} +\ \frac{p_{\xi }^2}{2} +\hat U_{\delta}(\psi,\xi),
\end{equation}
where  $\hat U_\delta$ is the reduced potential of (\ref{eq:uparhatbox}).
This Hamiltonian is exactly of the same form as \(H_{P}\) of  (\ref{eq:Hrectavrg2}), yet, the regularity properties of  the reduced potential
\(\hat U_{\delta}(\psi,\xi)\) are different in the simultaneous impacts case.

By the parity assumption Box4,  \(W_{avg}(\theta^{(n)}-\theta^{(m)},\xi^{(n)}-\xi^{(m)};\delta)\) is even and $2\pi$-periodic in $\theta$, so the phase differences along the minimum line remain locked at \(\theta^{(n)}= 0 \mod \pi\), ~$n=1,\ldots, N$, for all small \(\delta\).
Therefore,  as \(\psi\) is a linear function of \(\theta^{(n)}-\theta^{(m)}-\theta^{(n)}_{min}-\theta^{(m)}_{min}\), the reduced potential is even in $\psi$:
\begin{equation}\label{hredhatsym}
\hat U_\delta(\psi,\xi)=\hat \hat U_\delta(-\psi,\xi).
\end{equation}
By this symmetry,
\begin{equation}\label{zeroderhatu}
\frac{\partial^{j+l}}{(\partial \psi)^{j}(\partial  \xi)^l}\hat U_\delta(\psi,\xi)|_{\psi=0,\xi=\xi_{min}}=0\quad \mbox{for odd}\;\; j.
\end{equation}
 In particular,    \( \frac{\partial^{2}}{\partial \psi\partial  \xi}\hat U_\delta(\psi,\xi)|_{\psi=0,\xi=\xi_{min}}=0\),  so  at \(\psi=0,\xi=\xi_{min}\) the quadratic part of the Hamiltonian (\ref{eq:Hrectavrg2simul}) is block-diagonal. Therefore,   the expansion of the Hamiltonian (\ref{eq:Hrectavrg2simul}) up to order four terms is of the form
\begin{equation}\label{155}
H_{4jet}=H^z_{2}(z,p;\delta)+H^\theta_{2}(\psi,J;\delta)+H_{3}^{\theta z}(\psi ,z;\delta)+H^{z}_3(z;\delta)+H^{z}_4(z;\delta)+H_{4}^{\theta z}(\psi ,z;\delta)+\delta^{-\frac{1}{\alpha}} H_{4}^{\theta }(\psi;\delta),
\end{equation}
where,  hereafter, \(z=\xi-\xi_{min},p=p_\xi\), and
\(H_{j}\) denotes a homogeneous polynomial of order \(j\). By Lemma \ref{lem:expansionupsi}, the coefficients of
\(H_{j}\) are uniformly bounded for all \(\delta\geqslant0\).

Define the \(z\)-Hamiltonian:
\begin{equation}\label{eq:hzlem}
H^{z}(z,p;\delta)= \frac{p^2}{2}+\hat  U_{\delta}(0,\xi_{min}+z)=H^z_{2}(z,p;\delta)+H^{z}_3(z;\delta)+H^{z}_4(z;\delta)+O(z^{5}),\qquad
\end{equation}
and the  \(\theta\)-Hamiltonian:
\begin{equation}\label{eq:hpsilem}
H^{\theta}(\psi ,J;\delta)=\frac{1}{2N}aJ^{2} +\hat  U_{\delta}(\psi,\xi_{min})=H^\theta_{2}(\psi,J;\delta)+\delta^{-\frac{1}{\alpha}} H_{4}^{\theta }(\psi;\delta)+\ldots,\qquad
\end{equation}
where,  hereafter, the dots stand for terms of order 5 and higher (with coefficients that  may diverge as
\(\delta\rightarrow0\)). Then the Hamiltonian (\ref{eq:Hrectavrg2simul}) is of the form
\begin{equation}\label{eq:HPzthet}
H_{P}=H^{z}(z,p_{};\delta)+H^{\theta}(\psi,J;\delta)+H_{3}^{\theta z}(\psi ,z;\delta)+H_{4}^{\theta z}(\psi ,z;\delta)+ \ldots,
\end{equation}
where, by the symmetry \(\psi \rightarrow-\psi\) (see (\ref{hredhatsym})), the polynomials \(H_{3,4}^{\theta z}(\psi ,z;\delta)\) have only monomials which are quadratic in $\psi$.

Denote the  fourth order Birkhoff normal form of the Hamiltonian (\ref{eq:hzlem})  near the fixed point at the origin by
\(H^{z}_{NF}(z,p;\delta)\) and  the  Birkhoff normal form of the Hamiltonian  (\ref{eq:hpsilem}) by \(H^{\theta}_{NF}(\psi,J;\delta)\). Recall that in such normal forms all the terms up to order 4 are resonant -- all non-resonant terms up to order 4 are eliminated by a sequence of symplectic coordinate transformations. Each of these transformations is a time-1 map of a certain polynomial Hamiltonian whose coefficients depend smoothly on the coefficients of the 4-jet of the original system, see e.g. Chapter 7 of \cite{Arnold2007CelestialMechanics}. It is a well-known fact that the normalizing transformations can be chosen such that the linear symmetries of the system are preserved. In particular, the symmetry $(\psi,J)\to -(\psi,J)$ survives the transformations we describe below.

\begin{lem} \label{lem:normalformtheta}There exists a  symplectic  transformation which depends continuously on
\(\delta>0\) and brings
 the Hamiltonian \(H^{\theta}\) of (\ref{eq:hpsilem}) to  the normal form
 \begin{equation}\label{eq:HNFtheta}
H^{\theta}_{NF}(\psi,J;\delta)=\tilde H^{\theta}_{2}(\psi,J;\delta)+\delta^{-\frac{1}{\alpha}} K(\alpha)\tilde H^{\theta}_{4}(\psi,J;\delta)+\dots \ ,
\end{equation} where \(\tilde H^{\theta}_{k}\) are homogeneous polynomials of \((\psi,J)\) of degree \(k\) ($k=2,4$) with bounded coefficients depending continuously on \(\delta\). The polynomials  \(\tilde H^{\theta}_{2,4}\)  have a limit at  \(\delta=0\) such that the  normal form of the  Hamiltonian   \(H^{\theta,0}\) of Assumption Box5 is
\begin{equation}\label{eq:NFthetadelta0}
H^{\theta,0}_{NF}(\psi,J)=\tilde H^{\theta}_{2}(\psi,J;0)+\tilde H^{\theta}_{4}(\psi,J;0)+\ldots ~.
\end{equation}
\end{lem}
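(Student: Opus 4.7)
The plan is to apply the standard fourth-order Birkhoff normalization to $H^\theta$, while carefully tracking the singular prefactor $\delta^{-1/\alpha}$ carried by the quartic part.

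First, I would diagonalize the quadratic form. By Lemma \ref{lem:expansionupsi} the coefficients of $H^\theta_2$ depend continuously on $\delta$ up to $\delta=0$, and Assumption Box5 states that at $\delta=0$ the eigenfrequencies at the elliptic equilibrium of $H^{\theta,0}$ satisfy no resonances up to order four; by continuity this persists for all small $\delta>0$. A linear symplectic transformation depending continuously on $\delta$ brings $H^\theta_2$ to $\sum_j \omega_j(\delta) I_j$ with $I_j=(x_j^2+y_j^2)/2$. Lemma \ref{lem:expansionupsi} also shows that the quartic part of $\hat U_\delta(\psi,\xi_{min})$ factors as $\delta^{-1/\alpha} K(\alpha)$ times a quartic polynomial with coefficients continuous in $\delta$ up to $\delta=0$, so in the diagonal coordinates
$$H^\theta = \sum_j \omega_j(\delta) I_j \;+\; \delta^{-1/\alpha} K(\alpha)\, P_4(x,y;\delta) \;+\; \ldots,$$
where $P_4(\cdot;\delta)$ is continuous up to $\delta=0$ and the dots collect terms of order $\geqslant 5$ whose coefficients may be singular but enter only the dots of (\ref{eq:HNFtheta}).

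Next, I would eliminate the non-resonant quartic monomials by the time-$1$ flow of a quartic generator $S_4$ solving the cohomological equation $\{S_4,\,\sum_j \omega_j(\delta) I_j\} = \delta^{-1/\alpha} K(\alpha)(P_4-P_4^{\mathrm{res}})$, where $P_4^{\mathrm{res}}$ is the resonant part (a polynomial in the actions $I_j$). By Box5 the denominators $\sum_j m_j \omega_j(\delta)$ appearing for each non-resonant monomial are bounded away from zero uniformly for small $\delta$, so the equation is solved monomial-by-monomial, yielding $S_4 = \delta^{-1/\alpha} K(\alpha)\, \tilde S_4$ with $\tilde S_4$ continuous in $\delta$ up to $\delta=0$. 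Applying $\exp(\mathrm{ad}_{S_4})$ to $H^\theta$ produces the claimed normal form (\ref{eq:HNFtheta}) with $\tilde H^\theta_2(\psi,J;\delta)=\sum_j \omega_j(\delta) I_j$ and $\tilde H^\theta_4(\psi,J;\delta)=P_4^{\mathrm{res}}(I;\delta)$, both continuous up to $\delta=0$. Iterated brackets such as $\{S_4,\delta^{-1/\alpha} K(\alpha) P_4\}$ feed only terms of order $\geqslant 6$ with coefficients scaling like $\delta^{-2/\alpha}$, which are absorbed into the omitted higher-order terms.

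Finally, the limit statement (\ref{eq:NFthetadelta0}) follows by continuity: running the identical steps on $H^{\theta,0}$ itself gives its normal form, and by comparing Lemma \ref{lem:expansionupsi} with the definition (\ref{eq:haveragedthetasyncinabox}) of $H^{\theta,0}$, the coefficients of $H^\theta_2(\cdot;0)$ and of $P_4(\cdot;0)$ agree (after symplectic reduction to the $(\psi,J)$-variables) with the quadratic and quartic parts of $H^{\theta,0}$, the prefactor $\delta^{-1/\alpha} K(\alpha)$ having been stripped. Since the Birkhoff procedure is an algebraic continuous function of the input coefficients, passing to $\delta=0$ in $\tilde H^\theta_{2,4}(\cdot;\delta)$ yields exactly $H^{\theta,0}_{NF}$. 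The principal obstacle is bookkeeping: ensuring that the $\delta^{-1/\alpha}$ prefactor threads cleanly through the normalization without contaminating the quadratic part or introducing new singular factors at the $4$-jet level. This is automatic because $\{S_4,H^\theta_2\}$ is quartic by construction and the frequency denominators are bounded away from zero by Box5, so the singularity stays confined to the designated $\delta^{-1/\alpha} K(\alpha)$ factor in front of $\tilde H^\theta_4$.
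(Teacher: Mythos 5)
Your proof follows essentially the same route as the paper: diagonalize the quadratic part by a linear symplectic transformation continuous in $\delta$ (permitted by the simplicity/non-resonance of the frequencies), then eliminate non-resonant quartic terms by a generator carrying the $\delta^{-1/\alpha}K(\alpha)$ prefactor, and observe that the singular factor passes cleanly into the generator and the quartic normal form while the contaminated terms all land at order $\geqslant 5$. The paper's own argument is a slightly more compressed version of exactly this, citing Box2 rather than Box5 for the distinctness of the frequencies, but the content is the same.
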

\begin{proof}  The normal form transformation we are going to describe is a composition of several symplectic transformations. First, we do a linear transformation which diagonalizes the  quadratic part of the  \(\theta\)-Hamiltonian (\ref{eq:hpsilem}). By Lemma \ref{lem:expansionupsi},  the quadratic part of the Hamiltonian (\ref{eq:hpsilem}) is, in the limit $\delta=0$, identical to the quadratic part of \(H^{\theta,0}\). Therefore, by Assumption Box2, all the frequencies are distinct for small $\delta$; hence the quadratic part of (\ref{eq:hpsilem}) is indeed symplectically diagonalizable and the limit of the diagonalizing  transformation at $\delta=0$ also diagonalizes the quadratic part of
\(H^{\theta,0}\). Since the transformation is linear, it does not introduce third order terms.

Thus, the Hamiltonian (\ref{eq:hpsilem}) becomes \(H^{\theta}(\psi,J;\delta)=\tilde  H^{\theta}_{2}(\psi,J;\delta)+\delta^{-\frac{1}{\alpha}}K(\alpha)\hat H^{\theta}_{4}(\psi,J;\delta)+\ldots\),  where  $\hat H^{\theta}_{4}(\psi,J;0)$ coincides with the 4-th order terms of \(H^{\theta,0}\) after the diagonalization.  Next, we do a symplectic  transformation
 in order to eliminate all non-resonant fourth order terms, i.e., to bring the \(\theta\)-Hamiltonian to its normal form
 \(H_{NF}^{\theta}\). It is well-known (see \cite{Arnold2007CelestialMechanics}, Chapter 7) that such normalizing transformation does not alter the resonant terms of order 4 or lower, hence, \(\tilde  H^{\theta}_{4}(\psi,J;\delta)\) is obtained from   $\hat H^{\theta}_{4}(\psi,J;\delta)$ by throwing away the non-resonant terms. It follows that  even though the normalizing  transformation does not have a limit\footnote{The transformation is the time-1 map, $(\psi, J) \to (\psi, J)+\delta^{-\frac{1}{\alpha}}K(\alpha) (\partial_J S, -\partial_\psi S)+O((\psi,J)^4)$, of a Hamiltonian flow defined by the Hamilton function $\delta^{-\frac{1}{\alpha}}K(\alpha) S(\psi,J;\delta)$, where $S$ is a fourth-degree homogeneous polynomial with bounded coefficients which solves the equation $\{\tilde H_2^{\theta},S\}=\tilde H^{\theta}_{4}-\hat H^{\theta}_{4}$. The normalizing transformation for \(H^{\theta,0}\) is the time-1 map for the Hamiltonian $S(\psi,J;0)$.} at \(\delta=0\), the term \(\tilde  H^{\theta}_{4}(\psi,J;\delta)\) has a limit, equal to the resonant part of
\(\hat H^{\theta}_{4}(\psi,J;0)\), so the lemma follows.
\end{proof}

\begin{lem}
\label{lem:hnfdiag}The Birkhoff normal form of  the reduced Hamiltonian (\ref{eq:Hrectavrg2simul}) is the sum of the normal forms of the Hamiltonians (\ref{eq:hzlem}) and (\ref{eq:hpsilem}) with bounded corrections of order 4 which vanish at \((J,\psi)=0\):
\begin{equation}\label{eq:Hnormalformlem}
H_{NF}=H^{z}_{NF}(z,p;\delta)+H^{\theta}_{NF}(\psi,J;\delta)+H_{2}^{\psi J}(\psi,J;\delta)H_2^{zp}(z,p;\delta)+H_4^{\psi J}(\psi,J;\delta)+\ldots \ .
\end{equation}\end{lem}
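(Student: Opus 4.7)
The plan is to perform Birkhoff normalization of (\ref{eq:HPzthet}) in two stages, exploiting the block structure imposed by the parity symmetry $\psi\mapsto-\psi$ of Assumption Box4 together with (\ref{zeroderhatu}). First, I diagonalize the quadratic part: by (\ref{zeroderhatu}) the mixed Hessian $\partial_{\psi\xi}^{2}\hat U_{\delta}$ vanishes at the minimum, so $H_{2}^{\theta}(\psi,J;\delta)$ and $H_{2}^{z}(z,p;\delta)$ decouple. Using the simple-eigenvalues part of Assumption Box2, a single symplectic linear transformation diagonalizes both blocks simultaneously, producing action-angle coordinates with frequencies $\omega_{j}^{\theta}(\delta)$ and $\omega_{k}^{z}(\delta)$ that depend continuously on $\delta\geqslant 0$. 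By Assumption Box5, the combined frequency vector is non-resonant up to order $4$ for all sufficiently small $\delta$.

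Second, I eliminate the cubic coupling. By Lemma \ref{lem:expansionupsi} the coefficients of $H_{3}^{\theta z}$ are bounded uniformly in $\delta$, and by parity this term is quadratic in $\psi$ and linear in $z$ with no momentum dependence. Hence every Fourier monomial of $H_{3}^{\theta z}$ in the diagonalized coordinates is of the type $e^{i(\pm\phi_{i}^{\theta}\pm\phi_{j}^{\theta}\pm\phi_{k}^{z})}$, and none can be resonant by Box5. There exists therefore a cubic generating polynomial $F_{3}$ with $\{H_{2},F_{3}\}=-H_{3}^{\theta z}$ whose coefficients remain bounded as $\delta\to 0$, and the time-$1$ map it generates is a uniformly bounded symplectic transformation that kills $H_{3}^{\theta z}$ while producing only bounded $4$-th order corrections of the form $\{H_{3},F_{3}\}+\tfrac{1}{2}\{\{H_{2},F_{3}\},F_{3}\}$.

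Third, I normalize the quartic part. The $4$-th order Hamiltonian after the cubic step splits as $H_{4}^{z,\mathrm{new}}+H_{4}^{\theta,\mathrm{new}}+H_{4}^{\theta z,\mathrm{new}}$, where the divergent factor $\delta^{-1/\alpha}K(\alpha)$ appears \emph{only} inside $H_{4}^{\theta,\mathrm{new}}$ (this is the crucial structural point: by Lemma \ref{lem:expansionupsi} the divergence comes solely from $\partial_{\psi}^{4}\hat U_{\delta}|_{\xi=\xi_{min}}$, which is a polynomial in $\psi$ alone, while the coupling $H_{4}^{\theta z}$ and the cubic-induced corrections are bounded). By Box5, non-resonant quartic terms are removed by an additional symplectic transformation whose generator $F_{4}$ inherits the $\delta^{-1/\alpha}$ scaling on its pure $(\psi,J)$-part but is bounded on its coupling part. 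The resonant $4$-th order terms that survive are:
\begin{itemize}
\item pure $\theta$-action polynomials, identified via Lemma \ref{lem:normalformtheta} with $H_{NF}^{\theta}$ up to a bounded additional $H_{4}^{\psi J}(\psi,J;\delta)$ coming from the cubic-elimination brackets $\{H_{3},F_{3}\}$;
\item pure $z$-action polynomials, assembling into $H_{NF}^{z}$;
\item mixed $\theta$-action times $z$-action terms, which in the original variables take the product form $H_{2}^{\psi J}(\psi,J;\delta)H_{2}^{zp}(z,p;\delta)$ and vanish at $(J,\psi)=0$, with bounded coefficients since their origin is the bounded $H_{4}^{\theta z}$ together with bounded cubic-induced contributions.
\end{itemize}
This yields exactly the decomposition (\ref{eq:Hnormalformlem}).

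The main obstacle is bookkeeping the $\delta$-dependence: one must verify that the divergent $\delta^{-1/\alpha}$ scaling is confined to the pure $\tilde H_{4}^{\theta}$ block and never propagates into the coupling $H_{2}^{\psi J}H_{2}^{zp}$ or into $H_{4}^{\psi J}$. This is ensured because (i) by parity and (\ref{eq:uparhatpsinonscaled}) the divergent quartic has no $z$-dependence, (ii) the cubic $H_{3}^{\theta z}$ has bounded coefficients so its generator $F_{3}$ is bounded, and (iii) the brackets that produce mixed or pure $(\psi,J)$ corrections involve only $F_{3}$, $H_{2}$, $H_{3}$ and the bounded portion of $H_{4}$; nowhere does the divergent quartic get multiplied by $F_{3}$ or by a coupling term at this order. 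Once this separation is established, the explicit splitting claimed in (\ref{eq:Hnormalformlem}) follows from grouping resonant monomials according to which block of actions they depend on.
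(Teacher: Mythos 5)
Your proposal is correct and follows essentially the same staged Birkhoff normalization as the paper's proof: decouple the quadratics using the parity symmetry, eliminate the non-resonant cubic coupling with a uniformly bounded generator, and then normalize the quartics while observing that the divergent $\delta^{-1/\alpha}$ factor stays confined to the pure $(\psi,J)$ block. The only cosmetic difference is the ordering of the steps — you diagonalize both quadratic blocks at once before any nonlinear normalization, whereas the paper brings the $z$-sector to its full normal form first, then treats the cubic coupling and the $\theta$-sector — and this reordering does not affect the conclusion.
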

\begin{proof}
Consider the  symplectic transformation which brings the Hamiltonian  \(H^{z}=H^z_{2}(z,p;\delta)+H^{z}_3(z;\delta)+H^{z}_4(z;\delta)+ \dots \) to its Birkhoff normal form up to order 4:
\begin{equation}\label{eq:hnormalformz}
H^{z}_{NF}(z,p;\delta)=\tilde H^{z}_{2}(z,p;\delta)+\tilde H^{z}_{4}(z,p;\delta)+\ldots \ .
\end{equation}
Since, by Lemma \ref{lem:wtildc2inthet}, the coefficients of all monomials in \(H^{z}\) up to order 4 have well-defined finite limits as \(\delta\rightarrow0\), this transformation also has a well-defined finite limit.  Applying this transformation to \(H_{P}\) of  (\ref{eq:Hrectavrg2simul}) (equivalently, to (\ref{eq:HPzthet})), we obtain, in the new coordinates,
\begin{equation}\label{164}
\begin{array}{l}
 H_P = \tilde H^{z}_{2}(z,p;\delta)+\tilde H^{z}_{4}(z,p;\delta)+H^\theta_{2}(\psi,J;\delta)+\hat H_{3}^{\theta z}(\psi ,z,p;\delta)\\ \qquad\qquad \qquad+\hat H_{4}^{\theta z}(\psi ,z,p;\delta)+\delta ^{-1/\alpha}H_{4}^{\theta }(\psi ;\delta)+ \ldots,
\end{array}
\end{equation}
where all monomials in \(\hat H_{3,4}^{\theta z}(\psi ,z,p;\delta)\) are quadratic in $\psi$
(because of the symmetry $\psi\to -\psi$). Since all the terms in  \(\hat H_{3}^{\theta z}(\psi ,z,p;0)\)   are  non-resonant by Assumption Box5 (as at the minimum of \(\hat U\), the frequencies have no resonances of third order), they remain non-resonant for small \(\delta\). Therefore, there exists a symplectic  transformation  \((z,p)\rightarrow(z,p)+O((\psi,J)^{2}), (\psi,J)\rightarrow(\psi,J)+
O((z,p))\cdot(\psi,J) \) which eliminates the cubic terms and brings the Hamiltonian \( H_P\) to the form
\begin{equation}\label{165})
H_P = \tilde H^{z}_{2}(z,p;\delta)+\tilde H^{z}_{4}(z,p;\delta)+H^\theta_{2}(\psi,J;\delta)+ {\bar H}_{4}^{\theta z}(\psi ,J,z,p;\delta)+\bar H_{4}^{\theta}(\psi ,J;\delta)+\delta ^{-1/\alpha}H_{4}^{\theta }(\psi ;\delta)+\ldots,
\end{equation}
where \({\bar H}_{4}^{\theta z}(\psi ,J,z,p;\delta)\) is a sum of monomials  quadratic in \((\psi,J)^{2}\) and
\((z,p)^{2}\).

Now, we apply to $H_P$  the transformation of Lemma \ref{lem:normalformtheta}, which brings the Hamiltonian
\(H^\theta_{2}(\psi,J;\delta)+\delta ^{-1/\alpha}H_{4}^{\theta }(\psi ;\delta)+ \dots \) to its Birkhoff normal form.
This brings $H_P$ to the form (\ref{eq:Hnormalformlem}), with some of the 4th order terms in
$H_{2}^{\psi J}$, $H_2^{zp}$, and $H_4^{\psi J}$ possibly non-resonant. All such non-resonant terms are eliminated by a symplectic transformation without changing all other terms up of order four or less.
\end{proof}

Because there are  no resonances up to order 4 by Assumption Box5, the 4-jets of the Birkhoff normal forms depend only on actions. Namely,
we introduce actions
\begin{equation}\label{actionsIzpsi}
\begin{array}{l}
I_{z_{j}}=\frac{1}{2}(z_{j}^{2}+p_{j}^2), \qquad j=1,\ldots N(d-1),\\
I_{\psi_{k}}=\frac{1}{2}(\psi_{k}^{2}+J_{k}^2), \qquad k=1, \dots,N-1.
\end{array}
\end{equation}
Then
 \begin{equation}\nonumber\begin{array}{ll}
H^{z}_{NF}&=\omega_{z}(\delta)I_{z}+I_{z}^{T}B_{z}(\delta) I_{z} +\ldots,\\
 H^{\theta}_{NF}&=\omega_{\psi}(\delta)I_{\psi}+\delta ^{-1/\alpha}K(\alpha )I_{\psi}^{T}B_{\psi}(\delta)I_{\psi}+
 \ldots,
 \end{array}
\end{equation}
where, by Lemmas \ref{lem:wtildc2inthet} and \ref{lem:normalformtheta}  \(\omega_{z}(\delta)\), \(\omega_{\psi}(\delta)\), $B_{z}(\delta)$, $B_{\psi}(\delta)$ all are continuous functions with bounded limits at \(\delta=0\).
By Assumption Box5, the determinants of \(B_{z}(0)\) and \(B_{\psi}(0)\) are non-zero.

Moreover, by Lemma \ref{lem:normalformtheta}, the Hamiltonian \(H^{\theta,0}_{NF}\) is of the form
\begin{equation}\nonumber
 H^{\theta,0}_{NF}=\omega_{\psi}(0)I_{\psi}+I_{\psi}^{T}B_{\psi}(0)I_{\psi}+\ldots\ .
\end{equation}
Also, by Lemma \ref{lem:hnfdiag},
the Birkhoff normal form \(H_{NF}\) of  (\ref{eq:Hrectavrg2simul}) is given by
\begin{equation}\nonumber
H_{NF}=H^{z}_{NF}+H^{\psi}_{NF}+O(\|I_{z}\| +\|I_{\psi}\|)\cdot\ I_\psi + \ldots\ .
\end{equation}

\begin{lem}\label{lem:nondegcombinednf}
The Birkhoff normal form of the Hamiltonian  (\ref{eq:Hrectavrg2simul}) is  non-degenerate.
\end{lem}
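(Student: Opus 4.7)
The plan is to read off, from the normal form of Lemma \ref{lem:hnfdiag}, the twist matrix $\Lambda(\delta)$---that is, the Hessian of the quadratic-in-actions part of $H_{NF}$ with respect to the action variables $I_z=(I_{z_1},\dots,I_{z_{N(d-1)}})$ and $I_\psi=(I_{\psi_1},\dots,I_{\psi_{N-1}})$ introduced in (\ref{actionsIzpsi})---and then show $\det\Lambda(\delta)\neq 0$ for all small $\delta>0$ by a Schur complement argument that exploits the singular blowup of the $I_\psi^{2}$-block established in Lemma \ref{lem:wavatorigin}.

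First, I would use the nonresonance part of assumption Box5 and (\ref{eq:Hnormalformlem}) to rewrite each degree-four term of $H_{NF}$ as a polynomial in the actions: $H^z_{NF}$ contributes $I_z^\top B_z(\delta)\,I_z$, $H^\theta_{NF}$ contributes $\delta^{-1/\alpha}K(\alpha)\,I_\psi^\top B_\psi(\delta)\,I_\psi$ (combining Lemma \ref{lem:normalformtheta} with the coefficient of $\vartheta^{4}$ computed in (\ref{wavato4})), $H_2^{\psi J}H_2^{zp}$ contributes $I_\psi^\top C(\delta)\,I_z$, and $H_4^{\psi J}$ contributes $I_\psi^\top D(\delta)\,I_\psi$. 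Therefore
\[
\Lambda(\delta)=2\begin{pmatrix} B_z(\delta) & \tfrac{1}{2}C(\delta) \\ \tfrac{1}{2}C(\delta)^\top & \delta^{-1/\alpha}K(\alpha)\,B_\psi(\delta)+D(\delta) \end{pmatrix}.
\]
Two facts make this matrix tractable. By Lemma \ref{lem:wtildc2inthet} the coefficients of the pre-normalization polynomials $H_2^z,H_3^z,H_4^z,H_3^{\theta z},H_4^{\theta z},H_2^\theta$ in (\ref{155}) are uniformly bounded as $\delta\to 0^+$; only $H_4^{\theta}$ carries the singular prefactor $\delta^{-1/\alpha}$. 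The successive symplectic transformations of Lemma \ref{lem:hnfdiag} (diagonalization of $H_2^\theta$, elimination of cubic terms, and finally the map of Lemma \ref{lem:normalformtheta}) use only small divisors bounded away from $0$ (by the no-resonance part of Box5), and the generator of the last step depends only on $(\psi,J)$, so it cannot inject $\delta^{-1/\alpha}$ into the $zz$- or $z\psi$-blocks. Consequently $C(\delta),D(\delta)=O(1)$, while $B_z(\delta)\to B_z(0)$ and $B_\psi(\delta)\to B_\psi(0)$; by Box5 the latter two limits are nondegenerate, since they are precisely the twist matrices of the Birkhoff normal forms of $H_0^\xi$ (equivalently $H^z(z,p;0)$) and of $H^{\theta,0}$ (equivalently the limit (\ref{eq:NFthetadelta0})).

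Finally, I would apply the Schur complement formula on the dominant block:
\[
\det\Lambda(\delta)=\det(\Lambda_{\psi\psi})\,\det\bigl(\Lambda_{zz}-\Lambda_{z\psi}\Lambda_{\psi\psi}^{-1}\Lambda_{z\psi}^\top\bigr).
\]
Since $\Lambda_{\psi\psi}=2\delta^{-1/\alpha}K(\alpha)\bigl(B_\psi(0)+O(\delta^{1/\alpha})\bigr)$, it is invertible for all small $\delta$, with $\det\Lambda_{\psi\psi}\sim (2K(\alpha))^{N-1}\delta^{-(N-1)/\alpha}\det B_\psi(0)\neq 0$ and $\Lambda_{\psi\psi}^{-1}=O(\delta^{1/\alpha})$. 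Hence $\Lambda_{z\psi}\Lambda_{\psi\psi}^{-1}\Lambda_{z\psi}^\top=O(\delta^{1/\alpha})$ and the Schur complement equals $2B_z(0)+O(\delta^{1/\alpha})$, which is invertible for small $\delta$ by Box5. Multiplying the two nonzero factors gives $\det\Lambda(\delta)\neq 0$, proving the twist condition.

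I expect the main difficulty to lie not in the Schur computation but in the bookkeeping behind the boundedness of $C(\delta)$ and $D(\delta)$: one must verify that the intermediate normalizing transformations in Lemma \ref{lem:hnfdiag} really do not convert the singular coefficient $\delta^{-1/\alpha}$ of $H_4^\theta(\psi;\delta)$ into a singular contribution to the cross block or to the $zz$-block. Once the estimates of Lemma \ref{lem:wtildc2inthet} are combined with the observation that the normalizer of Lemma \ref{lem:normalformtheta} is $(\psi,J)$-only, this step becomes routine.
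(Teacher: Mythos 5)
Your proposal is correct and follows essentially the same route as the paper: both read off the block structure of the twist matrix from (\ref{eq:Hnormalformlem}), note that the $I_\psi$-block blows up as $\delta^{-1/\alpha}K(\alpha)B_\psi(\delta)$ while the $I_z$-block and cross-block stay bounded, and conclude from the nondegeneracy of $B_z(0)$ and $B_\psi(0)$ (Assumption Box5). The only cosmetic difference is that you use the Schur complement factorization where the paper instead scales the $\psi$-rows by $\delta^{1/\alpha}/K(\alpha)$ before taking the determinant; your preliminary paragraph justifying why $C(\delta)$ and $D(\delta)$ stay bounded is already packaged in Lemma \ref{lem:hnfdiag}, so it is good bookkeeping rather than a new ingredient.
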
\begin{proof}
We need to check that the twist condition for \(H_{NF}\) is satisfied. Indeed,
\begin{equation}\begin{array}{ll}
\det(\frac{\partial ^{2}H_{NF}}{\partial I^{2}})&=\begin{vmatrix}B_{z}(\delta) & O(1) \\
O(1) & \delta ^{-1/\alpha}K(\alpha )B_{\psi}(\delta)+O(1) \\
\end{vmatrix}=\delta ^{-2N/\alpha}K(\alpha )^{2N}\begin{vmatrix}B_{z}(\delta) & O(1) \\
O(\delta ^{1/\alpha}) & B_{\psi}(\delta)+O(\delta ^{1/\alpha})
\end{vmatrix} \\ \\&=\delta ^{-2N/\alpha}K(\alpha )^{2N}(|B_{z}(\delta)|\cdot |B_{\psi}(\delta)|+O(\delta ^{1/\alpha})).
\end{array}
\end{equation}
Since the determinants \(|B_{z}(\delta)|\) and \(|B_{\psi}(\delta)|\) are bounded away from zero for all sufficiently small \(\delta\), we have \(\det(\frac{\partial ^{2}H_{NF}}{\partial I^{2}})\neq~0\), i.e. the twist condition is satisfied, so \(H_{NF}\) is non-degenerate. \end{proof}

\subsubsection{\label{sec:normalformfull}Normal form of the full system.}

We have shown that the truncation  of   system (\ref{eq:Hrectavrg}),  namely system (\ref{eq:averagebox}) for the case of simultaneous impacts, has a stable KAM non-degenerate elliptic periodic orbit for all \(\delta>0\).
To complete the proof of Theorem \ref{thm:billiardbox2}, we need to show that the periodic orbit persists in the full system  (\ref{eq:Hrectavrg}), which includes the error term  \(\delta \tilde  G\), and also remains KAM-non-degenerate. This is non-trivial, as both the normal form and the error term
\(\delta \tilde  G\) diverge in \(C^{4}\) as \(\delta\rightarrow0\). While the persistence of the periodic orbit follows from the \(C^{2}\) regularity of the normal form and the error term,
the KAM non-degeneracy is established by revisiting the normal form computation of Section \ref{sec:normalformtrun} applied to the full system.

Rewriting  (\ref{eq:Hrectavrg}) with the use of the coordinates \((P,\varphi ,J,\psi,p_{\xi},\xi)\) of   (\ref{eq:Hrectavrg1}), we obtain the perturbed Hamiltonian:
 \begin{equation}\label{eq:Hrectavrg1plg}
\begin{array}{ll}
H(P,\varphi ,J,\psi,p_{\xi},\xi;\delta)&=\omega_{0}P+\delta^{1/2}(\frac{1}{2N}aP^{2}+\frac{1}{2N}aJ^{2} + \frac{p_{\xi}^2}{2} +\hat  U_{\delta}(\psi,\xi))\\ &\qquad\qquad+\delta \tilde G(P,\varphi ,J,\psi,p_{\xi},\xi;\delta).
\end{array}
\end{equation}
The corresponding system is:
\begin{equation}\begin{array}{ll}
\dot \varphi &=\omega_{0}+\frac{1}{N}a\delta^{1/2}P+\delta \frac{\partial  }{\partial P  } \tilde G(P,\varphi ,J,\psi,p_{\xi},\xi;\delta), \\
\dot P
 &= -\delta \frac{\partial  }{\partial \varphi  } \tilde G(P,\varphi ,J,\psi,p_{\xi},\xi;\delta),\\
       \dot \psi &=\frac{1}{N}a\delta^{1/2}J+\delta \frac{\partial  }{\partial J  } \tilde G(P,\varphi ,J,\psi,p_{\xi},\xi;\delta), \\
\dot J
 &=- \delta^{1/2} \frac{\partial  }{\partial \psi\  }\hat  U_{\delta}(\psi,\xi)-\delta \frac{\partial  }{\partial \psi\  } \tilde G(P,\varphi ,J,\psi,p_{\xi},\xi;\delta),\\
       {\dot \xi} &=\delta^{1/2}\ p_{\xi}+\delta \frac{\partial  }{\partial p_{\xi}  } \tilde G(P,\varphi ,J,\psi,p_{\xi},\xi;\delta),\\{\dot p_{\xi}} &=- \delta^{1/2} \frac{\partial  }{\partial \xi\  }\hat  U_{\delta}(\psi,\xi)-\delta \frac{\partial  }{\partial \xi\  } \tilde G(P,\varphi ,J,\psi,p_{\xi},\xi;\delta).\\
       \end{array}
\end{equation}
Restricting to a given energy level (so \(P\) is determined by all other variables) and using \(\bar\varphi=\delta^{1/2}\varphi\) as a new time:\begin{equation}\label{eq:systemofreduced}\begin{array}{ll}
        \frac{d \psi}{d\bar\varphi} &=\frac{1}{N\omega_{0}}aJ+\delta^{1/2}   G_1(\bar\varphi ,J,\psi,p_{\xi},\xi;\delta), \\
\frac{d J}{d\bar\varphi}
 &=-\frac{1}{\omega_{0}}  \frac{\partial  }{\partial \psi\  }\hat  U_{\delta}(\psi,\xi)  -\delta ^{\frac{1}{2}-\frac{1}{\alpha}}  G_{2}(\bar\varphi ,J,\psi,p_{\xi},\xi;\delta),\\
       \frac{d\xi}{d\bar\varphi} &=\ \frac{1}{\omega_{0}}p_{\xi}+\delta^{1/2}   G_{3}(\bar\varphi ,J,\psi,p_{\xi},\xi;\delta),\\
\frac{d p_{\xi}}{d\bar\varphi}  &=-\frac{1}{\omega_{0}} \frac{\partial  }{\partial \xi\  }\hat  U_{\delta}(\psi,\xi)- \delta^{1/2}   G_{4}(\bar\varphi ,J,\psi,p_{\xi},\xi;\delta),
\end{array}\end{equation}
where  \(\frac{\partial  }{\partial \psi^{k} }  G_{i}(\bar\varphi ,J,\psi,p_{\xi},\xi;\delta)=O(\delta ^{-k/\alpha}) \),  see the end of Section  \ref{sec:regularavrg}. The reduced system  (\ref{eq:systemofreduced}) is a fast-oscillating non-autonomous  Hamiltonian system with the Hamilton function
\begin{equation}\label{eq:Hredg5del}
\begin{array}{ll}
H_{reduced}(\bar\varphi ,J,\psi,p_{\xi},\xi;\delta)&=\frac{1}{2N\omega_{0}}aJ^{2} +\ \frac{p^2}{2\omega_{0}} + \frac{1}{\omega_{0}}\hat  U_{\delta}(\psi,\xi)+ \delta^{1/2} G_5(\bar\varphi ,J,\psi,p_{\xi},\xi;\delta)\\
&=\frac{1}{\omega_{0}}H_{P}(J,\psi,p_{\xi},\xi;\delta)+ \delta^{1/2} G_5(\bar\varphi ,J,\psi,p_{\xi},\xi;\delta),
\end{array}
\end{equation}
where \(H_{P}\) is the Hamiltonian (\ref{eq:Hrectavrg2simul}). As above, \(G_5\) is a function with bounded derivatives with respect to \(J,p_{\xi},\xi\), whereas making \(k\) differentiations of \(G_5\) with respect to
\(\psi\) introduces a factor of \(O(\delta ^{-k/\alpha})\). Now, as apposed to  (\ref{eq:Hrectavrg2simul}),  $H_{reduced}$ is
a periodic function of the time variable $\bar\varphi$, with the period $2\pi \delta^{1/2}$.

\begin{lem}\label{lem:lastNF}
Provided that \(\alpha>6\), the system (\ref{eq:systemofreduced}) has a KAM-nondegenerate elliptic periodic orbit. \end{lem}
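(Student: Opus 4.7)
The plan is to carry through the Birkhoff normal form construction of Section~\ref{sec:normalformtrun} for the full reduced Hamiltonian (\ref{eq:Hredg5del}) rather than for its autonomous truncation $H_P/\omega_0$, and to check that the non-autonomous perturbation $\delta^{1/2}G_5$ contributes strictly lower-order terms to the $4$-jet than the dominant twist $\delta^{-1/\alpha}$ produced in Lemma~\ref{lem:nondegcombinednf}. The hypothesis $\alpha>6$ enters precisely at this comparison step.

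I would first establish existence. Since $H_{reduced}$ is $2\pi\delta^{1/2}$-periodic in $\bar\varphi$ and its $\psi$-derivatives up to order two are $O(\delta^{1/2-k/\alpha})=o(1)$ for $k\le 2$ as soon as $\alpha>4$, the Poincar\'e return map $\Phi$ to the cross-section $\{\bar\varphi=0\}$ is $C^2$-close to the time-$2\pi\delta^{1/2}$ map of the autonomous Hamiltonian $H_P/\omega_0$. The latter has an elliptic fixed point at $(\psi,J,p_\xi,\xi)=(0,0,0,\xi_{min})$ by Lemma~\ref{lem:nondegcombinednf}. The implicit function theorem therefore produces a nearby elliptic fixed point of $\Phi$, that is, an elliptic periodic orbit of (\ref{eq:systemofreduced}).

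To prove KAM non-degeneracy, I would first remove the $\bar\varphi$-dependence by one step of averaging, in the spirit of Section~\ref{sec:regularavrg}: a near-identity canonical transformation with generator $\delta^{1/2}\tilde S(\bar\varphi,\cdot;\delta)$, where $\tilde S$ is the zero-mean antiderivative of $G_5-\bar G_5$ in $\bar\varphi$, turns $H_{reduced}$ into an autonomous Hamiltonian $\bar H = H_P/\omega_0+\delta^{1/2}\bar G_5$ plus an oscillating residual whose $C^k$-norm is $O(\delta^{1-k/\alpha})$. I would then repeat the normal form computation of Lemmas~\ref{lem:normalformtheta} and \ref{lem:nondegcombinednf} with $\bar H$ in place of $H_P/\omega_0$: by the $\psi$-regularity estimates, the additional term $\delta^{1/2}\bar G_5$ shifts the $j$-th derivative of the Hamiltonian at the equilibrium by at most $O(\delta^{1/2-j/\alpha})$, for $j\le 4$. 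In particular the quadratic frequencies move by $O(\delta^{1/2-2/\alpha})=o(1)$, preserving the no-fourth-order-resonance condition from Assumption~Box5, while the quartic block of the Birkhoff normal form receives a correction of order $O(\delta^{1/2-4/\alpha})$. The dominant quartic coefficient $\delta^{-1/\alpha}K(\alpha)B_\psi$ exhibited in Lemmas~\ref{lem:expansionupsi} and \ref{lem:nondegcombinednf} dominates this correction exactly when $\tfrac{1}{2}-\tfrac{4}{\alpha}>-\tfrac{1}{\alpha}$, i.e. when $3/\alpha<\tfrac{1}{2}$, that is when $\alpha>6$; the twist determinant computation of Lemma~\ref{lem:nondegcombinednf} then goes through verbatim for $\bar H$, and KAM non-degeneracy of the elliptic equilibrium follows.

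The main obstacle is the simultaneous divergence of the truncated system's dominant twist coefficient $\delta^{-1/\alpha}$ and of the perturbation's $4$-jet $\delta^{1/2-4/\alpha}$: the whole argument turns on the sharp inequality between their rates of blow-up, and the threshold $\alpha>6$ is dictated precisely by this. A secondary technical concern is that the averaging transformation itself is only $C^k$-small of order $O(\delta^{1-k/\alpha})$, so one has to check that, for $\alpha>6$, it remains a small enough $C^4$ correction to the identity to pull the normal form estimates back to the original coordinates without destroying the non-degeneracy; an analogous bound must be verified for the oscillating residual after averaging, which however enters the Poincar\'e map only integrated over one period $2\pi\delta^{1/2}$, so a factor $\delta^{1/2}$ is gained and the comparison with $\delta^{1/2-1/\alpha}$ (the leading map-twist) remains favourable precisely under the same hypothesis $\alpha>6$.
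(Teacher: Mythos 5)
Your proposal is correct and follows essentially the same strategy as the paper: establish existence through closeness of the Poincar\'e map to the autonomous flow, then track the Birkhoff normal form through the singular limit, isolating $\alpha>6$ from the comparison between the dominant twist $\delta^{-1/\alpha}K(\alpha)B_\psi$ and the perturbation's quartic contribution $O(\delta^{1/2-4/\alpha})$. The paper organizes the argument slightly differently, shifting coordinates first so that the new periodic orbit sits at $X=0$ and estimating the resulting error $G_6$ via the Lagrange formula before running $\bar\varphi$-periodic normalizing transformations, rather than performing a standalone averaging step first, but the key estimates and the threshold are identical to yours.
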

\begin{proof}
As we mentioned in Section \ref{sec:normalformtrun}, due to the non-resonance assumption of Box2 and the continuous dependence of the quadratic part of the Hamiltonian \(H_{P}(J,\psi,p_{\xi},\xi;\delta)\) on
\(\delta\), the truncated system (\ref{eq:Hrectavrg2simul}) has a non-resonant elliptic fixed point at the origin.  Since for \(\alpha>4\) the perturbation to (\ref{eq:Hrectavrg2simul}),
i.e. the term \( \delta^{1/2} G_5\) in the Hamiltonian (\ref{eq:Hredg5del}), is \(C^{2}\)-small, the system (\ref{eq:systemofreduced}) has, for all small \(\delta,\) an elliptic periodic orbit
\((J,\psi,p_{\xi},\xi)=\delta ^{\frac{1}{2}-\frac{1}{\alpha}}  X_{p}(\bar\varphi;\delta)\)
where \(X_{p}(\bar\varphi;\delta)\) is a continuous bounded function for all \(\delta\geqslant0\).
Let  \((J,\psi,p_{\xi},\xi)=\delta ^{\frac{1}{2}-\frac{1}{\alpha}}  X_{p}(\bar\varphi;\delta)+X\).
Then
\begin{equation}\label{eq:hreduced}
H(\bar\varphi ,X;\delta ):=H_{reduced}(\delta ^{\frac{1}{2}-\frac{1}{\alpha}}  X_{p}(\bar\varphi;\delta)+X;\delta)=H_{P}(X;\delta)+ G_{6}(\bar\varphi ,X;\delta),
\end{equation}
where the non-autonomous term \(G_{6}\) is given by
\[ G_{6}(\bar\varphi ,X;\delta)=H_{P}(\delta ^{\frac{1}{2}-\frac{1}{\alpha}}  X_{p}(\bar\varphi;\delta)+X;\delta)-H_{P}(X;\delta)+ \delta ^{\frac{1}{2}} G_{5}(\bar\varphi ,\delta ^{\frac{1}{2}-\frac{1}{\alpha}}  X_{p}(\bar\varphi;\delta)+X;\delta).\]
Next we show that  if \(\alpha>6\), the term \(G_{6}(\bar\varphi,X;\delta)\) and its derivatives with respect to \(X\) including up to 3 derivatives with respect to \(\psi\) are small, while its 4th  derivatives with respect to \(\psi\) are \(o(\delta ^{-1/\alpha})\):
\begin{equation}\label{g6est}
\frac{\partial^{k+l}}{\partial \psi^{k} \partial (J,p_{\xi},\xi)^l } G_6 = O(\delta ^{\frac{1}{2}-\frac{k}{\alpha}}),\quad\ k=0,1,2,3,4.
\end{equation}
Indeed, first recall that \( \frac{\partial}{\partial \psi^{k}} G_5\)  is
\(O(\delta^{-\frac{k}{\alpha}}),\) so for  \(\alpha>6\) the term \( \delta ^{\frac{1}{2}} G_{5}\)  gives a correct contribution to the estimates (\ref{g6est}). Next, by Lagrange formula,
\begin{equation}\label{lagrange}
H_{P}(\delta ^{\frac{1}{2}-\frac{1}{\alpha}}  X_{p}(\bar\varphi;\delta)+X;\delta)-H_{P}(X;\delta)=\delta ^{\frac{1}{2}-\frac{1}{\alpha}}X_{p}(\bar\varphi;\delta)\cdot\int_{0}^{1}\nabla H_{P}(X+s\delta ^{\frac{1}{2}-\frac{1}{\alpha}}  X_{p}(\bar\varphi;\delta);\delta)ds. \nonumber
\end{equation}
Recall that by Lemma \ref{lem:wtildc2inthet} the potential \(\hat U_{\delta}(\psi,\xi)  \), hence the Hamiltonian \(H_{P}\), has bounded derivatives up to order 3, yet its fourth derivatives with respect to
\(\psi\) are of order \(\delta ^{-\frac{1}{\alpha}}\) and its fifth derivatives are of order
\(\delta^{-\frac{2}{\alpha}}\). Hence, by (\ref{lagrange}),
\(\frac{\partial  }{\partial \psi^{k} }(H_{P}(\delta ^{\frac{1}{2}-\frac{1}{\alpha}}  X_{p}(\bar\varphi;\delta)+X;\delta)-H_{P}(X;\delta)))\) for \(k=0,1,2\) is of order  \(O(\delta ^{\frac{1}{2}-\frac{1}{\alpha}}) \), whereas for \(k=3\)   the derivatives are of order \(\delta ^{\frac{1}{2}-\frac{2}{\alpha}} \), and for  \(k=4  \) they are of order  \(\delta ^{\frac{1}{2}-\frac{3}{\alpha}}\), all in agreement with
(\ref{g6est}).

In order to align with the notations of Section \ref{sec:normalformtrun}, we denote hereafter the coordinates of \(X\) by \((J,\psi,p,z)\). Then, by (\ref{eq:HPzthet}) and (\ref{eq:hreduced})
\begin{equation}\label{eq:hpnormstep1}\begin{array}{l}
H(\bar\varphi ,X;\delta)=H^z_{2}(z,p;\delta)+H^{z}_3(z;\delta)+H^{z}_4(z;\delta)+H^\theta_{2}(\psi,J;\delta)+H_{3}^{\theta z}(\psi ,z;\delta) \\ \qquad\qquad\qquad+H_{4}^{\theta z}(\psi ,z;\delta)+\delta ^{-1/\alpha}H_{4}^{\theta }(\psi ;\delta)+
 G_6(\bar\varphi ,X;\delta),\end{array}
\end{equation}
where the functions \(H_{j}\) of  (\ref{eq:HPzthet}) are homogeneous polynomials of degree \(j \) with bounded coefficients (continuous in \(\delta\)) and  \(H_{3,4}^{\theta z}(\psi ,z;\delta)\) have only monomials quadratic in \(\psi\).

Recall  that  \(H_{P}\)  was brought to its normal form by a sequence of coordinate transformations described in Lemma \ref{lem:hnfdiag}. Next, we apply similar transformation to the full Hamiltonian
\(H(\bar\varphi ,X;\delta)\) of (\ref{eq:hreduced}). The difference is that $H$ is periodic in time
$\bar\varphi$, so the normalizing transformations are also periodic in $\bar\varphi$. It is well-known that
one can make the coordinate transformations such that the resulting Hamiltonian will become autonomous
up to any given order in $X$, i.e. such a transformation  corresponds to averaging to any given order \cite{Arnold2007CelestialMechanics}.   Our goal is to bring
$H(\bar\varphi, X;\delta)$ to an autonomous normal form up to order $4$ and compare it
with the 4-th order normal form $H_{NF}$ (see (\ref{eq:Hnormalformlem})) of the autonomous Hamiltonian $H_P$.
Since the period is small, of order \(\delta^{1/2}\), there are no additional resonant terms due to the dependence on  \(\bar \varphi\). Still, one needs to check that the singular terms in (\ref{eq:hpnormstep1}) do not destroy the twist condition.

Let us make the linear symplectic transformations which bring the Hamiltonians
\(H^\theta_{2}(\psi,J;\delta)\) and  \(H^z_{2}(z,p;\delta)\)  to the diagonal form \(\tilde H^\theta_{2}(\psi,J;\delta)\) and \(\tilde H^z_{2}(z,p;\delta) \) as in (\ref{eq:HNFtheta}) and (\ref{eq:hnormalformz}),
respectively. Applying these transformations to (\ref{eq:hpnormstep1}), we obtain a new Hamiltonian
\begin{equation}\label{eq:normalstep2}\begin{array}{ll}
 H (\bar\varphi ,X;\delta)=\tilde H^z_{2}(z,p;\delta) + H^{z}_{3}(z,p;\delta) + H^{z}_{4}(z,p;\delta) + \tilde H^\theta_{2}(\psi,J;\delta) + H_{3}^{\theta z}(\psi ,J,z,p;\delta)\\\qquad \qquad \qquad + H_{4}^{\theta z}(\psi ,J,z,p;\delta)+\delta ^{-1/\alpha}H_{4}^{\theta }(\psi,J;\delta ) + G_{7}(\bar\varphi ,X;\delta), \end{array}
\end{equation}
where \(H_{j}\)  are different functions from those in (\ref{eq:hpnormstep1}), yet they keep the same structure - they are homogeneous polynomials of degree \(j \) with bounded coefficients (continuous in
\(\delta\)), and \(H_{3,4}^{\theta z}\) have only monomials quadratic in \((\psi,J)\). Note also that since the linear normalizing transformations have bounded coefficients for all \(\delta\geqslant0\), the derivatives of the function \(G_{7}\) are of the same order as the derivatives of \(G_{6}\), as given by (\ref{g6est}).

Since \(X=0\) is a periodic orbit of the Hamiltonian \(H(\bar\varphi,X;\delta)\), it follows that the expansion of
\(G_7(\bar\varphi ,X;\delta)\) in powers of $X$ starts with quadratic terms. Moreover, the terms of the expansion which are independent of $(\psi,J)$ are of order $\delta^{1/2}$, the terms linear in $(\psi,J)$
are of order $\delta^{1/2-1/\alpha}$, and so on, e.g. the 4-th order terms in $(\psi,J)$ are
$O(\delta^{1/2-4/\alpha})$, i.e., $o(\delta^{-1/\alpha})$ since $\alpha>6$. Therefore, expanding \(G_{7}\) in powers of \(X\) we can rewrite (\ref{eq:normalstep2}) as
\begin{equation}\label{eq:normalstep2.5}\begin{array}{l}
 H (\bar\varphi ,X;\delta)=
\tilde H^z_{2}(z,p;\delta)+ \tilde H^\theta_{2}(\psi,J;\delta)+
\hat H_{2}^{\theta z}(\bar\varphi,\psi,J,z,p;\delta)\\\qquad \qquad +
\hat H^{z}_{3}(\bar\varphi,z,p;\delta)+\hat H_{3}^{\theta z}(\bar\varphi,\psi ,J,z,p;\delta)+\hat H^{z}_{4}(\bar\varphi ,z,p;\delta) +\hat H_{4}^{\theta z}(\bar\varphi,\psi ,J,z,p;\delta)
\\\qquad \qquad \qquad+\delta^{-1/\alpha}\hat H_{4}^{\theta }(\bar\varphi,\psi,J;\delta)+\ldots, \end{array}
\end{equation}
where, hereafter, the dots stand for the terms of order higher than $4$ (i.e., they are irrelevant for our purposes).
The terms $\hat H_j$ are homogeneous polynomials of $X$ of degree $j$ with periodic in $\bar\varphi$ coefficients, continuous and bounded for all $\delta\geqslant 0$. Moreover, \(\hat H_2^{\theta z}\) is
\(O(\delta ^{\frac{1}{2}-\frac{2}{\alpha}})\)-close to zero. The polynomials $\hat H_{3}^z$ and
$\hat H_{4}^z$ are  the cubic and, respectively quartic part of $H(\bar\varphi ,X;\delta)$ at $(\psi,J)=0$, so
they are $O(\delta^{1/2})$-close to  $H^{z}_{3,4}(z,p;\delta)$ of (\ref{eq:normalstep2}).
The  polynomial
$\hat H_{3}^{\theta z}$ is \(O(\delta ^{\frac{1}{2}-\frac{3}{\alpha}})\)-close to $H_{3}^{\theta z}$ of (\ref{eq:normalstep2}) (indeed, it has a linear in $(\psi,J)$ part which is  \(O(\delta ^{\frac{1}{2}-\frac{1}{\alpha}})\)-close to
zero, since \(H_{3}^{\theta z}\) has no linear part in \(\psi\), a quadratic in $(\psi,J)$ part which is
\(O(\delta ^{\frac{1}{2}-\frac{2}{\alpha}})\)-close to $H_{3}^{\theta z}$, and the cubic  in $(\psi,J)$ part is, again,
\(O(\delta ^{\frac{1}{2}-\frac{3}{\alpha}})\)-close to
zero). Similarly, the  polynomials  $\hat H_4^{\theta z}$ and $\hat H_4^\theta$ are \(O(\delta ^{\frac{1}{2}-\frac{3}{\alpha}})\)-close to $H^{\theta z}_4$ and $H_4^{\theta}$, respectively. Since $\alpha>6$, all these corrections are small, i.e.,
they vanish at $\delta=0$.

Since $\hat H_{2}^{\theta z}=O(\delta^{1/2-2/\alpha})$, the $\bar\varphi$-periodic, linear symplectic transformation of the variables $X$ which diagonailzes the quadratic part ($\tilde H^z_{2}+ \tilde H^\theta_{2}(\psi,J;\delta)+
\hat H_{2}^{\theta z}$) of $H$ and makes the quadratic part independent  of \(\bar\varphi\) is
\(O(\delta^{\frac{1}{2}-\frac{2}{\alpha}})\)-close to identity:
\(X\rightarrow(Id+\delta ^{\frac{1}{2}-\frac{2}{\alpha}}M(\bar\varphi;\delta))X\), for some periodic
in $\bar\varphi$ matrix $M$,
continuous and bounded for all $\delta\geqslant 0$.

After this transformation, the Hamiltonian (\ref{eq:normalstep2.5}) becomes
\begin{equation}\label{eq:normalstep3}\begin{array}{l}
 H (\bar\varphi ,X;\delta)=  \hat H^z_{2}(z,p;\delta)+ \hat H^\theta_{2}(\psi,J;\delta)+\hat H^{z}_{3}(\bar\varphi ,z,p;\delta)+\hat H_{3}^{\theta z}(\bar\varphi,\psi ,J,z,p;\delta)+\hat H^{z}_{4}(\bar\varphi ,z,p;\delta)\\\qquad \qquad \qquad +\hat H_{4}^{\theta z}(\bar\varphi,\psi,J,z,p;\delta)+\delta ^{-1/\alpha}\hat H_{4}^{\theta }(\bar\varphi,\psi,J;\delta ) + \ldots, \end{array}
\end{equation}
where \(\hat H_2^z\) and  $\hat H_2^\theta$ are \(O(\delta ^{\frac{1}{2}-\frac{2}{\alpha}})\)-close to
$\tilde H_2^z$ and, respectively, $\tilde H_2^\theta$; the new homogeneous third degree polynomials
$\hat H^{z}_{3}$ and $\hat H_{3}^{\theta z}$ acquire \(O(\delta ^{\frac{1}{2}-\frac{2}{\alpha}})\)-corrections in comparison with
$\hat H^{z}_{3}$ and $\hat H_{3}^{\theta z}$ of (\ref{eq:normalstep2.5}), and the new homogeneous fourth degree polynomials
$\hat H^{z}_4$, $\hat H_4^{\theta z}$, and $\hat H_4^{\theta}$ acquire \(O(\delta ^{\frac{1}{2}-\frac{3}{\alpha}})\)-corrections. It is only important for us that these corrections vanish at $\delta=0$.

Next, we follow the same steps as in Lemma \ref{lem:hnfdiag}. We make a periodic in \(\bar\varphi\), symplectic transformation of the $(z,p)$-coordinates which brings the Hamiltonian
\(\hat H^z_{2}(z,p;\delta)+\hat H^{z}_{3}(\bar\varphi, z,p;\delta)
+\hat H^{z}_{4}(\bar\varphi,z,p;\delta)\) to its Birkhoff normal form, independent of \(\bar\varphi\)
up to order 4 (recall that no new resonances can be created here by the \(\bar\varphi\)-dependence because the frequency is large). This Hamiltonian is close to the $z$-Hamiltonian of (\ref{eq:hzlem}), therefore the resulting normal form $\hat H_{NF}^z(z,p;\delta)$ is close to the normal form $H_{NF}^z$, see (\ref{eq:hnormalformz}); in particular, $\hat H_{NF}^z=\hat H_2(z,p;\delta) + \hat H_4(z,p;\delta)$ does not contain cubic terms, its quadratic and quartic terms $\hat H_{2,4}^z$ depend only on the actions  $I_z$
(see (\ref{actionsIzpsi})), and their limit as $\delta\to 0$ coincides with the limit of $\tilde H_{2,4}^z$.

After this transformation Hamiltonian (\ref{eq:normalstep2}) becomes
\begin{equation}\label{eq:normalstep3b}\begin{array}{l}
 H (\bar\varphi ,X;\delta)=  \hat H^z_{2}(z,p;\delta) + \hat H^{z}_{4}(z,p;\delta)+ \hat H^\theta_{2}(\psi,J;\delta)+\hat H_{3}^{\theta z}(\bar\varphi,\psi ,J,z,p;\delta)\\ \qquad \qquad \qquad +\hat H_{4}^{\theta z}(\bar\varphi,\psi,J,z,p;\delta)+\delta ^{-1/\alpha}\hat H_{4}^{\theta }(\bar\varphi,\psi,J;\delta) + \ldots, \end{array}
\end{equation}
where the modified functions $\hat H_{3,4}$ have the same structure as before and are close to their counterparts in (\ref{164}).  Since the third order terms in the Hamiltonian are all non-resonant
(by Assumption Box 5 and by the fact that the  period in   \(\bar\varphi\) is small), they are eliminated by a normalizing symplectic transformation of \(X\), which equals to the identity plus higher order    \(\bar\varphi\)-dependent terms and is close to that employed in Lemma
\ref{lem:hnfdiag} (the transformation from (\ref{164}) to (\ref{165})). The Hamiltonian (\ref{eq:normalstep3b}) becomes
\begin{equation}\label{eq:normalstep4}\begin{array}{l}
 H (\bar\varphi ,X;\delta)=  \hat H^z_{2}(z,p;\delta) + \hat H^{z}_{4}(\varphi,z,p;\delta)+ \hat H^\theta_{2}(\psi,J;\delta)\\ \qquad \qquad \qquad +\hat H_{4}^{\theta z}(\bar\varphi,\psi,J,z,p;\delta)+\delta ^{-1/\alpha}\hat H_{4}^{\theta }(\bar\varphi,\psi,J;\delta) + \ldots, \end{array}
\end{equation}
where $\hat H_4$, the fourth-degree polynoimials in $X$, are close to their counterparts in (\ref{165}) (note that applying the transformation  to the singular term, \(\delta ^{-1/\alpha}\hat H_{4}^{\theta }\)   of  (\ref{eq:normalstep3}),  introduces additional singularities but only to the terms of order 5  or higher in (\ref{eq:normalstep4})).

The last step is to bring the 4-th order terms to the autonomous normal form. This is done by a symplectic
transformation and the result is equivalent to throwing away the non-resonant 4-th order terms and taking the average of the resonant ones over $\bar\varphi$. Since all terms $\hat H$ in (\ref{eq:normalstep4}) coincide with their counterparts in (\ref{165}) at $\delta=0$, we immediately obtain that the resulting 4-th order
Birkhoff normal form of  the Hamiltonian (\ref{eq:Hredg5del}) is
\begin{equation}
     H_{NF4}=H^z_{2}(z,p;\delta)+H^{z}_{4}(z,p;\delta)+H^\theta_{2}(\psi,J;\delta)+\delta ^{-1/\alpha}K(\alpha )H_{4}^{\theta }(\psi,J;\delta )+H_{4}^{\theta z}(\psi ,J,z,p;\delta),
\end{equation}
where   \(H^x_{j}(;\delta)\) have bounded coefficients, and, as \(\delta\rightarrow0\) they approach the corresponding terms of (\ref{eq:Hnormalformlem}). Hence, the KAM-nondegeneracy of the elliptic orbit of the  system (\ref{eq:systemofreduced})  at $X=0$ follows from Lemma \ref{lem:nondegcombinednf}.\end{proof}

This completes the proof of Theorem \ref{thm:billiardbox2}.

The  divergence as  \(\delta\rightarrow0\) of the terms or order 5 and higher in \(X\)   does not alter the KAM nondegeneracy result, yet, it implies that quantitative estimates regarding the size of the stability island require analysis of the asymptotic \(\delta\)-dependence of such  terms.

\section{Discussion}

While the lack of ergodicity in Hamiltonian systems is expected, here we found a specific mechanism
for breaking the ergodicity, which persists for arbitrarily high energy for any finite number of particles that interact by repelling forces. We constructed coherent states of the multi-particle gas that correspond to collision-free choreographic solutions which are stabilized at high energies. We have also built similar solutions for systems of weakly interacting particles and systems of attracting particles.

Let us list several future research directions.

\textbf{Non-ergodicity of the gas of repelling particles in containers of dispersive geometry.}
We have established the existence of KAM-stable choreographic solutions for generic multi-particle systems in containers that support a stable periodic billiard motion of one particle. Conjecturally, this includes any generic container with a sufficiently smooth convex boundary. Yet, there are open classes of billiards with piece-wise smooth boundaries which do not allow for stable single-particle motions --
the main example is given by dispersive billiards whose boundary is built of strictly concave smooth pieces. We propose that  it should be possible to apply our method for finding KAM-stable choreographies for such containers  as well. Indeed,
smoothing the billiard potential destroys the hyperbolic structure of the dispersive
billiard, and  there are several known mechanisms for creating elliptic, KAM non-degenerate periodic motions of a single particle in the billiard-like Hamiltonian (\ref{eq:singlepartbiliarddel}) at arbitrary small \(\delta \) for the case of dispersive \(d(\geqslant2)\)-dimensional containers   \cite{Donnay1996,donnay1999non,turaev1998elliptic,TRK03cor,RapRK063d,RapRKT08stab}. It is conjectured that such islands appear for dispersive billiard-like Hamiltonians generically  \cite{turaev1998elliptic}. The phase space volume  of the islands in these cases vanishes  with \(\delta\), yet, its scaling with \(\delta\) is known, and depends on the asymptotic behavior of the container smoothing potential, \(V\), near the boundary. Our techniques imply that such stability islands of the single particle motion can produce also choreographic solutions of the \(N\) particle system provided the perturbations induced by the interaction potential, \(W\), are much smaller than the islands size. Hence, the results regarding the existence of KAM-stable choreographic solutions in containers that support stable motion can probably be extended to any container.

\textbf{Physical relevance of the coherent states.} Under which conditions can the constructed choreographic solutions be observed in  realistic multi-particle systems? General estimates on the probability for an initial condition to belong to a KAM-torus of a Hamiltonian system with $N$-degrees of freedom are quite pessimistic even for small $N$ \cite{chierchia2010properly,chierchia2011planetary}, but examples of such systems where stability islands are well-noticeable are also known \cite{antonopoulos2006chaotic,simo2015dynamical}, e.g. in numerical experiments of    \cite{RapRKT08stab}  the islands are seen for $N$ as large as 20. Therefore,
Nekhoreshev-type estimates on the life time of the coherent states are, probably, most relevant for the
physical realizability question.  Sufficiently long living (i.e., effectively stable) coherent states can be of direct interest when $N$ is not very large. For example, for a gas in a three-dimensional rectangular box, the construction of
\(N < (l_{1}l_2)/\rho^{2}\) particles moving vertically in synchrony, as in Theorem \ref{thm:billiardbox2}, corresponds to pulsating fronts. For a gas in a convex container, the states of
\(N< |L^{*}|/\rho\) particles  constructed in Theorem \ref{thm:billiardsystem} corresponds to rings of current of a specific non-trivial spatial form. For each such state, the effective stability imposes limitations on $N$ and the parameters of the system.  Additional classes of choreographic solutions   may allow to study  coherent states with a larger number of particles. For example, the theory might be extended to create dynamical analogs of Coulomb crystals with a larger number of particles     \cite{amiranashvili1999stability}.

\textbf{Multi-path choreographic solutions} provide such a class. When the single-particle system is non-integrable and has an elliptic periodic orbit, it has, typically, infinitely many elliptic orbits (e.g. around an elliptic periodic orbit there are typically many resonant elliptic orbits, around which there are secondary resonances, and so on \cite{Arnold2007CelestialMechanics}). Similarly, near a homoclinic loop to a saddle center of a non-integrable system many stable periodic orbits co-exist \cite{lerman2021saddle,lerman1991hamiltonian}. When the corresponding periodic paths in the configuration space do not intersect, one can obtain KAM-stable motions of repelling particles along several such paths: the particles on the same path must have the same frequency to avoid collisions, but the particles on different paths may have different frequencies. When the paths do intersect, one needs the frequencies of the motion along the different paths to be in resonance. This condition is not actually restrictive: given any finite set of elliptic orbits with arbitrary periods, one can tune the partial energies such that for every two paths the ratio of the periods would become rational. For particles in the billiard this is done just by normalizing the motion speed for each path by the path length.

\textbf{Choreographies in the box.} The billiard in the box is integrable, and therefore we build choreographic multi-particle regimes based on families of parabolic periodic solutions (instead of elliptic orbits). We have considered only one of such families in this paper, of parallel vertical motion, but there are many types of them, for example, diamond shaped orbits. In our construction of Section \ref{sec:boxsection}, to avoid collisions, a single particle sits on each periodic path. However, for other types of parabolic families, such as diamond shape orbits in a \(d\geqslant3\) box,  many particles may occupy the same path and parallel paths without any collisions. In particular, it may be possible to create KAM-stable choreographic motions of
\(N\propto (l/\rho)^{3}\) particles in a three-dimensional box of characteristic size $l$. Similar solutions can be built for particles in ellipsoidal billiard, and for other systems of weakly-interacting particles whose individual dynamics is integrable.

\textbf{Dynamics of the  averaged system.} We have shown that the stable choreographic motions are controlled by  effective potentials defined on the torus corresponding to the set of phases of the individual particles. Our main result only refers to the fact that near the minimum of such potential the dynamics are, generically, KAM stable. However, one may also ask a question of the global dynamics: can the averaged system be completely integrable, or can one find additional stable motions far from the minimum of the potential? Various types of KAM-stable solutions of
the averaged system should generate new types of non-trivial coherent states which may depend differently on
physical parameters and be relevant for a larger variety of physical settings than the dynamically simplest types of choreographies we found here.  Also, as explained in Appendix \ref{sec:equidistantsol}, in the particular case of the equidistant particles' phases, our effective potentials share the same symmetries as the potentials of the classical Fermi-Pasta-Ulam chains, yet they form a larger class, and it may be interesting to study this broader class of systems.

\textbf{Solid coherent states in a high temperature gas.} In this paper, we operate in the limit where the motion along the periodic orbit is faster than the oscillations of the phase differences between the particles. For highly energetic particles in a container, if the non-averaged interaction potential has a minimum (like the Lennard-Jones potential), one can think of an opposite limit where the frequencies \(\omega\) of the small oscillations of the particles near this minimum are much faster than the frequency of the periodic billiard-like
motion of the center of mass: \(\omega\gg\frac{\sqrt{2h}}{L^*}\) (the kinetic energy of the center-of-mass motion can still be much larger than the energy of the fast but small oscillations of the particles: \(I\ll h/\omega \)). It may be interesting to study whether this can also lead to stable choreographic motions of molecules.

\textbf{Large $N$ limit.} As one can see, different types of coherent states can correspond to different scaling of the number of particles as a function of the size of the container. In general, when considering the limit $N\to \infty$, one should also decide how the parameters of the system (the system size $l$, the energy per particle $h$, effective particle diameter $\rho$, etc.) scale with $N$: different types of  scalings correspond to different physical situations.
Determining which type of scalings correspond to various stable coherent states is the key for resolving the question of the realizability of such states in physically relevant settings.
\section*{Acknowledgements} VRK work was funded by Israel Science Foundation (grant 787/22).
DT work was funded by the Leverhulme Trust (grant RPG-2021-072).
We thank V. Gelfreich and L. Chierchia for useful discussions.

\section*{Data declaration} Data sharing not applicable to this article as no datasets were generated or analysed during the current study.
\appendix
\section*{Appendix}
\section{ Choreographic solutions with equidistant phases.}
\label{sec:equidistantsol}

The evolution of the phases of the choreographic solutions is described, in the first-order approximation, by the averaged Hamiltonian
\begin{equation}\label{hfpu}
H =\frac{1}{2}(I_{0})^2+ U(\theta),\quad (\theta, I_{0})\in  \mathbb{T}^{N}\times \mathbb{R}^{N},
\end{equation}
see Lemma \ref{Lemma1.1} (here we scale $I_0$ such that $a=1$ in the matrix $A$ of (\ref{eq:npartav}), see (\ref{amatr})).
The averaged potential $U$ is given by
$$U(\theta^{(1)}, \dots,\theta^{(N)})=\mathop{\sum_{n,m=1,\dots,N}}_{n\neq m} W_{avg}(\theta^{(n)}-\theta^{(m)}),$$
where $W_{avg}$ is an even, \(2\pi\)-periodic function defined by (\ref{eq:averagedw}).

System (\ref{hfpu}) is a generalization of the classical Fermi-Pasta-Ulam chain: the difference is that in the FPU we have only
$m=n+1 \mod N$ in the sum describing the potential $U$, i.e., the interaction between the phases $\theta$ is short-range, while
in our case all phases typically interact with each other. Like in the FPU, any uniformly distributed particle configuration
\(\theta=\theta_{eq}\),  where
$$\theta_{eq}^{(2)}-\theta_{eq}^{(1)}=\ldots =\theta_{eq}^{(N)}-\theta_{eq}^{(N-1)}=
\theta_{eq}^{(1)}+2\pi - \theta_{eq}^{(N)} = \frac{2\pi }{N}$$
is an equilibrium of system (\ref{hfpu}). Indeed, it is always an extremum of $U$ since
$\frac{\partial U}{\partial\theta^{(n)}}|_{\theta_{eq}}=0$ for each $n$:
$$\frac{\partial U}{\partial\theta^{(n)}}|_{\theta_{eq}}=
\sum_{m\neq n} W_{avg}^\prime (\theta_{eq}^{(n)}-\theta_{eq}^{(m)})=\sum^{N-1}_{k=1} W_{avg}^\prime (\frac{2\pi }{N}k)=0,$$
where the last equality follows because \(W_{avg}^\prime\) is an odd and $2\pi$-periodic function.

Such configurations form a line of extrema of $U$ (parameterized by the choice of $\theta_{eq}^{(1)}$). It is a line of {\em minima}
of $U$ when the Hessian matrix at $\theta=\theta_{eq}$ is positive semi-definite. We have
\begin{equation}\label{hessmatr}
\frac{\partial^{2} U }{\partial\theta^{(n)}\partial\theta^{(m)}}(\theta_{eq}) =
\begin{cases} - W_{avg}''(\frac{2\pi(n-m) }{N})=u_{n-m} & \mbox{ for } \; n\neq m \\
\sum^{N-1}_{j= 1 }W_{avg}''(\frac{2\pi }{N}j)=u_{0}=-\sum^{N-1}_{k=1}u_k & \mbox{ for } \;n=m. \\
\end{cases}\end{equation}
Note that the numbers $u_k$ satisfy $u_k=u_{-k}=u_{N-k}$ because $W_{avg}''$ is even and $2\pi$-periodic.

The Hessian matrix (\ref{hessmatr}) is a circulant matrix, so its eigenvectors are the Fourier modes
$$v_{j}=(1,\exp(i\frac{2\pi j }{N}),\exp(i\frac{4\pi j }{N}),\ldots,\exp(i\frac{2(N-1)j\pi }{N})), \quad j=0,\ldots,N-1.$$
The corresponding eigenvalues are:
$$
\lambda_{j}=u_0+u_1\exp(i\frac{2\pi j }{N})+u_2\exp(i\frac{4\pi j }{N})+\ldots +u_{N-2}\exp(i\frac{2\pi j(N-2) }{N})+u_{N-1}\exp(i\frac{2\pi j(N-1)}{N}),
$$
i.e.,
\begin{equation}\label{lamjn}
\lambda_{j}= 2\sum_{1\leqslant k < N/2} u_k (\cos (k \frac{2\pi j }{N})-1) + ((-1)^j-1) u_{N/2}
\end{equation}
(we take $u_{N/2}=0$ in this formula when $N$ is odd).

As we see, $\lambda_0=0$, which is due to the translational symmetry of the Hamiltonian (\ref{hfpu}). When all other $\lambda_j$ are strictly positive, the line of uniformly distributed particles' configurations consists of minima of the potential $U$. This happens, for example, when $u_k<0$ for all $k\neq 0$, i.e., when $W_{avg}$ is a convex function, which is consistent with the repelling nature of the interaction.

The frequencies of small oscillations around the line of minima are equal to $\sqrt{\lambda_j}$. It follows from
(\ref{lamjn}) that
$$\lambda_j=\lambda_{N-j}$$
for all $1\leqslant j \leqslant N/2$. Hence, the standard non-resonance assumption on the interaction potential breaks at
$\theta=\theta_{eq}$. In particular, adding a small perturbation to (\ref{hfpu}) (without breaking the translational symmetry) would result, in general, in the destruction of the ellipticity.

Yet, we show next that the KAM Assumption IP1 still holds generically. Notice that the resonance relations are due to the discrete symmetries of the potential near $\theta=\theta_{eq}$. Namely, the system in a small neighborhood of the minima line is symmetric  with respect to the
transformations \(T: \theta^{(n)}  \rightarrow \theta^{(n+1)} \) (where \(n\) is taken \(\mod N\)) and
\(S: \theta^{(n)}  \rightarrow \theta^{(N+1-n)} \). The maps $S$ and $T$ generate the so-called \(N\)-th dihedral group. This is the same group of symmetries as in the FPU. The normal form theory for the FPU was built by Rink \cite{rink2001symmetry}. In fact,
he derived  the normal form near an equilibrium of a general \(S,T\)-symmetric Hamiltonian provided the Hamiltonian has no additional resonances. His work applies to our Hamiltonian (even though the interactions here are for all particle pairs whereas in \cite{rink2001symmetry}  the FPU chain with only nearest neighbors interactions was considered). Indeed, notice that for any prescribed sequence \(u_k,k=1,\dots,N-1\) there is a smooth even and $2\pi$-periodic potential \(W_{avg}\) satisfying \(W_{avg}''(\frac{2\pi }{N}k)=u_k\), so, generically, for our system, no additional independent resonance relations appear.

The symmetric Rink normal form is given by Theorem 8.2 in \cite{rink2001symmetry} and for {\em odd} \(N\) is written as follows:
\begin{equation}\label{oddrink}
H_{Rink}=\sum_{1\leqslant j< \frac{N}{2}}\sqrt{\lambda_{j}}\;a_j+\frac{1}{2}\sum _{1\leqslant j,k< \frac{N}{2}}(C^a_{jk}a_ja_k+C^b_{jk}b_jb_k)
\end{equation}
whereas for {\em even} \(N\)
\begin{equation}\label{evenrink}
H_{Rink}=\sum_{1\leqslant j\leqslant \frac{N}{2}}\sqrt{\lambda_{j}}\;a_j+\frac{1}{2}\sum_{1\leqslant j,k\leqslant \frac{N}{2}}C^a_{jk}a_ja_k+
\frac{1}{2}\sum_{1\leqslant j,k < \frac{N}{2}} C^b_{jk}b_jb_k+
\sum_{1\leqslant j \leqslant \frac{N}{4}} C^d_{j}(d_jd_{\frac{N}{2}-j} - c_jc_{\frac{N}{2}-j}).
\end{equation}
The terms $a,b,c,d$ are quadratic functions, so the normal form is of order $4$ (the $C$'s are constant coefficients such that $C^{*}_{jk}=C^{*}_{kj},*\in\{a,b,c,d\}$). The transition to the normal form is done as follows. First, one makes a linear symplectic coordinate transformation which diagonalizes the quadratic part of the Hamiltonian near $(\theta=\theta_{eq}, I_0=0)$. One defines
$$\begin{array}{l} \displaystyle \!\!\!\!\!\!\!\!\!\!\!\!\!\!\!\!\!\!\!\!\!\!\!\!\!\!\!\!\!\!\!\!\mbox{for }\; 1\leqslant j < N/2:\\
\displaystyle\qquad
z_j = \frac{1}{\sqrt{2N}}\sum_{n=1}^N e^{-2\pi i\frac{j}{N} n} (I_0^{(n)} + i\sqrt{\lambda_j}\; (\theta^{(n)}-\theta_{eq}^{(n)})),\\
\displaystyle  \qquad\zeta_j =
\frac{1}{\sqrt{2N\lambda_j}}\sum_{n=1}^N e^{2\pi i\frac{j}{N} n} (I_0^{(n)} - i\sqrt{\lambda_j}\; (\theta^{(n)}-\theta_{eq}^{(n)})),\\
\displaystyle\qquad
z_{N-j} = -\;\frac{1}{\sqrt{2N}}\sum_{n=1}^N e^{-2\pi i\frac{j}{N} n} (I_0^{(n)} - i\sqrt{\lambda_j}\; (\theta^{(n)}-\theta_{eq}^{(n)})),\\
\displaystyle  \qquad\zeta_{N-j} =
\frac{1}{\sqrt{2N\lambda_j}}\sum_{n=1}^N e^{2\pi i\frac{j}{N} n} (I_0^{(n)} + i\sqrt{\lambda_j}\; (\theta^{(n)}-\theta_{eq}^{(n)})),\\
\displaystyle \!\!\!\!\!\!\!\!\!\!\!\!\!\!\!\!\!\!\!\!\!\!\!\!\!\!\!\!\!\!\!\!\mbox{and, if $N$ is even}:\\
\displaystyle\qquad z_{\frac{N}{2}} =
\frac{1}{\sqrt{2N \lambda_{\frac{N}{2}}}}\sum_{n=1}^N (-1)^n (I_0^{(n)} + i\sqrt{\lambda_j}\; (\theta^{(n)}-\theta_{eq}^{(n)})),\\
\displaystyle  \qquad\zeta_{\frac{N}{2}} =-\;
\frac{1}{\sqrt{2N}}\sum_{n=1}^N (-1)^n (I_0^{(n)} - i\sqrt{\lambda_j}\; (\theta^{(n)}-\theta_{eq}^{(n)})),
\end{array}
$$
cf. \cite{rink2001symmetry}, formulas (7.2),(7.3). By the translational invariance of the average potential $U$, the Hamiltonian (\ref{hfpu})
in these coordinates is the  sum of the term \(\frac{1}{2N}\sum_{n=1}^N (I^{(n)}_0)^2\)
and a function which depends only on $(z_j, \zeta_j)$ with $j=1,\ldots, N-1$.
This function is the Hamiltonian of the system reduced by the translation symmetry group ($\theta^{(n)}\mapsto \theta^{(n)} + c,\; c\in \mathbb{R}^1, n=1,\ldots, N$). Since $(z,\zeta)=0$ is an equilibrium of the reduced system, the Taylor expansion of the reduced Hamiltonian
at zero starts with quadratic terms -- these terms coincide with the quadratic part of $H_{Rink}$ in (\ref{oddrink}) and (\ref{evenrink}), i.e.,
with $\sum_{1\leqslant j\leqslant \frac{N}{2}}\sqrt{\lambda_{j}}\;a_j$. Next, one does a symplectic transformation (identity plus terms of the second order and higher) of the variables $(z_j, \zeta_j)$, $j=1,\ldots, N-1$, which brings the reduced Hamiltonian to the form
which coincides with the normal forms (\ref{oddrink}) or (\ref{evenrink}) up to terms of order $5$. It is a standard fact that
this system has KAM-tori if the normal form has KAM-tori in an arbitrarily small neighborhood of zero, so we further focus on finding KAM-tori in
the normal forms.

In the new variables $(z,\zeta)$ the functions $a,b,c,d$ in $H_{Rink}$ are given by
$$a_j=i(z_j\zeta_j - z_{N-j}\zeta_{N-j}), \qquad b_j=i(z_j\zeta_j + z_{N-j}\zeta_{N-j}),$$
$$c_j=\frac{1}{\sqrt{\lambda_j}}(z_j z_{N-j} +\lambda_j \zeta_ j\zeta_{N-j}), \qquad
d_j=\frac{-i}{\sqrt{\lambda_j}}(z_j z_{N-j} -\lambda_j \zeta_ j\zeta_{N-j}),$$
for $1\leqslant j <\frac{N}{2}$; we also have, for even $N$,
$$ a_{\frac{N}{2}}=i z_{\frac{N}{2}} \zeta_{\frac{N}{2}}.$$
Note that $a,b,c,d$  are real and satisfy
\begin{equation}\label{abcdjs}
a_j^2=b_j^2+c_j^2+d_j^2,
\end{equation}
see \cite{rink2001symmetry}, formulas (8.6),(8.7). The coefficients $C_{jk}^*$ in $H_{Rink}$ are polynomials of the coefficients of the original Hamiltonian depending rationally on $\sqrt{\lambda_j}$. One can check that a generic potential $U$ corresponds to a generic choice of $C$'s.

For odd \(N\), the normal form (\ref{oddrink}) is completely integrable and generically has KAM tori \cite{rink2001symmetry}. This gives us that the the KAM assumption IP1 is fulfilled generically for system (\ref{hfpu}) for odd $N$. Let us consider the case of even $N$. By \cite{rink2001symmetry}, Corollary 9.3, the $(N-1)$ degrees of freedom system (\ref{evenrink}) has a number of quadratic integrals in involution: $a_j$, $j=1,\ldots,\frac{N}{2}$ and $b_j-b_{\frac{N}{2}-j}$, $1\leqslant j <\frac{N}{4}$. This set is incomplete; while the normal form $H_{Rink}$ for the nearest-neighbor FPU chain with an even \(N\) has additional integrals and is completely integrable \cite{Henrici2008,Henrici2009}, it is not known whether this normal form is completely integrable for a generic choice of coefficients $C$ and
$\lambda$ in (\ref{evenrink}), or for a general choice of the potential $U$ in (\ref{hfpu}).

However, the restriction of (\ref{evenrink})
to the invariant subspace $\{a_s=0, 1 \leqslant s \leqslant \frac{N}{4}\}$
is completely integrable: since $b_s$, $c_s$ and $d_s$ all vanish for
$1 \leqslant s \leqslant \frac{N}{4}$ by (\ref{abcdjs}), the restricted Hamiltonian is given by
$$H=\sum_{\frac{N}{4} < j \leqslant \frac{N}{2}}\sqrt{\lambda_{j}}\;a_j+\frac{1}{2}\sum_{\frac{N}{4}< j,k \leqslant \frac{N}{2}}C^a_{jk}a_ja_k+
\frac{1}{2}\sum_{\frac{N}{4}< j,k < \frac{N}{2}} C^b_{jk}b_{j}b_k,$$
and the quadratic functions $a_j$ ($\frac{N}{4}< j\leqslant  \frac{N}{2}$) and $b_j$ ($\frac{N}{4}< j < \frac{N}{2}$) give a complete set of its integrals. The restricted Hamiltonian has the same structure as the completely integrable normal form (\ref{oddrink}). So, in the same way as
it was done in \cite{rink2001symmetry} for system (\ref{oddrink}), one establishes that for a generic choice of non-zero values of the integrals
$a_j$ ($\frac{N}{4}< j\leqslant  \frac{N}{2}$) and $b_j$ ($\frac{N}{4}< j < \frac{N}{2}$) the corresponding joint level set of these integrals is a
KAM-torus $\mathcal{T}$ of the restricted system. Moreover, these integrals $a_j$ and $b_j$ are the action variables (see formula (9.3) in \cite{rink2001symmetry}). The dynamics in a small neighborhood of such torus are described, in the main approximation, by the Hamiltonian
(\ref{evenrink}) {\em averaged} over the angle variables conjugate to the actions. The only terms in (\ref{evenrink}) that depend on these
angle variables are $d_{\frac{N}{2}-j}$ and $c_{\frac{N}{2}-j}$ with $j<\frac{N}{4}$; let us show that their averaged values are zero.

Indeed, by the ergodicity of the flow on the invariant torus $\mathcal{T}$, we can replace the averaging over the angle variables by
the time averaging. Due to the commutation relations (see formula (9.1) in \cite{rink2001symmetry})
$$\{b_k,c_m\}=2d_m\delta_{km}, \qquad \{b_k,d_m\}= - 2d_m, \qquad \{a_k,c_m\}=\{a_k,d_m\}=0,$$
where $\{\cdot,\cdot\}$ is the Poisson bracket and $\delta_{km}$ is the Kronecker delta, we have that for $\frac{N}{4}<m<\frac{N}{2}$
$$\frac{d}{dt} c_m =\{c_m,H_{Rink}\}= - \Omega_m\;  d_m,\qquad \frac{d}{dt} d_m =\{d_m,H_{Rink}\}= \Omega_m\;  c_m,$$
where
$$\Omega_m = 2 \sum_{\frac{N}{4} < k < \frac{N}{2}} C^b_{km} b_k.$$
For a generic choice of the actions $b_k$ ($\frac{N}{4} < k < \frac{N}{2}$), the frequencies $\Omega_m$ are all non-zero, hence $c_m$ and $d_m$ perform harmonic oscillations and their time-average is zero. Thus, the averaged system (\ref{evenrink}) is
$$H=\sum_{1\leqslant j\leqslant \frac{N}{2}}\sqrt{\lambda_{j}}\;a_j+\frac{1}{2}\sum_{1\leqslant j,k\leqslant \frac{N}{2}}C^a_{jk}a_ja_k+
\frac{1}{2}\sum_{1\leqslant j,k < \frac{N}{2}} C^b_{jk}b_jb_k.$$
This system has the same structure as (\ref{oddrink}) and is completely integrable (the integrals are $a_j$ and $b_j$).
It follows that the invariant torus $\mathcal{T}$ is normally elliptic.
As in \cite{rink2001symmetry}, one checks that a generic Liouville torus of this systems satisfies the twist condition, hence
$\mathcal{T}$ is surrounded by KAM-tori and these tori persist when we proceed from the averaged system to the original system $H_{Rink}$
in a neighborhood of $\mathcal{T}$.

\end{document}